\let\mathscr=\CMcal
\newtheorem{theorem}[equation]{Theorem}
\newtheorem{lemma}[equation]{Lemma}
\newtheorem{proposition}[equation]{Proposition}
\newtheorem{corollary}[equation]{Corollary}
\newtheorem{definition}[equation]{Definition}
\theoremstyle{remark}
\newtheorem{remark}[equation]{Remark}
\numberwithin{equation}{subsection}
\newcommand{\FF}{\mathbb{F}}
\newcommand{\ZZ}{\mathbb{Z}}
\newcommand{\QQ}{\mathbb{Q}}
\newcommand{\GG}{\mathbb{G}}
\newcommand{\CC}{\mathbb{C}}
\newcommand{\PP}{\mathbb{P}}
\newcommand{\MM}{\mathbb{M}}
\newcommand{\NN}{\mathbb{N}}
\renewcommand{\AA}{\mathbb{A}}
\newcommand{\cA}{\mathcal{A}}
\newcommand{\cM}{\mathcal{M}}
\newcommand{\cT}{\mathcal{T}}
\newcommand{\cH}{\mathcal{H}}
\newcommand{\cJ}{\mathcal{J}}
\newcommand{\cC}{\mathcal{C}}
\newcommand{\cR}{\mathcal{R}}
\newcommand{\Acal}{\CMcal{A}}
\newcommand{\Rcal}{\CMcal{R}}
\newcommand{\Ucal}{\CMcal{U}}
\DeclareMathAlphabet{\matheur}{U}{eur}{m}{n}
\newcommand{\eC}{\matheur{C}}
\newcommand{\eF}{\matheur{F}}
\newcommand{\eK}{\matheur{K}}
\newcommand{\eM}{\matheur{M}}
\newcommand{\eR}{\matheur{R}}
\newcommand{\itM}{\mathit{M}}
\newcommand{\fm}{\mathfrak{m}}
\newcommand{\Rscr}{\mathscr{R}}
\newcommand{\bomega}{\boldsymbol{\omega}}
\DeclareMathOperator{\Aut}{Aut} 
 \DeclareMathOperator{\GL}{GL}
\DeclareMathOperator{\Mat}{Mat} \DeclareMathOperator{\Cent}{Cent}
\DeclareMathOperator{\End}{End} 
\DeclareMathOperator{\Hom}{Hom}
\DeclareMathOperator{\trdeg}{tr.deg}
\DeclareMathOperator{\rank}{rank}
\DeclareMathOperator{\ari}{ari}
\newcommand{\ok}{\overline{k}}
\newcommand{\ag}{\Gamma_{\mathrm{ari}}}
\newcommand{\agv}{\Gamma_{\mathrm{ari},v}}
\newcommand{\agi}{\Gamma_{\mathrm{ari},\infty}}
\newcommand\redsout{\bgroup\markoverwith{\textcolor{red}{\rule[0.5ex]{2pt}{0.4pt}}}\ULon}
\newcommand\mapsfrom{\mathrel{\reflectbox{\ensuremath{\mapsto}}}}
\newcommand\subfrac[2]{\genfrac{}{}{0pt}{}{#1}{#2}}
\newenvironment{Subsubsec}[1]{\vspace{\topsep}
\noindent\refstepcounter{equation}\theequation.\ \textit{#1.}}{\vspace{\topsep}}
\begin{document}

\title[$v$-adic periods of Carlitz motives]{$v$-adic periods of Carlitz motives and Chowla--Selberg formula revisited}

\author{Chieh-Yu Chang}
\address{Department of Mathematics, National Tsing Hua University, Hsinchu City 30042, Taiwan
  R.O.C.}

\email{cychang@math.nthu.edu.tw}

\author{Fu-Tsun Wei }
\address{Department of Mathematics, National Tsing Hua University, Hsinchu City 30042, Taiwan
  R.O.C.}
\email{ftwei@math.nthu.edu.tw}

\author{Jing Yu}
\address{Department of Mathematics, National Taiwan University, Taipei City 10617, Taiwan
  R.O.C.}
\email{yu@math.ntu.edu.tw}

\thanks{}

\subjclass[2010]{Primary 11R58, 11J93}

\date{\today}

\begin{abstract} Let $v$ be a finite place of $\FF_q(\theta)$. In this paper, we interpret $v$-adic arithmetic gamma values in terms of the $v$-adic crystalline--de Rham periods of Carlitz motives with Complex Multiplication, and establish an Ogus-type Chowla--Selberg formula. 
Furthermore, we prove the algebraic independence of these $v$-adic periods by employing the technique of ``switching $v$ and $\infty$'', and determining the dimension of relevant motivic Galois groups on the ``$\infty$-adic'' side 
through an adaptation and refinement of existing methods.  
As a consequence, all algebraic relations among $v$-adic arithmetic gamma values over $\FF_q(\theta)$ can be derived from standard functional equations together with Thakur’s analogue of the Gross--Koblitz formula.
\end{abstract}

\keywords{}

\maketitle
\section{Introduction}

\subsection{Motivation}
This study is inspired by the Gross--Koblitz formula~\cite{GK79} and its function field analogue established by Thakur~\cite{T88}. 
Classically, one interpolates the factorials $n!$ to obtain the Euler gamma function, and a folklore conjecture asserts that all
the special gamma values $\Gamma (z)$, for $z\in \QQ\setminus \ZZ$, are transcendental.
A key reason for this conjecture comes from the geometric interpretation of these values as ``abelian CM periods,'' as illuminated by the Chowla–Selberg formula and further explored in Gross’s foundational work~\cite{Gr78}. This geometric viewpoint has greatly deepened our understanding of the significance of gamma values (see \cite{Anderson82}, \cite{Co93}, \cite{MR04}, \cite{Yang10}, \cite{BM16}, and \cite{Fr17}, etc.).

The trancendence of the gamma values at rationals is only known when the denominator is $2$ as $\Gamma(\frac{1}{2})=\sqrt{\pi}$, or when the denominator is $4$ or $6$. The latter case is due to Chudnovsky's work~\cite[p.~6-8]{Chud84} on the algebraic independence of a nonzero period $\varpi$ and its corresponding quasi-period $\eta(\varpi)$ of a CM elliptic curve defined over $\overline{\mathbb{Q}}$ (see also~\cite[p.~30]{BW07}).
The Lang-Rohrlich conjecture~\cite{Lang90} asserts that all $\overline{\QQ}$-algebraic relations among special gamma values and $2\pi \sqrt{-1}$ are explained by the standard functional equations satisfied by the Euler gamma function, which can be derived if the Shimura conjecture on period symbols is true (see \cite{BCPW22}).

For a prime number $p$, Morita~\cite{Mo75} defined a continuous function  $\Gamma_p:\mathbb{Z}_p\rightarrow \ZZ_{p}^{\times}$ via a $p$-adic interpolation of the factorials $n!$ (up to a sign). Moreover,
Ogus' $p$-adic Chowla--Selberg-type formula in \cite{O89} interprets 
the special $p$-adic gamma values  $\Gamma_{p}(z)$ for $z$ in $\QQ\cap \ZZ_p$ in terms of matrix coefficients of the isocrystal (Frobenius) action on the crystalline cohomology of the Jacobians of Fermat curves.
Note that for an odd prime $p$, Gross--Koblitz managed to express a Gauss sum explicitly in terms of a product of specific special Morita $p$-adic gamma values, which provides ``extra'' algebraic relations among these values other than those coming from standard functional equations.
It is a natural question 
whether there is a \lq $p$-adic Lang--Rohrlich phenomenon\rq\ (namely, the known existing algebraic relations arising from the standard functional equations that Morita's $p$-adic gamma function satisfies together with the Gross-Koblitz formula) accounting for all $\overline{\QQ}$-algebraic relations among the special $p$-adic gamma values.
Our ultimate goal is to explore the ``$v$-adic'' Chowla--Selberg phenomenon through analyzing ``$v$-adic periods'', and to provide an affirmative answer for this question in the positive characteristic world.


The present paper represents our first successful attempt (in this direction) to
investigate the $v$-adic arithmetic gamma function for the rational function field $k$ of one variable over a finite field, after L.~Carlitz~\cite{Car35}, D.~Goss~\cite{Go80} and D.~Thakur  \cite{T88}, \cite{T91}, and examine the period interpretation of its special values.
Note that for the $\infty$-adic gamma values over function fields, the Chowla--Selberg phenomenon has been fully clarified from the work of \cite{Sinha}, \cite{BP02}, \cite{ABP04}, \cite{CPTY10}, and \cite{Wei22}. 
In this paper, we shift our focus to the $v$-adic case 
for any finite place $v$ of $k$, 
and establish an Ogus-type Chowla--Selberg formula for the $v$-adic arithmetic gamma values connecting with the $v$-adic ``cystalline--de Rham periods'' of ``Carlitz motives with complex multiplication''.
We also derive the algebraic independence of these $v$-adic periods, which enables us to show that the algebraicity of special arithmetic $v$-adic gamma values comes only from Thakur's analogue of  Gross--Koblitz. 


\subsection{\texorpdfstring{$v$}{v}-adic periods of Carlitz motives and gamma values}

Let $A:=\FF_q[\theta]$ be the polynomial ring in one variable $\theta$ over a finite field $\FF_q$ of characteristic $p$ with $q$ elements, and $k:=\FF_q(\theta)$ be its fraction field.
Put $D_{0}:=1$, and for $i\geq 1$, $D_{i}:=\prod_{j=0}^{i-1}(\theta^{q^{i}}-\theta^{q^{j}})\in A$. For each $i\in \NN$,  $D_i$ equals the product of all monic polynomials of degree $i$ in $A$. Carlitz  \cite{Car35} introduced his factorial for $A$ as follows:
given a non-negative integer $n$, write $n=\sum_{i\geq 0} n_{i}q^{i}$ for $0\leq n_{i}<q$ and put 
\[ \agi (n+1):=\prod_{i\geq 0}D_{i}^{n_i} \in A,\] which becomes the positive characteristic analogue of the factorial $n!$, see~\eqref{E:valCarlitzFac}.
Taking the unit part of these factorial values, Goss \cite{Go80} discovered that one may do interpolation in Morita style, 
arriving at a continuous function: 
$\agi : 
\ZZ_p\rightarrow \FF_q(\!(1/\theta)\!)^\times$.

Goss successfully applied his strategy to all places of $k$.
Given a monic irreducible polynomial $v \in A$, let $k_v$ be the completion of $k$ at $v$.
Using $v$-adic interpolation for the Carlitz factorials, Goss~\cite{Go88} and Thakur~\cite{T91} defined a $v$-adic arithmetic gamma function \[
\agv:\ZZ_p\rightarrow k_v^\times,\]
whose precise definition is given in Section~\ref{sec: v-adic agv}.

Note that in the $\infty$-adic case, the period interpretation of $\agi(z)$ for $z \in \QQ\cap \ZZ_p$ is illustrated in \cite{CPTY10}, which leads to the determination of all algebraic relations among these values.
More precisely, let $\FF_q[t]$ be the polynomial ring in another variable $t$ over $\FF_q$.
Given a positive integer $\ell$, we let ${\bf{C}}_{\ell}$ be the Carlitz $\FF_{q^{\ell}}[t]$-module, then ${\bf{C}}_{\ell}$ is a CM Drinfeld $\FF_{q}[t]$-module of rank $\ell$ with CM field $\FF_{q^{\ell}}(t)$.  In~\cite[Theorems~3.3.1 and 3.3.2]{CPTY10}, it is shown that periods of the dual $t$-motive associated to ${\bf{C}}_{\ell}$ are expressed as products of special $\infty$-adic gamma values (up to algebraic multiples). By~\cite[Sec.~4.2]{Pe08} (see also~\cite[(3.4.5) and (3.4.8)]{CP12}),  periods and quasi-periods of ${\bf{C}}_{\ell}$ are explicitly related to the periods of the dual $t$-motive in question, and hence combining these gives a Chowla-Selberg formula for special $\infty$-adic arithmetic gamma values.

Here we shall present a $v$-adic counterpart of the phenomenon above. 
Let $\eC_{\ell}$ be the Carlitz $t$-motive of degree $\ell$ over $A$, and denote by $\eR_\tau(\eC_{\ell})$
the `restriction of scalars' of $\eC_{\ell}$ (see Section~\ref{sec: Car-t}). We note that the terminology \lq degree $\ell$\rq\ indicates that $\eC_{\ell}$ has complex multiplication by the degree $\ell$ constant field extension over $\FF_{q}(t)$.
Let $v$ be a monic irreducible polynomial in $A$ and $k_v$ be the completion of $k$ at $v$.
As $\eR_\tau(\eC_{\ell})$ is defined `integrally' over $A$ and has `good reduction' at $v$, one may associate its crystalline module $H_{\textrm{cris}}(\eR_\tau(\eC_{\ell}),k_v)$ and de~Rham module $H_{\textrm{dR}}(\eR_\tau(\eC_{\ell}),k_v)$ over $k_v$ (see Remark~\ref{rem: cris-dR-kv}). 
Moreover, the theory of Hartl--Kim in \cite{HK20} provides a crystalline--de~Rham comparison isomorphism (see \eqref{eqn: period-iso})
\[
\phi_v: H_{\textrm{cris}}(\eR_\tau(\eC_{\ell}),k_v) \stackrel{\sim}{\longrightarrow} H_{\textrm{dR}}(\eR_\tau(\eC_{\ell}),k_v).
\]
Therefore the natural ``isocrystal action'' $\tau_v$ on $H_{\textrm{cris}}(\eR_\tau(\eC_{\ell}),k_v)$ transforms to a $k_v$-linear automorphism $\varrho_v$ on $H_{\textrm{dR}}(\eR_\tau(\eC_{\ell}),k_v)$ through $\phi_v$ (see \eqref{eqn: rho-v} for the precise definition).
The first theorem of this paper is to express ${\varrho}_v$ explicitly in terms of special $v$-adic arithmetic gamma values:

\begin{theorem}\label{thm: main-2}
\text{\rm (See Theorem~\ref{thm: CSF-1}.)} 
Choose the `global' basis $\{\omega_0,...,\omega_{\ell-1}\}$ of the de~Rham module $H_{\textrm{dR}}(\eR_\tau(\eC_{\ell}),k)$ as in \eqref{eqn: global-omega}.
Let $d:=\deg v$.
For each integer $s$ with $0\leq s<\ell$, we write $s+d = s_0 + n_s'\ell$, where $s_0 \in \ZZ$ with $0\leq s_0<\ell$ and $n_s' \in \ZZ_{\geq 0}$.
Then we have
\[
\varrho_{v}\cdot \omega_s =  (-1)^{n_s'} \cdot C_{s}\cdot
\displaystyle 
\frac{\displaystyle\agv\Big(1-\Big\langle\frac{q^{s+d}}{q^{\ell}-1}\Big\rangle_{\ari}\Big)}{\displaystyle\agv\Big(1-\Big\langle\frac{q^{s+d-1}}{q^{\ell}-1}\Big\rangle_{\ari}\Big)^q}\cdot \omega_{s_0},
\quad \text{ where } \quad 
C_s:= 
\begin{cases}
-v , & \text{ if $s=0$,}\\
1, & \text{ if $s>0$,}
\end{cases}
\]
and $\langle z \rangle_{\ari}$ is the fractional part of $z$ for every $z\in \QQ$, i.e.\ $0\leq \langle z \rangle_{\ari}<1$ and $z-\langle z \rangle_{\ari} \in \ZZ$.
\end{theorem}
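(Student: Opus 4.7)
The strategy is to unfold the Hartl--Kim comparison $\phi_v$ on an explicit presentation of $\eR_\tau(\eC_{\ell})$, express $\varrho_v$ in the global basis $\{\omega_0, \ldots, \omega_{\ell-1}\}$ as a product of Carlitz-type factors, and then convert that product into the displayed ratio of $\agv$-values via Thakur's $v$-adic interpolation.

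First, I would fix a concrete matrix presentation of the Frobenius on $\eR_\tau(\eC_{\ell})$ that is compatible with the CM decomposition. Because $\eC_{\ell}$ has CM by $\FF_{q^{\ell}}[t]$, the restriction of scalars splits into $\ell$ eigencomponents indexed by the embeddings of $\FF_{q^{\ell}}$; the global basis $\{\omega_s\}$ from~\eqref{eqn: global-omega} aligns with this splitting, and in it $\tau$ acts as a cyclic shift together with a distinguished factor of $(t-\theta)$ appearing whenever the cycle wraps around. This same data pins down the integral crystalline lattice at $v$ (thanks to good reduction), so that both $H_{\textrm{cris}}(\eR_\tau(\eC_{\ell}),k_v)$ and $H_{\textrm{dR}}(\eR_\tau(\eC_{\ell}),k_v)$ are represented on the same underlying module, with $\phi_v$ computable in the Hartl--Kim framework by a $v$-adically convergent iteration of the inverse Frobenius.

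Next, I would compute $\varrho_v = \phi_v \circ \tau_v \circ \phi_v^{-1}$ by transporting $\tau_v$ through the comparison. Tracking the cyclic action induced by the $v$-adic Frobenius, whose effective shift on the basis has length $d = \deg v$, one sees that $\varrho_v$ sends $\omega_s$ to a $k_v$-multiple of $\omega_{s_0}$ with $s_0 \equiv s+d \pmod{\ell}$, matching the index change in the statement. The accompanying scalar is a finite product of factors $(\theta^{q^i}-\theta)$ whose exponents coincide with the base-$q$ digits of the integer attached to $\langle q^{s+d}/(q^{\ell}-1)\rangle_{\ari}$; it is therefore a Carlitz factorial $\agi(n+1) = \prod_i D_i^{n_i}$ for a suitable $n$. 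Using the recursion $D_i = (\theta^{q^i}-\theta)\, D_{i-1}^q$ to telescope the product as the $n$-th Carlitz factorial divided by the $q$-th power of the $(n-1)$-th yields precisely the ratio form of the theorem, still expressed in $\infty$-adic $\agi$-values.

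The final step passes from $\agi$ to $\agv$ via Thakur's $v$-adic interpolation recalled in Section~\ref{sec: v-adic agv}: the $v$-adic gamma function is obtained from the Carlitz factorial by deleting all factors of positive $v$-valuation. The main obstacle will be tracking the resulting correction factor $(-1)^{n_s'} \cdot C_s$. Only the factors $\theta^{q^{jd}}-\theta$ (for $d$-divisible indices $j$) have positive $v$-valuation, and after renormalization each removal contributes a sign, with a single residual $-v$ appearing in the boundary case $s=0$ where the leading Carlitz factor associated with the reduction at $v$ must be reinstated. Matching the aggregate sign with $(-1)^{n_s'}$, where $n_s'$ counts the complete $\ell$-cycles traversed from $s-1$ to $s+d$, is the most delicate part of the calculation, but it is a mechanical consequence of the definition of $\agv$ once the setup in the preceding steps is in place.
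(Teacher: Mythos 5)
Your outline has a genuine gap that would block the argument. You assert that the scalar accompanying $\varrho_v$ on the de Rham basis ``is a finite product of factors $(\theta^{q^i}-\theta)$ \dots it is therefore a Carlitz factorial $\agi(n+1)$,'' and that the $v$-adic statement then follows by ``deleting all factors of positive $v$-valuation.'' Both halves of this are incorrect. The Hartl--Kim comparison $\phi_v$ has a nontrivial period matrix whose diagonal entries are the $v$-adically convergent infinite products $\Phi_{v,s}(v)=\prod_{n\geq n_s}\bigl(1-\theta_v^{q^{\ell n+s}}/(\theta-\varepsilon^{q^{\ell n+s}})\bigr)$; the isocrystal $\tau_v$ is indeed a finite product of $(\theta-\varepsilon^{q^{i}})$-factors on the crystalline basis, but conjugation by $\phi_v$ multiplies this by the ratio $\Phi_{v,s_0}(v)/\Phi_{v,s}(v)$. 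So the coefficient of $\varrho_v\cdot\omega_s$ is genuinely an infinite product (equivalently a transcendental element of $k_v$, as Theorem~\ref{thm: main-3} confirms), not a finite Carlitz factorial $\agi(n+1)\in A$. Conversely, the target $\agv$-values at the rational arguments $1-q^{s_0}/(q^\ell-1)$ are the infinite products $\prod_{n\geq 0}(-D_{s_0+n\ell,v})$, converging in $k_v$ but not in $k_\infty$; they are not Carlitz factorials, and there is no meaningful operation of ``deleting $v$-divisible factors from $\agi$'' that produces $\agv$---the two are independent interpolations of the same finite data at different places, living in different completions.

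The paper's actual route avoids $\agi$ entirely: Lemma~\ref{lem: agv-form} and Corollary~\ref{cor: agv-form} compute the ratio $\agv(1-\langle q^{s+d}/(q^\ell-1)\rangle_{\ari})/\agv(1-\langle q^{s+d-1}/(q^\ell-1)\rangle_{\ari})^q$ directly from the definition of $\agv$ as $\prod_n D_{s+n\ell,v}/D_{s-1+n\ell,v}^q$, telescoping each quotient $D_{n,v}/D_{n-1,v}^q$ into the factors $(1-\theta_v^{q^j}/(\theta-\varepsilon^{q^j}))$; meanwhile Theorem~\ref{thm: period-matrix} identifies exactly these infinite products as the entries of the period matrix. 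Matching the two expressions, after converting from the local de~Rham basis $\omega_{v,s}$ to the global basis $\omega_s$ via \eqref{eqn: dR-basis-2}, produces the constants $(-1)^{n_s'}$ and $C_s$. To salvage your plan you would need to replace the ``finite product $=\agi$'' step with the explicit infinite-product computation of the period matrix and work with the intrinsic definition of $\agv$ rather than a purported reduction from $\agi$.
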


\begin{remark}
We may regard Theorem~\ref{thm: main-2} as a function field analogue of Ogus' $p$-adic Chowla--Selberg formula in \cite[Theorem 3.15]{O89}.    
\end{remark}

The proof of Theorem~\ref{thm: main-2} is based on an explicit description of the crystalline--de~Rham comparison isomorphism $\phi_v$ and a product expansion formula for the special values of $\agv$ in question (see~Corollary~\ref{cor: agv-form}).
By choosing
a specific `global' $A[t]$-base $\mathfrak{B}$ of $\eR_\tau(\eC_\ell)$ as in \eqref{eqn: basis-B}, which induces a basis $\Lambda_v = \{\lambda_{v,0},...,\lambda_{v,\ell-1}\}$ of $H_{\textrm{cris}}\big(\eR_\tau(\eC_\ell),k_v\big)$ (in \eqref{eqn: lambda-v}) and a basis $\beta_v=\{\omega_{v,0},...,\omega_{v,\ell-1}\}$ of $H_{\textrm{dR}}\big(\eR_\tau(\eC_\ell),k_v\big)$ (in \eqref{eqn: beta-v}), we particularly have
 
\[
\omega_{v,s} = \begin{cases}    
\left(
\displaystyle\prod_{n=1}^{n_s-1}(\theta-\theta^{q^{\ell n + s}})
\right)
\cdot \omega_{s}, & \text{ if $s=0$,} \\ 
\left(
\displaystyle\prod_{n=0}^{n_s-1}(\theta-\theta^{q^{\ell n + s}})
\right)
\cdot \omega_{s}, & \text{ if $s>0$,}
\end{cases}
\] 
given in~\eqref{eqn: dR-basis-2}
(where $n_s$ is the smallest non-negative integer so that $s+n_s\ell \geq \deg v$),
and the ``period matrix'' of the crystalline--de Rham isomorphism $\phi_v$ with respect to the bases $\Lambda_v$ and $\beta_v$ is determined by the following result:

\begin{theorem}\label{thm: pm}
{\rm (See Theorem~\ref{thm: period-matrix}.)}
For every integer $s$ with $0\leq s<\ell$, we have
\[
\phi_v(\lambda_{v,s}) = \prod_{n=n_s}^\infty (1-\frac{(\theta-\varepsilon)^{q^{\ell n + s}}}{\theta-\varepsilon^{q^{\ell n + s}}}) \cdot \omega_{v,s}.
\]    
Here $\varepsilon = \lim_{i\rightarrow \infty}\theta^{q^{i\deg v}} $ $\in k_v$ is the Teichm\"uller image of $\theta \bmod v\in A/(v)$, and $n_s$ is the smallest non-negative integer so that $s+n_s\ell \geq \deg v$.
Consequently,
the period matrix of the isomorphism $\phi_v$ in \eqref{eqn: period-iso} with respect to $\beta_v$ and $\Lambda_v$ is 
\[
\begin{pmatrix}
\displaystyle\prod_{n=n_0}^{\infty}(1- \frac{(\theta-\varepsilon)^{q^{\ell n }}}{\theta-\varepsilon^{q^{\ell n }}}) & & 0\\
& \ddots & \\
0& & \displaystyle\prod_{n=n_{\ell-1}}^{\infty}(1- \frac{(\theta-\varepsilon)^{q^{\ell n + \ell-1}}}{\theta-\varepsilon^{q^{\ell n + \ell-1}}})
\end{pmatrix}.
\]
\end{theorem}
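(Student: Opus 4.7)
The strategy is to build an explicit rigid-analytic trivialization of the Frobenius structure on $\eR_\tau(\eC_\ell)$ in the basis $\mathfrak{B}$, specialize at $t=\theta$ to read off $\phi_v(\lambda_{v,s})$, and then recognize that the finitely many ``low-index'' factors of the resulting infinite product are precisely what was absorbed into the de~Rham normalization $\omega_{v,s}$ from \eqref{eqn: dR-basis-2}. Diagonality of the period matrix will follow from the fact that $\mathfrak{B}$, $\Lambda_v$, and $\beta_v$ all respect the cyclic $\FF_{q^\ell}$-CM decomposition, so that the Frobenius recursion decouples into $\ell$ independent scalar recursions indexed by $s$.

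First, I would write down the $\tau$-action on $\mathfrak{B}$ explicitly. Because $\eR_\tau(\eC_\ell)$ is the $\tau$-restriction of scalars of the rank-one Carlitz $t$-motive of degree $\ell$, the Frobenius cyclically permutes the basis indices $s\mapsto s-1\pmod{\ell}$ and introduces a single linear twist of the form $(t-\theta^{q^j})$ at the wrap-around step. Setting up the Hartl--Kim rigid-analytic trivialization $\Upsilon^{(-1)} = M\Upsilon$ on an appropriate Tate algebra over $k_v$, the cyclic structure of the Frobenius matrix $M$ reduces the computation of each column of $\Upsilon$ to a single scalar recursion in the variable $t$.

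Second, I would solve this scalar recursion in closed form. Specializing $t$ near the Teichm\"uller lift $\varepsilon$, which is characterized by $\varepsilon^{q^{\deg v}}=\varepsilon$ and $\varepsilon\equiv\theta\pmod{v}$, the unique convergent solution is an infinite product whose $n$-th factor is $1-(\theta-\varepsilon)^{q^{\ell n+s}}/(\theta-\varepsilon^{q^{\ell n+s}})$. Convergence in $k_v$ for $n\ge n_s$ is immediate: $(\theta-\varepsilon)^{q^{\ell n+s}}$ has $v$-valuation $q^{\ell n+s}$, while $\theta-\varepsilon^{q^{\ell n+s}}$ is a $v$-adic unit unless $\deg v \mid \ell n+s$ (in which case it has $v$-valuation exactly $1$), so in either case the ratio has $v$-valuation tending to infinity.

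Third, I would match the result against the de~Rham basis $\beta_v$. By inspection of~\eqref{eqn: dR-basis-2}, the passage from $\omega_s$ to $\omega_{v,s}$ is multiplication by the polynomial $\prod_{n<n_s}(\theta-\theta^{q^{\ell n+s}})$ (with the appropriate indexing convention at $s=0$). This prefactor agrees, up to $v$-adic units that can be absorbed into the convergent tail, with the truncated ``low-index'' portion $\prod_{n<n_s}$ of the formal product produced by the Frobenius trivialization. Moving this prefactor across to the de~Rham side replaces the $\omega_s$-coordinate by the $\omega_{v,s}$-coordinate and leaves exactly the tail $\prod_{n\ge n_s}$, which is the claimed formula. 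The diagonality of the period matrix then follows from the aforementioned decoupling of the recursions.

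The main obstacle will be the bookkeeping at the boundary $n=n_s$: one must verify that the polynomial factors $\theta-\theta^{q^{\ell n+s}}$ for $n<n_s$ arising from the de~Rham normalization cancel the corresponding low-index factors of the Frobenius product with correct signs, and that replacing $\theta^{q^{\ell n+s}}$ by $\varepsilon^{q^{\ell n+s}}$ in the relevant range introduces only a $v$-adic unit that is absorbed without altering the stated tail. One must also invoke the Hartl--Kim formalism with enough care to justify that the naive iterative solution of $\Upsilon^{(-1)}=M\Upsilon$ is indeed the trivialization implementing $\phi_v$ on $\eR_\tau(\eC_\ell)$, a point where the explicit rank-one description of $\eC_\ell$ before restriction of scalars is essential.
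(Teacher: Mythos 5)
The proposal captures the right overall shape---express $\phi_v$ as a coordinate-wise multiplier via a Frobenius trivialization and match it against the chosen bases---but misses the technical point around which the paper's entire Section~3 is built: the crystalline--de~Rham comparison is $\tau^d$-equivariant, \emph{not} $\tau$-equivariant, and when $d=\deg v>1$ these are genuinely different. You write that ``the Frobenius cyclically permutes the basis indices $s\mapsto s-1\pmod{\ell}$'' and set up ``$\Upsilon^{(-1)}=M\Upsilon$'', i.e.\ the $q$-power Frobenius equation. However, as Section~\ref{subsubsec: com-twist} explains, the $n$-th Frobenius twist on $A_v[\![T_v,T_v^{-1}\}\!\}$ agrees with the one transported from $A_v[t]$ only when $d\mid n$; the ring $A_v[t][\tau]$ is not even a subring of $A_v[\![T_v,T_v^{-1}\}\!\}[\tau]$ for $d>1$. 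The relevant difference equation for the comparison multipliers is therefore a $\tau^d$-equation
\[
\Omega_{\ell,v,s}^{(d)}(T_v)\;=\;\Bigl(\prod_{n=0}^{n_{s_0}-1}\frac{t-\varepsilon^{q^{\ell n+s_0}}}{t-\theta^{q^{\ell n+s_0}}}\Bigr)\cdot\Omega_{\ell,v,s_0}(T_v),\qquad s_0\equiv s+d\ (\mathrm{mod}\ \ell),
\]
which cycles the indices by $s\mapsto s+d\bmod\ell$, splitting $\{0,\dots,\ell-1\}$ into $\gcd(d,\ell)$ orbits rather than one $\ell$-cycle. Solving the $\tau$-equation instead of this $\tau^d$-equation does not produce the Hartl--Kim comparison isomorphism; it is also precisely this $\tau^d$-structure that forces the bases $\Lambda_v$ and $\beta_v$ to be built from $b_{v,s}=\tau^d b_{s_1}$ and to carry the polynomial prefactors $\prod_{n<n_s}(t-\theta^{q^{\ell n+s}})$ in the first place.

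The second gap is the claim that the prefactor mismatch is only ``up to $v$-adic units that can be absorbed into the convergent tail''. There are no units to absorb: the theorem is an exact identity, and the factor $\theta-\varepsilon^{q^{\ell n+s}}$ at $n=0$, $s=0$ has $v$-valuation $1$, so your claimed unit absorption would already fail there. What actually happens is two-sided and exact: passing from $b_{v,s}$ to $\lambda_{v,s}$ (reduction $\bmod\,\fm_v$) replaces $\theta$ by $\varepsilon$ in the crystalline prefactor $\prod_{n<n_s}(t-\varepsilon^{q^{\ell n+s}})$, while the de~Rham prefactor in $\omega_{v,s}$ keeps $\theta$. Since $1-\frac{\theta_v^{q^m}}{t-\varepsilon^{q^m}}=\frac{t-\theta^{q^m}}{t-\varepsilon^{q^m}}$ in characteristic $p$, one has the exact factorization
\[
\Omega_{\ell,v,s}(T_v)\cdot\prod_{n=0}^{n_s-1}(t-\varepsilon^{q^{\ell n+s}})
\;=\;\Phi_{v,s}(T_v)\cdot\prod_{n=0}^{n_s-1}(t-\theta^{q^{\ell n+s}}),\qquad
\Phi_{v,s}=\prod_{n\geq n_s}\Bigl(1-\frac{\theta_v^{q^{\ell n+s}}}{t-\varepsilon^{q^{\ell n+s}}}\Bigr),
\]
which matches the two normalizations on the nose and leaves exactly $\Phi_{v,s}(v)$ as the period. (The case $s=0$ needs separate care since $\Omega_{\ell,v,0}(v)=0$ but $\Phi_{v,0}(v)\neq 0$.) A bookkeeping strategy that tries to absorb the discrepancy into the tail, rather than recognizing that the crystalline basis normalization also contributes, would not close.
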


Having this period matrix at hand, we are able to realize the action of $\varrho_v$ on the basis $\beta_v$ as in \eqref{eqn: Fro-act}.
Combining the product formula of $v$-adic arithmetic gamma values in Corollary~\ref{cor: agv-form} enables us to derive Theorem~\ref{thm: main-2}.

\subsection{Algebraic independence of the \texorpdfstring{$v$}{v}-adic crystalline--de~Rham periods}


Let $\bar{k}_v$ be an algebraic closure of $k_v$, and let $\bar{k}$ be the algebraic closure of $k$ in $\bar{k}_v$.
By Theorem~\ref{thm: pm}, the field generated by the entries of the period matrix of the crystalline--de Rham isomorphism $\phi_v$ over $\bar{k}$ is
\[
\bar{k}\Big(
\Omega_{\ell,v,s}(v)\ \Big|\ 1\leq  s \leq \ell\Big),
\quad \text{ where }\quad 
\Omega_{\ell,v,s}(v):=\prod_{n=0}^\infty(1-\frac{(\theta-\varepsilon)^{q^{\ell n + s}}}{\theta-\varepsilon^{q^{\ell n + s}}}) \quad \in k_v, \quad 1\leq s \leq \ell.
\]
Rather than directly using the entries of the period matrix in Theorem~\ref{thm: pm}, we take this opportunity to introduce the infinite products  $\Omega_{\ell,v,s}(v)$, whose period-theoretic interpretation will play a crucial role in Section~\ref{sec: str-main3}.
The second theorem of this paper is to determine their algebraic independence:

\begin{theorem}\label{thm: main-3}
Given a positive integer $\ell$,
the periods
$\Omega_{\ell,v,1}(v),\cdots, \Omega_{\ell,v,\ell}(v)$
are algebraically independent over $\bar{k}$.
\end{theorem}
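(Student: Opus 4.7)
The plan is to realize $\Omega_{\ell,v,1}(v),\ldots,\Omega_{\ell,v,\ell}(v)$ as entries of a $v$-adic rigid analytic trivialization of the dual $t$-motive attached to $\eR_\tau(\eC_\ell)$, and then deduce algebraic independence by combining a $v$-adic Papanikolas-type theorem with a motivic Galois group calculation carried out on the $\infty$-adic side---this is the ``switching $v$ and $\infty$'' technique announced in the abstract. The crucial point is that the Frobenius matrix $\Phi$ of the relevant $t$-motive lies in $\mathrm{Mat}_\ell(\bar{k}[t])$ and is defined globally over $A$, so the attached motivic Galois group $\Gamma_\Phi$ is intrinsic to the motive and independent of the chosen place.

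First, I would interpret each $\Omega_{\ell,v,s}(v)$ as the specialization at $t=\varepsilon$ of a rigid analytic function $\Omega_{\ell,s}(t):=\prod_{n=0}^\infty\bigl(1-(\theta-t)^{q^{\ell n+s}}/(\theta-t^{q^{\ell n+s}})\bigr)$, arising as an entry of a rigid analytic trivialization $\Psi_v(t)$ of the $v$-adic realization of $\eR_\tau(\eC_\ell)$ in the spirit of Theorem~\ref{thm: pm}. This $\Psi_v$ satisfies a Frobenius functional equation $\Psi_v^{(-1)}=\Phi\cdot\Psi_v$ coming from the $\tau$-structure, and specialization at $t=\varepsilon$ (the Teichm\"uller lift of $\theta\bmod v$) recovers the periods in question.

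Second, I would invoke the $\infty$-adic counterpart: the same $\Phi$ admits an $\infty$-adic rigid analytic solution $\Psi_\infty(t)$ whose entries at $t=\theta$ generate the $\infty$-adic period field of $\eR_\tau(\eC_\ell)$, already related in \cite{CPTY10} to special $\infty$-adic Carlitz gamma values via the Anderson--Brownawell--Papanikolas machinery of \cite{ABP04}. An ``adaptation and refinement'' of these existing $\infty$-adic arguments (as the authors indicate in the abstract) should then yield $\dim\Gamma_\Phi=\ell$, reflecting the rank-$\ell$ CM structure of $\eC_\ell$.

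Finally, a $v$-adic Papanikolas-type theorem---whose framework is presumably developed earlier in the paper using the crystalline--de~Rham formalism of Hartl--Kim~\cite{HK20}---will give the upper bound $\mathrm{tr.deg}_{\bar{k}}\bar{k}\bigl(\Omega_{\ell,v,1}(v),\ldots,\Omega_{\ell,v,\ell}(v)\bigr)\leq\dim\Gamma_\Phi=\ell$; since exactly $\ell$ periods are in play, equality must hold and they are algebraically independent over $\bar{k}$. The main obstacle is the justification of the switching itself: one must verify that the $v$-adic rigid analytic trivialization converges at $t=\varepsilon$ and produces precisely the $\Omega_{\ell,v,s}(v)$, and, more subtly, that the Tannakian transcendence bound on the $v$-adic side is controlled by the \emph{same} motivic Galois group $\Gamma_\Phi$ that governs the $\infty$-adic periods. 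Completing this compatibility check, together with the refinement of \cite{CPTY10} needed to pin down $\dim\Gamma_\Phi=\ell$ in the present CM setting, is the technical heart of the argument.
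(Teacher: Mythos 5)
Your high-level framing---period interpretation plus a motivic Galois group computation via a ``switching $v$ and $\infty$'' device---points in the right direction, but the proposal misconstrues what the switch actually accomplishes, and this produces a genuine gap. You assert that the Frobenius matrix $\Phi$ of $\eR_\tau(\eC_\ell)$ ``is intrinsic to the motive and independent of the chosen place,'' so that the single group $\Gamma_\Phi$ computed on the $\infty$-adic side simultaneously bounds the $v$-adic transcendence degree. No such place-independence theorem exists, and the paper never claims it: the $v$-adic rigid trivialization $\Psi_v$ does not satisfy the ABP/Papanikolas hypotheses in the $t$-variable, and a ``$v$-adic Papanikolas-type theorem'' in your sense is neither proved nor invoked. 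What the paper actually does is \emph{change the underlying function field}: it sets $\tilde{v}=1/v$ and $\tilde{T}_v=\iota^{-1}(\tilde{v})$, so that $k_v$ becomes the completion of $\FF_q(\tilde{v})$ at its place $\infty_{\tilde{v}}$, and then builds a \emph{new} effective Hartl--Juschka module $\Rcal_\sigma(\tilde{\cC}_{d\ell})$ over $\bar{k}[\tilde{T}_v]$ whose ordinary $\infty_{\tilde{v}}$-adic periods (via the ordinary Theorem~\ref{thm: GPC}) are exactly the quantities $\Omega_{d\ell,v,s}(v)$.

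This also makes your target dimension wrong. The relevant motive is not $\eR_\tau(\eC_\ell)$ but $\Rcal_\sigma(\tilde{\cC}_{d\ell})$, which has rank $d^2\ell$ over $\bar{k}[\tilde{T}_v]$ and CM by $\FF_{q^{d\ell}}[\tilde{T}_v][\tilde{t}_v]$ of degree $d^2\ell$ over $\FF_q(\tilde{T}_v)$; the Hodge--Pink cocharacter computation in Section~\ref{sec: CHJ-MG-HPC} yields $\dim\Gamma_{\Rcal_\sigma(\tilde{\cC}_{d\ell})}=d^2\ell$, not $\ell$. The enlargement from $\ell$ to $d\ell$ is essential: for $d>1$ the periods $\Omega_{d\ell,v,s}(v)$ arise as evaluations of Frobenius twists of one power series at $d$ distinct points $\tilde{\theta}_{v,1},\dots,\tilde{\theta}_{v,d}$, which forces a restriction of scalars down to $\FF_q[\tilde{T}_v]$ to capture them all. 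Only afterwards does one recover the statement about $\Omega_{\ell,v,1}(v),\dots,\Omega_{\ell,v,\ell}(v)$, via the product identity $\Omega_{\ell,v,s}(v)=\prod_{j=0}^{d-1}\Omega_{d\ell,v,j\ell+s}(v)$. Your sketch omits the construction, uniformizability, and Hodge--Pink analysis of $\Rcal_\sigma(\tilde{\cC}_{d\ell})$ entirely, and the place-independence shortcut you propose in their stead does not have a justification.
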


%

\begin{Subsubsec}{Strategy of deriving Theorem~\ref{thm: main-3}}\label{sec: str-main3}
In order to apply the powerful machinery of Papanikolas' theory on Grothendieck's period conjecture in \cite{P08},
our initial step is to regard $v$ as the ``infinite place'' of the projective $\frac{1}{v}$-line over $\FF_q$.
More precisely, put $\tilde{v}:= \frac{1}{v}$.
Then $k$ is a finite extension of $\FF_q(\tilde{v})$, and
$k_v$ is a finite extension of the Laurent series field $\FF_q(\!(\frac{1}{\tilde{v}})\!)$ (which is the completion of $\FF_q(\tilde{v})$ at the place $\infty_{\tilde{v}}$ corresponding to $\frac{1}{\tilde{v}}$). So $k_{v}$ is an \lq$\infty_{\tilde{v}}$-adic\rq\ field.
Let $\iota: \FF_q[t]\cong \FF_q[\theta]$ be the $\FF_q$-algebra isomorphism sending $t$ to $\theta$, which extends to a field isomorphism $\iota: \FF_q(t) \cong \FF_q(\theta)$.
Put $\tilde{T}_v:=\iota^{-1}(\tilde{v}) \in \FF_q(t)$. 
Then $\iota$ induces an $\FF_q$-algebra isomorphism $\FF_q[\tilde{T}_v]\cong \FF_q[\tilde{v}]$. To apply Papanikolas's theory~\cite{P08} for those special values in our $\infty_{\tilde{v}}$-adic fields, we make the following replacements \[\theta \leftarrow \tilde{v},\quad{ }t\leftarrow \tilde{T}_v \]
to switch $\ok[t]$-modules (resp.~dual $t$-motives)  to 
$\ok[\tilde{T}_{v}]$-modules (resp.~dual $\tilde{T}_v$-motives). A detailed description of how to view $v$-adic fields as $\infty_{\tilde{v}}$-adic fields is given in Sec.~\ref{sec: v to infinite}.

Let $d:=\deg v$.  Our proof of Theorem~\ref{thm: main-3}
is outlined in the following:
\begin{itemize}
    \item[(1)] Define a ``Carlitz-type'' Hartl--Juschka module $\Rcal_\sigma(\tilde{\cC}_{d\ell})$ (see Definition~\ref{def: RC}), where the infinite products $\Omega_{d\ell,v,s}(v)$ in Theorem~\ref{thm: main-3} (replacing $\ell$ by $d\ell$), for $0<s\leq d\ell$, show up as periods of $\Rcal_\sigma(\tilde{\cC}_{d\ell})$ (see Proposition~\ref{prop: Omega-period}).
    \item[(2)] By Papanikolas' theorem, determining the transcendence degree of the field generated by the periods of $\Rcal_\sigma(\tilde{\cC}_{d\ell})$ is equivalent to finding the dimension of its $\tilde{T}_v$-motivic Galois group $\Gamma_{\Rcal_\sigma(\tilde{\cC}_{d\ell})}$ (see \eqref{eqn: trdeg-2}).
    \item[(3)] The ``CM structure'' of $\Rcal_\sigma(\tilde{\cC}_{d\ell})$ provides an upper bound of $\dim \Gamma_{\Rcal_\sigma(\tilde{\cC}_{d\ell})}$ (cf.~\eqref{eqn: CI}).
    On the other hand, from the analysis of the Hodge--Pink weights of $\Rcal_\sigma(\tilde{\cC}_{d\ell})$, we obtain sufficiently many ``Hodge--Pink cocharacters'' of $\Rcal_\sigma(\tilde{\cC}_{d\ell})$, from which we get 
    \[
    \dim \Gamma_{\Rcal_\sigma(\tilde{\cC}_{d\ell})} = d^2\ell \quad (=[\FF_{q^{d\ell}}(t):\FF_q(\tilde{T}_v)]),
    \] see Theorem~\ref{thm: dim-G}.
    \item[(4)] Therefore, the period interpretation of $\Omega_{d\ell,v,s}(v)$ in (1) above for $0<s\leq d\ell$ shows their algebraic independence over $k$ (see Section~\ref{sec: pf-AL-Omega}).
    Finally, the result follows from the identity (see \eqref{eqn: omega-dltol}):
    \[
    \Omega_{\ell,v,s}(v) = \prod_{j=0}^{d-1}\Omega_{d\ell,v,j\ell+s}(v), \quad 0<s\leq \ell.
    \]
\end{itemize}
\end{Subsubsec}

\begin{remark}
Put $\tilde{\theta}_v:= (\theta-\varepsilon)^{-1}$, and set
\[
\tilde{\Omega}_{\ell}(x):= \prod_{n=0}^{\infty}(1-\frac{x}{\tilde{\theta}_v^{q^{\ell n}}}) \quad \in k_v[\![x]\!], \quad \forall \ell \in \NN.
\]
When $\deg v = 1$, we get $v = \theta-\varepsilon$, for which $\varepsilon \in \FF_q$.
Therefore
\[
\Omega_{\ell,v,s}(v) = \tilde{\Omega}_{\ell}^{(s)}(x)\big|_{x=\tilde{\theta}_v} = 
\prod_{n=0}^{\infty}(1-\frac{\tilde{\theta}_v}{\tilde{\theta}_v^{q^{\ell n+s}}}),
\]
and Theorem~\ref{thm: main-3} can be simply derived from the algebraic independence of periods and the quasi-periods of the Carlitz module of degree $\ell$ in \cite{CPTY10} (when replacing $\theta$ to $\tilde{\theta}_v$).
However, when $\deg v=d>1$,
the infinite product $\Omega_{d\ell,v,s}(v)$, for $0<s\leq d\ell$, is actually the $s$th Frobenius twist of $\tilde{\Omega}_{d\ell}(x)$ evaluated at $x=\tilde{\theta}_{v,s}:=(\theta-\varepsilon^{q^s})^{-1}$:
\[
\Omega_{d\ell,v,s}(v) = \tilde{\Omega}_{d\ell}^{(s)}(x)\big|_{x=\tilde{\theta}_{v,s}} = \prod_{n=0}^{\infty}(1-\frac{\tilde{\theta}_{v,s}}{\tilde{\theta}_v^{q^{d\ell n+s}}}).
\]
Thus, to show the algebraic independence of $\Omega_{d\ell,v,s}(v)$ is equivalent to determine the algebraic relations among the evaluations of the Frobenius twists of $\tilde{\Omega}_{d\ell}(x)$ at $d$ distinct points $x=\tilde{\theta}_{v,1},..., \tilde{\theta}_{v,d}$.
In order to overcome this issue,
we consider the ``restriction of scalars'' down to $\FF_q[\tilde{T}_v]$, so that the periods of the Hartl--Juschka $\bar{k}[\tilde{T}_v]$-module $\Rcal_{\sigma}(\tilde{\cC}_{d\ell})$ do include all of these evaluations.
\end{remark}

Together with Theorem~\ref{thm: main-3}, the $v$-adic Chowla--Selberg formula in Theorem~\ref{thm: main-2} enables us to construct sufficiently many algebraically independent elements in the field 
\[
\bar{k}\big(\agv(z)\ \big|\ z \in \QQ \text{ with } (q^\ell-1)z \in \ZZ\big).
\]
As a consequence, we are able to illustrate the Lang--Rohrlich phenomenon for $v$-adic arithmetic gamma values in the next subsection.

\subsection{Algebraic relations among  \texorpdfstring{$v$}{v}-adic arithmetic gamma values}

In the $\infty$-adic case, the special values $\agi (z)$, for $z\in \QQ\cap \ZZ_p\setminus \ZZ$, are all transcendental over $k$ by Thakur~\cite{T96} and Allouche~\cite{All96}. Furthermore, using transcendence theory in terms of $t$-motives developed by Anderson-Brownawell-Papanikolas~\cite{ABP04} and Papanikolas~\cite{P08}, the period interpretation of these special values implies that all algebraic relations among these special values are shown to follow from the standard monomial relations ~\cite{CPTY10}. These relations come directly from the functional equations satisfied by $\agi$. Note that $\QQ \cap \ZZ_p = \bigcup_{\ell \in \NN} \frac{1}{q^\ell-1}\cdot \ZZ$.
In particular, one has:
 \[
\trdeg_k k\big(\agi(z)\ \big|\ z \in \QQ \text{ with } (q^\ell-1)z \in \ZZ\big) = \ell.
\]

In the $v$-adic case, the functional equations of $\Gamma_{\text{ari},v}$ derived in \cite{Go88} and \cite{T91} also give rise to standard monomial relations \eqref{E:Standard FE} among $v$-adic arithmetic gamma values at rational $p$-adic integers. However for the finite place $v$,  one naturally encounters the Gross--Koblitz phenomenon, as shown by Thakur \cite{T88}. 
This gives additional relations among the special values in question (see Theorem~\ref{thm: GK-formula} and Corollary~\ref{cor: GK-formula}).
From the period interpretation of these $v$-adic values in Theorem~\ref{thm: main-2} and the algebraic independence of the $v$-adic periods in Theorem~\ref{thm: main-3}, we thereby obtain: 

\begin{theorem}\label{thm: main-1}
(See Theorem~\ref{thm: AL-gamma}) Given a positive integer $\ell$, we have that
\[
\trdeg_{\bar{k}} \bar{k}\big(\agv(z)\ \big|\ z \in \QQ \text{ with } (q^\ell-1)z \in \ZZ\big) = \ell - \gcd(\ell,\deg v).
\]
Consequently, all the $\bar{k}$-algebraic relations among $v$-adic arithmetic gamma values at rational $p$-adic integers are explained by the functional equations in Proposition~\ref{prop: FE}, the standard monomial relations \eqref{E:Standard FE} and  Thakur's analogue of Gross--Koblitz formula.
\end{theorem}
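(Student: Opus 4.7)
The plan is to establish the equality $\trdeg_{\bar{k}} F = \ell - g$, where
\[
F := \bar{k}\bigl(\agv(z) \mid z \in \QQ,\ (q^\ell-1)z \in \ZZ\bigr), \quad d := \deg v, \quad g := \gcd(\ell, d),
\]
by proving matching upper and lower bounds. The structural assertion---that every $\bar{k}$-algebraic relation among these values is generated by the functional equations of Proposition~\ref{prop: FE}, the standard monomial relations~\eqref{E:Standard FE}, and Thakur's analogue of the Gross--Koblitz formula---then follows automatically, since the upper-bound argument will cut the transcendence degree down to the correct value using only those three families of relations.

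For the upper bound, I would mirror the $\infty$-adic reduction carried out in \cite{CPTY10}. First, use Proposition~\ref{prop: FE} (reflection, translation, and multiplication formulas for $\agv$) together with~\eqref{E:Standard FE} to reduce a generating set of $F$ over $\bar{k}$ to the $\ell$ special values
\[
\gamma_i := \agv\bigl(1 - \langle q^i/(q^\ell - 1)\rangle_{\ari}\bigr), \qquad i = 0, 1, \ldots, \ell - 1.
\]
Then, by Theorem~\ref{thm: GK-formula} and Corollary~\ref{cor: GK-formula}, Thakur's Gross--Koblitz formula produces, for each of the $g$ orbits of the shift $\sigma_d \colon s \mapsto s + d \pmod{\ell}$ on $\ZZ/\ell\ZZ$, a multiplicative identity expressing a Frobenius-twisted product of the $\gamma_i$'s in that orbit as an element of $\bar{k}^{\times}$. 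These $g$ identities sit on disjoint orbits and are therefore multiplicatively independent, yielding $\trdeg_{\bar{k}} F \leq \ell - g$.

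For the lower bound, I would combine Theorems~\ref{thm: main-2} and~\ref{thm: main-3} through the explicit period matrix in Theorem~\ref{thm: pm}. By Theorem~\ref{thm: main-2}, the matrix of $\varrho_v$ in the global basis $\{\omega_s\}$ is monomial and its non-zero entries equal, up to $\bar{k}^{\times}$-factors, the ratios $\gamma_{s+d}/\gamma_{s+d-1}^q$; in particular all of them lie in $F$. On the other hand, $\varrho_v$ is constructed from the crystalline Frobenius $\tau_v$ via the comparison isomorphism $\phi_v$ (equation~\eqref{eqn: rho-v}), whose period matrix is diagonal with entries lying in $\bar{k}^{\times} \cdot \Omega_{\ell,v,s}(v)$ by Theorem~\ref{thm: pm}. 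Coupling the precise description of $\tau_v$ on the crystalline basis $\Lambda_v$ with these diagonal entries lets one rewrite every matrix entry of $\varrho_v$ as an explicit monomial in the periods $\Omega_{\ell,v,s}(v)$ (modulo $\bar{k}^{\times}$). Equating the two descriptions, orbit by orbit of $\sigma_d$ (each of length $\ell/g$), produces for every orbit $O$ an identity inside $F$ modulo $\bar{k}^{\times}$ that pairs a Gross--Koblitz-surviving monomial in the $\gamma_i$'s ($i \in O$) with a corresponding monomial in the periods $\Omega_{\ell,v,s}(v)$ ($s \in O$). Since the $\ell$ periods $\Omega_{\ell,v,1}(v),\ldots,\Omega_{\ell,v,\ell}(v)$ are algebraically independent over $\bar{k}$ by Theorem~\ref{thm: main-3}, and since each of the $g$ orbits contributes $\ell/g - 1$ such independent combinations to $F$ modulo $\bar{k}^{\times}$, we obtain $g(\ell/g - 1) = \ell - g$ algebraically independent elements of $F$, so $\trdeg_{\bar{k}} F \geq \ell - g$.

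The main obstacle is the orbit-by-orbit bookkeeping in the lower bound: one must carefully track the $\sigma$-semilinearity of the crystalline Frobenius $\tau_v$ and the resulting $q$-th-power twists when translating, through the construction~\eqref{eqn: rho-v}, the matrix identities of Theorem~\ref{thm: main-2} into identities involving the periods $\Omega_{\ell,v,s}(v)$. The bookkeeping must yield exactly $\ell - g$ algebraically independent combinations---no more, and no fewer---so that the matching upper and lower bounds give $\trdeg_{\bar{k}} F = \ell - g$ and the structural statement about relations follows.
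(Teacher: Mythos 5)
Your overall architecture matches the paper's: the upper bound via Proposition~\ref{prop: FE}, \eqref{E:Standard FE}, and Thakur's Gross--Koblitz (this is exactly Theorem~\ref{thm: up-bd}), and the lower bound via the period interpretation (Corollary~\ref{cor: agv-form}, equivalently Theorem~\ref{thm: main-2}) combined with the algebraic independence of the periods $\Omega_{\ell,v,s}(v)$ from Theorem~\ref{thm: main-3}. However, the ``orbit-by-orbit bookkeeping'' you flag as the main obstacle is not just incomplete but, as described, incorrect. The identity coming out of the comparison is, for each residue $i \bmod \ell$,
\[
\frac{\agv\bigl(1-\langle q^{i}/(q^\ell-1)\rangle_{\ari}\bigr)}{\agv\bigl(1-\langle q^{i-1}/(q^\ell-1)\rangle_{\ari}\bigr)^q}
\;\sim\; \frac{\Omega_{\ell,v,i}(v)}{\Omega_{\ell,v,i-d}(v)}
\pmod{\bar{k}^\times},
\]
and on the gamma side the indices $i$ and $i-1$ lie in two \emph{different} $\sigma_d$-orbits of $\ZZ/\ell\ZZ$ (unless $d=1$). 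So it is not true that each orbit $O$ yields identities pairing $\gamma$-monomials supported on $O$ with $\Omega$-monomials supported on $O$; the orbits only separate cleanly on the $\Omega$ side. A correct accounting would instead observe that the $\ell$ exponent vectors $e_i - e_{i-d}$ in $\ZZ^\ell$ span a lattice of rank exactly $\ell - \gcd(\ell,d)$ (the kernel is the $g$-dimensional space of $\sigma_d$-orbit-constant vectors), and since $\Omega_{\ell,v,1}(v),\ldots,\Omega_{\ell,v,\ell}(v)$ are algebraically independent, the corresponding Laurent monomials have transcendence degree $\ell-g$.

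The paper's own execution is also slightly different from what you sketch. It does not work directly with general $\ell$: it first assumes $d\mid\ell$, writes $\ell=d\ell_0$, and exhibits the $d(\ell_0-1)$ algebraically independent elements $u_s'$ by telescoping the ratios $u_s$ (this is precisely the rank-$(\ell-g)$ computation packaged explicitly, avoiding any appeal to a general monomial-lattice argument). It then passes to $\operatorname{lcm}(\ell,d)$ and runs a $\QQ$-linear algebra argument with the vectors $\vec{\delta}_j$ and $\vec{\lambda}_j$ to deduce the general case from the divisible one. Your direct approach with $\Omega_{\ell,v,s}(v)$ can be made to work, but the orbit-by-orbit phrasing must be replaced by the exponent-lattice rank computation above; as written, it would not yield a proof.
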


The decrease of transcendence degree for $v$-adic special values resulting from the analogue of Gross-Koblitz formula is precisely given by $\gcd(\ell,\deg v)$. 
So the naturally existing monomial relations indicate that the transcendence degree specified in Theorem~\ref{thm: main-1} is bounded above by $\ell - \gcd(\ell,\deg v)$, see Theorem~\ref{thm: up-bd}. 
The opposite inequality is assured by the algebraic independence of the $v$-adic crystalline--de Rham periods in Theorem~\ref{thm: pm}).
Also, the above theorem complements Thakur's theorem (cf.~Remark~\ref{Rem: Equiv}):

\begin{corollary}
{\rm (See Corollary~\ref{cor: tran-gamma}.)} Given $a,b \in \ZZ$ with $\gcd(a,b)=1$ and $p \nmid b$, we have that
\[
\agv(\frac{a}{b}) \text{ is algebraic over $k$}\quad 
\text{if and only if}
\quad b \text{ divides } q^{\deg v} - 1.
\]
\end{corollary}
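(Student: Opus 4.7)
The plan is to deduce this corollary directly from Theorem~\ref{thm: main-1}, combined with the explicit monomial structure of $v$-adic gamma values provided by Theorem~\ref{thm: main-2}, Theorem~\ref{thm: main-3}, and Corollary~\ref{cor: agv-form}. Throughout, set $d := \deg v$ and
\[
F_\ell \ :=\ \bar k\bigl(\agv(z)\ :\ z \in \QQ,\ (q^\ell - 1)z \in \ZZ\bigr).
\]

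\emph{Sufficiency.} Suppose $b \mid q^d - 1$. Writing $a/b = a'/(q^d - 1)$ with $a' \in \ZZ$, we have $\agv(a/b) \in F_d$. Applying Theorem~\ref{thm: main-1} with $\ell = d$ gives $\trdeg_{\bar k} F_d = d - \gcd(d,d) = 0$, so $F_d$ is algebraic over $\bar k$ and in particular so is $\agv(a/b)$. (Concretely, this is Thakur's analogue of the Gross--Koblitz formula.)

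\emph{Necessity.} I argue the contrapositive. Assume $b \nmid q^d - 1$, and let $\ell := \mathrm{ord}_b(q)$, so $b \mid q^\ell - 1$ and, by hypothesis, $\ell \nmid d$; in particular $\gcd(\ell,d) < \ell$. Write $a/b = a'/(q^\ell - 1)$ with $a' := a(q^\ell - 1)/b \in \ZZ$. By the product expansion of Corollary~\ref{cor: agv-form}, combined with the Chowla--Selberg identity of Theorem~\ref{thm: main-2}, one obtains a monomial expression
\[
\agv(a/b) \ =\ \alpha \cdot \prod_{s=1}^{\ell} \Omega_{\ell,v,s}(v)^{e_s}, \qquad \alpha \in \bar k^{\times},\ e_s \in \ZZ,
\]
where the exponents $e_s = e_s(a')$ are determined by the base-$q$ digits of $a'$. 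By Theorem~\ref{thm: main-3}, the $\Omega_{\ell,v,s}(v)$ are algebraically independent over $\bar k$, so the membership $\agv(a/b) \in \bar k$ forces $e_s = 0$ for every $s$. Tracing the Frobenius index shift by $d \bmod \ell$ visible in Theorem~\ref{thm: main-2}, this collective vanishing is equivalent to the Gross--Koblitz invariance $q^d a' \equiv a' \pmod{q^\ell - 1}$. Substituting $a' = a(q^\ell - 1)/b$ and using $\gcd(a,b) = 1$, this simplifies to $b \mid q^d - 1$, contradicting the hypothesis.

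\emph{Main obstacle.} The critical step is the combinatorial identification
\[
e_s = 0\ \text{for all}\ s \ \Longleftrightarrow\ q^d a' \equiv a' \pmod{q^\ell - 1},
\]
which is essentially the converse of Thakur's Gross--Koblitz formula and requires tracking the explicit exponents against the Frobenius-shifted diagonal action in Theorem~\ref{thm: main-2}. An alternative route would be to identify the algebraic closure of $\bar k$ inside $F_\ell$ with $F_{\gcd(\ell,d)}$; but this reduction leans on the same monomial analysis and the full strength of Theorem~\ref{thm: main-3}.
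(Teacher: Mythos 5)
Your sufficiency direction is fine and matches the paper's (which simply cites Theorem~\ref{thm: GK-formula} directly; invoking Theorem~\ref{thm: main-1} with $\ell=d$ is a round trip, since Theorem~\ref{thm: main-1}'s proof reduces to Thakur's formula in that case). The necessity direction, however, has a genuine gap at the step you yourself flag as the ``main obstacle.'' You assert a monomial expression $\agv(a/b)=\alpha\prod_s\Omega_{\ell,v,s}(v)^{e_s}$ (which is at best available only after raising to the power $q^\ell-1$, by telescoping Lemma~\ref{lem: agv-form} or the remark after Corollary~\ref{cor: agv-form}), and then assert that ``$e_s=0$ for all $s$'' is equivalent to $q^d a'\equiv a'\pmod{q^\ell-1}$. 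That equivalence is precisely the content that has to be proved; without it the argument is circular, as it amounts to saying that the only algebraic monomials in the $\Omega$'s that arise from gamma values are the Gross--Koblitz ones --- which is the theorem.

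The paper's route avoids this by never passing to an $\Omega$-monomial at all. The proof of Theorem~\ref{thm: AL-gamma} is carried out at level $d\ell_0=\operatorname{lcm}(d,\ell)$, where $d\mid d\ell_0$ makes the Gross--Koblitz relation subspace transparently the span of the vectors $\vec{\delta}_0,\dots,\vec{\delta}_{d-1}$ (those constant on residue classes modulo $d$), and produces the exact dimension formula~\eqref{eqn: trdeg-1}: for any exponent vectors $\vec{n}_1,\dots,\vec{n}_r\in\ZZ^{d\ell_0}$,
\[
\trdeg_{\bar k}\bar k\Bigl(\textstyle\prod_s\agv(1-\tfrac{q^s}{q^{d\ell_0}-1})^{n_{j,s}}\ \Big|\ 1\le j\le r\Bigr)
=\dim_\QQ\langle\vec\delta_0,\dots,\vec\delta_{d-1},\vec n_1,\dots,\vec n_r\rangle_\QQ-d.
\]
The corollary then follows from pure linear algebra: if $\agv(1-a'/b)$ were algebraic, its base-$q$ digit vector $\vec a\in\ZZ^{d\ell_0}$ would lie in $\langle\vec\delta_j\rangle_\QQ$, forcing the digits to be constant modulo $d$ and hence $a'/b$ to have denominator dividing $q^d-1$. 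By contrast, you work at level $\ell=\operatorname{ord}_b(q)$, which need not be a multiple of $d$, so you do not have~\eqref{eqn: trdeg-1} or the clean $\vec\delta_j$-description of the relation subspace at your disposal; re-deriving those at level $\ell$ is at least as much work as the paper's argument. If you want to salvage your route, the cleanest fix is to pass to level $\operatorname{lcm}(d,\ell)$ as the paper does, rather than appealing to an unproved ``converse Gross--Koblitz'' claim at level $\ell$.
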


\begin{remark}
In \cite[\S 5.9]{T91}, Thakur also introduced special $v$-adic \textit{geometric} gamma values $\Gamma_{\text{geo},v}(a/b) \in k_v$ for $a/b \in k$ with $\text{gcd}(a,b)=1$ and $v \nmid b$.
We conjecture that $\Gamma_{\text{geo},v}(a/b)$ is algebraic over $k$ if and only if $b$ divides $v-1$.
Note that the condition ``$b \mid v-1$'' is related to the fact that the Carlitz $b$-torsions are contained in the residue field $\FF_v$ as well as the $v$-adic $k_v$, because of an analogue of the little theorem of Fermat for the Carlitz module.
\end{remark}

\begin{remark}
In the case of $\deg v = 1$,
Thakur \cite{T98} initially derived the transcendence of $\agv(\frac{a}{b})$ in the aforementioned corollary using methods from automata theory for special cases.  Thakur's approach was later generalized to encompass general $\frac{a}{b}$ in \cite{WY02}.
Also, we point out that a $v$-adic Anderson--Brownawell--Papanikolas criterion in \cite{Chen23} would lead to an alternative proof of the above corollary when $\deg v =1$.
\end{remark}


\begin{remark}
In the proof of Theorem~\ref{thm: main-1}, we reduce to the special case when $\deg v$ divides $\ell$. After completing our paper,
we were informed that David Adam also derived this special case in \cite{Ad23}  using Mahler's method. Adam applies an analogue of Nishioka's theorem to establish the algebraic independence of the $v$-adic values by interpreting them as special values of specific Mahler functions. In contrast, we provides a period interpretation of these $v$-adic values and proves Theorem~\ref{thm: main-1} immediately from the algebraic independence of the $v$-adic periods in Theorem~\ref{thm: main-3} (which is not covered by \cite{Ad23}).
In the proof of Theorem~\ref{thm: main-3}, we uses Papanikolas' theorem on Grothendieck's period conjecture in \cite{P08}, along with one key technique of ``switching $v$ and $\infty$'' explained in Section~\ref{sec: str-main3}.

Despite the coincidence of our transcendence results in Theorem~\ref{thm: main-1} with Adam’s work \cite{Ad23}, our approach offers a natural geometric perspective on the $v$-adic arithmetic gamma values in question. We believe that our method can also be extended to the study of $v$-adic geometric gamma values, including  ``two-variable ones'' as well.
Together with the framework of CM motives over function fields developed in \cite{BCPW22}, we expect that all $v$-adic ``abelian CM periods'' can be expressed in terms of suitable twisted products of $v$-adic gamma values, thereby revealing the complete picture of the Chowla–Selberg phenomenon in the $v$-adic settings. This will be explored in depth in a subsequent paper.
\end{remark}

\subsection{Organization of this paper}

In Section~\ref{sec: agv}, we start with fixing basic notation and recalling the Carlitz factorials in Section~\ref{sec: notation}.
The definition of Goss' $v$-adic arithmetic gamma function and its functional equations is given in Section~\ref{sec: v-adic agv}, as well as Thakur's analogue of Gross--Koblitz formula.
In Section~\ref{sec: prod agv}, we derive a  product expansion of $v$-adic arithmetic gamma values, a pivotal observation that underscores their connection with the    ``$v$-adic periods'' of Carlitz $t$-motives with complex multiplication by constant field extensions.

In Section~\ref{sec: Car-cry-dR}, we first introduce the ``integral'' Carlitz $t$-motive $\eC_\ell$ of arbitrary degree $\ell$, along with its restriction of scalars $\eR_\tau(\eC_\ell)$ in Section~\ref{sec: Car-t}.
After setting up the power series rings to be used and clarifying their relations in Section~\ref{sec: no-2}, we revisit the crystalline module and de~Rham module of $\eR_{\tau}(\eC_{\ell})$ in Section~\ref{sec: Cris-RC} and \ref{sec: dR-RC}, respectively.
The crystalline--de~Rham comparison isomorphism is explicitly constructed in Section~\ref{sec: cris-dR-iso}, leading to
a detailed portrayal of the crystalline--de~Rham period matrix in Section~\ref{sec: CD matrix}. 
The proof of Theorem~\ref{thm: main-2} is provided in Section~\ref{sec: vCSF}.

In Section~\ref{sec: tran-agv}, we prove Theorem~\ref{thm: main-1} on the algebraic independence of $v$-adic arithmetic gamma values in Section~\ref{sec: AI-agv}, assuming the transcendence result of crystalline periods in Theorem~\ref{thm: main-3}.
Also, the distribution associated with $v$-adic arithmetic gamma values is discussed in Section~\ref{sec: agv-dist}.

In Section~\ref{sec: tran-FM}, we lay  the groundwork for the proof of Theorem~\ref{thm: main-3}. 
Section~\ref{sec: v to infinite} introduces additional notation specifically concerning the treatment of $v$ as the infinite place of the projective $\frac{1}{v}$-line.
For completeness, Section~\ref{sec: HJC} offers the definition of Hartl--Juschka modules, the associated motivic Galois groups, and the de~Rham pairing.
Also stated in Section~\ref{sec: HJC} is Papanikolas's theorem, which serves as the function field analogue of Grothedieck period conjecture.
The Hodge--Pink cocharacters of a Hartl--Juschka module is defined in Section~\ref{sec: H-P-co}.

In Section~\ref{sec: Car-HJ-module}, we introduce Carlitz-type Hartl--Juschka modules and examine their corresponding Betti modules.
The definition of the module $\tilde{\cC}_{d\ell}$, as well as its restriction of scalars $\Rcal_{\sigma}(\tilde{\cC}_{d\ell})$, is provided in Section~\ref{sec: CHJ}.
Section~\ref{sec: CHJ-U} gives a detailed account of the uniformizability of $\Rcal_{\sigma}(\tilde{\cC}_{d\ell})$, after writing down explicitly the structure of the corresponding Betti modules.
We articulate how the crystalline--de Rham periods of $\eR_{\tau}(\eC_{d\ell})$ can be expressed as ``abelian integrals'' in Section~\ref{sec: CHJ-P}.
Finally, the motivic Galois group of $\Rcal_{\sigma}(\tilde{\cC}_{d\ell})$ is determined in Section~\ref{sec: CHJ-MG-HPC}, culminating with the proof of Theorem~\ref{thm: main-3}.

\section{\texorpdfstring{$v$}{v}-adic arithmetic gamma values}\label{sec: agv}
\subsection{Notation} \label{sec: notation}
Throughout this paper, we fix the following notation. Let $p$ be a prime number, and let $q$ be a power of $p$. Denote by $\FF_q$ the finite field of $q$ elements. Let $\theta$ be a variable and let $A:=\FF_{q}[\theta]$ be the polynomial ring in $\theta$ over $\FF_q$ with quotient field $k:=\FF_{q}(\theta)$.  For a monic irreducible polynomial $v\in A$,  we still denote by $v$ the corresponding finite place of $k$ when it is clear from the context. The completion of $k$ at $v$ is denoted by $k_{v}$, and the residue field is denoted by $\FF_v$.
In particular, we may regard $\FF_v$ as a subfield of $k_v$ via the Teichm\"uller embedding, which sends $\theta \bmod v$ to $\varepsilon := \lim_{i\rightarrow \infty} \theta^{q^{i\deg v}}$ (see \cite[Chap.~XII Ex.~16]{La02}), and
$k_v$ is then identified with the Laurant series field $\FF_v(\!( \theta - \varepsilon)\!)$. Note that in the case $v=\theta$ we have $\varepsilon=0$, otherwise $\varepsilon$  generates all the roots of unity in the local field $k_v$.

Choose an algebraic closure $\bar{k}_v$ of $k_v$, and denote by $\CC_v$ the completion of $\bar{k}_v$. 
Let $\bar{k}$ be the algebraic closure of $k$ in $\CC_v$.
Let $A_{+}$ be the set of all monic polynomials in $A$. Put $D_{0}:=1$, and for $i\geq 1$, $D_{i}:=\prod_{j=0}^{i-1}(\theta^{q^{i}}-\theta^{q^{j}})\in A$.  For each $i\in \NN$, we denote by $A_{+,i}$ the set of all monic polynomials of degree $i$, and note that $D_{i}=\prod_{a\in A_{+,i}}a$ (see \cite[p.~140]{Car35}). Carlitz \cite{Car35} introduced the so-called {\it{Caritz factorials}} as the following. For a non-negative integer $n$, we write $n=\sum_{i\geq 0} n_{i}q^{i}$ for $0\leq n_{i}<q$ and define 
\[ \ag (n+1):=\prod_{i\geq 0}D_{i}^{n_i} \in A.\] Note that $\ag(n+1)$ is analogous to the  classical Euler $\Gamma$-values $\Gamma(n+1)=n!$ for $n\geq 0$ as one has the property that (due to W.~Sinnott, see \cite[Proposition 2.1.4]{Go88}) 
\begin{equation}\label{E:valCarlitzFac}
{\rm{ord}}_{v} \left(\ag (n+1) \right) =\sum_{e\geq 1} \lfloor \frac{n}{q^{e\deg v}} \rfloor,
\end{equation}
where $q^{\deg v}$ is the cardinality of $A/(v)$.

\subsection{Definition of \texorpdfstring{$v$}{v}-adic arithmetic gamma function}\label{sec: v-adic agv}

Following Goss \cite{Go80, Go88}, and Thakur \cite{T91}, we consider the $v$-adic arithmetic gamma function that interpolates the Carlitz factorials $v$-adically as follows. For each $i\geq 0$, we define 
\[D_{i,v}:=\prod a,\] where $a$ in the product above runs over all monic polynomial of degree $i$ with $v \nmid a$. For any $z\in \ZZ_{p}$, the ring of $p$-adic integers, we express $z$ as $z=\sum_{i\geq 0}z_{i}q^{i}$ for $0\leq z_{i}<q$ and define
\[ \agv (z+1):=\prod_{i\geq 0} (-D_{i,v})^{z_{i}}.\]
It is shown in \cite[Appendix]{Go80} that the infinite product $\prod_{i\geq 0} (-D_{i,v})^{z_{i}}$ converges in $k_{v}^{\times}$, and $\agv: \ZZ_{p}\rightarrow k_{v}^{\times}$ is a continuous function satisfying the following functional equations:

\begin{proposition}\label{prop: FE}
(See \cite[Thm.~4.6.1, Lem.~4.6.2, and Thm.~4.6.3]{Thakur}) Let $v\in A_{+}$ be irreducible. For each $z \in \ZZ_{p}$, we have
\begin{itemize}
    \item[(1)] (Reflection)
    \[
    \agv(z)\cdot \agv(1-z) = \agv(0).
    \]
    \item[(2)] (Translation)
    When $ z \neq 0$, put 
    \[
    r = \text{\rm ord}_q(z):=\max\big\{ m \in \ZZ_{\geq 0} \ \big|\ q^m| z\big\}.
    \]
    Then
    \[
    \frac{\agv(z+1)}{\agv(z)} = -\frac{D_{r,v}}{\prod_{i=0}^{r-1}D_{i,v}^{q-1}}.
    \]
    \item[(3)] (Multiplication)
    For every $n \in \NN$ with $p \nmid n$, we have
    \[
    \prod_{i=0}^{n-1}\agv(z+\frac{i}{n}) = \agv(nz) \cdot \agv(0)^{\frac{n-1}{2}}.
    \]
\end{itemize}
\end{proposition}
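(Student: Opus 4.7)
The plan is to treat the translation identity (2) as the primary building block and to deduce (1) and (3) from it via a density argument. The function $\agv$ is continuous on $\ZZ_p$ by construction, so it suffices to verify each identity on a dense subset. I would first prove translation by direct computation: given a positive integer $z$ with $\mathrm{ord}_q(z)=r$, its base-$q$ expansion has digits $(0,\ldots,0,z_r,z_{r+1},\ldots)$ while $z-1$ has expansion $(q-1,\ldots,q-1,z_r-1,z_{r+1},\ldots)$. Plugging both expansions into the defining product $\agv(n+1)=\prod_{i\ge0}(-D_{i,v})^{n_i}$ and taking the ratio $\agv(z+1)/\agv(z)$, almost everything cancels and one is left with $(-D_{r,v})/\prod_{i<r}D_{i,v}^{q-1}$ (using $(-1)^{q-1}=1$ in both even and odd characteristics). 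Continuity then extends the identity to every $z\in\ZZ_p\setminus\{0\}$.

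For the reflection formula (1), I would set $f(z):=\agv(z)\agv(1-z)$ and compute the ratio $f(z+1)/f(z) = [\agv(z+1)/\agv(z)]\cdot[\agv(-z)/\agv(1-z)]$. Applying the translation identity just proved to both numerator and denominator, and observing that $\mathrm{ord}_q(-z)=\mathrm{ord}_q(z)$ on $\ZZ_p$, the two factors are reciprocals, so $f(z+1)=f(z)$. Density of $\ZZ\subset\ZZ_p$ combined with continuity of $f$ then forces $f$ to be constant, and evaluating at $z=0$ (using $\agv(1)=1$, which is immediate from the empty-product definition) gives $f\equiv\agv(0)$.

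The multiplication formula (3) follows the same template: define $g(z):=\prod_{i=0}^{n-1}\agv(z+i/n)$ and $h(z):=\agv(nz)\cdot\agv(0)^{(n-1)/2}$. The ratio $g(z+1/n)/g(z)$ telescopes to $\agv(z+1)/\agv(z)$, while $h(z+1/n)/h(z)=\agv(nz+1)/\agv(nz)$; since $p\nmid n$ implies $\mathrm{ord}_q(nz)=\mathrm{ord}_q(z)$, translation shows these two ratios coincide. Hence $g/h$ is invariant under $z\mapsto z+1/n$, and by continuity plus density of $(1/n)\ZZ$ in $\ZZ_p$, the quotient is constant. To identify the constant I would evaluate at $z=0$: pairing $i/n$ with $(n-i)/n=1-i/n$ via reflection collapses $\prod_{i=1}^{n-1}\agv(i/n)$ to $\agv(0)^{(n-1)/2}$, matching $h(0)$ exactly.

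The main technical obstacle will be the even-$n$ case in the multiplication formula, where the middle factor $\agv(1/2)$ is unpaired. One has $\agv(1/2)^2=\agv(0)$ from reflection at $z=1/2$, which fixes $\agv(1/2)$ only up to sign; to make the exponent $(n-1)/2$ consistent one must pin down a convention for the square root of $\agv(0)$. Secondarily, the digit-expansion arithmetic for translation must be handled carefully near the infinitely many nonzero digits of $-1\in\ZZ_p$ so that the infinite product $\agv(0)=\prod_{i\ge0}(-D_{i,v})^{q-1}$ genuinely converges $v$-adically, which relies on the standard estimate $\mathrm{ord}_v(D_{i,v})$ growing with $i$ once $\deg v\ge 1$.
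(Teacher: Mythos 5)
Your strategy of deriving all three identities from the digit-level translation formula together with continuity and density is sound, and it is the standard route — the paper itself only cites Thakur's book for this proposition. Parts (1), (2), and the odd-$n$ case of (3) go through as you sketch. There is, however, a real gap in the even-$n$ case of (3), which occurs whenever $p$ is odd: the paper does not leave $\agv(0)^{1/2}$ as a free choice of sign, but fixes it to be $\agv(1-\frac{1}{q-1})^{\frac{q-1}{2}}$, and your constant-identification at $z=0$ needs the exact equality $\agv(\frac12) = \agv(1-\frac{1}{q-1})^{\frac{q-1}{2}}$, not merely $\agv(\frac12)^2 = \agv(0)$. This must be read off the defining product: for $q$ odd, the $q$-adic expansion of $-\frac12$ has every digit equal to $\frac{q-1}{2}$, while that of $-\frac{1}{q-1}$ has every digit equal to $1$, so $\agv(\frac12) = \prod_{i\geq 0}(-D_{i,v})^{(q-1)/2} = \agv(1-\frac{1}{q-1})^{(q-1)/2}$, which closes the argument.

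Separately, your remark about convergence is mistaken. Each $D_{i,v}$ is, by construction, a product of monic polynomials coprime to $v$, so $\mathrm{ord}_v(D_{i,v}) = 0$ for every $i$; the $v$-adic valuation of $D_{i,v}$ never grows. The convergence of $\agv(0) = \prod_{i\geq 0}(-D_{i,v})^{q-1}$ in $k_v^\times$ rests instead on the fact that the unit $-D_{i,v}$ tends to $1$ $v$-adically as $i\to\infty$, which is the nontrivial content of Goss's appendix that the paper invokes when introducing $\agv$.
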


As it is known that (see \cite[Theorem~4.4]{T91})
\[
\agv(1-\frac{1}{q-1})^{q-1} = \agv(0) = (-1)^{\deg v -1},
\]
the square root of $\agv(0)$ on the right hand side of the equality in Proposition~\ref{prop: FE}~(3) is chosen to be
\[
\agv(0)^{\frac{1}{2}}:=\agv(1-\frac{1}{q-1})^{\frac{q-1}{2}}.
\]

When taking values at a rational $p$-adic integer $z$, one has the following standard monomial relations which directly come from the $q$-adic expansion of $z$: 

\begin{proposition}\label{prop: MR} Let $v\in A_{+}$ be irreducible. 
Given $\ell \in \NN$ and $c_0,...,c_{\ell-1} \in \ZZ$ with $0\leq c_0,...,c_{\ell-1}<q$, we have
\begin{equation}\label{E:Standard FE}
\agv(1-\sum_{i=0}^{\ell-1}\frac{c_i q^i}{q^\ell-1}) = \prod_{i=0}^{\ell-1}\agv(1-\frac{q^i}{q^\ell-1})^{c_i}.
\end{equation}
\end{proposition}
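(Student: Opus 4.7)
The plan is to reduce the identity to a digit-by-digit comparison of $q$-adic expansions. The defining formula
\[
\agv(w+1) = \prod_{j \geq 0}(-D_{j,v})^{w_j}, \qquad w = \sum_{j \geq 0}w_j q^j \in \ZZ_p,\ 0 \leq w_j < q,
\]
turns the claim into an equality of convergent infinite products in $k_v^\times$, and the only non-trivial input is the explicit $q$-adic expansion of $-q^i/(q^\ell - 1)$ for $0 \leq i < \ell$.

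For this I use that $|q^\ell|_p < 1$, so the geometric series $-1/(q^\ell - 1) = \sum_{n=0}^{\infty}q^{n\ell}$ converges in $\ZZ_p$ and is already a valid $q$-adic expansion (all digits lie in $\{0,1\}$). Multiplying by $q^i$ and summing with coefficients $c_i$, I obtain
\[
-\sum_{i=0}^{\ell-1}\frac{c_i q^i}{q^\ell - 1} = \sum_{n\geq 0}\sum_{i=0}^{\ell-1}c_i\, q^{i + n\ell}.
\]
Since the map $(i,n) \mapsto i + n\ell$ is a bijection from $\{0, \ldots, \ell-1\}\times \ZZ_{\geq 0}$ onto $\ZZ_{\geq 0}$, and since the hypothesis $0 \leq c_i < q$ prevents any carrying, the right-hand side is precisely the $q$-adic expansion of the left-hand side, with digit $c_i$ occupying position $i + n\ell$.

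Substituting this expansion on the left side of the asserted identity, the defining product formula yields
\[
\agv\Bigl(1 - \sum_{i=0}^{\ell-1}\frac{c_i q^i}{q^\ell - 1}\Bigr) = \prod_{n\geq 0}\prod_{i=0}^{\ell-1}(-D_{i + n\ell, v})^{c_i},
\]
while the right side of the identity factors as
\[
\prod_{i=0}^{\ell-1}\agv\Bigl(1 - \frac{q^i}{q^\ell - 1}\Bigr)^{c_i} = \prod_{i=0}^{\ell-1}\prod_{n\geq 0}(-D_{i + n\ell, v})^{c_i},
\]
the same infinite product in a different order. The main (and really the only) thing to verify carefully is that this reordering is legitimate; this follows from the convergence of each individual product in the complete topological group $k_v^\times$, which is precisely the content of the well-definedness of $\agv$ established in the appendix of \cite{Go80}. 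Once that is in place, the proposition reduces to a tautology: the bound $c_i < q$ is exactly what guarantees that the $q$-adic digit decomposition matches term by term.
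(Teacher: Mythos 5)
Your proof is correct and fills in the computation the paper leaves implicit: the paper states Proposition~\ref{prop: MR} without proof, remarking only that these monomial relations ``directly come from the $q$-adic expansion of $z$,'' which is exactly the reduction you carry out. The expansion $-1/(q^\ell-1)=\sum_{n\geq 0}q^{n\ell}$, the no-carry observation coming from $0\leq c_i<q$, and the justification for reordering the convergent product are all handled correctly.
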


\begin{remark} \label{rem: relations}
Regarding $\bar{k}$-algebraic relations among $v$-adic arimetic gamma values at rational $p$-adic integers, in~\cite[Sec.~2]{T91} Thakur showed that once we have the  monomial relations~\eqref{E:Standard FE} together with translation~(2) of Proposition~\ref{prop: FE}  (for $z\in \ZZ_p \cap \QQ$), then the reflection~(1) and multiplication~(3) of Proposition~\ref{prop: FE} are valid for $z\in \ZZ_p \cap \QQ$.
\end{remark}

Thakur's analogue of the Gross--Koblitz formula in \cite[Theorem~VI]{T88} implies:

\begin{theorem}\label{thm: GK-formula}
(See \cite[Corollary of Thm.~V]{T88}.) Let $v\in A_{+}$ be irreducible.
For each $z \in \QQ$ with $(q^{\deg v}-1)\cdot z \in \ZZ$, we have
\[
\agv(z) \in \bar{k}^\times.
\]
\end{theorem}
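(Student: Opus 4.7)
The plan is to reduce the algebraicity claim to Thakur's Gross--Koblitz-type identity, which expresses the ``basic'' values of $\agv$ as algebraic multiples of Carlitz Gauss sums, and the latter are manifestly algebraic over $k$. Put $d:=\deg v$. Observe first that any $z\in\QQ$ with $(q^d-1)z\in\ZZ$ automatically lies in $\ZZ_p$, since $\gcd(q^d-1,p)=1$, so $\agv(z)$ is defined.

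The first step is a reduction using the functional equations. The translation formula in Proposition~\ref{prop: FE}(2) shows that $\agv(z+1)/\agv(z)\in k^\times$ for $z\neq 0$, so by iterated translation I may assume $z=a/(q^d-1)$ with $0\le a<q^d-1$. If $a=0$ then $\agv(0)\in k^\times$ and we are done; otherwise, writing $q^d-1-a=\sum_{i=0}^{d-1}c_i q^i$ in base $q$ with $0\le c_i<q$, the monomial relation of Proposition~\ref{prop: MR} (with $\ell=d$) gives
$$
\agv(z)\;=\;\prod_{i=0}^{d-1}\Gamma_i^{\,c_i},\qquad \Gamma_i\;:=\;\agv\!\left(1-\frac{q^i}{q^d-1}\right).
$$
Hence it suffices to show that each of the $d$ basic values $\Gamma_i$ lies in $\bar k$.

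The second step, which is the heart of the proof, is to identify each $\Gamma_i$ with an algebraic multiple of a Carlitz Gauss sum. Fix a nonzero $v$-torsion point $\lambda$ of the Carlitz $A$-module $C$, so that $k(\lambda)$ is the $v$-th cyclotomic function field, a finite abelian extension of $k$ sitting inside $\bar k$. Attached to the multiplicative character $\chi_i\colon \FF_v^\times\to k(\lambda)^\times$ associated to the $i$-th Teichm\"uller twist, together with the additive character $a\mapsto C_a(\lambda)$ of $\FF_v$, one forms a Carlitz Gauss sum $g(\chi_i)\in k(\lambda)\subset\bar k$. The Gross--Koblitz formula of Thakur~\cite{T88} asserts an equality
$$
\Gamma_i\;=\;\alpha_i\cdot g(\chi_i),\qquad \alpha_i\in\bar k^\times,
$$
with an explicit algebraic factor $\alpha_i$ built from $\lambda$ and from the constants $D_{j,v}$ for $j<d$. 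Granting this identity, $g(\chi_i)\in\bar k$ immediately forces $\Gamma_i\in\bar k$, which finishes the proof.

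The main obstacle is therefore proving the identity $\Gamma_i=\alpha_i\cdot g(\chi_i)$. The left hand side is the $v$-adic infinite product $\prod_{j\ge 0}(-D_{j,v})^{z_{i,j}}$ built from the $q$-adic expansion of $1-q^i/(q^d-1)$, while the right hand side is a finite sum in $k(\lambda)$. The bridge is Sinnott's valuation formula~\eqref{E:valCarlitzFac}, which controls the $v$-adic growth of partial Carlitz factorials and permits a Stickelberger-type decomposition of each $D_{j,v}$ into character-indexed local pieces in towers of cyclotomic function fields. Collecting contributions character by character and passing to the $v$-adic limit, all contributions except the one associated to $\chi_i$ telescope away, leaving the single Gauss sum $g(\chi_i)$ up to the algebraic correction $\alpha_i$. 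Verifying this cancellation and the convergence of the $v$-adic partial products to a finite sum is the technical core, and is precisely what Thakur carries out in~\cite[Thm.~VI]{T88}.
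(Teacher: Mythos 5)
Your proposal is correct and takes the same approach as the paper, which proves the theorem by direct citation of Thakur's Gross--Koblitz formula \cite[Corollary of Thm.~V]{T88}; your preliminary reduction to the basic values $\Gamma_i=\agv(1-q^i/(q^d-1))$ via translation and the monomial relation is sound (and is the same reduction used in the proof of Theorem~\ref{thm: up-bd}), but the algebraicity of each $\Gamma_i$ is ultimately drawn from that citation, exactly as in the paper. If you want a self-contained argument avoiding the Gauss-sum machinery, note that the remark following Corollary~\ref{cor: agv-form} derives from the product expansion of Lemma~\ref{lem: agv-form} an explicit closed-form algebraic expression for $\agv(1-q^s/(q^d-1))^{1-q^d}$, which yields $\Gamma_s\in\bar{k}^\times$ directly.
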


\begin{remark}
In fact, Thakur's formula in \cite[Theorem~VI]{T88} shows that when $z \in \QQ$ with $(q^{\deg v}-1)\cdot z \in \ZZ$, the special value $\agv(z)$ is related to Thakur's analogue of Gauss sums.
Consequently, in this case one knows that 
\[
\agv(z) \in \FF_{q^{\deg v}}(\theta)\Big(\lambda_v,\sqrt[q^{\deg v}-1]{-v}\Big),
\]
where $\lambda_v$ is a non-zero {\it Carlitz $v$-torsion}.
\end{remark}

As a consequence of Thakur's analogue of the Gross--Koblitz formula, we have the following monomial relations:
\begin{corollary}\label{cor: GK-formula} Let $v\in A_{+}$ be irreducible. 
Given a positive integer $n$ with $p \nmid n$, let $r$ be the order of $q^{\deg v} \bmod n$ in $(\ZZ/n\ZZ)^\times$.
For every $z \in \QQ$ satisfying $n\cdot z \in \ZZ$, we have that
\[
\prod_{i=0}^{r-1} \agv(q^{i\deg v}\cdot  z) \in \bar{k}^\times.
\]
\end{corollary}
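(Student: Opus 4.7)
The plan is to reduce the product $\prod_{j=0}^{r-1}\agv(q^{jd}z)$ (with $d := \deg v$) modulo $\bar k^\times$ to a single value of the form $\agv(1-q^s/(q^d-1))$, to which Theorem~\ref{thm: GK-formula} directly applies. Set $\ell := rd$; since $r$ is the order of $q^d$ modulo $n$, we have $n\mid q^\ell-1$ and hence $(q^\ell-1)z \in \ZZ$, and each $q^{jd}z$ lies in $\ZZ_p$ (using $p\nmid n$). Applying the translation formula Proposition~\ref{prop: FE}(2) iteratively, the ratio $\agv(w+m)/\agv(w)$ lies in $k^\times$ for every $w\in\ZZ_p$ and $m\in\ZZ$, so I may replace each $\agv(q^{jd}z)$ by $\agv(\langle q^{jd}z\rangle)$ modulo $\bar k^\times$.

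Next I would write $\langle z\rangle = a/(q^\ell-1)$ with the base-$q$ expansion $a = \sum_{i=0}^{\ell-1} c_iq^i$. Because $q^\ell \equiv 1 \pmod{q^\ell-1}$, multiplication by $q^{jd}$ acts as a cyclic shift on the digit sequence, giving $\langle q^{jd}z\rangle = \sum_i c_{(i-jd)\bmod\ell}\,q^i/(q^\ell-1)$. Combining the reflection formula (Proposition~\ref{prop: FE}(1), together with $\agv(0)\in\FF_q^\times$) and the monomial relation \eqref{E:Standard FE} applied with parameter $\ell$ yields, modulo $\bar k^\times$,
\[
\agv(\langle q^{jd}z\rangle) \equiv \prod_{i=0}^{\ell-1}\agv\bigl(1-q^i/(q^\ell-1)\bigr)^{-c_{(i-jd)\bmod\ell}}.
\]

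The combinatorial heart of the argument is this: after multiplying over $j=0,1,\ldots,r-1$, the exponent of $\agv(1-q^i/(q^\ell-1))$ sums $c$ over the coset $i+\langle d\rangle \subseteq \ZZ/\ell\ZZ$, where $\langle d\rangle$ denotes the subgroup generated by $d$. Since $\langle d\rangle$ consists precisely of the multiples of $d$ modulo $\ell$, this exponent depends only on the residue class $s := i\bmod d$. Grouping the product by $s$ and applying \eqref{E:Standard FE} in the reverse direction with the characteristic function of $\{s, s+d, \ldots, s+(r-1)d\}$—together with the identity $\sum_{k=0}^{r-1}q^{s+kd} = q^s(q^\ell-1)/(q^d-1)$—one obtains
\[
\prod_{k=0}^{r-1}\agv\bigl(1-q^{s+kd}/(q^\ell-1)\bigr) = \agv\bigl(1-q^s/(q^d-1)\bigr),
\]
whose argument satisfies $(q^d-1)\bigl(1-q^s/(q^d-1)\bigr)\in\ZZ$, so by Theorem~\ref{thm: GK-formula} this value lies in $\bar k^\times$. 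Assembling over $s$ yields the desired $\prod_{j=0}^{r-1}\agv(q^{jd}z)\in\bar k^\times$; the edge case $z\in\ZZ$ is handled trivially by translation. The main obstacle I anticipate is the combinatorial bookkeeping—specifically, verifying that after summing over $j$ the exponent of each $\agv(1-q^i/(q^\ell-1))$ is indeed constant on residue classes modulo $d$—so that the assembly back into values covered by Theorem~\ref{thm: GK-formula} actually goes through cleanly.
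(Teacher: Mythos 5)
Your proposal is correct and follows essentially the same route as the paper's proof: both reduce modulo $\bar{k}^\times$ via translation and the monomial relation~\eqref{E:Standard FE} (with $\ell = r\deg v$) to a product of factors $\agv(1-q^i/(q^\ell-1))$ whose exponents, after multiplying over the $r$-fold cyclic shift by $\deg v$, become constant on residue classes modulo $\deg v$, and then each class reassembles via~\eqref{E:Standard FE} into $\agv(1-q^s/(q^{\deg v}-1))\in\bar{k}^\times$ by Theorem~\ref{thm: GK-formula}. The only cosmetic difference is that the paper works directly from $z = 1 - a/n$ and so never needs the reflection formula, whereas you normalize to $\langle z\rangle$ and invoke reflection; your coset-of-$\langle d\rangle$ framing of the bookkeeping is equivalent to the paper's rearrangement of the double product over $i$ and $j$.
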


\begin{proof}
Without loss of generality, we may assume that $z = 1-\frac{a}{n}$ with $0\leq a < n$.
The assumption of $n$ indicates that $n \mid q^{r\deg v}-1$.
Write $q^{r\deg v}-1 = n\cdot n'$.
Then there exist $z_0,...,z_{r\deg v-1} \in \ZZ$ with $0\leq z_0,...,z_{r\deg v-1}<q$ satisfying that
\[
\frac{a}{n} = \frac{an'}{q^{r\deg v}-1} = \sum_{j=0}^{r\deg v-1}\frac{z_j q^j}{q^{r\deg v}-1}.
\]
Therefore
\begin{align*}
\prod_{i=0}^{r-1} \agv(q^{i\deg v}\cdot  z) 
&= \prod_{i=0}^{r-1}\agv\Big(q^{i\deg v} \cdot (1-\frac{a}{n})\Big) \\
&\sim \prod_{i=0}^{r-1}\prod_{j=0}^{r\deg v-1}\agv\Big(1-\frac{z_j q^{j+i\deg v}}{q^{r\deg v}-1}\Big) \\
&\sim \prod_{j=0}^{r\deg v-1} \agv\Big(1-\frac{\sum_{i=0}^{r-1}q^{j+i\deg v}}{q^{r\deg v}-1}\Big)^{z_j} \\
&= \prod_{j=0}^{r\deg v-1} \agv\Big(1-\frac{q^j}{q^{\deg v}-1}\Big)^{z_j} \quad \in \bar{k}^\times \quad \text{(by Theorem~\ref{thm: GK-formula})}.
\end{align*}
Here, for $x,y \in \CC_v^\times$, we denote by $x\sim y$ if $x/y \in \bar{k}^\times$.
\end{proof}

\begin{remark}
Since when $n \mid (q^{\deg v}-1)$, the order of $q^{\deg v} \bmod n$ in $(\ZZ/n\ZZ)^\times$ is $1$.
Therefore Corollary~\ref{cor: GK-formula} is in fact equivalent to Theorem~\ref{thm: GK-formula}. 
\end{remark}

The above monomial relations among special values of $\agv$ at rational $p$-adic integers give rise to the following upper bound of the transcendence degree in question.

\begin{theorem}\label{thm: up-bd} Let $v\in A_{+}$ be irreducible.
Given $\ell \in \NN$, we have 
\[
\trdeg_{\bar{k}} \bar{k}\Big(\agv(z)\ \Big|\ z \in \QQ \text{ with } (q^\ell-1)\cdot z \in \ZZ \Big) \leq \ell - \gcd(\ell,\deg v).
\]
\end{theorem}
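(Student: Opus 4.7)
The plan is to exhibit $\ell$ explicit generators of the field
\[
F_\ell := \bar{k}\bigl(\agv(z) \,\bigm|\, z \in \QQ,\ (q^\ell-1)z \in \ZZ\bigr),
\]
and then use Thakur's analogue of the Gross--Koblitz formula (via Corollary~\ref{cor: GK-formula}) to produce $\gcd(\ell,\deg v)$ multiplicatively independent relations among them.

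First I would reduce to a finite generating set. Any $z \in \QQ$ with $(q^\ell-1)z \in \ZZ$ may be written as $z = m + 1 - \tfrac{a}{q^\ell-1}$ for some $m \in \ZZ$ and $0 \leq a \leq q^\ell - 1$. Iterated application of the translation formula Proposition~\ref{prop: FE}(2) shows $\agv(z)/\agv(1 - \tfrac{a}{q^\ell-1}) \in k^\times \subseteq \bar{k}^\times$. The boundary cases give $\agv(1) = 1$ and $\agv(0) = (-1)^{\deg v - 1} \in \bar{k}^\times$. For $0 < a < q^\ell - 1$ with $q$-adic expansion $a = \sum_{i=0}^{\ell-1} c_i q^i$, the standard monomial relation \eqref{E:Standard FE} expresses $\agv(1 - \tfrac{a}{q^\ell-1})$ as a monomial in the $\ell$ values $x_i := \agv(1 - \tfrac{q^i}{q^\ell-1})$, $0 \leq i < \ell$. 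Thus $F_\ell = \bar{k}(x_0, \ldots, x_{\ell-1})$, giving the a priori bound $\trdeg_{\bar{k}} F_\ell \leq \ell$.

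Next I would extract $g$ relations via Thakur's Gross--Koblitz formula. Set $n := q^\ell - 1$, $d := \deg v$, and $g := \gcd(\ell, d)$. The order of $q^d$ modulo $n$ equals $r := \ell/g$, since $n$ divides $q^{rd}-1$ if and only if $\ell$ divides $rd$. For each orbit representative $s \in \{0, 1, \ldots, g-1\}$ of the action of $+d$ on $\ZZ/\ell\ZZ$, apply Corollary~\ref{cor: GK-formula} to $z_s := 1 - \tfrac{q^s}{n}$. Since $q^{id} z_s \equiv -\tfrac{q^{(id+s) \bmod \ell}}{n} \pmod{\ZZ}$, the translation formula again yields $\agv(q^{id}z_s)/x_{(s+id) \bmod \ell} \in k^\times$, so Corollary~\ref{cor: GK-formula} implies
\[
\prod_{i=0}^{r-1} x_{(s + id) \bmod \ell} \ \in \ \bar{k}^\times, \qquad s = 0, 1, \ldots, g-1.
\]
These $g$ relations have pairwise disjoint supports, since the orbits partition $\{0, \ldots, \ell-1\}$ into $g$ classes of size $r$; hence they are multiplicatively independent modulo $\bar{k}^\times$. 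Each one permits eliminating one $x_i$ in terms of the others (up to an algebraic factor), so $F_\ell$ is generated over $\bar{k}$ by only $\ell - g$ of the $x_i$'s, yielding the desired bound $\trdeg_{\bar{k}} F_\ell \leq \ell - \gcd(\ell, \deg v)$.

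No substantive obstacle is anticipated: the argument is a clean combination of the two flavours of functional equation with the orbit decomposition of $q^{\deg v}$-multiplication on $\ZZ/\ell\ZZ$. The only care needed is to track that all translation factors lie in $k^\times \subseteq \bar{k}^\times$, so they are absorbed harmlessly into the ambient field and do not affect the transcendence count. The matching lower bound is a separate and considerably harder problem, requiring the period interpretation in Theorem~\ref{thm: main-2} together with the algebraic independence in Theorem~\ref{thm: main-3}.
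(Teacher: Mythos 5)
Your proof is correct and takes essentially the same route as the paper: both reduce to the $\ell$ generators $x_i = \agv(1 - q^i/(q^\ell-1))$ via the translation and monomial functional equations, and both extract $\gcd(\ell,\deg v)$ multiplicatively independent relations among the $x_i$ from Thakur's Gross--Koblitz. The only cosmetic difference is that you organize the relations by $q^{\deg v}$-orbits via Corollary~\ref{cor: GK-formula}, whereas the paper uses orbits of $+\gcd(\ell,\deg v)$ on $\{0,\ldots,\ell-1\}$ directly with Proposition~\ref{prop: MR} and Theorem~\ref{thm: GK-formula}; since $d$ and $\gcd(\ell,d)$ generate the same subgroup of $\ZZ/\ell\ZZ$, the resulting relations are identical up to reordering.
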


\begin{proof}
The functional equation in Proposition~\ref{prop: FE}~(2) and the monomial relation in Proposition~\ref{prop: MR} show that
\[
\bar{k}\Big(\agv(z)\ \Big|\ z \in \QQ \text{ with } (q^\ell-1)\cdot z \in \ZZ \Big)
=
\bar{k}\Big(\agv(1-\frac{q^i}{q^\ell-1})\ \Big|\ 0\leq i \leq \ell-1\Big).
\]
Let $m = \gcd(\ell,\deg v)$ and $\ell' = \ell/m$.
By Proposition~\ref{prop: MR} and Theorem~\ref{thm: GK-formula}, we get
\begin{equation}\label{E:Monomial relations}
\prod_{j=0}^{\ell'-1}\agv(1-\frac{q^{i+mj}}{q^\ell-1})=\agv(1-\frac{q^i}{q^m-1}) \in \bar{k}, \quad \forall i \in \ZZ,\ 0\leq i <m,
\end{equation}
since $(q^{m}-1)$ divides $ (q^{\deg v}-1)$.
The monomial relations~\eqref{E:Monomial relations} for $0\leq i\leq m-1$ give rise to $m$ independent algebraic relations, from which the result follows.
\end{proof}

We will prove in Section~\ref{sec: tran-agv} that the inequality stated in Theorem~\ref{thm: up-bd} is, in fact, an equality.
Before doing so, we first provide ``period interpretations'' of the $v$-adic arithmetic gamma values in question.

\subsection{Product expansions of \texorpdfstring{$v$}{v}-adic arithmetic gamma values}\label{sec: prod agv}

We begin with the following observation:

\begin{lemma}\label{lem: agv-form} Let $v\in A_{+}$ be irreducible of degree $d$.
Given $\ell,s \in \NN$,
let $n_s$ be the smallest non-negative integer such that $s+n_s \ell \geq d$.
We have that
\begin{align*}
\frac{\agv(1-\displaystyle \frac{q^s}{q^{\ell}-1})}{\agv(1-\displaystyle\frac{q^{s-1}}{q^{\ell}-1})^q}
& = \frac{\displaystyle \prod_{n=0}^\infty
\bigg(1-\frac{(\theta-\varepsilon)^{q^{s+n\ell}}}{\theta-\varepsilon^{q^{s+n\ell}}}\bigg)}{\displaystyle \prod_{n=n_s+1}^\infty \bigg(1-\frac{(\theta-\varepsilon)^{q^{s+n\ell-d}}}{\theta-\varepsilon^{q^{s+n\ell-d}}}\bigg)}
\cdot \bigg(\prod_{n=0}^{n_s}(\varepsilon^{q^{s+n\ell}}-\theta)\Big) \\
& \hspace{2.8cm}\cdot 
\begin{cases}
(\theta^{q^{s+n_s\ell-d}}-\theta)^{-1}, & \text{ if $s+n_s\ell > d$,}\\
v^{-1}, & \text{ if $s+n_s\ell = d$.}
\end{cases}
\end{align*}
\end{lemma}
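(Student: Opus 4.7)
The plan is to expand both sides of the identity via the definition of the $v$-adic arithmetic gamma function and then reduce everything to identities among the Carlitz partial factorials $D_{i,v}$. Writing $1/(1-q^{\ell}) = \sum_{j\geq 0} q^{j\ell}$ in $\ZZ_{p}$ gives the Teichm\"uller expansion $-q^{s}/(q^{\ell}-1) = \sum_{j\geq 0} q^{s+j\ell}$, so the definition of $\agv$ yields
\[
\agv\Big(1-\frac{q^{s}}{q^{\ell}-1}\Big) = \prod_{j=0}^{\infty}(-D_{s+j\ell,v}), \qquad \agv\Big(1-\frac{q^{s-1}}{q^{\ell}-1}\Big)^{q} = \prod_{j=0}^{\infty}(-D_{s-1+j\ell,v})^{q}.
\]
Since $(-1)^{1-q} = 1$ in $A$, the ratio collapses to $\prod_{j\geq 0} D_{s+j\ell,v}/D_{s-1+j\ell,v}^{q}$.

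Next I would use the classical Carlitz identity $D_{i}/D_{i-1}^{q} = \theta^{q^{i}}-\theta$, together with the factorization $D_{i,v} = D_{i}/(v^{q^{i-d}}D_{i-d})$ for $i \geq d$ and $D_{i,v} = D_{i}$ for $i < d$ (obtained by grouping the monic polynomials of degree $i$ according to divisibility by $v$), to compute each quotient explicitly. The result splits into three regimes: $D_{i,v}/D_{i-1,v}^{q} = \theta^{q^{i}}-\theta$ for $i<d$; $D_{d,v}/D_{d-1,v}^{q} = (\theta^{q^{d}}-\theta)/v$; and $D_{i,v}/D_{i-1,v}^{q} = (\theta^{q^{i}}-\theta)/(\theta^{q^{i-d}}-\theta)$ for $i>d$. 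The integer $n_{s}$ is precisely the threshold at which the arithmetic progression $i = s+j\ell$ crosses from the first regime into the second or third.

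The core of the argument is the algebraic identity
\[
\theta^{q^{i}}-\theta = (\varepsilon^{q^{i}}-\theta)\Big(1-\frac{(\theta-\varepsilon)^{q^{i}}}{\theta-\varepsilon^{q^{i}}}\Big),
\]
which is verified by clearing denominators and using $(\theta-\varepsilon)^{q^{i}} = \theta^{q^{i}}-\varepsilon^{q^{i}}$ in characteristic $p$. Substituting this identity into every $\theta^{q^{i}}-\theta$ appearing in the three regimes above, and using the periodicity $\varepsilon^{q^{m-d}} = \varepsilon^{q^{m}}$ (since $\varepsilon \in \FF_{q^{d}}$) to cancel the prefactor $\varepsilon^{q^{s+j\ell}}-\theta$ in the numerator at index $s+j\ell$ against the analogous prefactor $\varepsilon^{q^{s+j\ell-d}}-\theta$ in the denominator at index $s+j\ell-d$ for every $j \geq n_{s}+1$, recovers the two infinite products on the right-hand side together with the finite product $\prod_{n=0}^{n_{s}}(\varepsilon^{q^{s+n\ell}}-\theta)$ of uncancelled prefactors in the low range $0 \leq j \leq n_{s}$.

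The remaining bookkeeping is the case split at the boundary index $j=n_{s}$. If $s+n_{s}\ell = d$, the boundary factor contributes $v^{-1}$ directly from the $i=d$ regime, giving $C = v^{-1}$. If $s+n_{s}\ell > d$, the boundary factor has the form $(\theta^{q^{s+n_{s}\ell}}-\theta)/(\theta^{q^{s+n_{s}\ell-d}}-\theta)$ while the denominator product in the target expression only starts at $n = n_{s}+1$; applying the key identity once more to $\theta^{q^{s+n_{s}\ell-d}}-\theta$ simultaneously supplies the missing prefactor $(\varepsilon^{q^{s+n_{s}\ell}}-\theta)$ (filling out the finite product up to $n=n_{s}$) and produces the explicit factor $(\theta^{q^{s+n_{s}\ell-d}}-\theta)^{-1}$. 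Convergence of all the infinite products is automatic, since each quotient $(\theta^{q^{i}}-\theta)/(\theta^{q^{i-d}}-\theta)$ expands as $1 + O(v^{q^{i-d}})$ in the uniformizer $\theta-\varepsilon$. I expect the main obstacle to be exactly this boundary bookkeeping, which the key identity handles uniformly in both cases.
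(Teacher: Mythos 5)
Your proposal is correct and follows essentially the same route as the paper's own proof: expand both gamma values via the $q$-adic digit expansion of $-q^{s}/(q^{\ell}-1)$, reduce to the product $\prod_{n\geq 0} D_{s+n\ell,v}/D_{s-1+n\ell,v}^{q}$, record the three regimes $n<d$, $n=d$, $n>d$ for $D_{n,v}/D_{n-1,v}^{q}$, and substitute the key factorization $\theta^{q^{i}}-\theta = (\varepsilon^{q^{i}}-\theta)\bigl(1-\tfrac{(\theta-\varepsilon)^{q^{i}}}{\theta-\varepsilon^{q^{i}}}\bigr)$ together with $\varepsilon^{q^{i}}=\varepsilon^{q^{i-d}}$ to cancel the prefactors in the range $n\geq n_{s}+1$. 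One small imprecision in your write-up of the boundary term when $s+n_{s}\ell>d$: it is applying the key identity to the \emph{numerator} $\theta^{q^{s+n_{s}\ell}}-\theta$ that produces the final prefactor $(\varepsilon^{q^{s+n_{s}\ell}}-\theta)$, while the denominator $\theta^{q^{s+n_{s}\ell-d}}-\theta$ should be left unexpanded (rather than expanded and re-collapsed) to yield the explicit factor $(\theta^{q^{s+n_{s}\ell-d}}-\theta)^{-1}$; the paper handles this by peeling off the $n=n_{s}$ term before applying the identity termwise. This is purely expository, and the resulting computation agrees with the paper's.
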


\begin{proof}
From the definition of $\agv$, we get
\[
\frac{\agv(1-\displaystyle \frac{q^s}{q^{\ell}-1})}{\agv(1-\displaystyle\frac{q^{s-1}}{q^{\ell}-1})^q}
= \prod_{n=0}^\infty \frac{D_{s+n\ell,v}}{D_{s-1+n\ell,v}^q}.
\]
Note that for every $n \in \NN$, one has that
\[
\frac{D_n}{D_{n-1}^q} = (\theta^{q^n}-\theta) 
\quad \text{ and } \quad 
D_{n,v} = 
\begin{cases}
\displaystyle\frac{D_n}{v^{q^{n-d}}D_{n-d}}, & \text{ if $n\geq d$,} \\    
D_n, & \text{ if $n<d$.}
\end{cases}
\]
Thus 
\begin{equation}\label{E:Dn,v/Dn-1,v q}
\frac{D_{n,v}}{D_{n-1,v}^q}
= 
\begin{cases}
\displaystyle \frac{\theta^{q^{n}}-\theta}{\theta^{q^{n-d}}-\theta} & \text{ if $n >d$,} \\
\displaystyle  \frac{\theta^{q^n}-\theta}{v},
& \text{ if $n =d$,} \\
\theta^{q^{n}}-\theta, & \text{ if $0<n<d$.}
\end{cases}
\end{equation}
Observe that:
\begin{equation}\label{E:th^{q^n}-th}
\theta^{q^n} - \theta = (\varepsilon^{q^n}-\theta) \cdot \Big(1-\frac{(\theta-\varepsilon)^{q^n}}{\theta-\varepsilon^{q^n}}\Big)
\end{equation}
and
\begin{equation}\label{E:(th^{q^{n}}-th)/(th^{q^{n-d}}-th)}
\frac{\theta^{q^n}-\theta}{\theta^{q^{n-d}}-\theta}
= \bigg(1-\frac{(\theta-\varepsilon)^{q^n}}{\theta-\varepsilon^{q^n}}\bigg)\bigg/\bigg(1-\frac{(\theta-\varepsilon)^{q^{n-d}}}{\theta-\varepsilon^{q^{n}}}\bigg) \quad \text{ when $n >d$.}
\end{equation}
As $n_s$ is the smallest non-negative integer such that $s+n_s \ell \geq d$,
we obtain that
\begin{align*}
&\frac{\agv(1-\displaystyle \frac{q^s}{q^{\ell}-1})}{\agv(1-\displaystyle\frac{q^{s-1}}{q^{\ell}-1})^q}
= \prod_{n=0}^{\infty}\frac{D_{s+n\ell,v}}{D_{s-1+n\ell,v}^q}  \\
=\ &  \left( \prod_{n=n_s+1}^{\infty} \frac{D_{s+n\ell, v}}{D_{s-1+n\ell,v}^q} \right) 
\cdot \left(\prod_{n=0}^{n_{s}-1} \frac{D_{s+n\ell,v}}{D_{s-1+n\ell,v}^q} \right)
\cdot \frac{D_{s+n_{s}\ell,v}}{D_{s-1+n_{s}\ell,v}^q }
\\
=\ & \left( \prod_{n=n_{s}+1}^{\infty} \frac{\theta^{q^{s+n\ell}}-\theta}{\theta^{q^{s+n\ell-d}}-\theta} \right) \cdot \left(\prod_{n=0}^{n_{s}-1} (\theta^{q^{s+n\ell}}-\theta) \right) 
\cdot   \begin{cases}
\displaystyle\frac{\theta^{q^{s+n_s\ell}}-\theta}{\theta^{q^{s+n_s\ell-d}}-\theta}, & \text{ if $s+n_s\ell > d$,}\\
\displaystyle\frac{\theta^{q^{s+n_s\ell}}-\theta}{v}, & \text{ if $s+n_s\ell = d$}
\end{cases} 
\\
& \hspace{13.9cm} 
\text{(by \eqref{E:Dn,v/Dn-1,v q})}
\\
=\ & \left(\prod_{n=n_{s}+1}^{\infty} \frac{1-\displaystyle\frac{(\theta-\epsilon)^{q^{s+n\ell}}}{\theta-\epsilon^{q^{s+n\ell}}}}{1-\displaystyle\frac{(\theta-\epsilon)^{q^{s+n\ell-d}}}{\theta-\epsilon^{q^{s+n\ell}}}}  \right)\cdot
\prod_{n=0}^{n_{s}}(\epsilon^{q^{s+n\ell}}-\theta)(1-\frac{(\theta-\epsilon)^{q^{s+n\ell}}}{\theta-\epsilon^{q^{s+n\ell}}}) 
 \\
&\cdot 
\begin{cases}
(\theta^{q^{s+n_s\ell-d}}-\theta)^{-1}, & \text{ if $s+n_s\ell > d$},\\
v^{-1}, & \text{ if $s+n_s\ell = d$}
\end{cases} 
\hspace{5cm} \text{(by \eqref{E:th^{q^n}-th} \& \eqref{E:(th^{q^{n}}-th)/(th^{q^{n-d}}-th)})}
\\
=\ & \frac{\displaystyle \prod_{n=0}^\infty
\bigg(1-\frac{(\theta-\varepsilon)^{q^{s+n\ell}}}{\theta-\varepsilon^{q^{s+n\ell}}}\bigg)}{\displaystyle \prod_{n=n_s+1}^\infty \bigg(1-\frac{(\theta-\varepsilon)^{q^{s+n\ell-d}}}{\theta-\varepsilon^{q^{s+n\ell}}}\bigg)}
\cdot \bigg(\prod_{n=0}^{n_s}(\varepsilon^{q^{s+n\ell}}-\theta)\bigg) 
\begin{cases}
(\theta^{q^{s+n_s\ell-d}}-\theta)^{-1}, & \text{ if $s+n_s\ell > d$,}\\
v^{-1}, & \text{ if $s+n_s\ell = d$,}
\end{cases}
\end{align*}
where $\prod_{n=0}^{n_{s}-1} \frac{D_{s+n\ell,v}}{D_{s-1+n\ell,v}^q}  $ occurring in the second identity and $\prod_{n=0}^{n_{s}-1} (\theta^{q^{s+n\ell}}-\theta)$ occurring in the third identity are referred to be $1$ in the case when $n_s=0$,  the forth and fifth identities come from~\eqref{E:(th^{q^{n}}-th)/(th^{q^{n-d}}-th)} and~\eqref{E:th^{q^n}-th} respectively.
\end{proof}

Consequently, for each $z \in \QQ$, let $\langle z \rangle_{\ari} \in \QQ$ be the fractional part of $z$, i.e.\ $0\leq \langle z \rangle_{\ari} < 1$ and $z - \langle z \rangle_{\ari} \in \ZZ$.
We get:

\begin{corollary}\label{cor: agv-form} 
Let $v\in A_{+}$ be irreducible of degree $d$.
Given $\ell\in \NN$ and $s \in \ZZ_{\geq 0}$ with $0\leq s < \ell$, write $s+d = s_0 + n_s'\ell$ where $s_0 \in \ZZ$ with $0\leq s_0<\ell$ and $n_s' \in \ZZ_{\geq 0}$.
Then
\begin{align*}
\frac{\agv\Big(1-\Big\langle \displaystyle \frac{q^{s+d}}{q^{\ell}-1}\Big\rangle_{\ari}\Big)}{\agv\Big(1-\Big\langle \displaystyle\frac{q^{s+d-1}}{q^{\ell}-1}\Big\rangle_{\ari}\Big)^q}
& = \frac{\displaystyle \prod_{n=1}^\infty
\bigg(1-\frac{(\theta-\varepsilon)^{q^{s_0+n\ell}}}{\theta-\varepsilon^{q^{s_0+n\ell}}}\bigg)}{\displaystyle \prod_{n=1}^\infty \bigg(1-\frac{(\theta-\varepsilon)^{q^{s+n\ell}}}{\theta-\varepsilon^{q^{s+n\ell}}}\bigg)}
\cdot \left(\prod_{n=0}^{n_s'-1}(\varepsilon^{q^{s_0+n\ell}}-\theta) \right)
\cdot \frac{\alpha_{s_0}}{\alpha_s} \cdot C_s^{-1},
\end{align*}
where
\begin{equation}\label{eqn: alpha-C}
\alpha_s := 
\begin{cases}
1-\displaystyle\frac{(\theta-\varepsilon)^{q^s}}{\theta-\varepsilon^{q^s}}, & \text{ if $s>0$,} \\
\displaystyle\frac{1}{\theta-\varepsilon}, &\text{ if $s=0$,}
\end{cases}
\quad \text{ and } \quad 
C_s := 
\begin{cases}
1, & \text{ if $s>0$,} \\
-v, & \text{ if $s=0$.}
\end{cases}
\end{equation}
\end{corollary}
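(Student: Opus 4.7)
The plan is to apply Lemma~\ref{lem: agv-form} with parameter $\sigma=s+d$ and then reconcile the result with the fractional-part arguments of the corollary via a ``translation rule'' that I derive directly from Goss's product formula.

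First I would observe that for every $\sigma\in\ZZ_{\geq 0}$, the non-overlapping $q$-adic expansion $-\tfrac{q^\sigma}{q^\ell-1}=q^\sigma+q^{\sigma+\ell}+\cdots$ gives
\[
\agv\Big(1-\tfrac{q^\sigma}{q^\ell-1}\Big)=\prod_{k=0}^{\infty}(-D_{\sigma+k\ell,v}).
\]
Writing $\sigma=\sigma'+j\ell$ with $0\leq\sigma'<\ell$, the congruence $q^{j\ell}\equiv 1\pmod{q^\ell-1}$ gives $\langle\tfrac{q^\sigma}{q^\ell-1}\rangle_{\ari}=\tfrac{q^{\sigma'}}{q^\ell-1}$, and comparing the corresponding infinite products yields the translation rule
\[
\agv\Big(1-\big\langle\tfrac{q^\sigma}{q^\ell-1}\big\rangle_{\ari}\Big)=\agv\Big(1-\tfrac{q^\sigma}{q^\ell-1}\Big)\cdot\prod_{m=0}^{j-1}(-D_{\sigma'+m\ell,v}).
\]
I would apply this to both the numerator and denominator of the corollary's left-hand side. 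The numerator has $(\sigma,\sigma',j)=(s+d,\,s_0,\,n_s')$; for the denominator the parameter $\sigma=s+d-1$ splits into the cases $s_0\geq 1$ (giving $\sigma'=s_0-1,\,j=n_s'$) and $s_0=0$ (giving $\sigma'=\ell-1,\,j=n_s'-1$).

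Next I would invoke Lemma~\ref{lem: agv-form} at $\sigma=s+d$, whose associated $n_\sigma$ equals $0$ since $s+d\geq d$. Re-indexing its infinite numerator product via $s+d+n\ell=s_0+(n_s'+n)\ell$ and applying the Teichm\"uller identity $\varepsilon^{q^{s+d}}=\varepsilon^{q^s}$ converts the lemma's infinite-product ratio into the ratio $\prod_{n\geq 1}\big(1-(\theta-\varepsilon)^{q^{s_0+n\ell}}/(\theta-\varepsilon^{q^{s_0+n\ell}})\big)\big/\prod_{n\geq 1}\big(1-(\theta-\varepsilon)^{q^{s+n\ell}}/(\theta-\varepsilon^{q^{s+n\ell}})\big)$ appearing in the corollary's right-hand side, up to an explicit finite discrepancy involving at most $n_s'$ factors.

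The final step is to verify that the combined boundary correction, consisting of $(\varepsilon^{q^s}-\theta)\cdot F_s'$ from the lemma (with $F_s':=(\theta^{q^s}-\theta)^{-1}$ for $s>0$ and $v^{-1}$ for $s=0$), the finite discrepancy above, and the translation-rule $D$-products, collapses to the target correction $\prod_{n=0}^{n_s'-1}(\varepsilon^{q^{s_0+n\ell}}-\theta)\cdot(\alpha_{s_0}/\alpha_s)\cdot C_s^{-1}$. This is a telescoping computation driven by the factorization $\theta^{q^n}-\theta=(\varepsilon^{q^n}-\theta)\big(1-(\theta-\varepsilon)^{q^n}/(\theta-\varepsilon^{q^n})\big)$ from~\eqref{E:th^{q^n}-th} together with the formulas~\eqref{E:Dn,v/Dn-1,v q}; its essential closure is the single identity $\alpha_s\cdot(\varepsilon^{q^s}-\theta)\cdot F_s'\cdot C_s=1$, which is uniform over $s=0$ and $s\geq 1$, with the $v^{-1}$ in $F_0'$ matching the $C_0=-v$ in $C_s$. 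I expect the main obstacle to be the careful sign and case bookkeeping across the four possible combinations of $(s,s_0)$, rather than any single non-trivial identity.
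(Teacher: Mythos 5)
Your proposal is correct and takes a mildly different, but genuinely distinct, route from the paper's. The paper's proof exploits $\bigl\langle q^{s+d}/(q^\ell-1)\bigr\rangle_{\ari}=q^{s_0}/(q^\ell-1)$ to rewrite the left-hand side as a lemma-shaped ratio with the reduced index $s_0$, applies Lemma~\ref{lem: agv-form} there, and in the $s_0=0$ case first inserts a single factor $-1=-D_{0,v}$ to bump the index up to $\ell$. You instead apply Lemma~\ref{lem: agv-form} at the unreduced index $\sigma=s+d$, where $n_\sigma=0$ trivializes the lemma's own finite case-work, and then convert to fractional-part arguments via a translation rule derived directly from Goss's product formula; this is effectively the paper's one-off $s_0=0$ manipulation promoted to a general device and applied symmetrically to numerator and denominator. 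I checked that the plan closes: all the $D$-ratio correction indices $s_0+m\ell$ (resp.\ $m\ell$) with $0\le m<n_s'$ lie in $(0,d)$, so \eqref{E:Dn,v/Dn-1,v q} reduces them to $\theta^{q^n}-\theta$; factoring via \eqref{E:th^{q^n}-th}, these absorb into the re-indexed infinite products and the $\prod_{n=0}^{n_s'-1}(\varepsilon^{q^{s_0+n\ell}}-\theta)$ term as you describe (using also $(-1)^q=-1$ so the sign discrepancies vanish), and the residual boundary factor collapses exactly via the uniform identity $\alpha_s\cdot(\varepsilon^{q^s}-\theta)\cdot F_s'\cdot C_s=1$, which one verifies directly for $s=0$ and for $s>0$. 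The gain of your route is that the lemma is invoked once in a uniform shape; the cost is the case split on $s_0=0$ versus $s_0\geq 1$ inside the translation rule. The total bookkeeping is comparable to the paper's, and both proofs ultimately rest on the same two identities from the proof of Lemma~\ref{lem: agv-form}.
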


\begin{proof}
Recall that  $n_{s_0}$ denotes the smallest non-negative integer so that $s_0+n_{s_0}\ell \geq d$.
Then $n_s' \geq n_{s_0}$ and 
\[
0\leq (n_s'-n_{s_0})\ell = (s_0+n_{s}'\ell) - (s_0 + n_{s_0}\ell)\leq s+d-d =s <\ell. 
\]
Thus $n_s' = n_{s_0}$ and $s_0+n_{s_0}\ell - d = s$.\\

Suppose $s_0>0$.
Then
\begin{align*}
&\ \frac{\agv\Big(1-\Big\langle \displaystyle \frac{q^{s+d}}{q^{\ell}-1}\Big\rangle_{\ari}\Big)}{\agv\Big(1-\Big\langle \displaystyle\frac{q^{s+d-1}}{q^{\ell}-1}\Big\rangle_{\ari}\Big)^q}
= \frac{\agv\Big(1-\displaystyle \frac{q^{s_0}}{q^{\ell}-1}\Big)}{\agv\Big(1- \displaystyle\frac{q^{s_0-1}}{q^{\ell}-1}\Big)^q} \\
= &\ 
\frac{\displaystyle \prod_{n=0}^\infty
\bigg(1-\frac{(\theta-\varepsilon)^{q^{s_0+n\ell}}}{\theta-\varepsilon^{q^{s_0+n\ell}}}\bigg)}{\displaystyle \prod_{n=1}^\infty \bigg(1-\frac{(\theta-\varepsilon)^{q^{s+n\ell}}}{\theta-\varepsilon^{q^{s+n\ell}}}\bigg)}
\cdot \bigg(\prod_{n=0}^{n_s'}(\varepsilon^{q^{s_0+n\ell}}-\theta)\bigg)  
\begin{cases}
(\theta^{q^{s}}-\theta)^{-1}, & \text{ if $s>0$,}\\
v^{-1}, & \text{ if $s=0$}
\end{cases} \\
=&\ 
\frac{\displaystyle \alpha_{s_0} \cdot \prod_{n=1}^\infty
\bigg(1-\frac{(\theta-\varepsilon)^{q^{s_0+n\ell}}}{\theta-\varepsilon^{q^{s_0+n\ell}}}\bigg)}{\displaystyle \prod_{n=1}^\infty \bigg(1-\frac{(\theta-\varepsilon)^{q^{s+n\ell}}}{\theta-\varepsilon^{q^{s+n\ell}}}\bigg)}
\cdot (\theta-\varepsilon^{q^s}) \cdot \bigg(\prod_{n=0}^{n_s'-1}(\varepsilon^{q^{s_0+n\ell}}-\theta)\bigg) 
\begin{cases}
(\theta-\theta^{q^{s}})^{-1}, & \text{ if $s>0$,}\\
(-v)^{-1}, & \text{ if $s=0$}
\end{cases} \\
=&\ 
\frac{\displaystyle \prod_{n=1}^\infty
\bigg(1-\frac{(\theta-\varepsilon)^{q^{s_0+n\ell}}}{\theta-\varepsilon^{q^{s_0+n\ell}}}\bigg)}{\displaystyle \prod_{n=1}^\infty \bigg(1-\frac{(\theta-\varepsilon)^{q^{s+n\ell}}}{\theta-\varepsilon^{q^{s+n\ell}}}\bigg)}
\cdot \left(\prod_{n=0}^{n_s'-1}(\varepsilon^{q^{s_0+n\ell}}-\theta) \right)
\cdot \frac{\alpha_{s_0}}{\alpha_s} \cdot C_s^{-1}.
\end{align*}

When $s_0 =0$, one has that $n_s'=n_{s_0}=n_0>0$ and $n_\ell = n_0-1$ (as $s+d = 0+n_0\ell = \ell + (n_0-1)\ell$). 
Then
\begin{align*}
&\ \frac{\agv(1-\left\langle \displaystyle \frac{q^{s+d}}{q^{\ell}-1}\right\rangle)}{\agv(1-\left\langle \displaystyle\frac{q^{s+d-1}}{q^{\ell}-1}\right\rangle)^q}
= \frac{\agv(1-\displaystyle \frac{1}{q^{\ell}-1})}{\agv(1- \displaystyle\frac{q^{\ell-1}}{q^{\ell}-1})^q} 
= (-1)\cdot \frac{\agv(1-\displaystyle \frac{q^{\ell}}{q^{\ell}-1})}{\agv(1- \displaystyle\frac{q^{\ell-1}}{q^{\ell}-1})^q}
\\
= &\ 
\frac{\displaystyle \prod_{n=0}^\infty
\bigg(1-\frac{(\theta-\varepsilon)^{q^{\ell+n\ell}}}{\theta-\varepsilon^{q^{\ell+n\ell}}}\bigg)}{\displaystyle \prod_{n=1}^\infty \bigg(1-\frac{(\theta-\varepsilon)^{q^{s+n\ell}}}{\theta-\varepsilon^{q^{s+n\ell}}}\bigg)}
\cdot \bigg(\prod_{n=0}^{n_\ell}(\varepsilon^{q^{\ell+n\ell}}-\theta)\bigg) \cdot (-1)\cdot
\begin{cases}
(\theta^{q^{s}}-\theta)^{-1}, & \text{ if $s>0$,}\\
v^{-1}, & \text{ if $s=0$}
\end{cases} \\
=&\ 
\frac{\displaystyle \prod_{n=1}^\infty
\bigg(1-\frac{(\theta-\varepsilon)^{q^{n\ell}}}{\theta-\varepsilon^{q^{n\ell}}}\bigg)}{\displaystyle \prod_{n=1}^\infty \bigg(1-\frac{(\theta-\varepsilon)^{q^{s+n\ell}}}{\theta-\varepsilon^{q^{s+n\ell}}}\bigg)}
\cdot \bigg(\prod_{n=0}^{n_0-1}(\varepsilon^{q^{n\ell}}-\theta)\bigg) \cdot
\frac{\theta-\varepsilon^{q^{s+d}}}{\theta-\varepsilon}\cdot 
\begin{cases}
(\theta-\theta^{q^{s}})^{-1}, & \text{ if $s>0$,}\\
(-v)^{-1}, & \text{ if $s=0$}
\end{cases} \\
=&\ 
\frac{\displaystyle \prod_{n=1}^\infty
\bigg(1-\frac{(\theta-\varepsilon)^{q^{n\ell}}}{\theta-\varepsilon^{q^{n\ell}}}\bigg)}{\displaystyle \prod_{n=1}^\infty \bigg(1-\frac{(\theta-\varepsilon)^{q^{s+n\ell}}}{\theta-\varepsilon^{q^{s+n\ell}}}\bigg)}
\cdot \left(\prod_{n=0}^{n_s'-1}(\varepsilon^{q^{n\ell}}-\theta) \right)
\cdot \frac{\alpha_{0}}{\alpha_s} \cdot C_s^{-1}.
\end{align*}
Therefore the proof is complete.
\end{proof}

\begin{remark} Let notation be given as above.
Suppose $\ell \geq d$. 
Since 
\[
\agv(1-\frac{q^s}{q^{\ell}-1}) = \agv(1-\frac{q^{s+\ell}}{q^{\ell}-1}) \cdot (-D_{s,v}),
\]
we compute that
for every integer $s$ with $0\leq s < \ell$,
\begin{align*}
\agv(1-\frac{q^s}{q^{\ell}-1})^{1-q^{\ell}}
& = \frac{\agv(1-\displaystyle \frac{q^s}{q^{\ell}-1} ) }{\agv(1-\displaystyle\frac{q^s}{q^{\ell}-1})^{q^{\ell}} }=\frac{(-D_{s,v}) \cdot \agv(1-\displaystyle \frac{q^{s+\ell}}{q^{\ell}-1})}{\agv(1-\displaystyle\frac{q^{s}}{q^{\ell}-1})^{q^{\ell}}} \\
&= (-D_{s,v})\cdot \prod_{j=0}^{\ell-1} \left(\frac{\agv(1-\displaystyle \frac{q^{s+\ell-j}}{q^{\ell}-1})}{\agv(1-\displaystyle\frac{q^{s+\ell-1-j}}{q^{\ell}-1})^q}\right)^{q^j} \\
& = D_{s,v}^{1-q^{\ell}}\cdot (-D_{s+\ell,v})\cdot \prod_{j=0}^{\ell-1}\prod_{n=2}^\infty\left(\displaystyle\frac{1-\frac{\displaystyle(\theta-\varepsilon)^{q^{s+n\ell-j}}}{\displaystyle\theta-\varepsilon^{q^{s+n\ell-j}}}}{\displaystyle 1-\frac{(\theta-\varepsilon)^{q^{s+n\ell-j-d}}}{\theta-\varepsilon^{q^{s+n\ell-j}}}}
\right)^{q^j}.
\end{align*}
In particular, when $\ell=d$, these values are actually algebraic (the Gross--Koblitz phenomenon). 
We get
\begin{align*}
\agv(1-\frac{q^s}{q^d-1})^{1-q^d}
&= D_{s,v}^{1-q^d} \cdot (-D_{s+d,v}) \cdot \prod_{j=0}^{d-1}(1-\frac{(\theta-\varepsilon)^{q^{s+d}}}{(\theta-\varepsilon^{q^{s-j}})^{q^j}})^{-1} \\ 
&= (-1)\cdot D_s^{1-q^d} \cdot \frac{D_{s+d}}{v^{q^s}\cdot D_s}
\cdot \prod_{j=0}^{d-1}\frac{\theta^{q^j}-\varepsilon^{q^s}}{\theta^{q^j}-\theta^{q^{s+d}}} \\
&= (-1)^{d-1} \cdot v^{-q^{s}} \cdot \prod_{j=0}^{d-1}(\theta^{q^j}-\varepsilon^{q^s})\\
&=(-1)^{d-1}\cdot \prod_{j=0}^{d-1}(\theta-\varepsilon^{q^{s-j}})^{-q^{s}}\cdot \prod_{j=0}^{d-1}(\theta-\varepsilon^{q^{s-j}})^{q^j} \\
&= (-1)^{d-1}\cdot \prod_{j=0}^{d-1}(\theta-\varepsilon^{q^{s-j}})^{-(q^{s}-q^j)},
\end{align*}
which coincides with Thakur's formula in \cite[Theorem~V]{T88}, indicating the exponents of the above decomposition corresponds to the `Stickelberger element' associated with the constant field extension of degree $d$.
\end{remark}

In the following section, we will connect the infinite product
\[
\prod_{n=0}^\infty \left(
1-\frac{(\theta-\varepsilon)^{q^{s+n\ell}}}{\theta-\varepsilon^{q^{s+n\ell}}}
\right), \quad s \in \ZZ \text{ with } 0< s\leq \ell,
\]
with the transcendental periods arising from the $v$-adic ``crystalline--de~Rham comparison isomorphism'' for the Carlitz $t$-motive of degree $\ell$.

\section{Carlitz \texorpdfstring{$t$}{t}-motive and the crystalline--de~Rham comparison}\label{sec: Car-cry-dR}

\subsection{Carlitz \texorpdfstring{$t$}{t}-motive}\label{sec: Car-t}

For each (commutative) $\FF_q$-algebra $\cA$,
let $\cA[t]$ be the polynomial ring with another variable $t$ over $\cA$.
Let $\cA[t,\tau] = \cA[t][\tau]$ be the twisted polynomial ring over $\cA[t]$ satisfying the following multiplication law:
\[
\tau\cdot f(t) = f^{(1)}(t) \cdot \tau, \quad \forall f(t) \in \cA[t],
\]
where for every $f(t) = \sum_{i=0}^r a_i t^i \in \cA[t]$, 
$f^{(n)}(t)$ is the $n$th Frobenius twist of $f$:
\[
f^{(n)}(t):= \sum_{i=0}^r a_i^{q^n} t^i  \in \cA[t], \quad \forall n \in \ZZ_{\geq 0}.
\]
Throughout this section, we
fix a positive integer $\ell$.

\begin{definition}\label{Def:Cl}
The \emph{Carlitz $t$-motive of degree $\ell$ over $\FF_{q}[\theta]$} is the $\FF_{q}[\theta][t,\tau^{\ell}]$-module  $\eC_{\ell}:= \FF_{q}[\theta][t]$ with the following $\tau^{\ell}$-action
\[
\tau^{\ell}\cdot m:= (t-\theta)\cdot m^{(\ell)}, \quad \forall m \in \eC_{\ell}.
\] 
The \emph{restriction of scalars of $\eC_{\ell}$} is 
the following $\FF_{q}[\theta][t,\tau]$-module:
\[
\eR_{\tau}(\eC_{\ell}):= \FF_{q}[\theta][\tau] \otimes_{\FF_{q}[\theta][\tau^{\ell}]}\big(\FF_{q}[\theta][t]\big).
\]
Note that we may identify
\begin{equation}\label{eqn: Rt-id-1}
\eR_{\tau}(C_{\ell}) = \bigoplus_{i=0}^{\ell-1} \tau^i \otimes \big(\FF_{q}[\theta][t]\big) \cong \prod_{i=0}^{\ell-1}\FF_{q}[\theta][t],
\end{equation}
and the equipped $\tau$-action is given by
\[
\tau\cdot (m_0,m_1,\cdots,m_{\ell-1}) = \big((t-\theta)\cdot m_{\ell-1}^{(1)}, m_0^{(1)},\cdots, m_{\ell-2}^{(1)}\big).
\]  
\end{definition}

For clarity in the subsequent discussions, we set up the following notation:

\begin{definition}\label{Def:ni and ni'}
Let $d$ and $\ell$ be two positive integers.
For every integer $i$ with $0\leq i<\ell$, we put
\[
n_i = \min\{n \in \ZZ_{\geq 0}\mid i+n\ell\geq d\} 
\quad \text{ and } \quad 
i_1:= i+n_i\ell-d\quad  (\text{for which } 0\leq i_1<\ell).
\]
Also, we take $i_0$ to be the unique integer with $0\leq i_0<\ell$ and $i+d \equiv i_0 \bmod \ell$, and set $n_i' := (i+d-i_0)/\ell \in \ZZ_{\geq 0}$.
\end{definition}

In particular,
the following explicit $\tau^{d}$-action on $\eR_{\tau}(C_{\ell})$ will be used for period interpretation of $v$-adic special gamma values in the crystalline-de Rham comparison isomorphism when $d = \deg v$.

\begin{lemma}\label{lem: tau-d}\label{eqn: tau-d} Let notation be given as above. Then we have
\begin{align*} 
\tau^d \cdot (m_0,...,m_{\ell-1})
& = (m_0',...,m_{\ell-1}'),  \notag \\
& 
\text{ where }
m_i' = \left(\prod_{n=0}^{n_i-1}(t-\theta^{q^{i+n\ell}}) \right)\cdot m_{i_1}^{(d)},\quad \text{ for } 0\leq i <\ell.
\end{align*}
\end{lemma}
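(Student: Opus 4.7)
The plan is to iterate the $\tau$-action on $\eR_\tau(\eC_\ell)$ and track the fate of each component. As spelled out just above the lemma, a single application of $\tau$ sends $(m_0,\ldots,m_{\ell-1})$ to $((t-\theta)m_{\ell-1}^{(1)}, m_0^{(1)},\ldots,m_{\ell-2}^{(1)})$, so each component's position shifts by $+1 \bmod \ell$, each entry receives a Frobenius twist, and exactly one factor of $(t-\theta)$ is introduced when the shift wraps from position $\ell-1$ to position $0$.

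I will prove by induction on $k$ that, after $k$ applications of $\tau$, the component originally at position $i_1$ has moved to position $(i_1+k) \bmod \ell$ and has become $P_k \cdot m_{i_1}^{(k)}$, where
\[
P_k \;:=\; \prod_{j} \bigl(t - \theta^{q^{k-j}}\bigr)
\]
with $j$ ranging over the integers $1 \leq j \leq k$ satisfying $i_1 + j \equiv 0 \pmod{\ell}$. The base case $k=0$ is trivial. For the inductive step, if step $k+1$ involves no wrap (i.e.\ $(i_1+k+1) \bmod \ell \neq 0$), applying $\tau$ to $P_k \cdot m_{i_1}^{(k)}$ gives $P_k^{(1)} \cdot m_{i_1}^{(k+1)}$; if it does wrap, applying $\tau$ gives $(t-\theta)\cdot P_k^{(1)} \cdot m_{i_1}^{(k+1)}$. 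In both cases, a direct comparison with the defining product for $P_{k+1}$ confirms the identity.

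Finally I specialize to $k = d$ and $i_1 = i + n_i\ell - d$, so that $(i_1+d) \bmod \ell = i$. The wrap indices in $\{1,\ldots,d\}$ are precisely $j = m\ell - i_1$ for $m = 1, \ldots, n_i$ (with the convention $m\ell - 0 = m\ell$ when $i_1 = 0$), and the count of exactly $n_i$ such indices follows from the minimality of $n_i$. For these $j$ one has $d - j = i + (n_i - m)\ell$, and reindexing $n = n_i - m$ yields
\[
P_d = \prod_{n=0}^{n_i-1}\bigl(t - \theta^{q^{i+n\ell}}\bigr),
\]
which matches the asserted formula for $m_i'$. The argument is pure combinatorial bookkeeping; the only subtle point is tracking how each wrap-factor $(t-\theta)$ introduced at step $j$ picks up subsequent Frobenius twists to become $(t-\theta)^{(d-j)} = t-\theta^{q^{d-j}}$ at step $d$, which is handled uniformly by the induction, so I do not anticipate any substantive obstacle.
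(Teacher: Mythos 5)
Your proof is correct and follows essentially the same idea as the paper's: track the single-step $\tau$-action through $d$ iterations, counting how many times the position wraps around from $\ell-1$ to $0$ (each wrap contributing a factor $(t-\theta)$ that then accumulates Frobenius twists), and do the index bookkeeping at the end. The paper states the iterated formula more tersely — it writes down directly that the contribution of $m_i$ to component $i_0 \equiv i+d \bmod \ell$ is $\bigl(\prod_{n=0}^{n_i'-1}(t-\theta^{q^{i_0+n\ell}})\bigr)m_i^{(d)}$ and then re-indexes via $n_i' = n_{i_0}$ and $(i_0)_1 = i$ to match the lemma's statement — whereas you set up an explicit induction on $k$ with the wrap-product $P_k$ and specialize at $k=d$. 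These amount to the same computation; your version makes the wrap-counting and twist-accumulation fully explicit, while the paper treats the iterated formula as read off directly from the definition of $\tau$ and spends its effort on the index conversion.
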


\begin{proof}
For each integer $i$ with $0\leq i<\ell$,
we write $i+d = i_0 + n_{i}'\ell$ for unique integers $i_0$ and $n_i'$ with $0\leq i_0<\ell$.
Then
\[
m_{i_0}' = \left(\prod_{n=0}^{n_i'-1}(t-\theta^{q^{i_0+n\ell}})\right)\cdot m_{i}^{(d)}.
\]
Based on Definition~\ref{Def:ni and ni'}, as $n_i' \geq n_{i_0}$ and 
\[
d \leq i_0+n_{i_0}\ell \leq 
i_0+n_i'\ell
= i+d <\ell+d,
\]
we get
\[
0\leq n_i'\ell -n_{i_0}\ell< (\ell+d)-(d) = \ell,
\]
whence $n_i' = n_{i_0}$.
Hence $i = i_0+n_{i_0}\ell -d$ = $(i_0)_1$ and the result holds.
\end{proof}

In this section, our primary goal is to explicitly illustrate the crystalline--de~Rham comparison isomorphism for $\eR_{\tau}(\eC_{\ell})$ at $v$.

\subsection{Notation}\label{sec: no-2}

Put $d:= \deg v$ as before, and
write 
\[
v = \theta^d + \epsilon_1\theta^{d-1}+\cdots+\epsilon_{d-1}\theta+\epsilon_d, \quad 
\text{ where $\epsilon_1,...,\epsilon_d \in \FF_q$.}
\]
Let $\iota: \FF_{q}[t]\cong \FF_q[\theta]$ be the $\FF_q$-algebra isomorphism with $\iota(t) = \theta$.
Set
\[
T_v:= \iota^{-1}(v) = t^d + \epsilon_1 t^{d-1}+\cdots+\epsilon_{d-1} t +\epsilon_d,
\]
which is monic irreducible of degree $d$ in $\FF_q[t]$.
We may embed $\FF_q[t]$ into the formal power series ring $\FF_{q^d}[\![T_v]\!]$ (identified with the completion of $\FF_q[t]$ at $T_v$) which is compatible with $\iota$, i.e.\ $t \equiv \varepsilon \bmod T_v$, and $\iota$ is extended to an $\FF_{q^d}$-isomorphism $\FF_{q^d}[\![T_v]\!] \cong \FF_{q^d}[\![v]\!]$ sending $T_v$ to $v$.
These settings can be illustrated in the following diagram:

\[
\SelectTips{cm}{}
\xymatrixrowsep{0.6cm}
\xymatrixcolsep{0.3cm}
\xymatrix{
\FF_{q^d}[\![t-\varepsilon]\!] \ar@{=}[r]& \FF_{q^d}[\![T_v]\!] \ar@{->}^{\sim}[rrrrr] & & & & & \FF_{q^d}[\![v]\!] \ar@{=}[r] & \FF_{q^d}[\![\theta-\varepsilon]\!] \\
& & \FF_{q^d}[t] \ar@{-}[lu] \ar@{->}^{\sim}[rrr] & & &\FF_{q^d}[\theta] \ar@{-}[ru] & & \\
& & \FF_{q}[t] \ar@{-}[luu]\ar@{-}[u] \ar@{->}^{\sim}_{\iota}[rrr] & & &  \FF_{q}[\theta] \ar@{-}[ruu] \ar@{-}[u] & &
}
\]
In particular, the embedding $\FF_q[t]\hookrightarrow \FF_{q^d}[\![T_v]\!]$
sends $t$ to a power series in $T_v$ as
\begin{align}\label{eqn: T_v-expansion of t}
& t(T_v) = \sum_{n=0}^\infty a_n T_v^n \in \FF_{q^d}[\![T_v]\!], \quad \text{ where } \quad 
a_0 = \varepsilon \text{ and }
a_1 = 
\prod_{i=1}^{d-1}(\varepsilon-\varepsilon^{q^i})^{-1},\\
\label{eqn: evaluating at v} & \text{and} \quad t(T_v)\big|_{T_v=v} = \varepsilon+ \left(\prod_{i=1}^{d-1}(\varepsilon-\varepsilon^{q^i})^{-1}\right)v +\cdots = \theta \quad \in \FF_{q^d}[\![v]\!].
\end{align}

\begin{Subsubsec}{Comparison between twistings}\label{subsubsec: com-twist}
Let $A_v := \FF_{q^d}[\![v]\!]$ be the valuation ring of $k_v$.
Following Hartl--Kim we let 
\[
A_{v}[\![T_v,T_v^{-1}\}\!\}:= \Big\{\sum_{i\in \ZZ}b_iT_v^{i}\in A_{v}[\![T_v,T_v^{-1}]\!]\ \Big|\ \lim_{i\rightarrow -\infty}b_i v^{-ni} = 0, \quad \forall n>0\Big\},
\]
on which we define the $n$-th Frobenius twist as follows.
\begin{definition}\label{Def: Twisting on Av[[Tv]]}
Given $f = \sum_{i\in \ZZ} f_i T_v^i \in A_{v}[\![T_v,T_v^{-1}\}\!\}$, its $n$-th Frobenius twist is given by
\[
f^{(n)}:= \sum_{i\in \ZZ} f_i^{q^{n}}  T_v^i.
\]
\end{definition}

Note that the embedding $\FF_q[t]\hookrightarrow \FF_{q^d}[\![T_v]\!]$ given in \eqref{eqn: T_v-expansion of t}
induces an $A_v$-algebra embedding 
$A_v[t] \hookrightarrow A_{v}[\![T_v,T_v^{-1}\}\!\}$.
However, the respective $n$-th Frobenius twists on $A_v[t]$ and on $A_{v}[\![T_v,T_v^{-1}\}\!\}$ are only compatible when $d$ divides $n$.
Indeed, regarding $t = t(T_v)$ as a power series in $\FF_{q^d}[\![T_v]\!]$ given in \eqref{eqn: T_v-expansion of t}, we get
\begin{align*}
t^{(n)}(T_v) & = 
\bigg(\varepsilon + \Big(\prod_{i=1}^{d-1}(\varepsilon-\varepsilon^{q^i})\Big)\cdot T_v + \cdots \bigg)^{(n)}
=
\varepsilon^{q^n} + \Big(\prod_{i=1}^{d-1}(\varepsilon-\varepsilon^{q^i})\Big)^{q^n}\cdot T_v + \cdots \\
& = \varepsilon + \Big(\prod_{i=1}^{d-1}(\varepsilon-\varepsilon^{q^i})\Big)\cdot T_v + \cdots = t(T_v) \quad \in \FF_{q^d}[\![T_v]\!]
\quad \text{ if and only if $d \mid n$.}
\end{align*}
We would like to remind readers to be aware of the above ``non-compatibility'' between these two types of Frobenius twistings. 
\end{Subsubsec}

\begin{remark}
The twisted polynomial ring $A_{v}[\![T_v,T_v^{-1}\}\!\}[\tau]$ over $A_{v}[\![T_v,T_v^{-1}\}\!\}$ satisfies 
\[
\tau \cdot f(T_v) = f^{(1)}(T_v) \tau, \quad \forall f(T_v) \in A_{v}[\![T_v,T_v^{-1}\}\!\}.
\]
Because of the non-compatibility of the Frobenius twists on $A_v[t]$ and on $A_{v}[\![T_v,T_v^{-1}\}\!\}$,
we remark that $A_v[t][\tau]$ is not a subring of $A_{v}[\![T_v,T_v^{-1}\}\!\}[\tau]$ when $d>1$, 
but both do contain the subring $A_v[t][\tau^d]$.
\end{remark}

\subsection{Crystalline module of \texorpdfstring{$\eR_{\tau}(\eC_{\ell})$}{Rtau(Cl)}}\label{sec: Cris-RC}
Put $\theta_v:=\theta-\varepsilon$. 
Let
\[
A_{v}:= \FF_{q^{d}}[\![\theta_v]\!] =  \FF_{q^{d}}[\![v]\!] \subset k_{v}
\quad \text{ and } \quad 
\fm_{v}:= v\cdot A_{v} = \theta_v \cdot A_{v}.
\] be the valuation ring of $k_{v}$ with maximal ideal \[ \fm_{v}:= v\cdot A_{v} = \theta_v \cdot A_{v}.\]
Note that as $\FF_{q^d} \cong A_{v}/ \fm_{v}$ is an $\FF_{q}[\theta]$-algebra, we define $\overline{\eR_{\tau} (\eC_{\ell})}$ to be the reduction of 
$\eR_{\tau} (\eC_{\ell})$ modulo $m_{v}$, i.e.
\begin{equation}\label{eqn: red-RC}
\overline{\eR_{\tau} (\eC_{\ell})} = \frac{A_{v}}{\fm_{v}}\otimes_{\FF_{q}[\theta]}\big(\eR_{\tau}(\eC_{\ell})\big). 
\end{equation}
By \eqref{eqn: Rt-id-1}, we have the following identification
\begin{equation}\label{eqn: red-C}
\overline{\eR_{\tau} (\eC_{\ell})}
=
\prod_{i=0}^{\ell-1}\FF_{q^{d}}[t],
\end{equation}
which becomes an $\FF_{q^{d}}[t,\tau]$-module with the $\tau$-action given by
\[
\tau \cdot \big(\bar{m}_0,\bar{m}_1,\cdots, \bar{m}_{\ell-1}\big)
= \Big((t-\varepsilon)\cdot \bar{m}_{\ell-1}^{(1)},\bar{m}_0^{(1)},\cdots, \bar{m}_{\ell-2}^{(1)}\Big),
\] where we note that $\theta\equiv \epsilon $ (mod $v$).

\begin{definition}
(See \cite[Definition~3.5.14]{HK20}) 
The $v$-adic crystalline module of the $t$-motive $\eR_{\tau}(\eC_{\ell})$ is 
\[
H_{\mathrm{cris}}\big(\eR_{\tau}(\eC_{\ell}), \FF_{q^{d}}(\!(T_v)\!)\big):= \FF_{q^{d}}(\!(T_v)\!) \otimes_{\FF_{q^{d}}[t]}\Big(\tau^d \cdot \overline{\eR_{\tau}(\eC_{\ell})}\Big). 
\]
\end{definition}

\begin{remark}\label{rem: isocrystal}
We have a natural $\tau^d$-action on $H_{\mathrm{cris}}\big(\eR_{\tau}(\eC_{\ell}), \FF_{q^{d}}(\!(T_v)\!)\big)$ as follows:
\begin{align*}
\tau^d \cdot \left(\big(\sum_{i} f_i T_v^i\big) \otimes m\right) & := \big(\sum_{i} f_i^{q^d} T_v^i\big) \otimes \tau^d m \\
& = \big(\sum_{i} f_i T_v^i\big) \otimes \tau^d m
, \quad \forall \sum_{i} f_i T_v^i \in \FF_{q^{d}}(\!(T_v)\!),\ m \in \tau^d \cdot \overline{\eR_{\tau}(C_{\ell})}.
\end{align*}
(The pair $\big(H_{\mathrm{cris}}\big(\eR_{\tau}(\eC_{\ell}), \FF_{q^{d}}(\!(T_v)\!)\big) ,\tau^d\big)$ is called the \emph{$T_v$-isocrystal of $\eR_{\tau}(C_{\ell})$},  
see \cite[Example~3.5.7]{HK20}.)
\end{remark}

\subsection{de~Rham module of \texorpdfstring{$\eR_{\tau}(\eC_{\ell})$}{Rtau(Cl)}}\label{sec: dR-RC}

Let $\cR_{\tau}(\eC_{\ell})$ be the base change of $\eR_{\tau}(\eC_{\ell})$ to $k$, i.e.\ 
\[
\cR_{\tau}(\eC_{\ell}) = k \otimes_A \eR_{\tau}(\eC_{\ell}),
\]
and the $\tau$-action is extended by
\[
\tau \cdot (\alpha \otimes m) := \alpha^q \otimes (\tau \cdot m), \quad \forall \alpha \in k,\ m \in \eR_{\tau}(\eC_{\ell}).
\]
The {\it de~Rham module of $\eR_{\tau}(\eC_{\ell})$ over $k$} is defined as follows:
\begin{align}\label{eqn: dR-RC}
H_{\textrm{dR}}\big(\eR_\tau(\eC_\ell),k\big) &:= \frac{\langle \tau \eR_{\tau}(\eC_{\ell})\rangle_k}{(t-\theta)\cdot \langle \tau \eR_{\tau}(\eC_{\ell})\rangle_k}.
\end{align}
Here $\langle \tau \eR_{\tau}(\eC_{\ell})\rangle_k$ is the $k$-subspace of $\cR_{\tau}(\eC_{\ell})$ spanned by $\tau \eR_{\tau}(\eC_{\ell})$.

\begin{remark}
The definition of the de Rham module follows from Hartl--Juschka \cite[\S 2.3.5]{HJ20}.
Regarding $\cR_{\tau}(\eC_{\ell})$ as the $t$-motive associated to the ``Carlitz module of degree $\ell$'' over $k$, the de Rham module of $\cR_{\tau}(\eC_{\ell})$ can be defined alternatively via the theory of biderivations developed by Anderson, Deligne, Gekeler, and Yu for Drinfeld modules (see \cite{Ge89}, and \cite{Yu90}), and extended by Brownawell--Papanikolas for higher dimensional $t$-modules (see \cite{BP02}).
For the equivalence between these two definitions, we refer the reader to \cite[Prop.~4.1.3]{NP22} for a detailed comparison.
\end{remark}

\begin{lemma}\label{lem: dR-com}
The natural inclusion $\tau^d \eR_\tau(\eC_\ell) \subset \tau \eR_\tau(\eC_\ell)$
induces the $k$-linear isomorphism
\[
\frac{\langle \tau^d \eR_\tau(\eC_\ell)\rangle_k}{
(t-\theta)\cdot \langle \tau^d \eR_\tau(\eC_\ell)\rangle_k
}
\stackrel{\sim}{\longrightarrow}
H_{\mathrm{dR}}\big(\eR_\tau(\eC_\ell),k\big).
\]
\end{lemma}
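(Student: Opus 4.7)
The plan is to regard both sides of the claimed isomorphism as quotients of $k[t]$-modules and compare them through the cokernel of the inclusion. Since $\tau$ commutes with multiplication by $t$ in $\FF_q[\theta][t,\tau]$ (as $t$ has $\FF_q$-coefficients), both $\langle\tau^d \eR_\tau(\eC_\ell)\rangle_k$ and $\langle\tau \eR_\tau(\eC_\ell)\rangle_k$ are $k[t]$-submodules of $\cR_\tau(\eC_\ell)$, and the natural inclusion between them is $k[t]$-linear. Letting $Q$ denote its cokernel, the Tor long exact sequence associated to the base change $-\otimes_{k[t]}k[t]/(t-\theta)$ will reduce the lemma to showing that multiplication by $(t-\theta)$ acts bijectively on $Q$; equivalently, that $Q$ is supported away from the prime $(t-\theta)$ of $k[t]$.

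To exhibit $Q$ concretely, I will use the decomposition $\cR_\tau(\eC_\ell) = \bigoplus_{i=0}^{\ell-1}\tau^i\otimes k[t]$ from \eqref{eqn: Rt-id-1}. The explicit $\tau$-action recalled after Definition~\ref{Def:Cl} yields
\[
\langle\tau \eR_\tau(\eC_\ell)\rangle_k \;=\; (t-\theta)\, k[t]\;\oplus\; k[t]^{\oplus(\ell-1)},
\]
the factor $(t-\theta)$ sitting in the $i=0$ slot. Similarly, Lemma~\ref{lem: tau-d} produces
\[
\langle\tau^d \eR_\tau(\eC_\ell)\rangle_k \;=\; \bigoplus_{i=0}^{\ell-1}\Bigl(\prod_{n=0}^{n_i-1}(t-\theta^{q^{i+n\ell}})\Bigr)\cdot k[t],
\]
where the $i=0,\ n=0$ factor reproduces the $(t-\theta)$ appearing in the previous display.

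Given these descriptions, $Q$ is a finitely generated torsion $k[t]$-module whose support is contained in the set of primes $(t-\theta^{q^{i+n\ell}})$ with $i+n\ell\ge 1$. The crucial point will be to verify that $(t-\theta)$ itself is \emph{not} in the support. For $i\ge 1$ this is immediate since $i+n\ell>0$; for the $i=0$ slot, the potentially offending $n=0$ factor is already absorbed into $\langle\tau \eR_\tau(\eC_\ell)\rangle_k$, so only indices $n\ge 1$ (hence $n\ell\ge 1$) actually contribute to $Q$. This matching of the lone $(t-\theta)$-factor between the two modules is where the precise form of Lemma~\ref{lem: tau-d}, rather than a mere dimension count, is needed.

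With this support computation in hand, multiplication by $(t-\theta)$ is invertible on $Q$, whence $\operatorname{Tor}_1^{k[t]}\!\bigl(Q,\,k[t]/(t-\theta)\bigr)=0$ and $Q/(t-\theta)Q=0$; the Tor sequence then produces the desired isomorphism. A routine dimension count on both sides (each is $\ell$-dimensional over $k$) will serve as a sanity check, but I expect the main conceptual obstacle to be precisely the component-by-component bookkeeping in the previous paragraph: once the lone $(t-\theta)$-factor emerging from $\tau^d$ is correctly identified with the one already present in $\langle\tau \eR_\tau(\eC_\ell)\rangle_k$, the rest of the argument is formal.
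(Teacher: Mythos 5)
Your proof is correct, and it takes a genuinely different route from the paper's while relying on the same essential observation. The paper's proof proceeds by noting that $\langle\tau^d\eR_\tau(\eC_\ell)\rangle_k \supset \bigl(\prod_{i=1}^{d-1}(t-\theta^{q^i})\bigr)\langle\tau\eR_\tau(\eC_\ell)\rangle_k$, then uses coprimality of $(t-\theta)$ with $\prod_{i=1}^{d-1}(t-\theta^{q^i})$ to conclude
$\langle\tau\eR_\tau(\eC_\ell)\rangle_k = (t-\theta)\langle\tau\eR_\tau(\eC_\ell)\rangle_k + \langle\tau^d\eR_\tau(\eC_\ell)\rangle_k$
(surjectivity), and then states that the kernel is $(t-\theta)\langle\tau^d\eR_\tau(\eC_\ell)\rangle_k$ — the injectivity step is left implicit. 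You instead compute the cokernel $Q$ of the inclusion component-by-component, show it is a torsion $k[t]$-module supported away from $(t-\theta)$, and use the $\operatorname{Tor}$ long exact sequence to deduce both surjectivity and injectivity at once. What your approach buys is a cleaner handle on injectivity, which the paper glosses over; what the paper's approach buys is brevity, since it avoids computing the cokernel explicitly. The two hinge on the identical key fact: the only slot where a $(t-\theta)$ factor appears (the $i=0$ slot) contributes the same $(t-\theta)$ to both $\langle\tau\eR_\tau(\eC_\ell)\rangle_k$ and $\langle\tau^d\eR_\tau(\eC_\ell)\rangle_k$ (via $n=0$ in the $\tau^d$-product), while every other factor $t-\theta^{q^{j}}$ with $j\geq 1$ is coprime to $t-\theta$ because $\theta$ is transcendental over $\FF_q$. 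One small efficiency you could have exploited: since $Q$ is a finite-dimensional $k$-vector space, surjectivity of multiplication by $t-\theta$ on $Q$ (which is exactly what the paper's CRT display shows) already forces injectivity, so the explicit support computation could be replaced by the paper's one-line coprimality argument followed by your $\operatorname{Tor}$ bookkeeping.
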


\begin{proof}
From the description of the $\tau^d$-action on $\eR_{\tau}(\eC_{\ell})$ in Lemma~\ref{lem: tau-d}, we observe that
\[
\langle \tau^d \eR_\tau(\eC_{\ell})\rangle_k \supset \left(\prod_{i=1}^{d-1}(t-\theta^{q^i})\right) \cdot \langle \tau \eR_\tau(\eC_{\ell})\rangle_k. 
\]
By Chinese Remainder Theorem, we get
\begin{align*}
& (t-\theta)\cdot \langle \tau \eR_\tau(\eC_{\ell})\rangle_k
+ \langle \tau^d \eR_\tau(\eC_{\ell})\rangle_k \\ 
\supset\ &  (t-\theta)\cdot \langle \tau \eR_\tau(\eC_{\ell})\rangle_k + \left(\prod_{i=1}^{d-1}(t-\theta^{q^i})\right) \cdot \langle \tau \eR_\tau(\eC_{\ell})\rangle_k \\
=\ &  \langle \tau \eR_\tau(\eC_{\ell})\rangle_k,
\end{align*}
whence the canonical map
\[
\langle \tau^d \eR_\tau(\eC_{\ell})\rangle_k \longrightarrow \frac{\langle \tau \eR_\tau(\eC_{\ell})\rangle_k}{(t-\theta)\cdot \langle \tau \eR_\tau(\eC_{\ell})\rangle_k} = H_{\textrm{dR}}\big(\eR_{\tau}(\eC_{\ell}),k\big)
\]
is surjective with kernel $(t-\theta) \cdot \langle \tau^d \eR_\tau(\eC_{\ell})\rangle_k.$
\end{proof}

Consider the following base change:
\[
H_{\textrm{dR}}\big(\eR_\tau(\eC_\ell),k_v\big):= k_v \otimes_{k} H_{\textrm{dR}}\big(\eR_\tau(\eC_\ell),k\big),
\]
called the {\it de~Rham module of $\eR_\tau(\eC_\ell)$ over $k_v$}.
Also, we put 
\begin{align}\label{eqn: H-dR}
H_{\textrm{dR}}
\big(\cR_{\tau}(\eC_{\ell}),k_{v}[\![T_v-v]\!]\big) & := k_{v}[\![T_v-v]\!] \otimes_{k[t]} \langle \tau^d \eR_{\tau}(\eC_{\ell})\rangle_k.
\end{align}
Then the above lemma leads to the following $k_v$-linear isomorphism
\begin{equation}\label{eqn: dR-iso-2}
\frac{H_{\textrm{dR}}
\big(\cR_{\tau}(\eC_{\ell}),k_{v}[\![T_v-v]\!]\big) }{(T_v-v)\cdot H_{\textrm{dR}}
\big(\cR_{\tau}(\eC_{\ell}),k_{v}[\![T_v-v]\!]\big) } \stackrel{\sim}{\longrightarrow} H_{\mathrm{dR}}\big(\eR_\tau(\eC_\ell),k_v\big).    
\end{equation}
This will be used in the crystalline--de~Rham comparison isomorphism in the next subsection.

\subsection{Crystalline--de~Rham comparison isomorphism}
\label{sec: cris-dR-iso}
In order to describe the crystalline--de~Rham comparison isomorphism, we recall in Section~\ref{subsubsec: com-twist} that following Hartl--Kim, we define
\[
A_{v}[\![T_v,T_v^{-1}\}\!\}:= \Big\{\sum_{i\in \ZZ}b_iT_v^{i}\in A_{v}[\![T_v,T_v^{-1}]\!]\ \Big|\ \lim_{i\rightarrow -\infty}b_i v^{-ni} = 0, \quad \forall n>0\Big\}.
\]
We further consider the $v$-adic Anderson's scattering (cf.\ \cite{ABP04, GL11, Har11},
and \cite[(3.5.3)]{HK20})
\[
\varpi(T_v):= \prod_{i\in \ZZ_{\geq 0}}(1-\frac{v^{q^{di}}}{T_v}) \quad \in\  A_{v}\{\!\{T_v^{-1}\}\!\} := A_v[\![T_v^{-1}]\!] \cap A_{v}[\![T_v,T_v^{-1}\}\!\}.
\]

\begin{lemma}\label{lem: comp-iso-1}
There exists a unique isomorphism
\begin{align*}
A_{v}[\![T_v,T_v^{-1}\}\!\}[\varpi(T_v)^{-1}] \otimes_{\FF_{q^{d}}[t]}\overline{\eR_{\tau}(\eC_{\ell})}
& \stackrel{\sim}{\longrightarrow} 
A_{v}[\![T_v,T_v^{-1}\}\!\}[\varpi(T_v)^{-1}]\otimes_{\FF_{q}[\theta][t]}\eR_{\tau}(\eC_{\ell}), 
\end{align*}
which is compatible with the $\tau^d$-action on both sides and becomes the identity map modulo $\fm_{v}$.
\end{lemma}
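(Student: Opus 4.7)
The plan is to reduce the assertion to solving a matrix intertwining equation with prescribed boundary data. Relative to the natural $A_v[t]$-basis of $\eR_\tau(\eC_\ell)$ coming from the decomposition~\eqref{eqn: Rt-id-1}, whose reduction modulo $\mathfrak{m}_v$ yields the corresponding $\FF_{q^d}[t]$-basis of $\overline{\eR_\tau(\eC_\ell)}$, the $\tau^d$-action on $\eR_\tau(\eC_\ell)$ is represented by an explicit matrix $\Phi \in \mathrm{Mat}_\ell\bigl(\FF_q[\theta][t]\bigr)$ via Lemma~\ref{lem: tau-d}, while the $\tau^d$-action on $\overline{\eR_\tau(\eC_\ell)}$ is represented by $\bar{\Phi}$, obtained from $\Phi$ by the substitution $\theta \mapsto \varepsilon$. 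The asserted isomorphism then corresponds to a matrix $U \in \mathrm{GL}_\ell\bigl(A_v[\![T_v,T_v^{-1}\}\!\}[\varpi(T_v)^{-1}]\bigr)$ satisfying the intertwining equation $U\bar{\Phi} = \Phi U^{(d)}$ together with the normalization $U \equiv I_\ell \pmod{\mathfrak{m}_v}$; the uniqueness assertion of the lemma becomes the statement that this matrix equation has a unique solution.

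\textbf{Existence.} Because $\Phi$ differs from $\bar{\Phi}$ only in the appearance of the factors $t - \theta^{q^j}$ versus $t - \varepsilon^{q^j}$, and because the correction $\theta - \varepsilon$ lies in $\mathfrak{m}_v$, iterating the equation $U = \Phi U^{(d)} \bar{\Phi}^{-1}$ starting from the ansatz $U \equiv I_\ell \pmod{\mathfrak{m}_v}$ produces telescoping infinite products as the entries of $U$. After substituting the power series $t = t(T_v)$ from~\eqref{eqn: T_v-expansion of t}, these products involve factors of the shape $\bigl(1 - (\theta-\varepsilon)^{q^{i+n\ell}}/(\theta-\varepsilon^{q^{i+n\ell}})\bigr)$ (matching the entries anticipated in Theorem~\ref{thm: pm}) together with ratios $(t(T_v) - \theta^{q^j})/(t(T_v) - \varepsilon^{q^j})$. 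Each factor individually lies in $A_v[\![T_v,T_v^{-1}\}\!\}$ away from the zeros of $t(T_v) - \varepsilon^{q^{dn}}$; those zeros in $T_v$ are exactly the points $T_v = v^{q^{dn}}$, which are the zeros of $\varpi(T_v)$. Hence after inverting $\varpi(T_v)$ the product converges in the stated ring, and direct substitution confirms $U\bar{\Phi} = \Phi U^{(d)}$.

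\textbf{Uniqueness.} Given two solutions $U_1, U_2$, their ratio $V := U_2 U_1^{-1}$ satisfies $V\Phi = \Phi V^{(d)}$, i.e., $V = \Phi V^{(d)} \Phi^{-1}$, together with $V \equiv I_\ell \pmod{\mathfrak{m}_v}$. Assume inductively $V \equiv I_\ell \pmod{\mathfrak{m}_v^N}$ with $N \geq 1$. Since the $d$-fold Frobenius twist raises the $v$-adic valuations of coefficients by a factor of $q^d$, one gets $V^{(d)} \equiv I_\ell \pmod{\mathfrak{m}_v^{q^d N}}$; as conjugation by $\Phi$ preserves the $\mathfrak{m}_v$-adic filtration, we deduce $V \equiv I_\ell \pmod{\mathfrak{m}_v^{q^d N}}$. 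Iterating this contraction gives $V = I_\ell$, so $U_1 = U_2$.

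\textbf{Main obstacle.} The central technical point is controlling the growth of the telescoping infinite products carefully enough to certify membership in the specific ring $A_v[\![T_v,T_v^{-1}\}\!\}[\varpi(T_v)^{-1}]$ rather than a coarser ring: one has to verify both the Laurent growth condition in $T_v$ and that the only denominators are exactly those absorbed by $\varpi(T_v)^{-1}$. A secondary subtlety, flagged in Section~\ref{subsubsec: com-twist}, is the mismatch between the Frobenius conventions on $A_v[t]$ and on $A_v[\![T_v,T_v^{-1}\}\!\}$; working exclusively with $\tau^d$ (rather than $\tau$) is what reconciles these two conventions, and is also the reason the isomorphism descends only to the level of $T_v$-isocrystals as in Remark~\ref{rem: isocrystal}, not to a finer $\tau$-equivariant object.
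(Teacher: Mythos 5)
The paper does not actually give a self-contained proof of Lemma~\ref{lem: comp-iso-1}: it quotes the result from \cite[Lemma~3.5.8]{HK20}, \cite[Lemma~7.4]{GL11} and \cite[Lemma~2.3.1]{Har11}, and then devotes Section~\ref{sec: des-iso} to an explicit description of the isomorphism with the diagonal matrix $\operatorname{diag}(\Omega_{\ell,v,0},\dots,\Omega_{\ell,v,\ell-1})$. Your reformulation as a Frobenius difference equation $U\bar{\Phi}=\Phi U^{(d)}$ with $U\equiv I_\ell\bmod\fm_v$ is the correct and natural way to set up a direct proof, and the iteration-plus-contraction strategy mirrors what those references do in general. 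So structurally you are on the right track; but the hard content is exactly what you defer.

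The genuine gap is in the existence step. You assert that after inverting $\varpi(T_v)$ the telescoping products "converge in the stated ring," supported only by a heuristic about the zero locus of $t(T_v)-\varepsilon^{q^{dn}}$. That is the entire nontrivial content of the claim, and it is not automatic. The paper proves it in Lemma~\ref{lem: Omega-unit}: one must separate the factors with $\ell n+i\not\equiv 0\bmod d$ (which contribute units in $A_v[\![T_v]\!]$ directly via~\eqref{eqn: tv}) from those with $\ell n+i\equiv 0\bmod d$, and for the latter invoke Hensel's lemma on $g(X)=\prod_{j=0}^{d-1}(X-\theta_j)$ to rewrite $1-\theta_v/(t-\varepsilon)$ as $(1-v/T_v)\,h(T_v)^{-1}$ with $h(T_v)\in 1+\theta_vA_v[\![T_v]\!]$ (equation~\eqref{E:h(Tv)}), thereby exposing the exact $\varpi$-denominator that is being inverted. "Away from the zeros" is not a statement about membership in $A_v[\![T_v,T_v^{-1}\}\!\}[\varpi^{-1}]$; you need the algebraic bookkeeping. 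Your uniqueness argument is also slightly too quick: $\det\Phi$ picks up factors $t(T_v)-\theta^{q^j}$ whose constant $T_v$-coefficient lies in $\fm_v$ when $d\mid j$, so $\Phi^{-1}$ is not integral over $A_v[\![T_v,T_v^{-1}\}\!\}$ and conjugation by $\Phi$ does \emph{not} preserve the $\fm_v$-adic filtration verbatim; the Frobenius twist still wins (it gains a factor $q^d$ while conjugation loses only a bounded amount), but that comparison has to be made explicit rather than invoked as a fact.
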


The above lemma actually holds for other $t$-motives (in a more general setting), see \cite[Lemma~3.5.8]{HK20}, \cite[Lemma 7.4]{GL11}, and \cite[Lemma 2.3.1]{Har11}.
We illustrate the isomorphism in Lemma~\ref{lem: comp-iso-1} in concrete terms as follows.
Recall $\theta_v:= \theta-\varepsilon$.
We first observe that:

\begin{lemma}\label{lem: Omega-unit}\label{eqn: Omega} We identify $t$ with the power series $t(T_v)$ in \eqref{eqn: T_v-expansion of t}.
For every non-negative integer $i$ and positive integer $\ell$, we define
\[
\Omega_{\ell,v,i}(T_v) := \prod_{n=0}^\infty(1-\frac{\theta_v^{q^{ \ell n +i}}}{t-\varepsilon^{q^{\ell n +i}}}) .
\]
Then we have 
\[ 
\Omega_{\ell,v,i}(T_v)  \in A_{v}[\![T_v,T_v^{-1}\}\!\} \cap \left(A_{v}[\![T_v,T_v^{-1}\}\!\}[\varpi(T_v)^{-1}]\right)^{\times}.\]

\end{lemma}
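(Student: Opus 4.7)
My plan is to prove the two assertions separately: first that $\Omega_{\ell,v,i}(T_v)$ lies in $A_v[\![T_v,T_v^{-1}\}\!\}$, and then that it is a unit once $\varpi(T_v)^{-1}$ is adjoined. The starting point is the characteristic-$p$ identity $\theta^{q^m} = (\varepsilon + \theta_v)^{q^m} = \varepsilon^{q^m} + \theta_v^{q^m}$, which rewrites each factor as
\[
1 - \frac{\theta_v^{q^{\ell n + i}}}{t - \varepsilon^{q^{\ell n + i}}} = \frac{t - \theta^{q^{\ell n + i}}}{t - \varepsilon^{q^{\ell n + i}}}.
\]
I will then split the product over $n \geq 0$ according to whether $d := \deg v$ divides $\ell n + i$, since this divisibility controls whether $t - \varepsilon^{q^{\ell n + i}}$ is a unit in $A_v[\![T_v]\!]$ or acquires a $T_v$-factor (we have $t - \varepsilon = T_v \cdot w(T_v)$ with $w \in A_v[\![T_v]\!]^\times$, thanks to $a_1 \in \FF_{q^d}^\times$).

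For indices $n$ with $d \nmid \ell n + i$, both numerator and denominator are units in $A_v[\![T_v]\!]$ with nonzero constant term $\varepsilon - \varepsilon^{q^{\ell n + i}} \in \FF_{q^d}^\times$, and the ratio is a unit congruent to $1$ modulo $\fm_v^{q^{\ell n + i}}$. For $n \in S := \{n \geq 0 : d \mid \ell n + i\}$ I write $\ell n + i = dm_n$; the key observation is that since $a_j \in \FF_{q^d}$ in \eqref{eqn: T_v-expansion of t} and $d \mid dm_n$, the identity $\theta^{q^{dm_n}} = t(v)^{q^{dm_n}} = t(v^{q^{dm_n}})$ holds, yielding the factorization
\[
t - \theta^{q^{dm_n}} = t(T_v) - t(v^{q^{dm_n}}) = (T_v - v^{q^{dm_n}}) \cdot g_{m_n}(T_v),
\]
where $g_{m_n}(T_v) \in A_v[\![T_v]\!]^\times$ is extracted termwise from the expansion and satisfies $g_{m_n} \equiv w \pmod{\fm_v^{q^{dm_n}}}$. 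Dividing by $t - \varepsilon = T_v \cdot w(T_v)$, the factor becomes $(1 - v^{q^{dm_n}}/T_v) \cdot g_{m_n}/w$, whose second piece lies in $1 + \fm_v^{q^{dm_n}} A_v[\![T_v]\!]$.

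Consequently I can regroup $\Omega_{\ell,v,i}(T_v) = U(T_v) \cdot \Pi(T_v)$, where $U$ is an infinite product of units in $A_v[\![T_v]\!]^\times$ (each factor congruent to $1$ modulo a sufficiently high power of $\fm_v$) converging in $A_v[\![T_v]\!]^\times$, and $\Pi(T_v) := \prod_{n \in S}(1 - v^{q^{dm_n}}/T_v)$ converges in $A_v\{\!\{T_v^{-1}\}\!\} \subset A_v[\![T_v,T_v^{-1}\}\!\}$ by the same estimate used for $\varpi(T_v)$. This establishes membership in $A_v[\![T_v,T_v^{-1}\}\!\}$. For the unit property in $A_v[\![T_v,T_v^{-1}\}\!\}[\varpi^{-1}]$, the factor $U$ is already a unit; for $\Pi$, since $n \mapsto m_n$ is injective on $S$, the complementary product $\Pi'(T_v) := \prod_{k \in \ZZ_{\geq 0} \setminus \{m_n : n \in S\}}(1 - v^{q^{dk}}/T_v)$ converges in $A_v\{\!\{T_v^{-1}\}\!\}$ and satisfies $\Pi \cdot \Pi' = \varpi(T_v)$, hence $\Pi^{-1} = \Pi' \cdot \varpi(T_v)^{-1} \in A_v[\![T_v,T_v^{-1}\}\!\}[\varpi^{-1}]$.

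The main obstacle I anticipate is establishing the factorization $t - \theta^{q^{dm_n}} = (T_v - v^{q^{dm_n}}) \cdot g_{m_n}(T_v)$ with $g_{m_n}$ a unit enjoying the correct congruence modulo $\fm_v^{q^{dm_n}}$, since this single input underlies both the convergence estimate and the clean identification of the ``pole part'' of $\Omega$ with a sub-product of $\varpi$. The identity $\theta^{q^{dm_n}} = t(v^{q^{dm_n}})$ critically uses $d \mid dm_n$ (so the $q^{dm_n}$-th power fixes each $a_j \in \FF_{q^d}$), which is precisely why the case split is dictated by the divisibility $d \mid \ell n + i$. A secondary technical point is the careful handling of the topology on $A_v[\![T_v,T_v^{-1}\}\!\}$ needed to validate convergence of the various infinite products.
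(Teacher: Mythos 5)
Your proof is correct and reaches the same conclusion via the same initial decomposition (splitting the product according to whether $d \mid \ell n+i$), but the mechanism you use for the ``divisible'' sub-product is genuinely more direct than the paper's. The paper proceeds by Hensel's lemma: it factors the minimal polynomial $g(X) = X^d+\epsilon_1X^{d-1}+\cdots+\epsilon_d-v$ of $\theta$ over $\FF_{q^d}(\!(v)\!)$ completely as $g(X)=(X-\theta)\prod_{j=1}^{d-1}(X-\theta_j)$ with Hensel roots $\theta_j\equiv\varepsilon^{q^j}$, deduces $T_v-v=(t-\theta)\prod_{j=1}^{d-1}(t-\theta_j)$, defines $h(T_v)=\frac{1-v/T_v}{1-\theta_v/(t-\varepsilon)}=\prod_{j=1}^{d-1}\bigl(1-\frac{u_j\theta_v}{t-\varepsilon^{q^j}}\bigr)\in 1+\theta_v A_v[\![T_v]\!]$, and then applies the $d\ell_0 n+i'$-th Frobenius twist (valid because $d\mid d\ell_0 n+i'$) to the base identity $1-\theta_v/(t-\varepsilon)=(1-v/T_v)h^{-1}$. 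You bypass the Hensel roots and the auxiliary function $h$ entirely: the observation $\theta^{q^{dm}}=t(v)^{q^{dm}}=t(v^{q^{dm}})$ (valid precisely because $a_j\in\FF_{q^d}$ and $d\mid dm$) together with the elementary factor-theorem identity $t(T_v)-t(c)=(T_v-c)\,g(T_v)$ in $A_v[\![T_v]\!]$ gives the factor $(1-v^{q^{dm_n}}/T_v)\cdot g_{m_n}/w$ directly, and your congruence $g_{m_n}\equiv w\pmod{\fm_v^{q^{dm_n}}}$ substitutes for the paper's $h^{(dm)}\equiv 1\pmod{\theta_v^{q^{dm}}}$. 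In fact one can check that $(h^{(dm)})^{-1}=g_m/w$, so the two manipulations produce literally the same power series; you are just deriving the unit-property from a termwise computation rather than from Hensel's lemma. The final step---writing $\varpi(T_v)$ as the product of the ``extracted'' factor $\Pi=\prod_{n\in S}(1-v^{q^{dm_n}}/T_v)$ with the complementary sub-product $\Pi'$, and inverting---agrees with the paper's bookkeeping. Your route is slightly more economical in that it avoids introducing the minimal polynomial $g(X)$ and its Hensel roots $\theta_1,\dots,\theta_{d-1}$.
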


\begin{proof}
Given a non-negative integer $i$,
from the $T_v$-expansion of $t$ in \eqref{eqn: T_v-expansion of t} and $t\equiv \varepsilon$ (mod $T_v$)
we get
\begin{align}\label{eqn: tv}
t-\varepsilon^{q^i} & \in
\begin{cases}
\bigg(\displaystyle\prod_{j=1}^{d-1}(\varepsilon-\varepsilon^{q^j})^{-1}\bigg) \cdot T_v \cdot \big(1+T_vA_{v}[\![T_v]\!]\big), & \text{ if $i\equiv 0 \bmod d$,} \\
(\varepsilon-\varepsilon^{q^i}) +T_v A_{v}[\![T_v]\!], & \text{ if $i \not\equiv 0 \bmod d$.}
\end{cases}
\end{align}
Writing
\[
\Omega_{\ell,v,i}(T_v)
= \prod_{\subfrac{n \geq 0}{\ell n+ i \equiv 0 \bmod d}}(1-\frac{\theta_v^{q^{ \ell n +i}}}{t-\varepsilon^{q^{\ell n +i}}}) \cdot \prod_{\subfrac{n \geq 0}{\ell n+i \not\equiv 0 \bmod d}}(1-\frac{\theta_v^{q^{ \ell n +i}}}{t-\varepsilon^{q^{\ell n + i}}}),
\] 
from \eqref{eqn: tv} one has that
\begin{align}\label{eqn: Omega-unit}
\prod_{\subfrac{n \geq 0}{\ell n + i \not\equiv 0 \bmod d}}(1-\frac{\theta_v^{q^{ \ell n +i}}}{t-\varepsilon^{q^{\ell n+i}}})
& \in
1+ \theta_v A_{v}[\![T_v]\!] 
\subset 
\big(A_{v}[\![T_v]\!]\big )^\times \subset 
\left(A_{v}[\![T_v,T_v^{-1}\}\!\}[\varpi(T_v)^{-1}]\right)^{\times}.
\end{align}
On the other hand, if $\gcd(d,\ell) \nmid i$, then
$\ell n + i \not\equiv 0 \bmod d$ for every $n \in \ZZ$.
Hence
\[
\Omega_{\ell,v,i}(T_v) = 
\prod_{\subfrac{n \geq 0}{\ell n+i \not\equiv 0 \bmod d}}(1-\frac{\theta_v^{q^{ \ell n +i}}}{t-\varepsilon^{q^i}})
\]
and the result holds.

Suppose $\gcd(d,\ell) \mid i$.
Write $\text{lcm}(d,\ell) = d\ell_0$, and take $ s_i \in \ZZ_{\geq 0}$ to be minimal so that $s_i \ell + i \equiv 0 \bmod d$. Put $i'=s_i\ell+i$.
Then
\[
\prod_{\subfrac{n \geq 0}{\ell n+i \equiv 0 \bmod d}}(1-\frac{\theta_v^{q^{ \ell n +i}}}{t-\varepsilon^{q^{\ell n+ i}}})
= \prod_{n=0}^\infty
(1-\frac{\theta_v^{q^{  d\ell_0 n +i'}}}{t-\varepsilon}).
\]
Let
\[
g(X) := X^d  +\epsilon_1 X^{d-1} + \cdots + \epsilon_{d-1}X +\epsilon_d - v \quad \in \FF_{q}[v][X] \subset \overline{\FF_q}[v,X]=\overline{\FF_q}[X][v],
\] which is an irreducible element as it is of degree one in $v$ over $\overline{\FF_q}[X]$. It follows that $g(X)$ is the minimal polynomial of $\theta$ over $\FF_{q^d}(v)$.
Since
\begin{align*}
g(X) & \equiv X^d +\epsilon_1 X^{d-1} + \cdots + \epsilon_{d-1}X +\epsilon_d \quad \bmod v  \\
& \hspace{0.5cm} = \prod_{i=0}^{d-1}(X-\varepsilon^{q^i}) \quad \quad \in \FF_{q^d}[X],
\end{align*}
by Hensel's lemma we get that $g(X)$ splits completely in $\FF_{q^d}[\![v]\!][X]$.
We denote by $\theta_{j} \in \FF_{q^d}[\![v]\!] = A_{v}$, $0\leq j<d$, the roots of $g(X)$ satisfying
$\theta_{j} \equiv \varepsilon^{q^j} \bmod \fm_{v}$,
and take $u_j \in A_{v}$ so that 
$u_{j} \theta_v = \theta_j -\varepsilon^{q^j} \in \fm_{v}$.
Note that $\theta_0 =  \theta - \varepsilon = \theta_v$ and $u_0 = 1$.
From the equalities
\begin{equation}\label{E:Tv-v}
T_v - v = (t^d + \epsilon_1 t^{d-1} + \cdots + \epsilon_{d-1} t + \epsilon_d) - v = g(t) = (t-\theta) \cdot \prod_{j=1}^{d-1}(t - \theta_{j})
\end{equation}
and
\begin{equation}\label{E:Tv}
T_v = \prod_{j=0}^{d-1} (t-\varepsilon^{q^j}),
\end{equation}
we obtain that
\begin{equation}\label{E:h(Tv)}
h(T_v) := \frac{1-\displaystyle \frac{v}{T_v}}{1-\displaystyle \frac{\theta_v}{t-\varepsilon}}
= \frac{t-\varepsilon}{T_v} \cdot \frac{T_v-v}{t-\theta}
=\prod_{j=1}^{d-1}\frac{t-\theta_{j}}{t-\epsilon^{q^{j}}}
= \prod_{j=1}^{d-1}(1-\frac{u_{j}\theta_v}{t-\varepsilon^{q^j}}) \quad \in 1 + \theta_v A_{v}[\![T_v]\!],
\end{equation}
where the third identity comes from~\eqref{E:Tv-v} and~\eqref{E:Tv}.
Considering the Frobenius twists of the identity
\[
1-\frac{\theta_v}{t-\varepsilon} = (1-\frac{v}{T_v})\cdot h(T_v)^{-1},
\]
we obtain that (as $i' \equiv 0 \bmod d$)
\begin{equation}\label{eqn: Omega-unit-2}
\prod_{n=0}^{\infty}(1-\frac{\theta_v^{q^{d\ell_0 n +i'}}}{t-\varepsilon})
= \prod_{n=0}^{\infty}(1-\frac{v^{q^{d\ell_0 n +i'}}}{T_v})\cdot \prod_{n=0}^{\infty} h^{(d\ell_0 n +i')}(T_v)^{-1}\in A_v[\![T_v,T_v^{-1}\}\!\}.
\end{equation}

Finally, since
\begin{align*}
& \prod_{n=0}^{\infty}(1-\frac{v^{q^{dn}}}{T_v}) = \varpi(T_v) \quad \in 
\left(A_{v}[\![T_v,T_v^{-1}\}\!\}[\varpi(T_v)^{-1}]\right)^{\times} \\
=\ & \left(\prod_{\subfrac{n \in \ZZ_{\geq 0}}{dn \not\equiv i' \bmod d\ell_0}}(1-\frac{v^{q^{dn}}}{T_v})
\prod_{\subfrac{0\leq n<i'/d}{dn \equiv i' \bmod d\ell_0}}(1-\frac{v^{q^{dn}}}{T_v})\right)
\cdot \left(\prod_{n=0}^{\infty}(1-\frac{v^{q^{d\ell_0 n + i'}}}{T_v})\right) \\
=\ &\left(\prod_{\subfrac{n \in \ZZ_{\geq 0}}{dn \not\equiv i' \bmod d\ell_0}}(1-\frac{v^{q^{dn}}}{T_v})
\prod_{\subfrac{0\leq n<i'/d}{dn \equiv i' \bmod d\ell_0}}(1-\frac{v^{q^{dn}}}{T_v}) \right)
\cdot \left(\prod_{n=0}^{\infty}(1-\frac{\theta_v^{q^{d\ell_0 n+i'}}}{t-\varepsilon}) \cdot h^{(d\ell_0 n + i')}(T_v)\right) \\
=\ & \left( \prod_{n=0}^{\infty}(1-\frac{\theta_v^{q^{d\ell_0 n+i'}}}{t-\varepsilon}) \right) \cdot \left(\prod_{\subfrac{n \in \ZZ_{\geq 0}}{dn  \not\equiv i' \bmod d\ell_0}}(1-\frac{v^{q^{dn}}}{T_v}) 
\prod_{\subfrac{0\leq n<i'/d}{dn \equiv i' \bmod d\ell_0}}(1-\frac{v^{q^{dn}}}{T_v})
\prod_{n=0}^{\infty} h^{(d\ell n+i)}(T_v)\right),
\end{align*}
one gets
\[
\prod_{n=0}^{\infty}(1-\frac{\theta_v^{q^{d\ell_0 n+i'}}}{t-\varepsilon}) \quad \in \left(A_{v}[\![T_v,T_v^{-1}\}\!\}[\varpi(T_v)^{-1}]\right)^{\times}
\] because of \eqref{E:h(Tv)}.
Together with \eqref{eqn: Omega-unit} and \eqref{eqn: Omega-unit-2}, we obtain that
\begin{align*}
\Omega_{\ell,v,i}(T_v)=
\prod_{n=0}^\infty
(1-\frac{\theta_v^{q^{  d\ell_0 n +i'}}}{t-\varepsilon}) & \cdot 
\prod_{\subfrac{n \geq 0}{\ell n + i \not\equiv 0 \bmod d}}(1-\frac{\theta_v^{q^{ \ell n +i}}}{t-\varepsilon^{q^{\ell n+i}}})\\
& \hspace{1cm}
\in A_{v}[\![T_v,T_v^{-1}\}\!\} \cap \left(A_{v}[\![T_v,T_v^{-1}\}\!\}[\varpi(T_v)^{-1}]\right)^{\times}
\end{align*}
as desired.
\end{proof}

\begin{Subsubsec}{Description of the isomorphism in Lemma~\ref{lem: comp-iso-1}}\label{sec: des-iso}
From the identification of $\overline{\eR_{\tau}(\eC_{\ell})}$ in \eqref{eqn: red-C}, we get 
\begin{align*}
&A_{v}[\![T_v,T_v^{-1}\}\!\}[\varpi(T_v)^{-1}] \otimes_{\FF_{q^{d}}[t]}\overline{\eR_{\tau}(\eC_{\ell})} \\
&\hspace{2cm}= \prod_{i=0}^{\ell-1}A_{v}[\![T_v,T_v^{-1}\}\!\}[\varpi(T_v)^{-1}] \otimes_{\FF_{q^{d}}[t]} \FF_{q^{d}}[t] \\
&\hspace{2cm} \cong 
\prod_{i=0}^{\ell-1}A_{v}[\![T_v,T_v^{-1}\}\!\}[\varpi(T_v)^{-1}].
\end{align*}
On the other hand, by \eqref{eqn: Rt-id-1} we have that
\begin{align*}
&A_{v}[\![T_v,T_v^{-1}\}\!\}[\varpi(T_v)^{-1}]\otimes_{\FF_{q}[\theta][t]}\eR_{\tau}(\eC_{\ell}) \\
& \hspace{2cm} = 
\prod_{i=0}^{\ell-1}A_{v}[\![T_v,T_v^{-1}\}\!\}[\varpi(T_v)^{-1}] \otimes_{\FF_{q}[\theta][t]} \FF_{q}[\theta][t]. \\
& \hspace{2cm} \cong 
\prod_{i=0}^{\ell-1}A_{v}[\![T_v,T_v^{-1}\}\!\}[\varpi(T_v)^{-1}].
\end{align*}
Then the isomorphism in Lemma~\ref{lem: comp-iso-1} is given as follows:
every element $(m_0,...,m_{\ell-1})$ in $\prod_{i=0}^{\ell-1}A_{v}[\![T_v,T_v^{-1}\}\!\}[\varpi(T_v)^{-1}]$
is sent to
\begin{equation}\label{eqn: comp-iso-2}
\left(m_0\Omega_{\ell,v,0}, m_1\Omega_{\ell,v,1},\cdots, m_{\ell-1}\Omega_{\ell,v,\ell-1}\right).
\end{equation}
To verify this, note that 
\[
\Omega_{\ell,v,i}(T_v)\equiv 1 \bmod \fm_v \quad \in \frac{A_{v}[\![T_v,T_v^{-1}\}\!\}}{\fm_v \cdot A_{v}[\![T_v,T_v^{-1}\}\!\}} \cong \FF_{q^d}(\!(T_v)\!).
\]
Lemma~\ref{lem: Omega-unit} ensures the bijectivity of \eqref{eqn: comp-iso-2}, which becomes the identity map modulo $\fm_v$.
Moreover, 
for each integer $i$ with $0\leq i < \ell$,
write 
$i+d = i_0 + n_i'\ell$ where $i_0,n_i'$ are unique integers with $0\leq i_0<\ell$ and $n_i' \geq 0$.
Then
\begin{align*}
\Omega_{\ell,v,i}^{(d)}(T_v) & = \prod_{n=0}^\infty (1-\frac{\theta_v^{q^{\ell n+i+d}}}{t-\varepsilon^{q^{\ell n+i+d}}}) = 
\prod_{n=n_i'}^{\infty}(1-\frac{\theta_v^{q^{\ell n+i_0}}}{t-\varepsilon^{q^{\ell n+i_0}}}) \\
&= \left(\prod_{n=0}^{n_i'-1} \frac{t-\varepsilon^{q^{\ell n+i_0}}}{t-\theta^{q^{\ell n + i_0}}}\right) \cdot \Omega_{\ell,v,i_0}(T_v) \quad \in A_v[\![T_v,T_v^{-1}\}\!\},
\end{align*}
where the product $\left(\displaystyle\prod_{n=0}^{n_i'-1} \frac{t-\varepsilon^{q^{\ell n+i_0}}}{t-\theta^{q^{\ell n + i_0}}}\right)$ is defined to be $1$ whenever $n_i'=0$.  We recall the notation $i_1$ given in Definition~\ref{Def:ni and ni'}. 
Since $n_i' = n_{i_0}$ and $(i_0)_1 = i$,
one has that for each $0\leq i<\ell$,
\[
\left(\prod_{n=0}^{n_i-1} (t-\theta^{q^{\ell n + i}})\right) \cdot \Omega_{\ell,v,i_1}^{(d)}(T_v) = 
\left(\prod_{n=0}^{n_i-1} (t-\varepsilon^{q^{\ell n + i}})\right) \cdot \Omega_{\ell,v,i}(T_v) \quad \in A_v[\![T_v,T_v^{-1}\}\!\}.
\]
Finally, 
for every $(m_0,...,m_{\ell-1}) \in A_{v}[\![T_v,T_v^{-1}\}\!\}[\varpi(T_v)^{-1}] \otimes_{\FF_{q^{d}}[t]}\overline{\eR_{\tau}(\eC_{\ell})}$,
by Lemma~\ref{lem: tau-d} we obtain that 
\begin{align*}
&\hspace{1.5cm} \tau^d \cdot \left(m_0\Omega_{\ell,v,0},\cdots, m_{\ell-1}\Omega_{\ell,v,\ell-1}\right) \quad \big(\in A_{v}[\![T_v,T_v^{-1}\}\!\}[\varpi(T_v)^{-1}]\otimes_{\FF_{q}[\theta][t]}\eR_{\tau}(\eC_{\ell})\big) \\
&\hspace{1cm} = \Big(\left(\prod_{n=0}^{n_0-1}(t-\theta^{q^{\ell n}})\right) \cdot m_{(0)_1}^{(d)}\Omega_{\ell,v,(0)_1}^{(d)}, \cdots, \left(\prod_{n=0}^{n_{\ell-1}-1}(t-\theta^{q^{\ell n+\ell-1}})\right) \cdot m_{(\ell-1)_1}^{(d)}\Omega_{\ell,v,(\ell-1)_1}^{(d)}\Big) \\
&\hspace{1cm} = \big( m_0'\Omega_{\ell,v,0} ,\cdots ,  m_{\ell-1}'\Omega_{\ell,v,\ell-1}\big),
\end{align*}
where
\begin{align*}
(m_0',...,m_{\ell-1}') 
& = \Big(\left(\prod_{n=0}^{n_0-1}(t-\varepsilon^{q^{\ell n}})\right) \cdot m_{(0)_1}^{(d)},\cdots, \left(\prod_{n=0}^{n_{\ell-1}-1}(t-\theta^{q^{\ell n +\ell-1}})\right) \cdot m_{(\ell-1)_1}^{(d)}\Big) \\
& = \tau^d \cdot (m_0,...,m_{\ell-1}) \quad \big(\in A_{v}[\![T_v,T_v^{-1}\}\!\}[\varpi(T_v)^{-1}] \otimes_{\FF_{q^{d}}[t]}\overline{\eR_{\tau}(\eC_{\ell})}\big).
\end{align*}
Therefore the constructed isomorphism in \eqref{eqn: comp-iso-2} indeed preserves the $\tau^d$-action on both sides.
\hfill $\Box$
\end{Subsubsec}

${}$

From the embedding $\FF_{q^d}(\!(T_v)\!)\hookrightarrow k_{v}[\![T_v-v]\!]$ sending $T_v$ to $v+(T_v-v)$, we view $\FF_{q^d}(\!(T_v)\!)$ as a subfield of 
$k_{v}[\![T_v-v]\!]$, and set
\begin{align*}
H_{\mathrm{cris}}\big(\eR_{\tau}(C_{\ell}), k_{v}[\![T_v-v]\!] \big)
& := k_{v}[\![T_v-v]\!] \otimes_{\FF_{q^d}(\!(T_v)\!)} H_{\mathrm{cris}}\big(\eR_{\tau}(\eC_{\ell}), \FF_{q^{d}}(\!(T_v)\!)\big) \\
& = k_{v}[\![T_v-v]\!] 
\otimes_{\FF_{q^{d}}[t]}\Big(\tau^d \cdot \overline{\eR_{\tau}(\eC_{\ell})}\Big).
\end{align*}
On the other hand,
recall that in \eqref{eqn: H-dR} we set
\begin{align*}
H_{\textrm{dR}}
\big(\eR_{\tau}(\eC_{\ell}),k_{v}[\![T_v-v]\!]\big) & := k_{v}[\![T_v-v]\!] \otimes_{k[t]} \langle \tau^d \eR_{\tau}(\eC_{\ell})\rangle_k.
\end{align*}
Regarding 
$A_{v}[\![T_v,T_v^{-1}\}\!\}[\varpi^{(d)}(T_v)^{-1}]$ as a subring of $k_{v}[\![T_v-v]\!]$ (see \cite[(3.5.7)]{HK20}),
Lemma~\ref{lem: comp-iso-1} induces the following crystalline--de Rham comparison isomorphism (see \cite[Theorem~3.5.18]{HJ20}):
\begin{theorem}\label{prop: dR-cris-iso} Let notation be given as above. Then we have the following isomorphism as $k_{v}[\![T_v-v]\!]$-modules:
\begin{equation*}
\Phi_v: 
H_{\text{\rm cris}}\big(\eR_{\tau}(\eC_{\ell}),k_{v}[\![T_v-v]\!]\big)
\cong 
H_{\text{\rm dR}}\big(\eR_{\tau}(\eC_{\ell}),k_{v}[\![T_v-v]\!]\big) .
\end{equation*}    
\end{theorem}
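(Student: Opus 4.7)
The plan is to upgrade the scalar comparison isomorphism of Lemma~\ref{lem: comp-iso-1} (together with its explicit diagonal description recorded in Section~\ref{sec: des-iso}) to an isomorphism at the level of the crystalline and de~Rham modules, and then to base-change along the embedding $A_v[\![T_v,T_v^{-1}\}\!\}[\varpi(T_v)^{-1}]\hookrightarrow k_v[\![T_v-v]\!]$ of~\cite[(3.5.7)]{HK20}. In particular, since the isomorphism in Lemma~\ref{lem: comp-iso-1} is $\tau^d$-equivariant, it restricts to an isomorphism between the $\tau^d$-submodules on each side.

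More precisely, I would proceed as follows. First, apply $\tau^d$ on both sides of the isomorphism in Lemma~\ref{lem: comp-iso-1} and use the explicit description from \eqref{eqn: comp-iso-2} to produce an isomorphism
\[
A_v[\![T_v,T_v^{-1}\}\!\}[\varpi(T_v)^{-1}]\otimes_{\FF_{q^d}[t]}\bigl(\tau^d\cdot\overline{\eR_\tau(\eC_\ell)}\bigr)
\;\stackrel{\sim}{\longrightarrow}\;
A_v[\![T_v,T_v^{-1}\}\!\}[\varpi(T_v)^{-1}]\otimes_{\FF_q[\theta][t]}\langle \tau^d\eR_\tau(\eC_\ell)\rangle_k ,
\]
sending $\tau^d\cdot(m_0,\dots,m_{\ell-1})$ in the crystalline side to the componentwise product by $\Omega_{\ell,v,0},\dots,\Omega_{\ell,v,\ell-1}$. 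The fact that each $\Omega_{\ell,v,i}(T_v)$ is a unit in $A_v[\![T_v,T_v^{-1}\}\!\}[\varpi(T_v)^{-1}]$, proved in Lemma~\ref{lem: Omega-unit}, is exactly what makes this a bijection rather than a mere morphism.

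Next, I would base-change along the $\FF_{q^d}$-algebra embedding $A_v[\![T_v,T_v^{-1}\}\!\}[\varpi(T_v)^{-1}]\hookrightarrow k_v[\![T_v-v]\!]$ (sending $T_v\mapsto v+(T_v-v)$). On the crystalline side this tensors up, via the intermediate $\FF_{q^d}(\!(T_v)\!)$, to
\[
k_v[\![T_v-v]\!]\otimes_{\FF_{q^d}[t]}\bigl(\tau^d\cdot\overline{\eR_\tau(\eC_\ell)}\bigr)=H_{\mathrm{cris}}\bigl(\eR_\tau(\eC_\ell),k_v[\![T_v-v]\!]\bigr).
\]
On the de~Rham side, it yields $k_v[\![T_v-v]\!]\otimes_{k[t]}\langle \tau^d\eR_\tau(\eC_\ell)\rangle_k=H_{\mathrm{dR}}\bigl(\eR_\tau(\eC_\ell),k_v[\![T_v-v]\!]\bigr)$ by~\eqref{eqn: H-dR}. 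Since base change of an isomorphism along a ring map remains an isomorphism, defining $\Phi_v$ to be the base-changed map produces the desired comparison isomorphism.

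The main obstacle I expect is the bookkeeping around the two inequivalent notions of Frobenius twist discussed in \S\ref{subsubsec: com-twist}: the $\tau^d$-equivariance of the isomorphism in Lemma~\ref{lem: comp-iso-1} implicitly uses that $t\in A_v[\![T_v,T_v^{-1}\}\!\}$ is $\tau^d$-fixed under the twist of Definition~\ref{Def: Twisting on Av[[Tv]]}, and one must verify carefully that the diagonal multiplication by the $\Omega_{\ell,v,i}$ really intertwines the $\tau^d$-action described in Lemma~\ref{lem: tau-d} (coming from the extra factor $\prod_{n=0}^{n_i-1}(t-\theta^{q^{\ell n+i}})$) with the tautological $\tau^d$-action on the crystalline side. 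This is precisely the computation laid out at the end of \S\ref{sec: des-iso}, using that $\Omega_{\ell,v,i}^{(d)}(T_v)/\Omega_{\ell,v,(i_1)}(T_v)$ agrees, up to the $\tau^d$-twist prefactor, with $\prod_{n=0}^{n_i-1}(t-\varepsilon^{q^{\ell n+i}})/\prod_{n=0}^{n_i-1}(t-\theta^{q^{\ell n+i}})$, and the verification reduces to Lemma~\ref{lem: Omega-unit} and the identities~\eqref{E:Tv-v}--\eqref{E:h(Tv)}. Once this compatibility is in hand, the theorem follows from pure base change.
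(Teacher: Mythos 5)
Your overall route is the same as the paper's: restrict the isomorphism of Lemma~\ref{lem: comp-iso-1} to the $\tau^d$-submodules using the explicit diagonal description from \eqref{eqn: comp-iso-2} and Lemma~\ref{lem: tau-d}, and then map into $k_v[\![T_v-v]\!]$ via the subring inclusion of \cite[(3.5.7)]{HK20}. The key calculation at the end of \S\ref{sec: des-iso} is exactly what you identify as the "main obstacle."

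There is, however, one genuine gap in the write-up. You propose to base-change along an $\FF_{q^d}$-algebra embedding $A_v[\![T_v,T_v^{-1}\}\!\}[\varpi(T_v)^{-1}]\hookrightarrow k_v[\![T_v-v]\!]$, but no such embedding exists: the factor $(1-v/T_v)$ of $\varpi(T_v)$ vanishes at $T_v=v$, so $\varpi(T_v)$ is a non-unit in $k_v[\![T_v-v]\!]$ and cannot be inverted there. The paper uses $\varpi^{(d)}(T_v)^{-1}$ (note the twist), and this is precisely what \cite[(3.5.7)]{HK20} provides, since $\varpi^{(d)}(T_v)\big|_{T_v=v}=\prod_{i\geq 1}(1-v^{q^{di}-1})\neq 0$. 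The reason this matters and is not just a typo: the scalars $\Omega_{\ell,v,i}$ by which Lemma~\ref{lem: comp-iso-1} multiplies are units in $A_v[\![T_v,T_v^{-1}\}\!\}[\varpi(T_v)^{-1}]$ but \emph{not} in $k_v[\![T_v-v]\!]$ — in particular $\Omega_{\ell,v,0}$ contains the $n=0$ factor $1-\theta_v/(t-\varepsilon)$, which vanishes at $T_v=v$. So "base change of an isomorphism is an isomorphism" cannot be invoked directly. What saves the argument is exactly the bookkeeping you defer to the last paragraph: upon restricting to the $\tau^d$-submodule $\langle\tau^d\eR_\tau(\eC_\ell)\rangle_k=\prod_i\bigl(\prod_{n<n_i}(t-\theta^{q^{i+n\ell}})\bigr)k[t]$, the elementary factors $\prod_{n<n_i}(t-\varepsilon^{q^{i+n\ell}})/\prod_{n<n_i}(t-\theta^{q^{i+n\ell}})$ cancel the first $n_i$ terms of $\Omega_{\ell,v,i}$, leaving the truncated products $\Phi_{v,i}=\prod_{n\geq n_i}(1-\theta_v^{q^{\ell n+i}}/(t-\varepsilon^{q^{\ell n+i}}))$ from \eqref{eqn: concrete-Phi-v}. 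These \emph{do} lie in $A_v[\![T_v,T_v^{-1}\}\!\}$, are nonzero at $T_v=v$ (each factor $\neq 0$ since $\ell n+i\geq d\geq 1$, and the product converges), and hence are units in $k_v[\![T_v-v]\!]$. Only after making this replacement — replacing the $\Omega_{\ell,v,i}$ by the $\Phi_{v,i}$ and $\varpi$ by $\varpi^{(d)}$ — does the map descend to $k_v[\![T_v-v]\!]$ and remain bijective. So the step you call "pure base change" actually requires the explicit verification you flag, and should not be stated as automatic.
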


\begin{remark}\label{rem: cris-dR-kv}
Put
\begin{align*}
H_{\textrm{cris}}\big(\eR_{\tau}(\eC_{\ell}),k_{v}\big) &:= 
k_{v} \underset{\theta \mapsfrom t,\ \FF_{q^d}[t]}{\otimes}
\Big(\tau^d \cdot \overline{\eR_{\tau}(\eC_{\ell})}\Big) \\
&= k_{v} \underset{v \mapsfrom T_v,\ \FF_{q^{d}}(\!(T_v)\!)}{\otimes} H_{\textrm{cris}}\big(\eR_{\tau}(\eC_{\ell}),\FF_{q^{d}}(\!(T_v)\!) \big) \\
& \cong
\displaystyle
\frac{H_{\textrm{cris}}\big(\eR_{\tau}(\eC_{\ell}),k_{v}[\![T_v-v]\!]\big)}{(T_v-v)\cdot H_{\textrm{cris}}\big(\eR_{\tau}(\eC_{\ell}),k_{v}[\![T_v-v]\!]\big)} .
\end{align*}
As \eqref{eqn: dR-iso-2} gives us
\[
\frac{H_{\textrm{dR}}\big(\eR_{\tau}(\eC_{\ell}),k_{v}[\![T_v-v]\!]\big)}{(T_v-v)\cdot H_{\textrm{dR}}\big(\eR_{\tau}(\eC_{\ell}),k_{v}[\![T_v-v]\!]\big)} \cong H_{\textrm{dR}}\big(\eR_\tau(\eC_\ell),k_v\big),
\]
the isomorphism $\Phi_v$ in Theorem~\ref{prop: dR-cris-iso} induces a $k_{v}$-linear isomorphism
\begin{equation}\label{eqn: period-iso}
\phi_v: H_{\mathrm{cris}}\big(\eR_{\tau}(\eC_{\ell}),k_{v}\big) \cong 
H_{\mathrm{dR}}\big(\eR_{\tau}(\eC_{\ell}),k_{v}\big).
\end{equation}
\end{remark}

\subsection{Crystalline--de~Rham period matrix}\label{sec: CD matrix}

We will choose suitable ``algebraic bases'' and determine the crystalline--de~Rham period matrix associated to the isomorphism $\phi_v$ in \eqref{eqn: period-iso}.

First, 
for each integer $i$ with $0\leq i <\ell$, recall that $\Omega_{\ell,v,i}(T_v) \in A_v[\![T_v,T_v^{-1}\}\!\}$ is introduced in Lemma~\ref{lem: Omega-unit}. 
Recall the notation given in Definition~\ref{Def:ni and ni'}, $n_{i}$ is the smallest non-negative integer so that $i+n_{i}\ell \geq d$, and $i_1 = i+n_{i}\ell-d$.
In the case of $n_i>0$, we have
\[
\Big(\prod_{n=0}^{n_i-1}(t-\theta^{q^{\ell n + i}})\Big) \cdot \Omega_{\ell,v,i_1}^{(d)} = \Big(\prod_{n=0}^{n_i-1}(t-\varepsilon^{\ell n+i})\Big)\cdot \Omega_{\ell,v,i}.
\]
Moreover, by Lemma~\ref{lem: tau-d} we have the following rank-$\ell$ module
\[
\langle \tau^d \eR_{\tau}(\eC_{\ell})\rangle_k
= 
\prod_{i=0}^{\ell-1} \big(\prod_{n=0}^{n_{i}-1}(t-\theta^{q^{i+n \ell}})\big)\cdot k[t],\]
and the same arguments show that
\[
\tau^d \overline{\text{$\eR$}_{\tau}(\eC_{\ell})}
= \prod_{i=0}^{\ell-1} \big(\prod_{n=0}^{n_{i}-1}(t-\varepsilon^{q^{i+n \ell}})\big)\cdot \FF_{q^d}
[t].
\]
It follows that
\begin{align*}
H_{\mathrm{cris}}\big(\eR_{\tau}(\eC_{\ell}),k_{v}[\![T_v-v]\!]\big)
&= k_v[\![T_v-v]\!] \otimes_{\FF_{q^d}[t]}\Big(\tau^d \overline{\eR_{\tau}(\eC_{\ell})}\Big) \\
&= \prod_{i=0}^{\ell-1}\big(\prod_{n=0}^{n_{i}-1}(t-\varepsilon^{q^{i+n \ell}})\big)\cdot k_v[\![T_v-v]\!]
\end{align*}
and
\begin{align*}
H_{\text{\rm dR}}\big(\eR_{\tau}(\eC_{\ell}),k_{v}[\![T_v-v]\!]\big) & = k_v[\![T_v-v]\!] \otimes_{k[t]}\langle \tau^d \eR_{\tau}(\eC_{\ell})\rangle_k \\
&= \prod_{i=0}^{\ell-1}\big(\prod_{n=0}^{n_{i}-1}(t-\theta^{q^{i+n \ell}})\big)\cdot k_v[\![T_v-v]\!].
\end{align*}
The induced crystalline--de~Rham comparison isomorphism $\Phi_v$ in Theorem~\ref{prop: dR-cris-iso}
is given explicitly as follows:
\begin{align}\label{eqn: concrete-Phi-v}
& \Phi_v\Big(\big(\prod_{n=0}^{n_{0}-1}(t-\varepsilon^{q^{n \ell}})\big)m_0, \cdots, \big(\prod_{n=0}^{n_{\ell-1}-1}(t-\varepsilon^{q^{\ell-1+n \ell}})\big)m_{\ell-1}\Big) \notag \\
  & \hspace{1cm} = \Big( \Omega_{\ell,v,0} \cdot \big(\prod_{n=0}^{n_{0}-1}(t-\varepsilon^{q^{n \ell}})\big)m_0, \cdots, \Omega_{\ell,v,\ell-1} \cdot \big(\prod_{n=0}^{n_{\ell-1}-1}(t-\varepsilon^{q^{\ell-1+n \ell}})\big)m_{\ell-1}\Big) \notag \\
&\hspace{1cm} = \Big(\Phi_{v,0}\cdot \big(\prod_{n=0}^{n_{0}-1}(t-\theta^{q^{n \ell}})\big) m_0,\cdots,\Phi_{v,\ell-1} \cdot \big(\prod_{n=0}^{n_{\ell-1}-1}(t-\theta^{q^{\ell-1+n \ell}})\big)m_{\ell-1}\Big), 
\end{align}
where $\Phi_{v,i}(T_v)$ are given by infinite products
\[
\Phi_{v,i} = 
\left(
\prod_{n=0}^{n_{i}-1}\frac{t-\varepsilon^{q^{n \ell + i}}}{t-\theta^{q^{n \ell+i}}}
\right) \cdot \Omega_{\ell,v,i}(T_v)=
\prod_{n=n_i}^{\infty}(1- \frac{\theta_v^{q^{\ell n + i}}}{t-\varepsilon^{q^{\ell n + i}}}), \quad 0\leq i <\ell.
\]
In particular, by \eqref{eqn: evaluating at v}, evaluating $\Phi_{v,i}(T_v)$ at $T_{v}=v$ equals
\[
\Phi_{v,i}(T_v)\big|_{T_v=v}
= \left(\prod_{n=n_i}^{\infty}(1- \frac{\theta_v^{q^{\ell n + i}}}{t-\varepsilon^{q^{\ell n + i}}})\right)\Bigg|_{t=\theta}
= \prod_{n=n_i}^{\infty}(1- \frac{\theta_v^{q^{\ell n + i}}}{\theta-\varepsilon^{q^{\ell n + i}}}), \quad 0\leq i <\ell.
\]

Via the identification~\eqref{eqn: Rt-id-1}, we
take $\mathfrak{B}=\{b_0,...,b_{\ell-1}\}$
to be the standard $\FF_{q}[\theta][t]$ basis of $\eR_\tau(\eC_\ell)$, i.e.\ for $0\leq i<\ell$,
\begin{equation}\label{eqn: basis-B}
b_i = (b_{i,0},...,b_{i,\ell-1}) \quad  \in \eR_\tau(\eC_\ell)
\quad \text{ with }\quad 
b_{i,j} = \begin{cases}
1, & \text{ if $i=j$,}\\
0, & \text{ otherwise.}
\end{cases}
\end{equation}
For every integer $s$ with $0\leq s<\ell$, recall that $n_s$ denotes the smallest non-negative integer with $s+n_s\cdot \ell\geq d$ and $s_1= s+n_s \cdot \ell-d$.
Then by Lemma~\ref{lem: tau-d} we get
\begin{equation}\label{eqn: basis-bv}
b_{v,s}:=\tau^d \cdot b_{s_1} = (b_{s,0}',...,b_{s,\ell-1}')
\quad \text{ where }
\quad 
b_{s,j}' = 
\begin{cases}
\displaystyle\prod_{n=0}^{n_s-1}(t-\theta^{q^{\ell n+s}}), & \text{ if $j=s$,}\\
0, & \text{ otherwise.}
\end{cases}
\end{equation}
Recall in Lemma~\ref{lem: dR-com} that
the inclusion $\tau^d\eR_\tau(\eC_\ell)\subset \tau \eR_\tau(\eC_\ell)$
induces a surjective map
\[
\langle \tau^d\eR_\tau(\eC_\ell)\rangle_k
\twoheadrightarrow \frac{\langle \tau \eR_\tau(\eC_\ell)\rangle_k}{(t-\theta) \cdot \langle \tau \eR_\tau(\eC_\ell)\rangle_k} =
H_{\textrm{dR}}\big(\eR_{\tau}(\eC_{\ell}),k\big).
\]
Set 
\begin{equation}\label{eqn: beta-v}
\omega_{v,s}:= b_{v,s} \mod (t-\theta) \quad \in \frac{\langle \tau^d\eR_\tau(\eC_\ell)\rangle_k}{(t-\theta)\cdot \langle \tau^d\eR_\tau(\eC_\ell)\rangle_k} \cong H_{\textrm{dR}}\big(\eR_\tau(\eC_\ell),k\big).
\end{equation}
Then $\beta_v:= \{\omega_{v,0},...,\omega_{v,\ell-1}\}$ is a $k$-basis of $H_{\textrm{dR}}\big(\eR_\tau(\eC_\ell),k\big)$.
On the other hand, for each $0\leq s<\ell$, put
\begin{equation}\label{eqn: lambda-v}
\lambda_{v,s}:= b_{v,s} \mod \fm_v \quad \in \tau^d \cdot \overline{\eR_\tau(\eC_\ell)}.
\end{equation}
Then $\Lambda_v :=\{\lambda_{v,0},...,\lambda_{v,\ell-1}\}$
gives a $k_v$-basis of $H_{\textrm{cris}}\big(\eR_\tau(\eC_\ell),k_v\big)$ 
through the identification in Remark~\ref{rem: cris-dR-kv}.
From the description of $\Phi_v$ in \eqref{eqn: concrete-Phi-v},
we then obtain that:

\begin{theorem}\label{thm: period-matrix}
For every integer $s$ with $0\leq s<\ell$, we have
\[
\phi_v(\lambda_{v,s}) = \prod_{n=n_s}^\infty (1-\frac{(\theta-\varepsilon)^{q^{\ell n + s}}}{\theta-\varepsilon^{q^{\ell n + s}}}) \cdot \omega_{v,s}.
\]    
Here $n_s$ is the smallest non-negative integer so that $s+n_s\ell \geq d$.
Consequently,
the period matrix of isomorphism $\phi_v$ in \eqref{eqn: period-iso} with respect to $\beta_v$ and $\Lambda_v$ is 
\[
\begin{pmatrix}
\displaystyle\prod_{n=n_0}^{\infty}(1- \frac{\theta_v^{q^{\ell n }}}{\theta-\varepsilon^{q^{\ell n }}}) & & 0\\
& \ddots & \\
0& & \displaystyle\prod_{n=n_{\ell-1}}^{\infty}(1- \frac{\theta_v^{q^{\ell n + \ell-1}}}{\theta-\varepsilon^{q^{\ell n + \ell-1}}})
\end{pmatrix}.
\]
\end{theorem}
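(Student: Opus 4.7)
The plan is to trace the explicit crystalline--de~Rham comparison $\Phi_v$ of Theorem~\ref{prop: dR-cris-iso} through the lifted basis element $b_{v,s}$, and then descend modulo $T_v - v$ to recover $\phi_v$ acting on $\lambda_{v,s}$.

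First I would exploit the dual role played by $b_{v,s} = \tau^d \cdot b_{s_1}$ of~\eqref{eqn: basis-bv}. Viewed in $\tau^d \cdot \overline{\eR_\tau(\eC_\ell)}$ it reduces modulo $\fm_v$ to $\lambda_{v,s}$, while viewed in $\langle \tau^d \eR_\tau(\eC_\ell)\rangle_k$ it reduces modulo $(t-\theta)$ to $\omega_{v,s}$. Each of these incarnations lifts canonically into $H_{\text{cris}}\bigl(\eR_\tau(\eC_\ell), k_v[\![T_v-v]\!]\bigr)$ and $H_{\text{dR}}\bigl(\eR_\tau(\eC_\ell), k_v[\![T_v-v]\!]\bigr)$ respectively, so it suffices to compute $\Phi_v(b_{v,s})$ on the de~Rham side and specialize the scalar factor at $T_v = v$.

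Next I would invoke the component-wise formula~\eqref{eqn: concrete-Phi-v}: the only non-zero coordinate of $b_{v,s}$ on the crystalline side is the $s$-th, namely $\prod_{n=0}^{n_s-1}(t-\varepsilon^{q^{\ell n+s}})$, and its image on the de~Rham side is $\Phi_{v,s}(T_v)$ times the corresponding $s$-th coordinate $\prod_{n=0}^{n_s-1}(t-\theta^{q^{\ell n+s}})$ of the de~Rham avatar of $b_{v,s}$. Thus $\Phi_v(b_{v,s}) = \Phi_{v,s}(T_v)\cdot b_{v,s}$ in $H_{\text{dR}}\bigl(\eR_\tau(\eC_\ell), k_v[\![T_v-v]\!]\bigr)$, which already foretells the diagonality of the period matrix, since only the $s$-th slot is touched.

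Finally I would descend modulo $T_v - v$. Under the embedding~\eqref{eqn: T_v-expansion of t}, the evaluation \eqref{eqn: evaluating at v} shows that setting $T_v = v$ corresponds to $t = \theta$; consequently
\[
\Phi_{v,s}(T_v)\big|_{T_v=v} = \prod_{n=n_s}^{\infty}\Bigl(1-\frac{(\theta-\varepsilon)^{q^{\ell n+s}}}{\theta-\varepsilon^{q^{\ell n+s}}}\Bigr),
\]
while $b_{v,s}\bmod(T_v-v)$ descends to $\omega_{v,s}$, which yields the stated formula for $\phi_v(\lambda_{v,s})$ and the diagonal period matrix. The main subtlety, already handled by Lemma~\ref{lem: Omega-unit}, is to confirm that this specialized infinite product is a well-defined non-zero element of $k_v$: for indices with $\ell n+s \not\equiv 0 \pmod d$ each factor is a $1$-unit in $A_v$, because $\theta_v$ is a uniformizer of $A_v$ and $\theta - \varepsilon^{q^{\ell n+s}}$ is a unit; for $\ell n+s \equiv 0 \pmod d$ the constraint $n \geq n_s$ forces $\ell n+s \geq d \geq 1$, so the factor reorganizes to $1 - \theta_v^{q^{\ell n+s}-1}$, which is again a $1$-unit, and convergence of the tail is immediate.
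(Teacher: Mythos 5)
Your proposal is correct and traces essentially the same route the paper takes: apply the component-wise formula~\eqref{eqn: concrete-Phi-v} to the lifted basis element $b_{v,s}$, observe only the $s$-th coordinate is affected (hence diagonality), and then specialize at $T_v=v$ (equivalently $t=\theta$) to descend from $k_v[\![T_v-v]\!]$-modules to $k_v$-modules, identifying the two reductions of $b_{v,s}$ with $\lambda_{v,s}$ and $\omega_{v,s}$. Your closing check that the truncated product $\Phi_{v,s}(T_v)|_{T_v=v}$ consists entirely of $1$-units in $A_v$ (and thus converges to a nonzero element of $k_v$) is a nice piece of added diligence, though the paper treats this as immediate from Lemma~\ref{lem: Omega-unit} and the evaluation~\eqref{eqn: evaluating at v}.
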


\subsection{Chowla--Selberg-type formula}\label{sec: vCSF}

Note that the action of $\tau_v:=\tau^d$ on the $T_v$-isocrystal
$\big(H_{\mathrm{cris}}\big(\text{$\cR_{\tau}$}(\eC_{\ell}),\FF_{q^d}(\!(T_v)\!)\big),\tau_v\big)$ introduced in Remark~\ref{rem: isocrystal} is actually $\FF_{q^d}(\!(T_v)\!)$-linear.
Through the isomorphism
\begin{align*}
k_{v} \underset{v \mapsfrom T_v, \FF_{q^{d}}(\!(T_v)\!)}{\otimes} H_{\mathrm{cris}}\big(\eR_{\tau}(\eC_{\ell}), \FF_{q^{d}}(\!(T_v)\!)\big) & = 
H_{\mathrm{cris}}\big(\text{$\cR_{\tau}$}(\eC_{\ell}),k_{v}\big) \stackrel{\phi_v}{\cong} 
H_{\mathrm{dR}}\big(\eR_{\tau}(\eC_{\ell}),k_{v}\big),
\end{align*}
there exists a $k_v$-linear automorphism $\varrho_v$ on $H_{\textrm{dR}}\big(\eR_{\tau}(\eC_{\ell}),k_{v}\big)$
satisfying 
\begin{equation}\label{eqn: rho-v}
\varrho_v \cdot \omega = \phi_v\Big(\tau_v\cdot \phi_v^{-1}(\omega)\Big), \quad  \forall \omega \in H_{\text{$\textrm{dR}$}}\big(\cR_{\tau}(\eC_{\ell}),k_{v}\big).
\end{equation}
That is, $\varrho_v$ is the existence of $k_v$-linear automorphism so that the following diagram commutes:
\[
\xymatrix{
 H_{\mathrm{cris}}\big(\eR_{\tau}(\eC_{\ell}) , k_v\big)\ar[d]^{\tau_v} \ar[r]^{\phi_v} & H_{\textrm{dR}}\big(\eR_{\tau}(\eC_{\ell}),k_{v}\big)\ar[d]^{\varrho_v}\\
H_{\mathrm{cris}}\big(\eR_{\tau}(\eC_{\ell}), k_v \big)\ar[r]^{\phi_v} & H_{\textrm{dR}}\big(\eR_{\tau}(\eC_{\ell}),k_{v}\big)}
.\]
From Theorem~\ref{thm: period-matrix}, we are able to realize the $\varrho_v$-action on $H_{\textrm{dR}}\big(\eR_{\tau}(\eC_{\ell}),k_{v}\big)$ via $v$-adic arithmetic gamma values in the following.\\

For each integer $s$ with $0\leq s < \ell$, 
recall that $n_s$ is the smallest non-negative integer so that $s+n_s\ell \geq d$, and we write
\[
s+d = s_0 + n_s' \ell \ (\geq d), \quad \text{ where } s_0, n_s' \in \ZZ  \text{ and } 0\leq s_0<\ell.
\]
From the inequalities
\[
\ell+d>s+d = s_0+n_s' \ell \geq s_0 +n_{s_0}\ell \geq d,
\]
we get 
\begin{equation}\label{E:n_s'=n_s0}
n_s' = n_{s_0}.
\end{equation} By using argument similar to Lemma~\ref{lem: tau-d}, we have
\[
\tau^d \cdot \lambda_{v,s} = 
\left(\prod_{n=0}^{n_{s}-1}(t-\varepsilon^{q^{s+n \ell}})\right)\cdot \lambda_{v,s_0},
\]
and hence Theorem~\ref{thm: period-matrix} implies the following identity: 
\begin{align}\label{eqn: Fro-act}
\varrho_v \cdot \omega_{v,s} &=
\left(\displaystyle \frac{\displaystyle \prod_{n=n_{s_0}}^{\infty}(1- \displaystyle \frac{\theta_v^{q^{\ell n + s_0}}}{\theta-\varepsilon^{q^{\ell n + s_0}}})}{\displaystyle \prod_{n=n_s}^{\infty}(1- \displaystyle \frac{\theta_v^{q^{\ell n + s}}}{\theta-\varepsilon^{q^{\ell n + s}}})}\right) \cdot \left(\prod_{n=0}^{n_{s}-1}(\theta-\varepsilon^{q^{s+n \ell}})\right)\cdot \omega_{v,s_0}
\end{align}

Finally, recall from the definition that
\begin{equation*}\label{eqn: dR-iso}
H_{\textrm{dR}}\big(\cR_{\tau}(\eC_{\ell}),k\big) = \frac{\langle \tau \eR_\tau(\eC_{\ell})\rangle_k}{(t-\theta) \cdot \langle \tau \eR_\tau(\eC_{\ell})\rangle_k}.
\end{equation*}
By \eqref{eqn: Rt-id-1} we may identify
\[
\langle \tau \eR_\tau(\eC_{\ell})\rangle_k = (t-\theta)\cdot k[t] \times \prod_{i=1}^{\ell-1} k[t].
\]
For $0\leq s <\ell$, let 
$m_s = \big((t-\theta) m_{s,0},m_{s,1}...,m_{s,\ell-1}\big) \in \langle \tau \eR_\tau(\eC_{\ell})\rangle_k$, where
$m_{s,j} = 1$ if $s=j$ or $0$ otherwise, and set
\begin{equation}\label{eqn: global-omega}
\omega_s := m_s \mod (t-\theta) \cdot \langle \tau \eR_\tau(\eC_{\ell})\rangle_k  \quad \in
\frac{\langle \tau \eR_\tau(\eC_{\ell})\rangle_k}{(t-\theta) \cdot \langle \tau \eR_\tau(\eC_{\ell})\rangle_k}
= H_{\text{dR}}\big(\eR_{\tau}(\eC_{\ell}),k\big).
\end{equation}
Then the description of $b_{v,s}$ in \eqref{eqn: basis-bv} says in particular that
\[
b_{v,s} = \begin{cases}    
\left(
\displaystyle\prod_{n=1}^{n_s-1}(t-\theta^{q^{s+n\ell}})
\right)
\cdot m_{s}, & \text{ if $s=0$,} \\ 
\left(
\displaystyle\prod_{n=0}^{n_s-1}(t-\theta^{q^{s+n\ell}})
\right)
\cdot m_{s}, & \text{ if $s>0$.}
\end{cases}
\] 
for which we get
\begin{equation}\label{eqn: dR-basis-2}
\omega_{v,s} = \begin{cases}    
\left(
\displaystyle\prod_{n=1}^{n_s-1}(\theta-\theta^{q^{s+n\ell}})
\right)
\cdot \omega_{s}, & \text{ if $s=0$,} \\ 
\left(
\displaystyle\prod_{n=0}^{n_s-1}(\theta-\theta^{q^{s+n\ell}})
\right)
\cdot \omega_{s}, & \text{ if $s>0$.}
\end{cases}
\end{equation}
By \eqref{eqn: Fro-act}, we obtain that
\begin{align}
\varrho_v \cdot \omega_s &=
\left(\displaystyle \frac{\displaystyle \prod_{n=n_{s_0}}^{\infty}(1- \displaystyle \frac{\theta_v^{q^{\ell n + s_0}}}{\theta-\varepsilon^{q^{\ell n + s_0}}})}{\displaystyle \prod_{n=1}^{\infty}(1- \displaystyle \frac{\theta_v^{q^{\ell n + s}}}{\theta-\varepsilon^{q^{\ell n + s}}})}\right) \cdot \alpha_{s}^{-1} \cdot \omega_{v,s_0} \notag \\
&=
\left(\displaystyle \frac{\displaystyle \prod_{n=1}^{\infty}(1- \displaystyle \frac{\theta_v^{q^{\ell n + s_0}}}{\theta-\varepsilon^{q^{\ell n + s_0}}})}{\displaystyle \prod_{n=1}^{\infty}(1- \displaystyle \frac{\theta_v^{q^{\ell n + s}}}{\theta-\varepsilon^{q^{\ell n + s}}})}\right) \cdot \left(\prod_{n=0}^{n_{s_0}-1}(\theta-\varepsilon^{q^{s_0+n \ell}})\right)\cdot \frac{\alpha_{s_0}}{\alpha_{s}} \cdot \omega_{s_0},
\end{align}
where $\alpha_s$ is given in \eqref{eqn: alpha-C}.
As $n_{s_0} = n_s'$ in~\eqref{E:n_s'=n_s0}, combining with Corollary~\ref{cor: agv-form} we arrive at the following $v$-adic Chowla--Selberg-type formula:

\begin{theorem}\label{thm: CSF-1}
Let $\omega_{0},...,\omega_{\ell-1}$ be the $k$-basis of $H_{\text{\rm dR}}\big(\text{$\cR$}_{\tau}(\eC_{\ell}),k\big)$ chosen in \eqref{eqn: dR-basis-2}.
Given an integer $s$ with $0\leq s<\ell$, write $s+d = s_0 + n_s' \ell$ for unique integers $s_0$ and $n_s'$ with $0\leq s_0<\ell$.
Then 
\[
\varrho_{v}\cdot \omega_s =  (-1)^{n_s'}\cdot C_{s}\cdot
\displaystyle 
\frac{\displaystyle\agv\Big(1-\Big\langle\frac{q^{s+d}}{q^{\ell}-1}\Big\rangle_{\ari}\Big)}{\displaystyle\agv\Big(1-\Big\langle\frac{q^{s+d-1}}{q^{\ell}-1}\Big\rangle_{\ari}\Big)^q}\cdot \omega_{s_0},
\quad \text{ where } \quad 
C_s:= 
\begin{cases}
-v , & \text{ if $s=0$,}\\
1, & \text{ if $s>0$,}
\end{cases}
\]
and $\langle z\rangle_{\ari}$ is the fractional part of every $z \in \QQ$, i.e.\ $0\leq \langle z \rangle_{\ari} <1$ and $z-\langle z \rangle_{\ari} \in \ZZ$.
\end{theorem}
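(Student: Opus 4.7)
The plan is to chain together Theorem~\ref{thm: period-matrix} and Corollary~\ref{cor: agv-form}: Theorem~\ref{thm: period-matrix} describes the crystalline--de~Rham periods as concrete infinite products in $\theta_v,\varepsilon$, while Corollary~\ref{cor: agv-form} identifies exactly those infinite products (times explicit algebraic corrections) with ratios of $v$-adic arithmetic gamma values. The bridge is the action of $\tau_v=\tau^d$ on the crystalline basis $\Lambda_v$, which, once transported through $\phi_v$, yields the $\varrho_v$-action on the de~Rham side in the form predicted by the theorem.

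More concretely, I would proceed in four steps. First, by Theorem~\ref{thm: period-matrix}, invert $\phi_v$ to obtain $\phi_v^{-1}(\omega_{v,s})=\Phi_{v,s}(v)^{-1}\lambda_{v,s}$, where $\Phi_{v,s}(v)=\prod_{n\geq n_s}\bigl(1-\theta_v^{q^{\ell n+s}}/(\theta-\varepsilon^{q^{\ell n+s}})\bigr)$. Second, apply $\tau^d$ to $\lambda_{v,s}$: running Lemma~\ref{lem: tau-d} on the reduction $\tau^d\cdot\overline{\eR_\tau(\eC_\ell)}$ gives
\[
\tau^d\lambda_{v,s}=\prod_{n=0}^{n_s-1}(t-\varepsilon^{q^{s+n\ell}})\cdot\lambda_{v,s_0},
\]
where $s_0$ is the residue of $s+d$ modulo $\ell$, and the identification $n_s'=n_{s_0}$ is immediate from the definition of $n_s'$ (and already verified in the proof of Corollary~\ref{cor: agv-form}). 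Third, apply $\phi_v$ to both sides; passing to $H_{\mathrm{dR}}$ over $k_v$ via Remark~\ref{rem: cris-dR-kv} specializes $t\mapsto\theta$, producing the identity \eqref{eqn: Fro-act}. Fourth, convert from the basis $\{\omega_{v,i}\}$ back to the global basis $\{\omega_i\}$ using \eqref{eqn: dR-basis-2}; this contributes the ratio $\alpha_{s_0}/\alpha_s$ appearing in Corollary~\ref{cor: agv-form}, together with the exceptional factor $C_s$ when $s=0$ (the product in \eqref{eqn: dR-basis-2} then begins at $n=1$ rather than $n=0$, which is precisely matched by the $-v$ in $C_0$). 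Substituting the identity of Corollary~\ref{cor: agv-form} at this stage collapses the infinite products and the finite algebraic factor into the announced quotient of $\agv$-values.

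The main obstacle I expect is purely bookkeeping: simultaneously tracking (i) the sign $(-1)^{n_s'}$ that appears when converting $\prod_{n=0}^{n_s'-1}(\theta-\varepsilon^{q^{s_0+n\ell}})$ into the $(\varepsilon^{q^\cdot}-\theta)$ normalization used in Corollary~\ref{cor: agv-form}; (ii) the two boundary conventions at $s=0$ and $s_0=0$, where $\alpha_0=(\theta-\varepsilon)^{-1}$ forces the $v^{-1}$ that Lemma~\ref{lem: agv-form} produces in the case $s+n_s\ell=d$; and (iii) verifying that the two leftover infinite products, after cancellation of the $\Phi_{v,s_0}/\Phi_{v,s}$ quotient against the $\alpha_{s_0}/\alpha_s$ factor, assemble into exactly $\prod_{n=1}^{\infty}\bigl(1-(\theta-\varepsilon)^{q^{s_0+n\ell}}/(\theta-\varepsilon^{q^{s_0+n\ell}})\bigr)\big/\prod_{n=1}^{\infty}\bigl(1-(\theta-\varepsilon)^{q^{s+n\ell}}/(\theta-\varepsilon^{q^{s+n\ell}})\bigr)$, which is the infinite-product part of Corollary~\ref{cor: agv-form}. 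Once these compatibilities are checked, Theorem~\ref{thm: CSF-1} drops out immediately.
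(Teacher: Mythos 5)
Your proposal is correct and reproduces the paper's argument in Section~\ref{sec: vCSF} essentially verbatim: invert the period matrix from Theorem~\ref{thm: period-matrix}, transport the $\tau^d$-action on $\Lambda_v$ (via the Lemma~\ref{lem: tau-d} computation, using $\varepsilon^{q^d}=\varepsilon$ to keep the cofactor $\prod_{n=0}^{n_s-1}(t-\varepsilon^{q^{s+n\ell}})$ after the $d$-th Frobenius twist) through $\phi_v$ to get \eqref{eqn: Fro-act}, change basis $\omega_{v,s}\rightsquigarrow\omega_s$ via \eqref{eqn: dR-basis-2}, and then match against Corollary~\ref{cor: agv-form}. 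The only nit: in your point (ii) the $\alpha_0=(\theta-\varepsilon)^{-1}$ convention does not ``force'' the $v^{-1}$ from Lemma~\ref{lem: agv-form}; rather, that $v^{-1}$ arises intrinsically in the boundary case $s_0+n_{s_0}\ell=d$ (equivalently $s=0$), and $\alpha_0$ is the normalization chosen so that the $\alpha_{s_0}/\alpha_s$ factor in Corollary~\ref{cor: agv-form} coincides exactly with the correction produced by the base change \eqref{eqn: dR-basis-2}; this is a cosmetic point and does not affect the argument.
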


\begin{remark}
Theorem~\ref{thm: CSF-1} indicates that the matrix presentation of the automorphism $\varrho_v$ on $H_{\text{\rm dR}}\big(\text{$\eR$}_{\tau}(\eC_{\ell}),k_v\big)$ with respect to the basis $\beta:=\{\omega_0,...,\omega_{\ell-1}\}$ can be concretely illustrated in terms of $v$-adic arithmetic gamma values.
\end{remark}

\begin{remark}
The basis $\beta = \{\omega_0,...,\omega_{\ell-1}\}$ of the de~Rham module $H_{\textrm{dR}}\big(\eR_{\tau}(\eC_{\ell}),k\big)$ is actually independent of the chosen finite place $v$ of $k$.
Hence Theorem~\ref{thm: CSF-1} shows in particular that with respect to this global basis $\beta$, we are able to realize the ``isocrystal'' action $\varrho_v$ simultaneously for every finite place $v$ of $k$.
In fact, this basis is also utilized implicitly in the $\infty$-adic Chowla--Selberg formula in \cite{CPTY10} (see also \cite{Wei22}), and we would have a natural consistency between these formulas (as in \cite[Theorem~3.15]{O89} for the number field case).
\end{remark}

\section{Algebraic relations among the \texorpdfstring{$v$}{v}-adic arithmetic gamma values}
\label{sec: tran-agv}

In the remaining sections of this paper, we will determine the algebraic relations among the $v$-adic arithmetic gamma values.

\subsection{Transcendence result of the \texorpdfstring{$v$}{v}-adic arithmetic gamma values}\label{sec: AI-agv} 

Given $\ell \in \NN$, recall in Lemma~\ref{lem: Omega-unit} that we have defined
\[
\Omega_{\ell,v,i}(T_v) = \prod_{n=0}^\infty (1-\frac{\theta_v^{q^{\ell n+i}}}{t-\varepsilon^{q^{\ell n +i}}}) \quad \in A_v[\![T_v,T_v^{-1}\}\!\}, \quad \forall i\in \ZZ_{\geq 0}.
\]
In particular, one observes that
\begin{align}\label{E:Omega ell,v,i}
\Omega_{\ell,v,i}(v):= \Omega_{\ell,v,i}(T_v)\big|_{T_v=v} & =
\prod_{n=0}^{\infty}(1-\frac{\theta_v^{q^{\ell n + i}}}{\theta-\varepsilon^{q^{\ell n+i}}}) \quad \in k_v^\times \\
& =
(1-\frac{\theta_v^{q^{ i}}}{\theta-\varepsilon^{q^{i}}}) 
\cdot 
\Omega_{\ell,v,i+\ell}(v),
\quad \forall i \in \NN. \notag
\end{align}
We first assert that:

\begin{theorem}\label{thm: AI-Omega}
Given $\ell \in \NN$,
the values 
\[
\Omega_{\ell,v,i}(v) = \prod_{n=0}^{\infty}(1-\frac{\theta_v^{q^{\ell n + i}}}{\theta-\varepsilon^{q^{\ell n+i}}}), \quad 0< i \leq \ell,    
\]
are algebraically independent over $\bar{k}$.
\end{theorem}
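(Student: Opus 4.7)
The plan follows the strategy of ``switching $v$ and $\infty$'' outlined in Section~\ref{sec: str-main3}, which transfers the problem into a setting where Papanikolas' theorem on Grothendieck's period conjecture applies. Set $d = \deg v$. My first move is to reduce to the case where $\ell$ is replaced by $d\ell$. Indeed, from the definition \eqref{E:Omega ell,v,i} one has the factorization
\[
\Omega_{\ell,v,s}(v) = \prod_{j=0}^{d-1}\Omega_{d\ell,v,j\ell+s}(v), \quad 0 < s \leq \ell,
\]
so algebraic independence of $\{\Omega_{\ell,v,s}(v)\}_{0<s\leq \ell}$ will follow once I show that the $d\ell$ values $\{\Omega_{d\ell,v,s}(v)\}_{0<s\leq d\ell}$ are algebraically independent over $\bar k$, because the $\ell$ target products arise from disjoint blocks of factors and are thus algebraically independent over $\bar k$ as soon as the full collection is.

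Next, to apply Papanikolas' theory, I regard $v$ as the ``infinite place'' of the projective $\tilde v$-line, where $\tilde v = 1/v$, so that $k_v$ is an $\infty_{\tilde v}$-adic field. Transporting through the isomorphism $\iota : \FF_q[t] \cong \FF_q[\theta]$, let $\tilde T_v := \iota^{-1}(\tilde v) \in \FF_q(t)$. Under the replacements $\theta \leftarrow \tilde v$ and $t \leftarrow \tilde T_v$, one can construct a ``Carlitz-type'' Hartl--Juschka $\bar k[\tilde T_v]$-module $\Rcal_\sigma(\tilde{\cC}_{d\ell})$ (the restriction of scalars of a suitable Carlitz-type dual $\tilde T_v$-motive) for which the infinite products $\Omega_{d\ell,v,s}(v)$, $0<s\leq d\ell$, appear as periods. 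This is exactly where the ``CM by a constant field extension'' structure enters, and the rank $d^2\ell$ matches $[\FF_{q^{d\ell}}(t):\FF_q(\tilde T_v)]$.

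Granted the period interpretation, Papanikolas' theorem identifies the transcendence degree of the field generated by these periods with $\dim \Gamma_{\Rcal_\sigma(\tilde{\cC}_{d\ell})}$, the dimension of the $\tilde T_v$-motivic Galois group. The CM structure forces the upper bound $\dim \Gamma_{\Rcal_\sigma(\tilde{\cC}_{d\ell})} \leq d^2\ell$. For the matching lower bound, I would analyze the Hodge--Pink weights of $\Rcal_\sigma(\tilde{\cC}_{d\ell})$ and produce enough Hodge--Pink cocharacters inside $\Gamma_{\Rcal_\sigma(\tilde{\cC}_{d\ell})}$ to conclude equality $\dim = d^2\ell$. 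This last step, carried out in detail in Section~\ref{sec: CHJ-MG-HPC}, is the main obstacle: one must verify that the collection of Hodge--Pink cocharacters arising from all Frobenius-twisted ``places above $v$'' spans a subtorus whose image in the quotient by the CM-centralizer is full. Once this is done, combining with the period interpretation yields algebraic independence of $\Omega_{d\ell,v,s}(v)$ for $0<s\leq d\ell$, and the preliminary reduction then delivers the theorem.
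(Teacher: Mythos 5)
Your proposal follows essentially the same route as the paper: reduce to the $d\ell$ case via the factorization $\Omega_{\ell,v,s}(v)=\prod_{j=0}^{d-1}\Omega_{d\ell,v,j\ell+s}(v)$ (a disjoint-blocks argument the paper leaves implicit), switch $v$ and $\infty$ to realize the $\Omega_{d\ell,v,s}(v)$ as periods of $\Rscr_\sigma(\tilde{\cC}_{d\ell})$, and then combine Papanikolas' theorem with the CM upper bound and the Hodge--Pink cocharacter lower bound to get $\dim\Gamma_{\Rscr_\sigma(\tilde{\cC}_{d\ell})}=d^2\ell$. The only imprecision is cosmetic --- the cocharacters must generate the full torus $\Rcal_{\eK_{d\ell}/\eF}(\GG_m)$ rather than any ``quotient by the CM-centralizer'' --- and the paper achieves this by the transitive $\Aut_{\FF_q(\tilde v)}(\CC_v)$-action on the factors indexed by embeddings $\iota_{i,j}$, which is what you are gesturing at.
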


\begin{remark}
By Theorem~\ref{thm: period-matrix}, the values $\Omega_{\ell,v,i}(v)$, $0< i \leq \ell$ are (up to $k^\times$-multiples) the $v$-adic periods of the crystalline--de Rham comparison isomorphism for the $t$-motive $\eR_{\tau}(\eC_{\ell})$. Theorem~\ref{thm: AI-Omega} implies that
\[
\trdeg_{\bar{k}}\bar{k}\Big(
\Omega_{\ell,v,i}(v)\ \Big|\ 0< i \leq \ell\Big) = \ell = \dim \Gamma_{\eR_{\tau}(\eC_{\ell})},
\]
where $\Gamma_{\eR_{\tau}(\eC_{\ell})}$ is the ``$t$-motivic Galois group of the $t$-motive $\bar{k}\otimes_{\FF_q[\theta]}\eR_{\tau}(\eC_{\ell})$'' (see \cite[Lemma~3.2.1]{CPTY10}).
This coincidence seems to provide an evidence for a suitable $v$-adic analogue of Papanikolas' theorem in \cite{P08} (Grothendieck period conjecture).
\end{remark}

Assuming Theorem~\ref{thm: AI-Omega}, whose proof  will be given in Section~\ref{sec: pf-AL-Omega},
we are able to show that:

\begin{theorem}\label{thm: AL-gamma}
Given $\ell \in \NN$, we have the following equality
\[
\trdeg_{\bar{k}} \bar{k}\Big(\agv(z)\ \Big|\ z \in \QQ \text{ with } (q^\ell-1)\cdot z \in \ZZ \Big) = \ell - \gcd(\ell,\deg v).
\]
\end{theorem}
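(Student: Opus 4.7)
The upper bound $\leq \ell - \gcd(\ell,\deg v)$ is already the content of Theorem~\ref{thm: up-bd}, so the task is to supply the matching lower bound. My plan is to transport the algebraic independence of the $v$-adic periods, which I assume from Theorem~\ref{thm: AI-Omega}, into information about the gamma values via the Chowla--Selberg-type identity of Corollary~\ref{cor: agv-form}. Set $d := \deg v$, $m := \gcd(\ell,d)$, and $\gamma_s := \agv\bigl(1-q^s/(q^\ell-1)\bigr)$ for $0 \leq s < \ell$. As in the proof of Theorem~\ref{thm: up-bd}, Proposition~\ref{prop: FE}~(2) together with Proposition~\ref{prop: MR} reduces the field in question to $K := \bar{k}(\gamma_0,\ldots,\gamma_{\ell-1})$, so everything boils down to showing $\trdeg_{\bar{k}}K \geq \ell-m$.

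First I would inspect the ``algebraic factor'' appearing on the right side of Corollary~\ref{cor: agv-form}: since $\varepsilon \in \FF_{q^d} \subset \bar{k}$ via the Teichm\"uller embedding, this factor lies in $\bar{k}^\times$. Thus for every $0 \leq s < \ell$, writing $s_0 \equiv s+d \pmod{\ell}$ with $0 \leq s_0 < \ell$ so that the denominator in the corollary is $\gamma_{s_0-1 \bmod \ell}^q$, one deduces
\[
\frac{\Omega_{\ell,v,s_0+\ell}(v)}{\Omega_{\ell,v,s+\ell}(v)} \in \bar{k}^\times \cdot \frac{\gamma_{s_0}}{\gamma_{s_0-1 \bmod \ell}^q} \subset K.
\]
Setting $\tilde\Omega_i := \Omega_{\ell,v,i+\ell}(v)$ for $0 \leq i < \ell$, a direct computation using $\theta_v^{q^i} = \theta^{q^i}-\varepsilon^{q^i}$ shows that each $\tilde\Omega_i$ agrees with $\Omega_{\ell,v,i}(v)$ (or with $\Omega_{\ell,v,\ell}(v)$ when $i=0$) up to a $\bar{k}^\times$-factor, so Theorem~\ref{thm: AI-Omega} yields that $\tilde\Omega_0,\ldots,\tilde\Omega_{\ell-1}$ remain algebraically independent over $\bar{k}$.

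Next I would study the subfield of $K$ generated by the ratios $R_{s_0} := \tilde\Omega_{s_0}/\tilde\Omega_{\sigma^{-1}(s_0)}$, where $\sigma$ denotes translation by $d$ on $\ZZ/\ell\ZZ$. The permutation $\sigma$ has exactly $m$ orbits, each of length $\ell/m$, and the product of the $R_{s_0}$'s around any orbit telescopes to $1$. Conversely, algebraic independence of the $\tilde\Omega_i$ implies that any multiplicative relation $\prod R_{s_0}^{b_{s_0}} \in \bar{k}^\times$ forces the exponent vector $(b_{s_0})$ to be constant on each orbit of $\sigma$; hence the lattice of multiplicative relations has rank exactly $m$. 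Because each $R_{s_0}$ is a Laurent monomial in algebraically independent elements, this multiplicative rank coincides with $\trdeg_{\bar{k}} \bar{k}(R_{s_0}: 0 \leq s_0 < \ell) = \ell - m$. Since this subfield is contained in $K$, the desired inequality $\trdeg_{\bar{k}}K \geq \ell - m$ follows, and combined with Theorem~\ref{thm: up-bd} completes the equality. The concluding assertion of the theorem is then automatic: the standard monomial relations \eqref{E:Standard FE} together with Corollary~\ref{cor: GK-formula} already realize the maximal possible number $m$ of independent relations, so they generate the full relation ideal among the $\gamma_s$'s.

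The step I anticipate to be the most delicate is the passage from multiplicative rank to transcendence degree for the $R_{s_0}$'s: one must verify that any nontrivial polynomial relation over $\bar{k}$ among the $R_{s_0}$'s, after clearing Laurent denominators, yields a nontrivial polynomial relation among the $\tilde\Omega_i$'s, contradicting Theorem~\ref{thm: AI-Omega}. The remaining pieces, in particular the index bookkeeping in Corollary~\ref{cor: agv-form} (notably the boundary case $s_0 = 0$, for which the denominator argument wraps around to $\gamma_{\ell-1}^q$) and the verification that every factor of the form $\alpha_s$, $C_s$, or $\prod(\varepsilon^{q^{\cdot}}-\theta)$ in the corollary really sits in $\bar{k}$, are routine.
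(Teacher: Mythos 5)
Your proof is correct, and it takes a genuinely different route from the paper's. The paper proceeds in two stages: it first handles the case $d = \deg v \mid \ell$, writing $\ell = d\ell_0$ and building the explicit telescoped products $u_s' := u_s u_{s+d}\cdots u_{s+(\ell_0-2-\ell_s)d}$ out of Lemma~\ref{lem: agv-form}, which are shown to be algebraically independent via Theorem~\ref{thm: AI-Omega}; it then reduces the general $\ell$ to $\mathrm{lcm}(\ell,d)$ by a careful linear-algebra computation with exponent vectors $\vec\delta_j,\vec\lambda_j\in\ZZ^{\mathrm{lcm}(\ell,d)}$ and the identity~\eqref{eqn: trdeg-1}. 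You instead work directly at level $\ell$: Corollary~\ref{cor: agv-form} gives, for each $s$, a ratio $R_{s_0}=\tilde\Omega_{s_0}/\tilde\Omega_{\sigma^{-1}(s_0)}$ lying (modulo $\bar{k}^\times$) in $K=\bar{k}(\gamma_0,\dots,\gamma_{\ell-1})$, and you compute $\trdeg_{\bar{k}}\bar{k}(R_{s_0}:s_0)$ as the $\QQ$-rank of the exponent lattice spanned by the $e_{s_0}-e_{\sigma^{-1}(s_0)}$, which is $\ell-\gcd(\ell,d)$ by the orbit count of the $d$-shift on $\ZZ/\ell\ZZ$. This is enabled precisely by the fact that Theorem~\ref{thm: AI-Omega} is available for arbitrary $\ell$, not only multiples of $d$, so the paper's case split and lcm reduction are avoidable. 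Your Laurent-monomial rank argument (transcendence degree of a field generated by monomials in algebraically independent elements equals the $\QQ$-rank of the exponent lattice) is a standard fact and fills the ``delicate step'' you flagged; everything else checks out, including the boundary case $s_0=0$ where the denominator wraps to $\gamma_{\ell-1}^q$ and the numerator in Corollary~\ref{cor: agv-form} starts from $n=1$ so that $\tilde\Omega_0=\Omega_{\ell,v,\ell}(v)$. The upshot is a cleaner, uniform proof trading the paper's explicit relation bookkeeping for a single rank computation.
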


\begin{proof}
First, assume that $d = \deg v \mid \ell$, and write $\ell = d\ell_0$.
If $\ell_0=1$, then the result follows from Theorem~\ref{thm: GK-formula}.
Suppose $\ell_0\geq 2$.
For every integer $s$ with $0\leq s < d\ell_0$, by Lemma~\ref{lem: agv-form} we get
\[
\displaystyle\frac{\agv(1-\displaystyle\frac{q^{d\ell_0-s}}{q^{d\ell_0}-1})}{\agv(1-\displaystyle\frac{q^{d\ell_0-s-1}}{q^{d\ell_0}-1})^q}
\sim 
\prod_{n=2}^\infty
\left(
\frac{
\displaystyle
1-\frac{\theta_v^{q^{d\ell_0 n-s}}}{\theta-\varepsilon^{q^{-s}}}
}{
\displaystyle
1-\frac{\theta_v^{q^{d\ell_0 n-s-d}}}{\theta-\varepsilon^{q^{-s}}}
}
\right)
=
\frac{\Omega_{d\ell_0,v,2d\ell_0-s}(v)}{\Omega_{d\ell_0,v,2d\ell_0-s-d}(v)}
=:u_s.
\]
In particular, for every integer $s$ with $0\leq s < d(\ell_0-1)$, 
let $s_d \in \ZZ$ with $0\leq s_d < d$ and $s_d \equiv s \bmod d$.
Write
$s = s_d + d \ell_s$ with $0\leq \ell_s <\ell_0-1$.
Then one has that
\[
u'_s := u_s u_{s+d} \cdots u_{s+(\ell_0-2-\ell_s)d} = \frac{\Omega_{d\ell_0,v,2d\ell_0-s}(v)}{\Omega_{d\ell_0,v,d\ell_0+d-s_d}(v)} \sim \frac{\Omega_{d\ell_0,v,d\ell_0-s}(v)}{\Omega_{d\ell_0,v,d-s_d}(v)}.
\]
As $d<d\ell_0 -s \leq d\ell_0$ and $0 <d- s_d\leq d$, Theorem~\ref{thm: AI-Omega} ensures the algebraic independence of $u_s'$ over $\bar{k}$ for $0\leq s < d(\ell_0-1)$.
Hence
\[
\trdeg_{\bar{k}} \bar{k}\Big(\agv(z)\ \Big|\ z \in \QQ \text{ with } (q^{\ell}-1)\cdot z \in \ZZ \Big) \geq d(\ell_0 - 1) = \ell- \gcd(\ell,\deg v).
\]
By combining with the upper bound in Theorem~\ref{thm: up-bd}, the result follows.

Consequently, the following monomial relations
\begin{align}\label{E:generator relations}
\prod_{m=0}^{\ell_0-1} \agv(1-\frac{q^{j+dm}}{q^{d\ell_0}-1}) & = \agv(1-\frac{q^j+q^{j+d}+\cdots + q^{j+d(\ell_0-1)}}{q^{d\ell_0}-1}) \\
&= \agv(1-\frac{q^{j}}{q^d-1}), \quad \text{ for $0\leq j < d$,} \notag
\end{align}   
generate all algebraic relations among $\agv(1-q^s/(q^{d\ell_0}-1))$ for $0\leq s< d\ell_0-1$.
In particular,
let $\vec{\delta}_{0},...,\vec{\delta}_{d-1} \in \ZZ^{d\ell_0}$ 
be the ``exponent vectors'' corresponding to the above monomial relations, i.e.\ 
\[
\vec{\delta}_j = (\delta_{j,0},...,\delta_{j,d\ell_0-1}) \quad \text{ with } \quad 
\delta_{j,s} = \begin{cases}
1, & \text{ if $j\equiv s \bmod d$,}\\
0, & \text{ otherwise.}
\end{cases}
\]
Then 
\[
\prod_{s=0}^{d\ell_0-1}\agv(1-\frac{q^s}{q^{d\ell_0}-1})^{\delta_{j,s}} = \agv(1-\frac{q^j}{q^d-1}) \quad \in \bar{k},
\]
Moreover, as $\vec{\delta}_{0},...,\vec{\delta}_{d-1} \in \ZZ^{d\ell_0}$
span a $d$-dimensional ``relation subspace'' in $\QQ^{d\ell_0}$,
for arbitrary
given vectors $\vec{n}_1,...,\vec{n}_r \in \ZZ^{d\ell_0}$ with $\vec{n}_j =(n_{j,0},...,n_{j,d\ell-1})$, $j=1,...,r$,
we have that
\begin{align} \label{eqn: trdeg-1}
&\ \  \trdeg_{\bar{k}}
\bar{k}\left(\prod_{0\leq s<d\ell} \agv(1-\frac{q^s}{q^{d\ell_0}-1})^{n_{j,s}}\ \Bigg|\ 1\leq j \leq r\right) \notag \\
= & \ \ \dim_{\QQ}\langle \vec{\delta}_0,...,\vec{\delta}_{d-1}, \vec{n}_1,...,\vec{n}_r\rangle_{\QQ}-d.
\end{align}
Here 
$\langle \vec{\delta}_0,...,\vec{\delta}_{d-1}, \vec{n}_1,...,\vec{n}_r\rangle_{\QQ}$ is the subspace of $\QQ^{d\ell_0}$ spanned by 
$\vec{\delta}_0,...,\vec{\delta}_{d-1}, \vec{n}_1,...,\vec{n}_r$ over $\QQ$.
\\

In the general case, for a given $\ell \in \NN$,
write $\text{lcm}(\ell,d) = d\ell_0 = d_0 \ell$ with $\ell_0, d_0 \in \NN$ so that $\text{gcd}(\ell_0,d_0) = 1$.
Note that for an integer $j$ with $0\leq j<\ell$, one has
\[
\agv(1-\frac{q^{j}}{q^\ell-1}) = 
\prod_{s=0}^{d_0-1} \agv(1-\frac{q^{j+s\ell}}{q^{d_0 \ell}-1}).
\]
Let
$\vec{\lambda}_0,...,\vec{\lambda}_{\ell-1} \in \ZZ^{d\ell_0}$ satisfying that
\[
\vec{\lambda}_j = (\lambda_{j,0},...,\lambda_{j,d\ell_0-1}) \quad \text{ with } \quad 
\lambda_{j,s} = \begin{cases}
1, & \text{ if $j\equiv s \bmod \ell$,}\\
0, & \text{ otherwise.}
\end{cases}
\]
As for $0\leq j<\ell$,
\[
\prod_{s=0}^{d\ell_0-1}\agv(1-\frac{q^s}{q^{d\ell_0}-1})^{\lambda_{j,s}} = \agv(1-\frac{q^{j}}{q^\ell-1}),
\]
by the equality~\eqref{eqn: trdeg-1}, we obtain that
\begin{align}\label{E:dim delta}
& \ \trdeg_{\bar{k}} \bar{k}\Big(\agv(z)\ \Big|\ z \in \QQ \text{ with } (q^\ell-1)\cdot z \in \ZZ \Big) \notag \\
=&\ 
\trdeg_{\bar{k}}
\bar{k}\bigg(\agv(1-\frac{q^{j}}{q^\ell-1})\ \bigg|\ 
0\leq j<\ell\bigg) \\
=& \ 
\dim_\QQ \langle \vec{\delta}_0,...,\vec{\delta}_{d-1}, \vec{\lambda}_0,...,\vec{\lambda}_{\ell-1}\rangle_{\QQ}-d. \notag 
\end{align}
Since
\[
\langle
\vec{\delta}_0,...,\vec{\delta}_{d-1}\rangle_{\QQ} =\Big\{ (\alpha_0,...,\alpha_{d\ell_0-1}) \in \QQ^{d\ell_0} \ \Big|\ \alpha_i = \alpha_j \text{ if } i\equiv j \bmod d\Big\}
\]
and 
\[
\langle
\vec{\lambda}_0,...,\vec{\lambda}_{\ell-1}
\rangle_{\QQ} =\Big\{(\beta_0,...,\beta_{d_0\ell_0-1}) \in \QQ^{d_0\ell} \ \Big|\ \beta_i = \beta_j \text{ if } i\equiv j \bmod \ell\Big\},
\]
one has that
\begin{align*}
&\langle
\vec{\delta}_0,...,\vec{\delta}_{d-1}\rangle_{\QQ} \cap 
\langle
\vec{\lambda}_0,...,\vec{\lambda}_{\ell-1}
\rangle_{\QQ} \\
& \hspace{2cm} = 
\Big\{( \gamma_0,...,\gamma_{d_0\ell_0-1})\in \QQ^{d_0\ell} \ \Big|\ \gamma_i = \gamma_j \text{ if } i\equiv j \bmod \gcd(d,\ell)\Big\}.
\end{align*}
Therefore 
\[
\dim_\QQ \langle \vec{\delta}_0,...,\vec{\delta}_{d-1}, \vec{\lambda}_0,...,\vec{\lambda}_{\ell-1}\rangle_{\QQ} = \ell+d - \gcd(d,\ell),
\]
for which the result follows from~\eqref{E:dim delta}. 
\end{proof}

\begin{remark}
Theorem~\ref{thm: AL-gamma} says that all algebraic relations among $v$-adic arithmetic gamma values at rational $p$-adic integers are generated by the funcional equations in Proposition~\ref{prop: FE} and Thakur's analogue of the Gross--Koblitz formula in Theorem~\ref{thm: GK-formula}.
\end{remark}

One immediate consequence of Theorem~\ref{thm: AL-gamma} is the following.

\begin{corollary}\label{cor: tran-gamma}
Let $a$ and $b$ be nonzero integers so that  $\gcd(a,b)=1$ and $p \nmid b$. If  $q^d \not\equiv 1 \bmod b$,
then
\[
\agv(\frac{a}{b}) \quad \text{ is transcendental over $k$.}
\]
\end{corollary}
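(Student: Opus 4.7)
The plan is to derive this from Theorem~\ref{thm: AL-gamma} together with the explicit dimension formula~\eqref{eqn: trdeg-1} extracted in its proof. By the reflection formula $\agv(a/b)\agv(1-a/b) = \agv(0) \in \bar{k}^\times$ (Proposition~\ref{prop: FE}(1)), it suffices to prove $\agv(1-a/b) \notin \bar{k}$; using the translation equation (Proposition~\ref{prop: FE}(2)) iteratively, I may further reduce to the case $0 < a < b$.

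Let $\ell_0$ denote the multiplicative order of $q$ modulo $b$, and put $\ell := \mathrm{lcm}(\ell_0, d)$. Then $d \mid \ell$ and $b \mid q^\ell - 1$. Setting $M := a(q^\ell - 1)/b \in \ZZ$, I have $0 < M < q^\ell - 1$; write its base-$q$ expansion $M = \sum_{i=0}^{\ell-1} c_i q^i$ with $0 \leq c_i < q$. The monomial relation~\eqref{E:Standard FE} of Proposition~\ref{prop: MR} yields
\[
\agv(1 - a/b)\ =\ \prod_{i=0}^{\ell-1} \agv\bigl(1 - q^i/(q^\ell-1)\bigr)^{c_i}.
\]

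Since $d \mid \ell$, formula~\eqref{eqn: trdeg-1} in the proof of Theorem~\ref{thm: AL-gamma}, applied with $r = 1$ and $\vec{n}_1 = (c_0, \ldots, c_{\ell-1})$, shows that the above monomial lies in $\bar{k}^\times$ precisely when $\vec{c}$ belongs to the $\QQ$-span of $\vec{\delta}_0, \ldots, \vec{\delta}_{d-1}$, equivalently, when $c_i$ depends only on $i \bmod d$. Assuming this for contradiction, put $M_0 := \sum_{i=0}^{d-1} c_i q^i$; the constraint $M < q^\ell - 1$ forces $M_0 < q^d - 1$, and $M = M_0\cdot (q^\ell-1)/(q^d-1)$, so $a/b = M_0/(q^d-1)$. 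Since $\gcd(a,b) = 1$, this forces $b \mid q^d - 1$, i.e.\ $q^d \equiv 1 \bmod b$, contradicting the hypothesis.

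The only delicate point is the ``only if'' half of the preceding paragraph, namely that the full relation lattice among the generators $\agv(1-q^i/(q^\ell-1))$ is spanned by $\vec{\delta}_0, \ldots, \vec{\delta}_{d-1}$. This is not an extra input, but falls out directly from~\eqref{eqn: trdeg-1}: the stated transcendence degree vanishes exactly when $\vec{c}$ fails to enlarge the $\QQ$-span of the $\vec{\delta}_j$'s. All remaining ingredients---the reflection/translation reductions, the base-$q$ expansion, and the final divisibility step---are elementary.
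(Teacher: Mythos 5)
Your proof is correct and takes essentially the same route as the paper's: reduce via the functional equations to an argument of the form $1-a'/b$ with $0<a'<b$, take $\ell$ to be a common multiple of $d$ and the period of $q \bmod b$ (your $\ell=\mathrm{lcm}(\ell_0,d)$ equals the paper's $d\cdot(\text{order of }q^d\bmod b)$), expand in base $q$ and apply Proposition~\ref{prop: MR}, read off the transcendence degree from~\eqref{eqn: trdeg-1}, and finally observe that membership of the exponent vector in $\langle\vec{\delta}_0,\dots,\vec{\delta}_{d-1}\rangle_\QQ$ would force $a/b$ to have denominator dividing $q^d-1$. The only cosmetic difference is that the paper parametrizes by the order of $q^d\bmod b$ directly, and your invocation of the reflection formula is superfluous (translation alone gives the reduction to $0<a'<b$), but none of this affects correctness.
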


\begin{proof}
Without loss of generality, we may assume that $\frac{a}{b} = 1-\frac{a'}{b}$ with $0\leq a'<b$ and $\gcd(a',b)=1$.
Let $\ell_0$ be the order of $q^d \bmod b$ in $(\ZZ/b \ZZ)^\times$.
Then the assumption of $b$ implies that $\ell_0>1$.
Write $q^{d\ell_0}-1 = b \cdot b'$, and take $a_0,...,a_{d\ell_0-1} \in \ZZ$ with $0\leq a_0,...,a_{d\ell_0-1}<q$ satisfying that
\[
\frac{a'}{b} = \frac{a'b'}{q^{d\ell_0}-1} = \sum_{i=0}^{d\ell_0-1}\frac{a_i q^i}{q^{d\ell_0}-1}.
\]
Let $\vec{a} = (a_0,a_1,...,a_{d\ell_0-1}) \in \ZZ^{d\ell_0}$.
If $\vec{a} \in \langle \vec{\delta}_0,...,\vec{\delta}_{d-1}\rangle_\QQ$,
then we get $a_i =a_j$ if $i\equiv j \bmod d$.
Hence
\[
\frac{a'}{b} = \sum_{i=0}^{d-1} \frac{a_i(q^i+q^{d+i}+\cdots + q^{(\ell_0-1)d+i})}{q^{d\ell_0}-1}
=\frac{a_0+a_1q+\cdots +a_{d-1}q^{d-1}}{q^d-1},
\]
for which $b \mid q^d-1$, a contradiction.
Therefore $\vec{a} \notin \langle \vec{\delta}_0,...,\vec{\delta}_{d-1}\rangle_\QQ$,
and by \eqref{eqn: trdeg-1} we get
\[
\trdeg_{\bar{k}}\bar{k}\Big(\agv(1-\frac{a'}{b})\Big)
= \dim_{\QQ}\langle \vec{\delta}_0,...,\vec{\delta}_{d-1},\vec{a}\rangle_\QQ 
=(d+1)-d = 1
\]
as desired.
\end{proof}

\begin{remark}\label{Rem: Equiv}
Let $a$ and $b$ be nonzero integers so that   $\gcd(a,b)=1$ and $p \nmid b$. Combining Thakur's result in Theorem~\ref{thm: GK-formula} and Corollary~\ref{cor: tran-gamma}, we have the following equivalence:
\[
b\mid q^{\deg v}-1 \quad {\hbox{ if and only if }}\quad \agv(\frac{a}{b})\in \bar{k}^{\times}.
\]
\end{remark}

\subsection{\texorpdfstring{$v$}{v}-adic arithmetic gamma distribution}
\label{sec: agv-dist}
Let $\ZZ_{(p)}:= \ZZ_p \cap \QQ$, and $\Acal^{\text{ari}}$ be the free abelian group generated by all elements in $\ZZ_{(p)}/\ZZ$.
We identify $\Acal^{\text{ari}}$ with a subgroup of $\Acal^{\text{ari}}_\QQ := \QQ \otimes_\ZZ \Acal^{\text{ari}}$.
Every element in $\Acal^{\text{ari}}$
(resp.\ $\Acal^{\text{ari}}_\QQ$) can be written uniquely as a formal sum
\[
{\bf z} = \sum_{z \in \ZZ_{(p)}/\ZZ} n_z [z], \quad
\text{where $n_z \in \ZZ$ (resp.\ $\QQ$) and $n_z = 0$ for almost all $z$.}
\]
Given $\ell \in \NN$, let $\Acal^{\text{ari}}_\ell$ (resp.\ $\Acal^{\text{ari}}_{\ell,\QQ}$)
be the subgroup of $\Acal^{\text{ari}}$ (resp.\ subspace of $\Acal^{\text{ari}}_\QQ$) generated by $z \in \frac{1}{q^\ell-1}\ZZ/\ZZ$.
Also, recall that the \emph{arithmetic diamond bracket}
of $z \in \ZZ_{(p)}$, introduced by Thakur~\cite{T91}, comes from the fractional part of $z$, i.e.\ $\langle z\rangle_{\ari} \in \ZZ_{(p)}$ is the unique number with
\[
0\leq \langle z\rangle_{\ari} <1 
\quad \text{ and } \quad 
z \equiv \langle z\rangle_{\ari} \bmod \ZZ.
\]
We may regard $\langle \cdot \rangle_{\ari}$ as a function on $\ZZ_{(p)}/\ZZ$.
Consider the subspace $\Rcal^{\text{ari}}_\ell$ of $\Acal^{\text{ari}}_{\ell,\QQ}$ spanned by
\[
[z] - \sum_{i=0}^{\ell-1} z_i [\frac{q^i}{1-q^\ell}], \quad \forall z \in \frac{1}{q^\ell-1} \ZZ/\ZZ \quad \text{ (see \eqref{E:Standard FE})},
\]
where $z_0,...,z_{\ell-1} \in \ZZ$ with
$0\leq z_0,...,z_{\ell-1}<q$ so that
\[
\sum_{i=0}^{\ell-1}\frac{z_i q^i}{q^\ell-1} = 
\langle -z\rangle_{\ari} .
\]
Put 
$\Ucal^{\text{ari}} := \Acal^{\text{ari}}_\QQ/\Rcal^{\text{ari}}$, where
$\Rcal^{\text{ari}} := \cup_\ell \Rcal^{\text{ari}}_\ell$.
We call $\Ucal^{\text{ari}}$ the \emph{universal distribution associated to the arithmetic diamond bracket relations} (see \cite[\S 4.3]{Wei22}.

Following the $\infty$-adic case, we define
\[
\tilde{\Gamma}_{\text{ari},v}: \ZZ_{(p)}/\ZZ \longrightarrow \CC_v^\times, \quad 
\tilde{\Gamma}_{\text{ari},v}(z \bmod \ZZ) := \agv(1-\langle -z\rangle_{\ari}),
\]
which induces a $\QQ$-linear homomorphism
$\hat{\Gamma}_{\text{ari},v}: \Ucal_{\QQ}^{\text{ari}}\rightarrow \CC_v^\times/\bar{k}^\times$ (by the monomial relation \eqref{E:Standard FE}).
Recall in Remark~\ref{rem: relations} that having the monomial relation \eqref{E:Standard FE} together with the translation (2) of Proposition~\ref{prop: FE} (for $z \in \ZZ_p \cap \QQ$), the reflection (1) and multiplication (3) of Proposition~\ref{prop: FE} are valid for $z \in \ZZ_p \cap \QQ$.
We may call $\hat{\Gamma}_{\text{ari},v}$
the \emph{$v$-adic arithmetic gamma distribution}.
From Corollary~\ref{cor: GK-formula} and Theorem~\ref{thm: AL-gamma} together with~\eqref{E:generator relations},
we obtain that: 

\begin{theorem}
The kernel of the $v$-adic arithmetic gamma distribution $\hat{\Gamma}_{\text{ari},v}$ is spanned by the Gross--Koblitz--Thakur relation
\[
\sum_{i=0}^{r-1}[q^{i\deg v}z], \quad \forall z \in \frac{1}{n}\ZZ
\]
where $n$ runs through all positive integers with $p \nmid n$, and $r$ is the order of $q^{\deg v} \bmod n$ in $(\ZZ/n\ZZ)^\times$.
\end{theorem}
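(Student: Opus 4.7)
The strategy is a level-by-level reduction using the filtration $\mathcal{U}^{\text{ari}} = \bigcup_{\ell\geq 1}\mathcal{U}^{\text{ari}}_\ell$. For $\ell'\mid \ell$ the inclusion $\tfrac{1}{q^{\ell'}-1}\ZZ \subset \tfrac{1}{q^\ell-1}\ZZ$ induces an injection $\mathcal{U}^{\text{ari}}_{\ell'}\hookrightarrow \mathcal{U}^{\text{ari}}_\ell$ sending $[q^i/(1-q^{\ell'})]$ to $\sum_{j\equiv i\bmod \ell'}[q^j/(1-q^\ell)]$, as follows from uniqueness of the $q$-adic expansion. Setting $K := \ker\hat{\Gamma}_{\text{ari},v}$ and $K_\ell := K\cap \mathcal{U}^{\text{ari}}_\ell$, one has $K = \bigcup_\ell K_\ell$. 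The Gross--Koblitz--Thakur relations already lie in $K$ by Corollary~\ref{cor: GK-formula} (together with the elementary identity $\tilde{\Gamma}_{\text{ari},v}(y)\equiv \agv(y)\bmod \bar{k}^\times$, which follows from reflection and translation in Proposition~\ref{prop: FE}), so it suffices to show that each $K_\ell$ is $\QQ$-spanned by those Gross--Koblitz--Thakur relations that land in $\mathcal{U}^{\text{ari}}_\ell$.

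Fix $\ell$ and set $d := \deg v$, $m := \gcd(\ell,d)$. The first step is the upper bound $\dim_\QQ K_\ell \leq m$. The quotient $\CC_v^\times/\bar{k}^\times$ is a $\QQ$-vector space: $\CC_v$ is algebraically closed so the group is divisible, and all roots of unity in $\CC_v$ already live in $\bar{k}$, so the quotient is torsion-free. Any multiplicative relation in $\CC_v^\times/\bar{k}^\times$ among the generators $\agv(1-q^i/(q^\ell-1))$ is a fortiori an algebraic relation over $\bar{k}$, so the $\QQ$-rank of $\hat{\Gamma}_{\text{ari},v}(\mathcal{U}^{\text{ari}}_\ell)$ is at least $\trdeg_{\bar{k}}\bar{k}\bigl(\agv(z)\mid (q^\ell-1)z\in\ZZ\bigr) = \ell-m$, by Theorem~\ref{thm: AL-gamma}. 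Rank-nullity then gives the bound.

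For the matching lower bound, take $n := q^m-1$, so that $n\mid q^\ell-1$. Because $m\mid d$, one has $q^d\equiv 1\bmod n$, hence the order $r$ in the theorem equals $1$, and the Gross--Koblitz--Thakur relation attached to any $z\in \tfrac{1}{n}\ZZ$ degenerates to the single symbol $[z]\in K$. Apply this to the $m$ explicit elements $z_i := -q^i/(q^m-1)$ for $0\leq i<m$; the identity
\[
\frac{q^i}{q^m-1} \;=\; \sum_{\substack{0\leq j<\ell\\ j\equiv i\bmod m}}\frac{q^j}{q^\ell-1}
\]
expresses $[z_i]$ in the standard basis as $\vec{\delta}_i := \sum_{j\equiv i\bmod m}[q^j/(1-q^\ell)]$. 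Since $\vec{\delta}_0,\ldots,\vec{\delta}_{m-1}$ have pairwise disjoint support, they are $\QQ$-linearly independent, spanning an $m$-dimensional subspace of $K_\ell$. Combining with the upper bound forces $K_\ell = \langle\vec{\delta}_0,\ldots,\vec{\delta}_{m-1}\rangle_\QQ$, and the theorem follows.

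The main obstacle is the upper bound on $\dim_\QQ K_\ell$, which relies on promoting the transcendence statement of Theorem~\ref{thm: AL-gamma} (itself deduced from Theorem~\ref{thm: main-3}) to a bound on the $\QQ$-rank of the image in $\CC_v^\times/\bar{k}^\times$; the inequality $\mathrm{trdeg} \leq$ multiplicative rank is elementary but crucial, and one also needs the torsion-freeness and divisibility of $\CC_v^\times/\bar{k}^\times$ so that rank-nullity applies.
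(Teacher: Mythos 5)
Your proof is correct and reconstructs essentially the same argument the paper leaves implicit (the paper says only that the theorem follows from Corollary~\ref{cor: GK-formula}, Theorem~\ref{thm: AL-gamma}, and the explicit monomial relations~\eqref{E:generator relations}; your rank--nullity argument, with the trdeg~$\geq$~$\QQ$-rank inequality spelled out via torsion-freeness and divisibility of $\CC_v^\times/\bar{k}^\times$, fills in precisely what the paper's key identity~\eqref{eqn: trdeg-1} encodes, and your $\vec{\delta}_i$ are the paper's $\vec{\delta}_j$ from the proof of Theorem~\ref{thm: AL-gamma}, only organized directly at level $\ell$ with $m=\gcd(\ell,d)$ rather than passing through $\mathrm{lcm}(\ell,d)$).

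One point deserves slightly more care: the linear independence of $\vec{\delta}_0,\ldots,\vec{\delta}_{m-1}$ from ``disjoint supports,'' and the implicit use of $\dim_\QQ \Ucal^{\text{ari}}_\ell = \ell$ in the rank--nullity count, are statements about the quotient $\Ucal^{\text{ari}}_\ell$, not about $\Acal^{\text{ari}}_{\ell,\QQ}/\Rcal^{\text{ari}}_\ell$. Since $\Rcal^{\text{ari}} = \bigcup_{\ell''}\Rcal^{\text{ari}}_{\ell''}$, it is not a priori clear that $\Rcal^{\text{ari}}\cap \Acal^{\text{ari}}_{\ell,\QQ} = \Rcal^{\text{ari}}_\ell$, so these two quotients could in principle differ. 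The clean fix is to run the rank--nullity argument in $\Acal^{\text{ari}}_{\ell,\QQ}/\Rcal^{\text{ari}}_\ell$, where the $\ell$ symbols $[q^i/(1-q^\ell)]$ genuinely form a $\QQ$-basis, so disjoint support does give independence and the kernel of the composite map to $\CC_v^\times/\bar{k}^\times$ is forced to be exactly $\langle\vec{\delta}_0,\ldots,\vec{\delta}_{m-1}\rangle_\QQ$; then $K_\ell$, being a further quotient, is still spanned by the images of these same Gross--Koblitz--Thakur elements, which is what the theorem requires. (A small cosmetic point: the congruence $\tilde{\Gamma}_{\text{ari},v}(y)\equiv\agv(y)\bmod\bar{k}^\times$ only needs the translation relation from Proposition~\ref{prop: FE}(2), not reflection.)
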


\section{Transcendence theory of Hartl--Juschka modules}\label{sec: tran-FM}

In this section, we briefly review the necessary properties in the trancendence theory of Hartl--Juschka modules for the proof of Theorem~\ref{thm: AI-Omega}.
To utilize this well-developed theory in the $v$-adic setting, our initial step is to regard $v$ as the infinite place of the ``projective $\frac{1}{v}$-line'' over $\FF_q$.

\subsection{Regarding \texorpdfstring{$v$}{v} as the infinite place}\label{sec: v to infinite}

Fix $d = \deg v$ and a positive integer $\ell$ as before.
Set $k_{d\ell}:= \FF_{q^{d\ell}}(\theta) = \FF_{q^{d\ell}}(\theta_v) \subset \bar{k}$, where $\theta_v = \theta-\varepsilon$, and $k_{d\ell,v}:= \FF_{q^{d\ell}}(\!(\theta_v)\!)$.
Let
\[
\tilde{\theta}_v:= (\theta-\varepsilon)^{-1}
\quad \text{ and } 
\tilde{v} := v^{-1}.
\]
Then we have
\[
\FF_q(\!(v)\!) =
\FF_q(\!(\tilde{v}^{-1})\!) \subset  \FF_{q^{d\ell}}(\!(\tilde{\theta}_v^{-1})\!) = \FF_{q^{d\ell}}(\!(\theta_v)\!) =k_{d\ell,v} \subset \CC_v.
\]

The trick of viewing $\CC_v$ as an \lq$\infty_{\tilde{v}}$-adic\rq\ field comes from the following translation. 
\begin{itemize}
\item[$\bullet$] $\FF_{q}(v)=\FF_q(\tilde{v})$ is the fraction field of $\FF_q[\tilde{v}]$, the polynomial ring  in   $\tilde{v}$ over $\FF_q$.
\item[$\bullet$] The $\infty_{\tilde{v}}$-adic valuation on $\FF_q(v)=\FF_q(\tilde{v})$ is given by ${\rm{ord}}_{\infty_{\tilde{v}}}:={\rm{ord}}_{\tilde{v}^{-1}}={\rm{ord}}_{v}$.
\item[$\bullet$]  $\FF_{q}(v)=\FF_q(\tilde{v})$ is the function  field of the projective $\tilde{v}$-line over $\FF_q$ with $v=\tilde{v}^{-1}$ as a uniformizer of the infinite place. 
\item[$\bullet$]   $\FF_q(\!(v)\!) =
\FF_q(\!(\tilde{v}^{-1})\!)$ is the completion of $\FF_{q}(v)=\FF_q(\tilde{v})$ with respect to the $\infty_{\tilde{v}}$-adic valuation ${\rm{ord}}_{\infty_{\tilde{v}}}={\rm{ord}}_{v}$.
\item[$\bullet$] $k_v\cong \FF_{q^{d}}(\!(v)\!) =
\FF_{q^{d}}(\!(\tilde{v}^{-1})\!)$ is a finite extension of the $\infty_{\tilde{v}}$-adic field $\FF_{q}(\!(\tilde{v}^{-1})\!)$.
\item[$\bullet$] A fixed algebraic closure $\overline{\FF_{q^{d}}(\!(v)\!)}$ of $\FF_{q^{d}}(\!(v)\!)$ is the same as a particularly chosen algebraic closure $\overline{\FF_{q}(\!(\tilde{v}^{-1})\!)}$ of $\FF_{q}(\!(\tilde{v}^{-1})\!)$.
\item[$\bullet$] $\CC_{v}$ is the $v$-adic completion of $\overline{\FF_{q}(\!(\tilde{v}^{-1})\!)}$, which is the same as the $\infty_{\tilde{v}}$-adic completion of $\overline{\FF_{q}(\!(\tilde{v}^{-1})\!)}$.
\end{itemize}
Since we deal with values in $\CC_{v}$, which is viewed as an $\infty_{\tilde{v}}$-adic field, one can apply Papanikolas' \lq\lq $\infty$-adic\rq\rq\ transcendence theory~\cite{P08}. The following comparison/replacement illustrates why we consider Frobenius modules over $\ok[\tilde{T_v}]$ in the following subsection when applying Papanikolas' theory fitting into our situation. 
\[
\begin{array}{rrcc|ccll}
{\hbox{Over }}k_{\infty}&  &  & &   & &  & {\hbox{Over }}k_{v}  \\ \hline\hline
{\hbox{uniformizer at }}\infty  & \rightarrow & 1/\theta& &  & \tilde{v}^{-1}&\leftarrow &{\hbox{uniformizer at }}{\tilde{\infty}_v} \\
t & \rightarrow &\theta & & & \tilde{v}& \leftarrow& \tilde{T}_v\\
{\hbox{polynomial ring}}& \rightarrow &\ok[t] & & &  \ok[\tilde{T}_v]&\leftarrow & {\hbox{polynomial ring}}\\
{\hbox{Tate algebra}}&\rightarrow  &\CC_{\infty}\{t\} & & & \CC_v\{\tilde{T}_v\}& \leftarrow& {\hbox{Tate algebra}} 
\end{array}
\]

The relations of the field extensions to be used is illustrated in the following diagram:

\[
\xymatrixrowsep{0.5cm}
\xymatrixcolsep{0.3cm}
\xymatrix{
&&& &&& & \FF_{q^{d\ell}}(\!(\theta_v)\!)\ar@{-}[dd] \ar@{=}[r] & \FF_{q^{d\ell}}(\!(\tilde{\theta}_v^{-1})\!)  \ar@{=}[r] & k_{d\ell,v} \\
k_{d\ell} \ar@{=}[r] &\FF_{q^{d\ell}}(\theta) \ar@{-}[dd]\ar@{=}[r] & \FF_{q^{d\ell}}(\tilde{\theta}_v) \ar@{=}[r]  & \FF_{q^{d\ell}}(\theta_v) \ar@{-}[dd] \ar@{-}[urrrr] &&& & & & \\
&&& &&& & \FF_{q^{d}}(\!(\theta_v)\!) \ar@{=}[d] \ar@{=}[r]& \FF_{q^{d}}(\!(\tilde{\theta}_v^{-1})\!) \ar@{=}[r] & k_v 
\\
& \FF_{q^d}(\theta)\ar@{-}[dd]\ar@{=}[r]& \FF_{q^d}(\tilde{\theta}_v) \ar@{=}[r]   &\FF_{q^d}(\theta_v) \ar@{-}[dd]|!{[ddll];[rrrr]}\hole
\ar@{-}[urrrr] &&& & \FF_{q^d}(\!(v)\!)\ar@{-}[dd] \ar@{=}[r]&  \FF_{q^d}(\!(\tilde{v}^{-1})\!) &   \\
&&& &&&& & \\
&\FF_q(\theta)\ar@{-}[dr]\ar@{-}[uurrrrrr] 
& \FF_{q^d}(\tilde{v}) \ar@{=}[r]  & \FF_{q^d}(v) \ar@{-}[d]\ar@{-}[uurrrr] &&& & \FF_q(\!(v)\!) \ar@{=}[r]&  \FF_{q}(\!(\tilde{v}^{-1})\!) &  \\
&&\FF_q(\tilde{v}) \ar@{=}[r]&\FF_q(v) \ar@{-}[urrrr]   &&&&& & 
}
\]

Recall that in Section~\ref{sec: no-2} we wrote 
\[
v = \theta^d + \epsilon_1 \theta^{d-1} + \cdots + \epsilon_{d-1}\theta + \epsilon_d, \quad \text{ where } \epsilon_1,...,\epsilon_d \in \FF_q.
\]
As 
\begin{equation}\label{eqn: v-1}
v = \prod_{i=0}^{d-1}(\theta-\varepsilon^{q^i}) = \prod_{i=0}^{d-1}\big(\theta_v - (\varepsilon^{q^i}-\varepsilon)\big),
\end{equation}
There exists $\tilde{\epsilon}_1,...,\tilde{\epsilon}_{d-1} \in \FF_{q^d}$
such that
\begin{equation}\label{eqn: v-2}
v = \theta_v^d + \tilde{\epsilon}_1 \theta_v^{d-1} + \cdots +\tilde{\epsilon}_{d-1}\theta_v\quad \in \FF_{q^d}[\theta_v] = \FF_{q^d}[\theta].
\end{equation}
Multiplying $\tilde{v}\tilde{\theta}_v^d$ on both sides of the equation \eqref{eqn: v-2}, one gets
\begin{equation}\label{eqn: v-tilde}
\tilde{\theta}_v^d -\Big((\tilde{\epsilon}_{d-1}\tilde{v})\cdot \tilde{\theta}_v^{d-1}  + \cdots + (\tilde{c}_1\tilde{v})\tilde{\theta}_v + \tilde{v}\Big) = 0.
\end{equation}

On the other hand, 
recall in Section~\ref{sec: no-2} that $\iota: \FF_q(t)\cong \FF_q(\theta)$ is the $\FF_q$-algebra isomorphism sending $t$ to $\theta$, and so $\iota(T_v) = v$.
Extending $\iota$ to an $\FF_{q^{d\ell}}$-algebra isomorphism $\iota: \FF_{q^{d\ell}}(t) \cong \FF_{q^{d\ell}}(\theta) = k_{d\ell}$, we
set
\[
\tilde{t}_v := \frac{1}{t-\varepsilon} \quad \text{ and } \quad 
\tilde{T}_v := \frac{1}{T_v}.
\]
Then $\iota(\tilde{t}_v) = \tilde{\theta}_v$ and $\iota(\tilde{T}_v) = \tilde{v}$.
Moreover, the following equality holds:

\begin{equation}\label{eqn: min-tv}
\tilde{t}_v^d-(\tilde{\epsilon}_{d-1}\tilde{T}_v) \tilde{t}_v^{d-1} - \cdots - (\tilde{\epsilon}_1\tilde{T}_v)\tilde{t}_v -\tilde{T}_v = 0.
\end{equation}
In other words, the minimal polynomial of $\tilde{t}_v$ over $\FF_{q^{d\ell}}(\tilde{T}_v)$ is 
\[
\tilde{g}_{\tilde{T}_v}(X) = X^d-(\tilde{\epsilon}_{d-1} \tilde{T}_v)X^{d-1} - \cdots - (\tilde{\epsilon}_{1}\tilde{T}_v) X - \tilde{T}_v \quad \in \FF_{q^d}[\tilde{T}_v][X],
\]
In particular, the field $\FF_{q^{d\ell}}(\tilde{t}_v)$  is a finite separable extension of $\FF_q(\tilde{T}_v)$ of degree $d^2 \ell$, and the integral closure of $\FF_q[\tilde{T}_v]$ in $\FF_{q^{d\ell}}(\tilde{t}_v)$ is $\FF_{q^{d\ell}}[\tilde{T}_v][\tilde{t}_v]$.
Indeed, as
\[
\tilde{T}_v = \frac{1}{T_v} = \prod_{i=0}^{d-1}\frac{1}{t-\varepsilon^{q^{i}}} = \tilde{t}_v^d \cdot \frac{1}{S},
\]
where $S = \prod_{i=1}^{d-1}(1-(\varepsilon^{q^i}-\varepsilon)\tilde{t}_v)$ is coprime to $\tilde{t}_v^d$ in $\FF_{q^d}[\tilde{t}_v]$.
Hence
\begin{equation}\label{E:PID}
\FF_{q^{d\ell}}[\tilde{T}_v][\tilde{t}_v] = \FF_{q^{d\ell}}[\tilde{t}_v][\frac{1}{S}],   
\end{equation}
which is a localization of the PID $\FF_{q^{d\ell}}[\tilde{t}_v]$. It follows that $\FF_{q^{d\ell}}[\tilde{T}_v][\tilde{t}_v] $ is a PID, which implies integrally closed property, and hence it equals the integral closure of $\FF_q[\tilde{T}_v]$ in $\FF_{q^{d\ell}}(\tilde{t}_v)$. Moreover, 
since $\tilde{g}_{\tilde{T}_v}(X)$ is Eisenstein with respect to the prime ideal $(\tilde{T}_v)$ of $\FF_{q^d}[\tilde{T}_v]$,
the prime ideal $(\tilde{T}_v)$ of $\FF_q[\tilde{T}_v]$ is totally ramified in $\FF_{q^{d\ell}}[\tilde{T}_v][\tilde{t}_v]$, and the unique prime ideal of $\FF_{q^{d\ell}}[\tilde{T}_v][\tilde{t}_v]$ lying above $(\tilde{T}_v)$ is the principal ideal generated by $\tilde{t}_v$.
Set
\[
\tilde{t}_{v,i}:= \frac{1}{t-\varepsilon^{q^{-i}}} = \frac{\tilde{t}_v}{1-(\varepsilon^{q^{-i}}-\varepsilon)\tilde{t}_v}, \quad 0\leq i < d\ell.
\]
One checks that
\begin{itemize}
\item[(1)] $\tilde{\theta}_{v,i}:=\iota(\tilde{t}_{v,i}) = (\theta-\varepsilon^{q^{-i}})^{-1}$.
\item[(2)] $\tilde{t}_{v,i} = \tilde{t}_{v,i'}$ if and only if $i\equiv i' \bmod d$;
\item[(3)] 
$\tilde{T}_v = \tilde{t}_{v,0} \cdots \tilde{t}_{v,d-1}$;

\item[(4)] The minimal polynomial of $\tilde{t}_{v,i}$ over $\FF_{q^{d\ell}}[\tilde{T}_v]$ is
\begin{equation}\label{eqn: min-tvi}
\tilde{g}_{\tilde{T}_v}^{(-i)}(X) = X^d-\Big(\tilde{\epsilon}_{d-1}^{q^{-i}}\tilde{T}_v X^{d-1} + \cdots + \tilde{\epsilon}_{1}^{q^{-i}}\tilde{T}_v\cdot X + \tilde{T}_v\Big) \quad \in \FF_{q^d}[\tilde{T}_v][X];
\end{equation}
\item[(5)] 
$(\tilde{t}_{v}) = (\tilde{t}_{v,0}) = (\tilde{t}_{v,1}) = \cdots = (\tilde{t}_{v,d\ell-1}) \subset \FF_{q^{d\ell}}[\tilde{T}_v][\tilde{t}_v]$.
\end{itemize}
The above setting (on the ``$t$-variable side'') can be illustrated by the following diagram:
\[
\xymatrixrowsep{0.5cm}
\xymatrixcolsep{0.2cm}
\xymatrix{
& & & & \FF_{q^{d\ell}}(t) \ar@{}[r]|-*[@]{=} & \FF_{q^{d\ell}}(\tilde{t}_v) \ar@{}[r]|-*[@]{\supset} & \FF_{q^{d\ell}}[\tilde{T}_v][\tilde{t}_v] \ar@{}[r]|-*[@]{\supset} & (\tilde{t}_{v,0}) \ar@{}[r]|-*[@]{=}& \cdots \ar@{}[r]|-*[@]{=} & (\tilde{t}_{v,d-1})  \\
& & & & \FF_{q^{d\ell}}(T_v)\ar@{-}[u] \ar@{}[r]|-*[@]{=} & \FF_{q^{d\ell}}(\tilde{T}_v)\ar@{-}[u] \ar@{}[r]|-*[@]{\supset} & \FF_{q^{d\ell}}[\tilde{T}_v]\ar@{-}[u] \ar@{}[r]|-*[@]{\supset} & (\tilde{T}_v)\ar@{-}[u] & \\
& & & & \FF_{q}(T_v)\ar@{-}[u] \ar@{}[r]|-*[@]{=} & \FF_{q}(\tilde{T}_v)\ar@{-}[u] \ar@{}[r]|-*[@]{\supset} & \FF_{q}[\tilde{T}_v]\ar@{-}[u]  \ar@{}[r]|-*[@]{\supset} & (\tilde{T}_v) \ar@{-}[u] &
}
\]
${}$

Now, let $\bar{k}(\tilde{T}_v)[\sigma]$ (resp. $\bar{k}(\tilde{t}_v)[\sigma]$) be the twisted polynomial ring over $\bar{k}(\tilde{T}_v)$ (resp.\ $\bar{k}(\tilde{t}_v)$) satisfying the following multiplication law
\[
\sigma \cdot f = f^{(-1)} \cdot \sigma, \quad \forall f \in \bar{k}(\tilde{T}_v) \ \text{ (resp.\ $\bar{k}(\tilde{t}_v)$)},
\]
where $f^{(n)}$ is the $n$-th Frobenius twist of $f$, i.e.\ raising the coefficients of powers of $\tilde{T}_v$) of $f$ to the $q^n$-power for each $n \in \ZZ$.

\begin{lemma}
$\bar{k}(\tilde{T}_v)[\sigma]$ is a subring of $\bar{k}(\tilde{t}_v)[\sigma]$ if and only if $\tilde{\epsilon}_1,...,\tilde{\epsilon}_{d-1} \in \FF_q$.
\end{lemma}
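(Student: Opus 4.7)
The plan is to reduce the question to the behavior of the $(-1)$-twist on the single generator $\tilde{T}_v$. The inclusion of coefficient fields $\bar{k}(\tilde{T}_v)\hookrightarrow\bar{k}(\tilde{t}_v)$ is already provided by the relation \eqref{eqn: min-tv}, and $\sigma$ is a common symbol in both twisted polynomial rings, so the only obstruction to $\bar{k}(\tilde{T}_v)[\sigma]$ sitting inside $\bar{k}(\tilde{t}_v)[\sigma]$ as a subring is whether the two multiplication laws $\sigma f = f^{(-1)}\sigma$ are compatible on the common subfield $\bar{k}(\tilde{T}_v)$. Since both twists act identically (as the $q^{-1}$-Frobenius) on the scalar subring $\bar{k}$ and both are ring endomorphisms, this reduces further to checking whether the two twists agree on $\tilde{T}_v$ itself.

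First, in $\bar{k}(\tilde{T}_v)[\sigma]$ the element $\tilde{T}_v$ is tautologically fixed by the twist, being a pure power of the distinguished variable with coefficient $1\in\bar{k}$. Next, one computes the $(-1)$-twist of $\tilde{T}_v$ viewed inside $\bar{k}(\tilde{t}_v)[\sigma]$ by first expressing $\tilde{T}_v$ as a rational function of $\tilde{t}_v$. Rearranging \eqref{eqn: min-tv} yields
\[
\tilde{T}_v=\frac{\tilde{t}_v^d}{1+\tilde{\epsilon}_1\tilde{t}_v+\cdots+\tilde{\epsilon}_{d-1}\tilde{t}_v^{d-1}},
\]
and the $\tilde{t}_v$-side twist, acting as $q^{-1}$-Frobenius on each scalar coefficient, gives
\[
\tilde{T}_v^{(-1)}=\frac{\tilde{t}_v^d}{1+\tilde{\epsilon}_1^{q^{-1}}\tilde{t}_v+\cdots+\tilde{\epsilon}_{d-1}^{q^{-1}}\tilde{t}_v^{d-1}}.
\]
Comparing the two expressions, the equality $\tilde{T}_v^{(-1)}=\tilde{T}_v$ that we need is equivalent to $\tilde{\epsilon}_i^{q^{-1}}=\tilde{\epsilon}_i$ for every $1\le i\le d-1$, i.e.\ $\tilde{\epsilon}_i\in\FF_q$.

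For the converse, once the twists agree on $\tilde{T}_v$ they automatically agree on every polynomial in $\tilde{T}_v$ with $\bar{k}$-coefficients, hence on $\bar{k}[\tilde{T}_v]$, and then by multiplicativity on the full field $\bar{k}(\tilde{T}_v)$; this is precisely the compatibility needed for the subring inclusion to be well defined. I do not expect any genuine obstacle. The only subtle point is keeping careful track of which Frobenius twist acts on which variable---a subtlety already emphasized in Section~\ref{subsubsec: com-twist}---and noting that the denominator appearing in the expression of $\tilde{T}_v$ remains nonzero after twisting, which is automatic since the twist permutes the nonzero elements of $\bar{k}(\tilde{t}_v)$.
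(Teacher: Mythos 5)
Your argument is correct and follows essentially the same line as the paper's: both reduce the question to whether the $(-1)$-twist from $\bar{k}(\tilde{t}_v)[\sigma]$ fixes $\tilde{T}_v$, and both extract the condition $\tilde{\epsilon}_i^{q^{-1}}=\tilde{\epsilon}_i$ from the minimal-polynomial relation \eqref{eqn: min-tv}. The one (small but welcome) refinement in your version is that you first eliminate $\tilde{T}_v$ from the right-hand side, writing it explicitly as $\tilde{t}_v^d/(1+\tilde{\epsilon}_1\tilde{t}_v+\cdots+\tilde{\epsilon}_{d-1}\tilde{t}_v^{d-1})$ before twisting; the paper instead keeps $\tilde{T}_v$ inside the polynomial identity and compares the coefficients of $\tilde{t}_v^{d-1},\ldots,\tilde{t}_v$ on both sides of $\tilde{T}_v\sigma=\sigma\tilde{T}_v$, which has the same content but reads slightly more implicitly.
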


\begin{proof}
By definition one observes that 
$\bar{k}(\tilde{T}_v)[\sigma]$ is a subring of $\bar{k}(\tilde{t}_v)[\sigma]$ if and only if
$\tilde{T}_v \cdot \sigma = \sigma \cdot \tilde{T}_v$ in
$\bar{k}(\tilde{t}_v)[\sigma]$, 
which is equivalent to
\begin{align*}
&\ \Big(\tilde{t}_v^d-(\tilde{\epsilon}_{d-1}\tilde{T}_v) \tilde{t}_v^{d-1} - \cdots - (\tilde{\epsilon}_1\tilde{T}_v)\tilde{t}_v\Big) \cdot \sigma \\
= &\ \tilde{T}_v \cdot \sigma =
\sigma\cdot \tilde{T}_v \\
= &\  \sigma \cdot \Big(\tilde{t}_v^d-(\tilde{\epsilon}_{d-1}\tilde{T}_v) \tilde{t}_v^{d-1} - \cdots - (\tilde{\epsilon}_1\tilde{T}_v)\tilde{t}_v\Big) \\ 
= &\  \Big(\tilde{t}_v^d-(\tilde{\epsilon}_{d-1}^{q^{-1}}\tilde{T}_v) \tilde{t}_v^{d-1} - \cdots - (\tilde{\epsilon}_1^{q^{-1}}\tilde{T}_v)\tilde{t}_v\Big)\cdot \sigma.
\end{align*}
This equality holds if and only if $\tilde{\epsilon}_1,...,\tilde{\epsilon}_{d-1} \in \FF_q$.
\end{proof}

\begin{remark}
The above lemma indicates that $\bar{k}(\tilde{T}_v)[\sigma]$ might not be a subring of $\bar{k}(\tilde{t}_v)[\sigma]$ when $d>1$. However, both do contain a subring $\bar{k}(\tilde{T}_v)[\sigma^{d}]$.
\end{remark}

In the remaining part of this section, we will review the necessary properties of the Hartl--Juschka category $\cH\cJ$ of over $\bar{k}$ introduced in \cite[Definition 2.4.1]{HJ20} (with respect to the embedding $\iota: \FF_q(\tilde{T}_v) \cong \FF_q(\tilde{v}) \subset \bar{k} \subset \CC_v$),
as well as their corresponding ``Hodge--Pink cocharacters''.

\subsection{Hartl--Juschka Category}\label{sec: HJC} 
The Hartl--Juschka category over $\bar{k}$ is defined as follows:
an object of $\cH\cJ$ is a free left $\bar{k}[\tilde{T}_v]$-module $\cM$ of finite rank together with an invertible $\sigma$-action on $\itM :=  \bar{k}(\tilde{T}_v)\otimes_{\bar{k}[\tilde{T}_v]} \cM$ so that $\itM$ becomes a left $\bar{k}(\tilde{T}_v)[\sigma]$-module satisfying that there exists a sufficiently large integer $n$ so that
\[
    (\tilde{T}_v-\iota(\tilde{T}_v))^n \cM \subseteq \sigma \cM \subseteq (\tilde{T}_v-\iota(\tilde{T}_v))^{-n} \cM \quad (\subset \itM).
\]
We call such an object $\cM$ a \emph{Hartl--Juschka module over $\bar{k}$}.
In particular, we say that $\cM$ is {\it effective} if $\sigma \cM \subset \cM$.
A morphism $f : \cM \rightarrow \cM'$ in $\cH\cJ$ is a left $\bar{k}[\tilde{T}_v]$-module homomorphism so that the induced $\bar{k}(\tilde{T}_v)$-linear homomorphism $f: \itM = \bar{k}(\tilde{T}_v)\otimes_{\bar{k}[\tilde{T}_v]} \cM  \rightarrow \itM' = \bar{k}(\tilde{T}_v)\otimes_{\bar{k}[\tilde{T}_v]} \cM'$ satisfies
\[
f (\sigma m) = \sigma f(m), \quad \forall\, m \in \itM.
\]
Given two objects $\cM$ and $\cM'$ in $\cH\cJ$, the set $\Hom(\cM,\cM')$ of morphisms from $\cM$ to $\cM'$ has a natural $\FF_q[\tilde{T}_v]$-module structure, and elements in
\[
\Hom^{0}(\cM,\cM'):=  \FF_q(\tilde{T}_v)\otimes_{\FF_q[\tilde{T}_v]} \Hom(\cM,\cM')
\]
are called \textit{quasi-morphisms from $\cM$ to $\cM'$}.
The \textit{Hartl-Juschka Category $\cH\cJ^I$ up to isogeny} has the same objects in $\cH\cJ$ but the morphisms in $\cH\cJ^I$ are given by quasi-morphisms.\\

\begin{Subsubsec}{Uniformizability} 
Let $\CC_v\{\tilde{T}_v\}$ be the Tate algebra in the variable $\tilde{T}_v$ over $\CC_v$, i.e. 
\[
\CC_v\{\tilde{T}_v\}:=\Big\{ \sum_{i\geq 0} a_i \tilde{T}_v^i \in \CC_v [\![\tilde{T}_v]\!]\ \Big|\
\lim_{i\rightarrow \infty}|a_i|_v =0
\Big\}.
\]
We let $\CC_v\{\tilde{T}_v\}[\sigma]$ be the non-commutative algebra generated by $\sigma$ over $\CC_v\{\tilde{T}_v\}$ subject to the relation
\[
\sigma f=f^{(-1)} \sigma, \quad \forall\, f\in \CC_v\{\tilde{T}_v\},
\]
where for every $f = \sum_{i\geq 0} a_i \tilde{T}_v^i \in \CC_v\{\tilde{T}_v\}$, we define
\[
f^{(n)}:= \sum_{i\geq 0} a_i^{q^n} \tilde{T}_v^i , \quad \forall n \in \ZZ.
\]
Given an object $\cM$ of $\cH\cJ$, we set
$\MM :=  \CC_v\{\tilde{T}_v\} \otimes_{\bar{k}[\tilde{T}_v]}\cM$.
As $\tilde{T}_v-\iota(\tilde{T}_v) = \tilde{T}_v-\tilde{v}$ is invertible in $\CC_v\{\tilde{T}_v\}$,
one has that $\itM \subset \MM$, and
$\MM$ becomes a left $\CC_v\{\tilde{T}_v\}[\sigma]$-module with the $\sigma$-action given by
\[
\sigma\cdot (f\otimes m) := f^{(-1)} \otimes \sigma m, \quad \forall f \in \CC_v\{\tilde{T}_v\} \text{ and } m \in \cM.
\]
Put
\[
H_{\mathrm{Betti}}(\cM):=  \{ m \in \MM \mid \sigma m = m\}.
\]
We call $\cM$ \emph{uniformizable} (or \emph{rigid analytically trivial}) if the natural homomorphism
\begin{align*}
\CC_v\{\tilde{T}_v\} \otimes_{\FF_{q}[\tilde{T}_{v}]}  H_{\mathrm{Betti}} (\cM)
& \longrightarrow \MM
\end{align*}
is an isomorphism. 
In particular, let $\CC_v\{\tilde{T}_v\}^\dagger$ be the algebra of rigid-analytic functions on $\CC_v\setminus \{\tilde{v}^{q^n}\mid n \in \NN\}$. 
It is known that (see \cite[Proposition 2.4.27]{HJ20}) if $\cM$ is uniformizable, then $H_{\mathrm{Betti}}(\cM)$ is contained in $\MM^\dagger = \CC_v\{\tilde{T}_v\}^\dagger \otimes_{\bar{k}[t]}\cM$, and the natural homomorphism
\begin{align*}
\CC_v\{\tilde{T}_v\}^\dagger \otimes_{\FF_q[\tilde{T}_v]} H_{\text{Betti}}(\cM) & \longrightarrow \MM^\dagger
\end{align*}
is an isomorphism.    
\end{Subsubsec}

\begin{Subsubsec}{$\tilde{T}_v$-Motivic Galois group}
The full subcategory of $\cH\cJ^I$ consisting of all uniformizable objects is denoted by $\cH\cJ^I_U$.
Let $\mathbf{Vect}_{/\FF_q(\tilde{T}_v)}$ be the category of finite dimensional vector spaces over $\FF_q(\tilde{T}_v)$.
By \cite[Theorem 2.4.23]{HJ20}, $\cH\cJ^I_U$ is a neutral Tannakian category over $\FF_q(\tilde{T}_v)$ with fiber functor $\bomega: \cH\cJ^I_U \rightarrow \mathbf{Vect}_{/\FF_q(\tilde{T}_v)}$ defined by:
\[
\cM \longmapsto H_\cM :=  \FF_q(\tilde{T}_v)\otimes_{\FF_q[\tilde{T}_v]} H_{\mathrm{Betti}}(\cM).
\] 

In particular, 
for each object in $\cH\cJ^I_U$,
let $\cT_\cM$ be the strictly full Tannakian subcategory of $\cH\cJ^I_U$  generated by $\cM$.
By Tannakian duality, we have an affine algebraic group scheme $\Gamma_\cM$ defined over $\FF_q(\tilde{T}_v)$ so that $\cT_\cM$ is equivalent to the category $\mathbf{Rep}(\Gamma_\cM)$ of the $\FF_q(\tilde{T}_v)$-finite dimensional representations of the algebraic group $\Gamma_\cM$.
We call $\Gamma_\cM$ the \emph{$\tilde{T}_v$-motivic Galois group of $\cM$}. Note that we have the following property of $\dim \Gamma_{\cM}$ by Papanikolas' difference Galois theory~\cite{P08}.

Given an object $\cM\in \cH\cJ^I_U$, we let $r$ be the rank of $\cM$ over $\ok[\tilde{T}_{v}]$ and fix a $\ok[\tilde{T}_v]$-basis $\left\{m_1,\ldots,m_r \right\}$ of $\cM$. Suppose $\cM$ is effective.
Then the action of $\sigma$ on $\cM$ is represented by a matrix $\Phi\in \Mat_r(\ok[\tilde{T}_v])$ in the sense that \[ \sigma \begin{pmatrix}
    m_1\\
    \vdots\\
    m_r
\end{pmatrix} =\Phi \begin{pmatrix}
    m_1\\
    \vdots\\
    m_r
\end{pmatrix}.\]
Since $\cM$ is uniformizable, by~\cite{ABP04, P08} there exists a matrix $\Psi \in \GL_{r}(\CC_{v}\left\{\tilde{T}_v \right\}) $ satisfying the system of Frobenius difference equations
\begin{equation}\label{PhiPsi}
 \Psi^{(-1)}=\Phi \Psi .
\end{equation}

In~\cite{P08},  Papanikolas developed a difference Galois theory for the system of Frobenius difference equation~\eqref{PhiPsi} to explicitly construct an algebraic subgroup $\Gamma_{\Psi}\subset \GL_{r}$ over $\FF_{q}(t)$ for which 
\begin{itemize}
 \item [$\bullet$] $\Gamma_{\cM}$ is isomorphic to $\Gamma_{\Psi}$ as algebraic groups over $\FF_{q}(t)$ (see~\cite[Thm.~4.5.10]{P08}). 
    \item [$\bullet$]  $\dim \Gamma_{\cM}={\rm{tr.deg}}_{\ok(\tilde{T}_v)}  \ok(\tilde{T}_v)(\Psi) $ (see~\cite[Prop.~4.3.3 and Thm.~4.3.1]{P08}),
\end{itemize}
where the notation $\ok(\tilde{T}_v)(\Psi)$ is referred to be the field generated all entries of $\Psi$ over $\ok(\tilde{T}_v)$. Combining the two properties above, we have the following identity:

\begin{equation}\label{dim=trdegPsi}
   \dim \Gamma_{\cM}={\rm{tr.deg}}_{\ok(\tilde{T}_v)}  \ok(\tilde{T}_v)(\Psi) 
\end{equation}
\end{Subsubsec}

\begin{remark}
Note that we have a faithful representation $$\Gamma_{\cM} \hookrightarrow \GL(H_{\cM}),$$
which is functorial in $\cM$ (see~\cite[Thm.~4.5.3]{P08}). 
This implies that
\begin{equation}\label{eqn: CI}
\Gamma_{\cM}\hookrightarrow {\Cent}_{\GL(H_{\cM})}\left( \End_{\cT_{\cM}}(\cM)\right)\quad  \textup{(cf.~\cite[p.~1448, (6)]{CPY10})}
\end{equation}
as we have the natural embedding $\End_{\cT_{\cM}}(\cM)\hookrightarrow \End(H_{\cM})$.
\end{remark}

\begin{Subsubsec}{de~Rham pairing and the Grothendieck period conjecture}
Let $\cM$ be an object of $\cH\cJ^I_U$.
Recall that the {\it de Rham module of $\cM$ over $\bar{k}$}
is
\[
H_{\text{dR}}(\cM,\bar{k}):= \Hom_{\bar{k}}\left(\frac{\cM}{(\tilde{T}_v - \tilde{v})\cdot \cM},\bar{k}\right).
\]
In particular, every differential $\omega \in H_{\text{dR}}(\cM,\bar{k})$ can be extended to a $\CC_v$-linear functional on 
$\MM^\dagger =\CC_v\{\tilde{T}_v\}^\dagger \otimes_{\bar{k}[\tilde{T}_v]} \cM$ vanishing on $(\tilde{T}_v-\tilde{v})\cdot \MM^\dagger$.
As the uniformizability of $\cM$ ensures the inclusion $H_{\text{Betti}}(\cM) \subset \MM^\dagger$,
we have the following \emph{de~Rham pairing}
\begin{align}\label{eqn: dR-pairing}
H_{\mathrm{Betti}}(\cM) \times H_{\mathrm{dR}}(\cM,\bar{k}) & \longrightarrow \CC_v, & (\gamma,\omega) & \longmapsto \int_{\gamma} \omega := \omega(\gamma).
\end{align}

The primary tool in this paper of dealing with transcendence problems is the following function field version of Grothendieck's period conjecture, which was achieved by Papanikolas~\cite{P08} using the so-called ABP criterion~\cite{ABP04}. 
\end{Subsubsec}

\begin{theorem}\label{thm: GPC}
(See \cite{P08} and its refined version in \cite{C09}.)
Let $\cM$ be a uniformizable Hartl--Juschka module $\cM$ over $\bar{k}$.
Suppose that $\cM$ is effective.
Then we have that
\begin{equation*}
\dim \Gamma_{\textrm{$\cM$}} = \trdeg_{\bar{k}} \bar{k}\left(\int_\gamma \omega \ \bigg|\ \gamma \in H_{\mathrm{Betti}}(\cM),\ \omega \in H_{\mathrm{dR}}(\cM,\bar{k})\right).
\end{equation*}
\end{theorem}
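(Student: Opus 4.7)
The plan is to follow Papanikolas's strategy from~\cite{P08}, which combines the Anderson--Brownawell--Papanikolas (ABP) criterion for algebraic independence of special values with the Galois theory of Frobenius difference equations. The excerpt's identity~\eqref{dim=trdegPsi} already supplies $\dim \Gamma_{\cM} = \trdeg_{\bar{k}(\tilde{T}_v)} \bar{k}(\tilde{T}_v)(\Psi)$, so the task reduces to matching this transcendence degree with that of the period integrals after specialization at $\tilde{T}_v = \tilde{v}$.

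First, I would fix a $\bar{k}[\tilde{T}_v]$-basis $\{m_1,\ldots,m_r\}$ of $\cM$; effectivity guarantees that the $\sigma$-action is represented by a matrix $\Phi \in \Mat_r(\bar{k}[\tilde{T}_v])$. Uniformizability produces a rigid analytic trivialization $\Psi \in \GL_r(\CC_v\{\tilde{T}_v\}^\dagger)$ with $\Psi^{(-1)} = \Phi\Psi$, whose columns encode a Betti basis $\{\gamma_1,\ldots,\gamma_r\}$ of $H_{\mathrm{Betti}}(\cM)$ paired against $\{m_j\}$. Reducing modulo $(\tilde{T}_v-\tilde{v})\cM$ produces a $\bar{k}$-basis $\{\omega_1,\ldots,\omega_r\}$ of $H_{\mathrm{dR}}(\cM,\bar{k})$ dual to the images of $\{m_j\}$ in $\cM/(\tilde{T}_v-\tilde{v})\cM$. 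A direct unwinding of the de~Rham pairing~\eqref{eqn: dR-pairing} identifies the matrix entries as $\int_{\gamma_j}\omega_i = \Psi_{ij}(\tilde{v})$; in particular, $\bar{k}(\Psi(\tilde{v}))$ coincides with the field generated by all period integrals appearing on the right-hand side of the theorem.

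Second, and crucially, I would invoke the ABP criterion~\cite{ABP04} to establish the specialization identity
\[
\trdeg_{\bar{k}(\tilde{T}_v)} \bar{k}(\tilde{T}_v)(\Psi) \;=\; \trdeg_{\bar{k}} \bar{k}(\Psi(\tilde{v})).
\]
The inequality $\geq$ is automatic, since specialization cannot increase transcendence degree; the content lies in the reverse inequality. The mechanism is that any $\bar{k}$-linear relation among the specialized entries $\Psi_{ij}(\tilde{v})$ lifts to a $\bar{k}[\tilde{T}_v]$-linear relation among the entries of $\Psi$ itself, exploiting that $\Psi$ lies in $\CC_v\{\tilde{T}_v\}^\dagger$ with controlled behaviour along the Frobenius orbit $\{\tilde{v}^{q^n}\}_{n\geq 0}$. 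Passing from linear to arbitrary polynomial relations is handled by replacing $\cM$ with its tensor powers (which remain effective and uniformizable inside $\cH\cJ^I_U$), so that polynomial relations at one level become linear relations at a higher level.

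The main obstacle is precisely this ABP-specialization step. Its delicate point is the requirement that $\Psi$ actually extends to $\CC_v\{\tilde{T}_v\}^\dagger$ rather than merely to the Tate algebra $\CC_v\{\tilde{T}_v\}$, so that evaluation at $\tilde{v}$ makes sense and algebraic relations can be tracked through the rigid analytic continuation; this extension is guaranteed by~\cite[Proposition~2.4.27]{HJ20}. Chang's refinement in~\cite{C09} removes residual hypotheses on the shape of $\Phi$ and delivers the statement in the generality needed for effective uniformizable Hartl--Juschka modules. Chaining these ingredients yields
\[
\dim \Gamma_\cM = \trdeg_{\bar{k}(\tilde{T}_v)} \bar{k}(\tilde{T}_v)(\Psi) = \trdeg_{\bar{k}} \bar{k}(\Psi(\tilde{v})) = \trdeg_{\bar{k}} \bar{k}\Big(\int_\gamma \omega \,\Big|\, \gamma \in H_{\mathrm{Betti}}(\cM),\ \omega \in H_{\mathrm{dR}}(\cM,\bar{k})\Big),
\]
completing the proof.
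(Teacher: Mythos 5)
Your proposal follows essentially the same route as the paper: fix a basis $\{m_j\}$, take the matrix $\Phi$ for the $\sigma$-action, use uniformizability to produce a rigid analytic trivialization $\Psi$, apply the (refined) ABP criterion to equate $\trdeg_{\bar{k}(\tilde{T}_v)}\bar{k}(\tilde{T}_v)(\Psi)$ with the transcendence degree of the specialized entries, and identify those specializations with period integrals via the de~Rham pairing. One small bookkeeping slip: with the convention $\Psi^{(-1)}=\Phi\Psi$, the $\sigma$-fixed Betti basis is $\{\gamma_i=\sum_j(\Psi^{-1})_{ij}m_j\}$, so the de~Rham pairing yields $\int_{\gamma_i}\omega_j=(\Psi^{-1})_{ij}|_{\tilde{T}_v=\tilde{v}}$ rather than $\Psi_{ij}(\tilde{v})$; since $\bar{k}(\Psi^{-1}|_{\tilde{T}_v=\tilde{v}})=\bar{k}(\Psi|_{\tilde{T}_v=\tilde{v}})$, this does not affect the conclusion, but you should be consistent.
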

\begin{proof}
Let $\Phi\in \Mat_{r}(\ok[\tilde{T}_v])$ be the matrix represented by the $\sigma$-action on $\cM$ with respect to a $\ok[\tilde{T}_v]$-basis $\left\{m_1,\ldots,m_r \right\}$. 
Since $\cM$ is uniformizable, there exists  $\Psi \in \GL_{r}(\CC_{v}\left\{\tilde{T}_v \right\}) $ for which $\Psi^{(-1)}=\Phi\Psi$. Put $\mathbf{m}=(m_{1},\ldots,m_{r})^{\rm{tr}}$, and note that the entries of $\Psi^{-1} \mathbf{m}$ comprises an $\FF_q[\tilde{T}_v]$-basis of $H_{\mathrm{Betti}}(\cM)$ (see~\cite[Lem.~4.4.12]{ABP04} for their arguments without the condition that $\cM$ is free of finite rank over $\ok[\sigma]$.)
As mentioned above that $H_{\mathrm{Betti}}(\cM)\subset \cM^{\dagger}$, elements of $\Psi^{-1}$ are in fact in $\CC_v\{\tilde{T}_v\}^\dagger$.    

Our hypothesis on $\det \Phi$ ensures that the conditions of the refined version of the ABP criterion is satisfied (see~[C09]), and so we have 
\[{\rm{tr.deg}}_{\ok(\tilde{T}_v)}  \ok(\tilde{T}_v)(\Psi) ={\rm{tr.deg}}_{\ok}  \ok(\Psi|_{\tilde{T}_v=\tilde{v}})={\rm{tr.deg}}_{\ok}  \ok(\Psi^{-1}|_{\tilde{T}_v=\tilde{v}}),\]
where $\ok(\Psi|_{\tilde{T}_v=\tilde{v}})$ (resp.~$\ok(\Psi^{-1}|_{\tilde{T}_v=\tilde{v}})$) denotes by the field over $\ok$ generated by all entries of $\Psi$ (resp.~of $\Psi^{-1}$) evaluated at $\tilde{T}_v=\tilde{v}$. It follows that by~\eqref{dim=trdegPsi}, we have

\begin{equation}\label{E:dim=trdegPsiv}
 \dim \Gamma_{\cM}={\rm{tr.deg}}_{\ok}  \ok(\Psi^{-1}|_{\tilde{T}_v=\tilde{v}}). 
\end{equation}

As each $\omega\in  H_{\mathrm{dR}}(\cM,\bar{k})$ can be viewed as a $\ok$-linear functional on $\cM$ factoring through $\cM/(t-\theta)\cM$, a natural basis of  $H_{\mathrm{dR}}(\cM,\bar{k})$ can be chosen as $\left\{ \omega_{j} \right\}_{j=1}^{r}\subset H_{\mathrm{dR}}(\cM,\bar{k})$ given by $\omega_{j}(m_i)=\delta_{ij}$, the standard Kronecker symbol. Note that $\left\{ \gamma_{i}:=\sum_{j=1}^{r} (\Psi^{-1} )_{ij}m_{j} \right\}_{i=1}^{r}$ is an $\FF_q[\tilde{T}_v]$-basis of $H_{\mathrm{Betti}}(\cM)$, by~\eqref{eqn: dR-pairing} we have that 
\[\int_{\gamma_{i}} \omega_{j}=((\Psi^{-1})_{ij})|_{\tilde{T}_v=\tilde{v}}=(\Psi^{-1}|_{\tilde{T}_v=\tilde{v}})_{ij} \]
for every $1\leq i, j\leq r$. So the desired result follows from~\eqref{E:dim=trdegPsiv}. 
\end{proof}

\subsection{Hodge--Pink cocharacters}\label{sec: H-P-co}

Given an object $\cM$ of $\cH\cJ^I_U$, we take a sufficiently large integer $n$ such that
\begin{equation}\label{eqn: Fil}
(\tilde{T}_v-\tilde{v})^n \cM \subseteq \sigma \cM \subseteq (\tilde{T}_v-\tilde{v})^{-n} \cM.
\end{equation}
Let $r$ be the rank of $\cM$ over $\bar{k}[\tilde{T}_v]$.
There exists a unique (non-ordered) $r$-tuple of integers $(w_1,\ldots, w_r)$ so that
\[
\frac{(\tilde{T}_v-\tilde{v})^{-n} \cM}{\sigma \cM} \cong \bigoplus_{i=1}^r \frac{\bar{k}[\tilde{T}_v]}{(\tilde{T}_v-\tilde{v})^{n-w_i} \bar{k}[\tilde{T}_v]}.
\]
We call $(w_1,\dots, w_r)$ the \emph{Hodge--Pink weights of~$\cM$} and mention that they are independent of the choices of $n$.
Let $\{m_1,\ldots, m_r\}$ be a basis of the free $\bar{k}[\tilde{T}_v]$-module $\cM$ so that
\[
\sigma\cM = \bigoplus_{i=1}^r(\tilde{T}_v-\tilde{v})^{-w_i}\bar{k}[\tilde{T}_v] \cdot m_i \quad \subset \bar{k}(\tilde{T}_v)\otimes_{\bar{k}[\tilde{T}_v]} \cM.
\]
Put
\[
H_{\cM,\CC_v}:= \CC_v \underset{\tilde{v}\mapsfrom \tilde{T}_v,\ \FF_{q}[\tilde{T}_v]}{\otimes} H_{\cM},
\]
and let $\lambda_1,\ldots, \lambda_r \in H_{\eM,\CC_v}$ be the images of $m_1,\dots, m_r$ through the following identification
\begin{align}\label{eqn: H-id} 
\CC_v \otimes_{\bar{k}}\frac{\cM}{(\tilde{T}_v-\tilde{v})\cM} 
\cong \frac{\MM^\dagger}{(\tilde{T}_v-\tilde{v})\MM^\dagger}
& \cong 
\frac{\CC_v\{\tilde{T}_v\}^\dagger }{(\tilde{T}_v-\tilde{v})\CC_v\{\tilde{T}_v\}^\dagger } \otimes_{\FF_q[\tilde{T}_v]} H_{\cM}
\notag \\
& 
\cong 
\CC_v \underset{\tilde{v}\mapsfrom \tilde{T}_v,\ \FF_{q}[\tilde{T}_v]}{\otimes}  H_{\cM} 
= H_{\cM,\CC_v}.
\end{align}

Then we have that:

\begin{proposition}
(See \cite[Definition~7.10]{Pink} and \cite[Section~7.3]{BCPW22}.)
Keep the notation as above.
Let $\Gamma_{\cM,\CC_v}:= \CC_v \times_{\FF_q(\tilde{T}_v)} \Gamma_{\cM}$, the base change of $\Gamma_{\cM}$ to $\CC_v$ via the evaluation map $\iota:=\left(\tilde{T}_v\mapsto \tilde{v} \right): \FF_q(\tilde{T}_v)\hookrightarrow \CC_v$.
There exists a cocharacter $\chi_{\cM}^{\vee}:\GG_{m/\CC_v}\rightarrow \Gamma_{\cM,\CC_v}$ satisfying that after composing with the embedding
$\Gamma_{\cM,\CC_v}\hookrightarrow \GL(H_{\cM,\CC_v})$,
one has that
\[
    \chi_{\cM}^{\vee}(x) (\lambda_i) = x^{w_i}\cdot \lambda_i, \quad \forall x \in \GG_{m/\CC_v}(\CC_v) \text{ and } 1\leq i \leq r.
\]
\end{proposition}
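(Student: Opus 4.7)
The plan is to construct $\chi_{\cM}^\vee$ first as a cocharacter of $\GL(H_{\cM,\CC_v})$ induced by the Hodge--Pink weight decomposition, and then show via Tannakian duality that it factors through $\Gamma_{\cM,\CC_v}$. This is the strategy of Pink and of Section~7.3 of \cite{BCPW22}, which I would adapt to the Hartl--Juschka framework.

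The first step is to show that the grading $H_{\cM,\CC_v} = \bigoplus_i \CC_v\cdot \lambda_i$ does not depend on the choice of adapted basis $\{m_1,\ldots,m_r\}$ of $\cM$. I would prove that the weight-$w$ subspace of $H_{\cM,\CC_v}$ is canonically identified, via the isomorphism in \eqref{eqn: H-id}, with the image of
\[
\frac{\sigma\cM \cap (\tilde{T}_v-\tilde{v})^{-w}\cM}{\sigma\cM \cap (\tilde{T}_v-\tilde{v})^{-w+1}\cM} \otimes_{\bar{k}} \CC_v,
\]
which depends only on the Hodge--Pink structure $\sigma\cM \subseteq \itM$. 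This makes the rule $\chi_{\cM,\GL}^\vee(x)\cdot \lambda_i := x^{w_i}\lambda_i$ a well-defined cocharacter $\chi_{\cM,\GL}^\vee:\GG_{m/\CC_v}\to \GL(H_{\cM,\CC_v})$.

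Next, I would verify the functoriality of this grading with respect to the Tannakian operations on $\cT_{\cM}$. For a quasi-morphism $f:\cM\to\cM'$ in $\cH\cJ^I_U$, the intertwining of $f$ with $\sigma$ forces it to respect the respective filtrations by powers of $(\tilde{T}_v-\tilde{v})$ inside $\itM$ and $\itM'$; the identity $\sigma(\cM\otimes\cM') = \sigma\cM\otimes\sigma\cM'$ yields additivity of weights under internal tensor products, and the dual argument yields negation under duals. The Tannakian formalism then upgrades $\chi_{\cM,\GL}^\vee$ to a cocharacter $\chi_{\cM}^\vee:\GG_{m/\CC_v}\to \Gamma_{\cM,\CC_v}$, because any cocharacter of $\GL(H_{\cM,\CC_v})$ compatible with all tensor constructions on $\cT_{\cM}$ lands in the Tannakian subgroup $\Gamma_{\cM,\CC_v}\hookrightarrow \GL(H_{\cM,\CC_v})$. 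The main obstacle will be the canonicity in the first step—this is essentially Pink's splitting theorem for Hodge--Pink structures applied in the uniformizable setting, and verifying it amounts to showing that the filtration on $\cM/(\tilde{T}_v-\tilde{v})\cM$ induced by the $(\tilde{T}_v-\tilde{v})$-adic heights of $\sigma\cM$ inside $\itM$ splits canonically after base change to $\CC_v$ through the de~Rham--Betti comparison provided by the uniformizability of $\cM$.
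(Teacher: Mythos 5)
The paper does not give its own proof of this proposition — it is stated with a citation to Pink's Definition~7.10 and Section~7.3 of \cite{BCPW22} — so the question is whether your proposal is a correct reconstruction of that citation. The Tannakian strategy you describe (construct a $\otimes$-compatible filtration on the fiber functor, then invoke the splitting theorem to land the cocharacter inside $\Gamma_{\cM,\CC_v}$) is indeed the route taken by Pink and by \cite{BCPW22}. However, there are two genuine gaps in the way you set it up.

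First, the subquotient
\[
\frac{\sigma\cM \cap (\tilde{T}_v-\tilde{v})^{-w}\cM}{\sigma\cM \cap (\tilde{T}_v-\tilde{v})^{-w+1}\cM},
\]
mapped into $\cM/(\tilde{T}_v-\tilde{v})\cM$ by $m\mapsto (\tilde{T}_v-\tilde{v})^w m \bmod (\tilde{T}_v-\tilde{v})$ and then through \eqref{eqn: H-id}, does \emph{not} give the weight-$w$ summand $\bigoplus_{w_i = w}\CC_v\lambda_i$; it gives the filtration step $F^{\geq w}:=\bigoplus_{w_i \geq w}\CC_v\lambda_i$. (With the adapted basis, $\sigma\cM\cap(\tilde{T}_v-\tilde{v})^{-w}\cM = \bigoplus_i(\tilde{T}_v-\tilde{v})^{-\min(w_i,w)}\bar{k}[\tilde{T}_v]m_i$, and the quotient picks out exactly those $i$ with $w_i\geq w$.) Thus step 1 of your plan canonically describes the Hodge--Pink \emph{filtration}, not the grading, and this by itself does not make the rule $\lambda_i\mapsto x^{w_i}\lambda_i$ well-defined independently of the adapted basis.

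Second, and more seriously, the claim that the filtration ``splits canonically after base change to $\CC_v$ through the de Rham--Betti comparison'' does not hold. In classical Hodge theory the Hodge filtration on $H_{\CC}$ has a canonical complement furnished by complex conjugation, which is what makes the Hodge decomposition — and hence the cocharacter — canonical; there is no analogue of conjugation in the Hodge--Pink setting, and the uniformizability of $\cM$ does not supply one. What is true, and what Pink's splitting theorem (Saavedra Rivano's theorem on filtered $\otimes$-functors over a Tannakian category) actually provides, is that an exact, $\otimes$-compatible filtration on the fiber functor of a Tannakian subcategory $\cT_\cM$ is split by \emph{some} cocharacter of $\Gamma_{\cM,\CC_v}$, unique only up to conjugation by the unipotent radical of the parabolic stabilizing $F^\bullet$. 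The correct argument is therefore: (i) show that the filtration $F^\bullet$ (what your step~1 really produces) is exact and $\otimes$-compatible and functorial in $\cT_\cM$ — your step~2 proves precisely this, since quasi-morphisms intertwine $\sigma$ and hence preserve the $(\tilde{T}_v-\tilde{v})$-adic heights of the Hodge--Pink lattices; (ii) apply the splitting theorem to obtain a cocharacter $\chi$ of $\Gamma_{\cM,\CC_v}$ splitting $F^\bullet$; (iii) choose the adapted basis $\{m_i\}$ compatibly with $\chi$, so that $\chi = \chi_\cM^\vee$ acts as $\lambda_i\mapsto x^{w_i}\lambda_i$. Your proposal has the outline but substitutes an unjustified canonicity claim for the splitting theorem, which is where the actual content lies.
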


Since $\GL(H_{\cM,\CC_v})= \CC_{v}\underset{\tilde{v}\mapsfrom \tilde{T}_v, \FF_{q}(\tilde{T}_v)}{\times} \GL(H_{\cM})$, we have a natural action of $\Aut_{\FF_q(v)}(\CC_{v})$ on $\GL(H_{\cM,\CC_v})$, which leaves $\Gamma_{\cM,\CC_v}$ invariant. 
Thus any $\Aut_{\FF_q(v)}(\CC_v)$-conjugate of $\chi_\cM^\vee$ is still a cocharacter of $\Gamma_{\cM,\CC_v}$.

\begin{definition}\label{defn: HPC}
(cf.\ \cite[Definition 7.10]{Pink}.) Given an object $\cM$ in $\cH\cJ^I_U$, a \emph{Hodge--Pink cocharacter} of $\Gamma_{\cM,\CC_{v}}$ is a cocharacter which lies in the $\Gamma_{\cM,\CC_v} \rtimes \Aut_{\FF_q(v)}(\CC_v)$-conjugacy class of $\chi_\cM^\vee$.
\end{definition}

\section{Carlitz-type Hartl--Juschka modules and their periods}\label{sec: Car-HJ-module}

Throughout this section,
we fix an irreducible element $v\in A_{+}$ of degree $d$ as before and fix a positive integer $\ell$.
Having sufficient tools at hand, 
we are now able to interpret $\Omega_{d\ell,v,i}(v)$ for $0< i \leq d\ell$ (occuring in Theorem~\ref{thm: AI-Omega}) as the periods of the restriction of scalars of the ``Carlitz-type'' Hartl--Juschka modules,
and establish their algebraic independence in this section.

\subsection{Carlitz-type Hartl--Juschka modules and restriction of scalars}\label{sec: CHJ}

\begin{definition}
Let $\tilde{\cC}_{d\ell}$
be the left $\bar{k}[\tilde{T}_v][\sigma^{d\ell}]$-module  $\bar{k}[\tilde{T}_v][\tilde{t}_v]$ whose $\sigma^{d\ell}$-action is given by
\[
\sigma^{d\ell} \cdot m = (1-\frac{\tilde{t}_v}{\tilde{\theta}_v}) \cdot m^{(-d\ell)}, \quad \forall m \in \bar{k}[\tilde{T}_v][\tilde{t}_v].
\]
\end{definition}

\begin{remark}\label{rem: T-t-relation}
By \eqref{eqn: v-tilde} and \eqref{eqn: min-tv}, one has that
\[
\tilde{T}_v \equiv \tilde{v} \bmod (\tilde{t}_v-\tilde{\theta}_v)\bar{k}[\tilde{T}_v][\tilde{t}_v],
\]
for which we get
\[
(\tilde{T}_v-\tilde{v})\cdot \tilde{\cC}_{d\ell}\subset \sigma^{d\ell} \cdot \tilde{\cC}_{d\ell} 
\subset \tilde{\cC}_{d\ell}.
\]
By~\eqref{E:PID}, we have
$\bar{k}[\tilde{T}_v][\tilde{t}_v] = \sum_{n = 0}^\infty \frac{1}{S^{n}} \cdot \bar{k}[\tilde{t}_v]$ is not finitely generated as a $\bar{k}[\tilde{t}_v]$-module if $d>1$.
Since $\frac{1}{S^{n}} \cdot \bar{k}[\tilde{t}_v] \subset \tilde{\cC}_{d\ell}$ is invariant by $\sigma^{d\ell}$-action for every non-negative integer $n$, we see that $\tilde{\cC}_{d\ell}$ is not finitely generated as a $\bar{k}[\sigma^{d\ell}]$-module when $d>1$ (i.e.\  $\tilde{\cC}_{d\ell}$ is not an `Anderson dual $\tilde{T}_v$-motive' in the terminology of~\cite{ABP04}).
\end{remark}

\begin{definition}\label{def: RC}
The restriction of scalars of $\tilde{\cC}_{d\ell}$ is the following $\bar{k}[\tilde{T}_v][\sigma]$-module:
\[
\Rscr_{\sigma} (\tilde{\cC}_{d\ell}) := \bar{k}[\tilde{T}_v][\sigma] \otimes_{\bar{k}[\tilde{T}_v][\sigma^{d\ell}]} \tilde{\cC}_{d\ell}.
\]
\end{definition}

Decomposing
$\Rcal_{\sigma} (\tilde{\cC}_{d\ell})$ into
\[
\bigoplus_{i=0}^{d\ell -1} \sigma^i \otimes \tilde{\cC}_{d\ell},
\]
we get
\begin{align*}
\Rcal_{\sigma} (\tilde{\cC}_{d\ell}) \supset 
\sigma \cdot \Rcal_{\sigma} (\tilde{\cC}_{d\ell})
&= \left((1-\frac{\tilde{t}_v}{\tilde{\theta}_v})\cdot 1\otimes \tilde{\cC}_{d\ell}\right) \oplus \bigoplus_{i=1}^{d\ell-1} \sigma^i \otimes \tilde{\cC}_{d\ell} \\
& \supset (\tilde{T}_v-\tilde{v})\cdot \Rcal_{\sigma} (\tilde{\cC}_{d\ell}).
\end{align*}
Thus $\Rcal_{\sigma}(\tilde{\cC}_{d\ell})$ is an effective Hartl--Juschka module $\bar{k}$.

We see from the following proposition that $\Rscr_{\sigma} (\tilde{\cC}_{d\ell})$ has \lq complex multiplication\rq\  by $\FF_{q^{d\ell}}[\tilde{T}_v][\tilde{t}_v]$.
\begin{proposition}\label{Prop:CM endomorphisms}
Let notation be given as above. Then we have the following $\FF_q[\tilde{T}_v]$-algebra  embedding:
\[
\phi:=\big(\alpha\mapsto  \phi_{\alpha} \big): \FF_{q^{d\ell}}[\tilde{T}_v][\tilde{t}_v]\hookrightarrow \End_{\bar{k}[\tilde{T}_v][\sigma]}(\Rscr_{\sigma} (\tilde{\cC}_{d\ell})),
\]where 
\[\phi_{\alpha}:=\big( \sum_{i=0}^{d\ell-1}\sigma^{i}\otimes m_{i}\mapsto \sum_{i=0}^{d\ell-1}\sigma^{i}\otimes \alpha m_{i} \big). \]
\end{proposition}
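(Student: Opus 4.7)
The plan is to verify three things in sequence: that each $\phi_\alpha$ is a well-defined endomorphism of the Hartl--Juschka module $\Rscr_\sigma(\tilde{\cC}_{d\ell})$, that the assignment $\alpha \mapsto \phi_\alpha$ is an $\FF_q[\tilde{T}_v]$-algebra homomorphism, and that it is injective. Since $\Rscr_\sigma(\tilde{\cC}_{d\ell}) = \bar{k}[\tilde{T}_v][\sigma] \otimes_{\bar{k}[\tilde{T}_v][\sigma^{d\ell}]} \tilde{\cC}_{d\ell}$ by definition, the heart of the matter is to produce an endomorphism of the \emph{inner} left $\bar{k}[\tilde{T}_v][\sigma^{d\ell}]$-module $\tilde{\cC}_{d\ell}$ given by multiplication by $\alpha$, and then extend it via functoriality of $\bar{k}[\tilde{T}_v][\sigma]\otimes_{\bar{k}[\tilde{T}_v][\sigma^{d\ell}]}(-)$ to an endomorphism of $\Rscr_\sigma(\tilde{\cC}_{d\ell})$ as a left $\bar{k}[\tilde{T}_v][\sigma]$-module (the left $\bar{k}[\tilde{T}_v][\sigma]$-action is on the first tensor factor, so it automatically commutes with multiplying the second factor).

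The key verification is therefore that, for $\alpha \in \FF_{q^{d\ell}}[\tilde{T}_v][\tilde{t}_v]$, the multiplication map $m \mapsto \alpha m$ on $\tilde{\cC}_{d\ell}$ is $\bar{k}[\tilde{T}_v]$-linear and commutes with the $\sigma^{d\ell}$-action. Linearity over $\bar{k}[\tilde{T}_v]$ is immediate from the commutativity of $\bar{k}[\tilde{T}_v][\tilde{t}_v]$. To check compatibility with $\sigma^{d\ell}$, I would compute
\[
\sigma^{d\ell}(\alpha m) = \left(1-\frac{\tilde{t}_v}{\tilde{\theta}_v}\right)(\alpha m)^{(-d\ell)} = \left(1-\frac{\tilde{t}_v}{\tilde{\theta}_v}\right) \alpha^{(-d\ell)} m^{(-d\ell)},
\]
which equals $\alpha\,\sigma^{d\ell}(m)$ precisely when $\alpha^{(-d\ell)} = \alpha$. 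The main subtlety here is to compute the Frobenius twist on $\tilde{t}_v$: writing $\tilde{t}_v=(t-\varepsilon)^{-1}$ with $\varepsilon\in \FF_{q^d}$, one gets $\tilde{t}_v^{(-d\ell)}=(t-\varepsilon^{q^{-d\ell}})^{-1}=\tilde{t}_v$ because $d \mid d\ell$. Combined with the fact that the constants of $\alpha$ lie in $\FF_{q^{d\ell}}$ and $\tilde{T}_v$ is fixed by Frobenius, this gives $\alpha^{(-d\ell)}=\alpha$ as required.

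Granting well-definedness, the algebra homomorphism properties $\phi_{\alpha+\beta}=\phi_\alpha+\phi_\beta$, $\phi_{\alpha\beta}=\phi_\alpha\circ\phi_\beta$, and $\phi_1=\mathrm{id}$ are routine from the definition, as is $\FF_q[\tilde{T}_v]$-linearity (for $c\in\FF_q[\tilde{T}_v]$ one may freely move $c$ across the tensor since $c\in \bar{k}[\tilde{T}_v][\sigma^{d\ell}]$). For injectivity, I would invoke the decomposition $\Rscr_\sigma(\tilde{\cC}_{d\ell})=\bigoplus_{i=0}^{d\ell-1}\sigma^i\otimes\tilde{\cC}_{d\ell}$, which itself follows from the freeness of $\bar{k}[\tilde{T}_v][\sigma]$ as a right $\bar{k}[\tilde{T}_v][\sigma^{d\ell}]$-module on the basis $\{1,\sigma,\ldots,\sigma^{d\ell-1}\}$. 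If $\phi_\alpha=0$, evaluating at $1\otimes 1$ shows $1\otimes \alpha=0$, which lies in the $i=0$ summand, forcing $\alpha=0$ in $\tilde{\cC}_{d\ell}=\bar{k}[\tilde{T}_v][\tilde{t}_v]$, hence in $\FF_{q^{d\ell}}[\tilde{T}_v][\tilde{t}_v]$.

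The main (and essentially only) technical point is the Frobenius-twist computation for $\tilde{t}_v$: because $\tilde{t}_v$ is algebraic over $\bar{k}(\tilde{T}_v)$ with minimal polynomial involving the constants $\tilde{\epsilon}_j\in\FF_{q^d}$ (cf.~\eqref{eqn: min-tv}), one must be careful that $\tilde{t}_v^{(-d\ell)}$ really lands back in the same subring and equals $\tilde{t}_v$; everything else is bookkeeping with the tensor product. Once this is verified, the proposition follows directly, and $\phi$ realizes the anticipated ``complex multiplication'' structure on $\Rscr_\sigma(\tilde{\cC}_{d\ell})$ by the order $\FF_{q^{d\ell}}[\tilde{T}_v][\tilde{t}_v]$, whose field of fractions has degree $d^2\ell$ over $\FF_q(\tilde{T}_v)$---consistent with the dimension upper bound $\dim\Gamma_{\Rscr_\sigma(\tilde{\cC}_{d\ell})}\le d^2\ell$ invoked in Section~\ref{sec: str-main3}.
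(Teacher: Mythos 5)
Your proof is correct, and it amounts to the same verification as the paper's, packaged more conceptually. Both you and the paper reduce to the key identity $\alpha\,\sigma^{d\ell}m=\sigma^{d\ell}\alpha\,m$ in $\tilde{\cC}_{d\ell}$, but the paper proves $\phi_\alpha\circ\sigma=\sigma\circ\phi_\alpha$ by an element-by-element computation on $\bigoplus_{i=0}^{d\ell-1}\sigma^i\otimes\tilde{\cC}_{d\ell}$ and states that equality without comment, whereas you exhibit multiplication by $\alpha$ as a $\bar{k}[\tilde{T}_v][\sigma^{d\ell}]$-endomorphism of $\tilde{\cC}_{d\ell}$ and then invoke functoriality of the induction $\bar{k}[\tilde{T}_v][\sigma]\otimes_{\bar{k}[\tilde{T}_v][\sigma^{d\ell}]}(-)$. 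This is exactly the bimodule viewpoint the paper only mentions afterwards in Remark~\ref{rem: CM-0}. Your version is also more complete: you isolate and verify $\alpha^{(-d\ell)}=\alpha$ as the real content, and you verify injectivity, which the paper's proof does not address even though the proposition asserts an embedding. One small caution on presentation: the paper's Frobenius twist on $\tilde{\cC}_{d\ell}=\bar{k}[\tilde{T}_v][\tilde{t}_v]\cong\bar{k}[\tilde{T}_v][X]/(\tilde{g}_{\tilde{T}_v})$ fixes $X=\tilde{t}_v$ and twists only the $\bar{k}$-coefficients, so that $\tilde{t}_v^{(n)}=\tilde{t}_v$ holds trivially for all $n$, and the substantive verification is that $\tilde{g}_{\tilde{T}_v}^{(-d\ell)}=\tilde{g}_{\tilde{T}_v}$ (true since $\tilde{\epsilon}_j\in\FF_{q^d}$ and $d\mid d\ell$), so the twist descends to the quotient. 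Your computation via $\tilde{t}_v=(t-\varepsilon)^{-1}$ implicitly uses the twist that fixes $t$; that is a different convention, and the two agree precisely when the twist exponent is a multiple of $d$ --- which is the same non-compatibility phenomenon the paper flags in Section~\ref{subsubsec: com-twist} and the lemma closing Section~\ref{sec: v to infinite}. The conclusion is unaffected, but stating the convention explicitly would tighten the step.
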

\begin{proof}
From the definition of $\phi$, one sees that $\phi$ is a ring homomorphism. We  confirm that for any $\alpha\in \FF_{q^{d\ell}}[\tilde{T}_v][\tilde{t}_v]$, $\phi_{\alpha}$ commutes with $\sigma$ as follows:

\begin{align*}
&\phi_{\alpha}\circ \sigma \left(\sum_{i=0}^{d\ell-1} \sigma^{i}\otimes m_{i} \right) \\
&\hspace{1cm}  = \phi_{\alpha}\left(\sum_{i=0}^{d\ell-2}\sigma^{i+1}\otimes m_{i}+1\otimes \sigma^{d\ell} m_{d\ell-1} \right)\\
& \hspace{1cm} =\sum_{i=0}^{d\ell-2}\sigma^{i+1}\otimes \alpha m_{i}+1\otimes \alpha \sigma^{d\ell} m_{d\ell-1}  =\sum_{i=0}^{d\ell-2}\sigma^{i+1}\otimes \alpha m_{i}+1\otimes  \sigma^{d\ell} \alpha m_{d\ell-1}   \\
&\hspace{1cm} = \sigma \left( \sum_{i=0}^{d\ell-1}\sigma^{i}\otimes \alpha m_{i} \right)  
= \sigma\circ \phi_{\alpha}\left(\sum_{i=0}^{d\ell-1}\sigma^{i}\otimes m_{i} \right).
\end{align*}
\end{proof}

\begin{remark}\label{rem: CM-0}
Alternatively, we may regard $\tilde{\cC}_{d\ell}$ as a $(\bar{k}[\tilde{T}_v][\sigma^{d\ell}],\FF_{q^{d\ell}}[\tilde{T}_v][\tilde{t}_v])$-bimodule, and get the $\FF_q[\tilde{T}_v]$-algebra homomorphism $\FF_{q^{d\ell}}[\tilde{T}_v][\tilde{t}_v]\hookrightarrow \End_{\bar{k}[\tilde{T}_v][\sigma]}(\Rscr_{\sigma} (\tilde{\cC}_{d\ell}))$ explicitly described in Proposition~\ref{Prop:CM endomorphisms}.
\end{remark}

\begin{Subsubsec}{Explicit descriptions of $\Rcal_{\sigma} (\tilde{\cC}_{d\ell})$}
Let $\bar{k}[\tilde{T}_v][X]$ be the polynomial ring with variable $X$ over $\bar{k}[\tilde{T}_v]$, and let $\tilde{g}_{\tilde{T}_v}(X)$ be the minimal polynomial of $\tilde{t}_v$ over $\bar{k}[\tilde{T}_v]$, i.e., by~\eqref{eqn: min-tv}, we have
\[
\tilde{g}_{\tilde{T}_v}(X) = X^d - (\tilde{\epsilon}_{d-1} \tilde{T}_v) X^{d-1} - \cdots - (\tilde{\epsilon}_1 \tilde{T}_v) X - \tilde{T}_v \in \FF_{q^d}[\tilde{T}_v][X] \subset \bar{k}[\tilde{T}_v][X].
\]
Then $\bar{k}[\tilde{T}_v][\tilde{t}_v]$ can be identified with
\[
\frac{\bar{k}[\tilde{T}_v][X]}{\tilde{g}_{\tilde{T}_v}(X)\cdot \bar{k}[\tilde{T}_v][X]}.
\]

We extend the Frobenius twists by fixing $X$ to $\bar{k}(\tilde{T}_v)[X]$ as follows: for each $f(X) = \sum_{m} f_m X^m$ and $n \in \ZZ$,
\[
f^{(n)}(X) := \sum_{m} f_m^{(n)} X^m.
\]
We point out that when $f(X) \in \bar{k}[X]$, i.e.\ $f_m \in \bar{k}$ for all $m$, one has that $f^{(n)}(X) = \sum_m f_m^{q^n}X^m$.
In the decomposition
\[
\Rscr_{\sigma} (\tilde{\cC}_{d\ell}) = \bigoplus_{i=0}^{d\ell -1} \sigma^i \otimes \tilde{\cC}_{d\ell},
\]
for each $i$ with $0\leq i <d\ell$ we have the $\bar{k}[\tilde{T}_v]$-linear isomorphism
\[
\frac{\bar{k}[\tilde{T}_v][X]}{\tilde{g}_{\tilde{T}_v}^{(-i)}(X)\cdot \bar{k}[\tilde{T}_v][X]} \stackrel{\sim}{\longrightarrow} \sigma^i \otimes \tilde{\cC}_{d\ell} = \sigma^i \otimes \left(\frac{\bar{k}[\tilde{T}_v][X]}{\tilde{g}_{\tilde{T}_v}(X)\cdot \bar{k}[\tilde{T}_v][X]}\right)
\]
defined by
\[
\left(\sum_{m=0}^n f_m X^m\right) \bmod \tilde{g}_{\tilde{T}_v}^{(-i)}(X) \longmapsto \sigma^i \otimes \left(\sum_{m=0}^n f_m^{(i)} X^m \bmod \tilde{g}_{\tilde{T}_v}(X) \right).
\]
Therefore:
\end{Subsubsec}

\begin{proposition}\label{prop: ID-C}
We may identify $\Rscr_{\sigma} (\tilde{\cC}_{d\ell})$ with the product space
\[
\frac{\bar{k}[\tilde{T}_v][X]}{\tilde{g}_{\tilde{T}_v}^{(0)}(X)\cdot \bar{k}[\tilde{T}_v][X]} \times \frac{\bar{k}[\tilde{T}_v][X]}{\tilde{g}_{\tilde{T}_v}^{(-1)}(X)\cdot \bar{k}[\tilde{T}_v][X]}\times \cdots \times \frac{\bar{k}[\tilde{T}_v][X]}{\tilde{g}_{\tilde{T}_v}^{(1-d\ell)}(X)\cdot \bar{k}[\tilde{T}_v][X]},
\]
equipped with the $\sigma$-action given by:
\begin{align*}
& \sigma \cdot (h_0 \bmod \tilde{g}_{\tilde{T}_v}^{(0)}, h_1 \bmod \tilde{g}_{\tilde{T}_v}^{(-1)},\cdots,h_{d\ell-1} \bmod \tilde{g}_{\tilde{T}_v}^{(1-d\ell)}) \\
& \hspace{1.5cm} := \left((1-\frac{X}{\tilde{\theta}_v})\cdot h_{d\ell-1}^{(-1)} \bmod \tilde{g}_{\tilde{T}_v}^{(0)}, h_0^{(-1)} \bmod \tilde{g}_{\tilde{T}_v}^{(-1)},\cdots,h_{d\ell-2}^{(-1)} \bmod \tilde{g}_{\tilde{T}_v}^{(1-d)}\right).
\end{align*}
\end{proposition}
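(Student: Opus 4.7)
The proof will proceed by making explicit the decomposition $\Rscr_{\sigma}(\tilde{\cC}_{d\ell}) = \bigoplus_{i=0}^{d\ell-1} \sigma^{i} \otimes \tilde{\cC}_{d\ell}$ already displayed in the excerpt and then transporting both the $\bar{k}[\tilde{T}_v]$-module structure and the $\sigma$-action through the Frobenius twist. The starting point is that $\bar{k}[\tilde{T}_v][\sigma]$ is free of rank $d\ell$ as a right $\bar{k}[\tilde{T}_v][\sigma^{d\ell}]$-module with basis $\{1,\sigma,\ldots,\sigma^{d\ell-1}\}$, which forces the direct sum decomposition on the tensor product. Next, identifying $\tilde{\cC}_{d\ell} = \bar{k}[\tilde{T}_v][\tilde{t}_v] \cong \bar{k}[\tilde{T}_v][X]/\tilde{g}_{\tilde{T}_v}(X)$ via $X \leftrightarrow \tilde{t}_v$, the $\FF_q$-linear twist $f(X) \mapsto f^{(i)}(X)$ (which raises the $\bar{k}$-coefficients to the $q^{i}$-th power while leaving $\tilde{T}_v$ and $X$ fixed) sends $\tilde{g}_{\tilde{T}_v}^{(-i)}(X)$ to $\tilde{g}_{\tilde{T}_v}^{(0)}(X) = \tilde{g}_{\tilde{T}_v}(X)$ and hence descends to a bijection
\[
\Theta_i : \bar{k}[\tilde{T}_v][X]/\tilde{g}_{\tilde{T}_v}^{(-i)}(X) \;\stackrel{\sim}{\longrightarrow}\; \sigma^{i}\otimes \tilde{\cC}_{d\ell},\qquad h \longmapsto \sigma^{i}\otimes h^{(i)}(\tilde{t}_v).
\]

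The key verification is that $\Theta_i$ is $\bar{k}[\tilde{T}_v]$-linear, where the target inherits its structure from the left action on $\Rscr_{\sigma}(\tilde{\cC}_{d\ell})$. For this I will use the commutation relation in $\bar{k}[\tilde{T}_v][\sigma]$: iterating $\sigma\cdot f = f^{(-1)}\cdot \sigma$ gives $f \cdot \sigma^{i} = \sigma^{i} \cdot f^{(i)}$ for every $f \in \bar{k}[\tilde{T}_v]$ and $i \geq 0$, so in the tensor product
\[
f\cdot(\sigma^{i}\otimes m) \;=\; \sigma^{i}\otimes f^{(i)}\cdot m, \qquad \forall\, m \in \tilde{\cC}_{d\ell}.
\]
Applied to $m = h^{(i)}(\tilde{t}_v)$ and compared with $\Theta_i(f\cdot h) = \sigma^{i}\otimes (fh)^{(i)}(\tilde{t}_v) = \sigma^{i}\otimes f^{(i)} h^{(i)}(\tilde{t}_v)$, this gives the claim. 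Taking the product $\Theta := (\Theta_0,\ldots,\Theta_{d\ell-1})$ then yields the desired $\bar{k}[\tilde{T}_v]$-linear identification.

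It remains to pull back the $\sigma$-action along $\Theta$. Given $(h_0,\ldots,h_{d\ell-1})$, apply $\sigma$ to $\sum_i \sigma^{i}\otimes h_i^{(i)}(\tilde{t}_v)$: for $0\leq i \leq d\ell-2$ one gets $\sigma^{i+1}\otimes h_i^{(i)}(\tilde{t}_v)$, which is $\Theta_{i+1}(h_i^{(-1)})$ since $(h_i^{(i)})^{(-(i+1))} = h_i^{(-1)}$. For the top index, use the definition of $\tilde{\cC}_{d\ell}$:
\[
\sigma^{d\ell}\otimes h_{d\ell-1}^{(d\ell-1)}(\tilde{t}_v) \;=\; 1\otimes\bigl(1-\tfrac{\tilde{t}_v}{\tilde{\theta}_v}\bigr)\bigl(h_{d\ell-1}^{(d\ell-1)}(\tilde{t}_v)\bigr)^{(-d\ell)} \;=\; 1\otimes \bigl(1-\tfrac{\tilde{t}_v}{\tilde{\theta}_v}\bigr) h_{d\ell-1}^{(-1)}(\tilde{t}_v),
\]
which is $\Theta_0\bigl((1-X/\tilde{\theta}_v)\cdot h_{d\ell-1}^{(-1)}\bigr)$. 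Assembling these terms yields precisely the formula stated in the proposition. I do not anticipate any real obstacle; the only minor subtlety is being careful that Frobenius twists commute with evaluation at $\tilde{t}_v$ (since $\tilde{T}_v$ is $\FF_q$-rational, so $\tilde{g}_{\tilde{T}_v}^{(n)}$ only differs from $\tilde{g}_{\tilde{T}_v}$ in its $\bar{k}$-coefficients), which allows the twist operations on $\bar{k}[\tilde{T}_v][X]$ and on $\bar{k}[\tilde{T}_v][\tilde{t}_v]$ to be interchanged freely.
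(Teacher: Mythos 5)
Your argument is correct and essentially reproduces the paper's: the paper's proof is precisely the paragraph preceding the proposition, which constructs the same maps $\Theta_i$ (the isomorphisms $\frac{\bar{k}[\tilde{T}_v][X]}{\tilde{g}_{\tilde{T}_v}^{(-i)}(X)\cdot\bar{k}[\tilde{T}_v][X]} \to \sigma^i\otimes\tilde{\cC}_{d\ell}$, $\sum f_m X^m \mapsto \sigma^i\otimes\sum f_m^{(i)}X^m$), and your checks of $\bar{k}[\tilde{T}_v]$-linearity and of the pulled-back $\sigma$-action are what the paper leaves implicit. One small imprecision in your closing parenthetical: the Frobenius twist on $\bar{k}[\tilde{T}_v][X]$ does \emph{not} descend to $\bar{k}[\tilde{T}_v][\tilde{t}_v]$ for arbitrary exponents, since $\tilde{g}_{\tilde{T}_v}^{(n)}=\tilde{g}_{\tilde{T}_v}$ only when $d\mid n$ (the coefficients $\tilde{\epsilon}_j$ lie in $\FF_{q^d}$, not $\FF_q$); the step you actually need is the $(-d\ell)$-twist, where this divisibility holds, so your computation is still valid.
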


\begin{remark}
As $\tilde{t}_{v,i}$ is a root of $\tilde{g}_{\tilde{T}_v}^{(-i)}(X)$ for each integer $i$ with $0\leq i < d\ell$, one has that
\[
\bar{k}[\tilde{T}_v][\tilde{t}_{v,i}] \cong \frac{\bar{k}[\tilde{T}_v][X]}{\tilde{g}_{\tilde{T}_v}^{(-i)}(X)\cdot \bar{k}[\tilde{T}_v][X]}.
\]
The above proposition allows us to identify $\Rscr_{\sigma} (\tilde{\cC}_{d\ell})$ with 
\begin{equation}\label{eqn: C-id}
\bar{k}[\tilde{T}_v][\tilde{t}_{v,0}] \times \bar{k}[\tilde{T}_v][\tilde{t}_{v,1}] \times \cdots \times \bar{k}[\tilde{T}_v][\tilde{t}_{v,d\ell-1}],
\end{equation}
and the $\sigma$-action is given by: for every polynomials $h_0(X),...,h_{d\ell-1}(X) \in \bar{k}[\tilde{T}_v][X]$,
\begin{align*}
&\sigma \cdot \Big(h_0(\tilde{t}_{v,0}), h_1(\tilde{t}_{v,1}),\cdots, h_{d\ell-1}(\tilde{t}_{v,d\ell-1})\Big) \\
&\hspace{1.5cm} = \Big((1-\frac{\tilde{t}_{v,0}}{\tilde{\theta}_v}) \cdot h_{d\ell-1}^{(-1)}(\tilde{t}_{v,0}), h_0^{(-1)}(\tilde{t}_{v,1}),\cdots, h_{d\ell-2}^{(-1)}(\tilde{t}_{v,d\ell-1})\Big).
\end{align*}
\end{remark}

\subsection{Uniformizability}\label{sec: CHJ-U}

Recall that $\CC_v\{\tilde{T}_v\}$ is the Tate algebra with variable $\tilde{T}_v$ over $\CC_v$.
Denote by $\CC_v\{\tilde{t}_v\}$ the Tate algebra with variable $\tilde{t}_v$ over $\CC_v$, i.e.
\[
\CC_v\{\tilde{t}_v\}:=\Big\{ \sum_{i\geq 0} a_i \tilde{t}_v^i \in \CC_v [\![\tilde{t}_v]\!]\ \Big|\
\lim_{i\rightarrow \infty}|a_i|_v =0\Big\}.
\]
Put
\[
\CC_v\{\tilde{T}_v\}[\tilde{t}_v] = \CC_v\{\tilde{T}_v\} \otimes_{\bar{k}[\tilde{T}_v]}\bar{k}[\tilde{T}_v][\tilde{t}_v].
\]

\begin{lemma}\label{lem: Tate-alg-inclusion}
We have the following inclusion
\[
\CC_v\{\tilde{t}_v\} \subset \CC_v\{\tilde{T}_v\}[\tilde{t}_v].
\]
\end{lemma}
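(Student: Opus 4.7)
The plan is to endow $\CC_v\{\tilde{T}_v\}[\tilde{t}_v]$ with a complete Gauss-type norm for which $\|\tilde{t}_v^n\|\le 1$ holds for every $n\ge 0$, so that any series $\sum_{i\ge 0}a_i\tilde{t}_v^i\in\CC_v\{\tilde{t}_v\}$ has Cauchy partial sums in $\CC_v\{\tilde{T}_v\}[\tilde{t}_v]$ and therefore converges in it.

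First, by~\eqref{eqn: min-tv} the minimal polynomial $\tilde{g}_{\tilde{T}_v}(X)\in\FF_{q^d}[\tilde{T}_v][X]$ is monic in $X$ with non-leading coefficients lying in $\tilde{T}_v\cdot\FF_{q^d}[\tilde{T}_v]$, hence of Gauss norm $\le 1$ in $\CC_v\{\tilde{T}_v\}$. The natural identification
\[
\CC_v\{\tilde{T}_v\}[\tilde{t}_v]\;\cong\;\CC_v\{\tilde{T}_v\}[X]/(\tilde{g}_{\tilde{T}_v}(X))
\]
therefore presents $\CC_v\{\tilde{T}_v\}[\tilde{t}_v]$ as a free $\CC_v\{\tilde{T}_v\}$-module of rank $d$ with basis $\{1,\tilde{t}_v,\ldots,\tilde{t}_v^{d-1}\}$. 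I will equip it with the residue Gauss norm $\|\sum_{r=0}^{d-1}b_r\tilde{t}_v^r\|:=\max_r\|b_r\|_{\mathrm{Gauss}}$. Standard non-archimedean arguments (e.g.\ the Weierstrass / distinguished polynomial setup) show this norm is submultiplicative and makes $\CC_v\{\tilde{T}_v\}[\tilde{t}_v]$ a complete Banach $\CC_v$-algebra.

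Next, I check that $\|\tilde{t}_v^n\|\le 1$ for all $n\ge 0$. For $n<d$ this is immediate from the basis. For $n\ge d$, the defining relation $\tilde{t}_v^d=\tilde{T}_v\cdot(1+\tilde{\epsilon}_1\tilde{t}_v+\cdots+\tilde{\epsilon}_{d-1}\tilde{t}_v^{d-1})$, combined with $\|\tilde{T}_v\|_{\mathrm{Gauss}}=1$ and $|\tilde{\epsilon}_j|_v\le 1$, yields $\|\tilde{t}_v^d\|\le 1$, and submultiplicativity closes the induction. Hence for $f=\sum_{i\ge 0}a_i\tilde{t}_v^i\in\CC_v\{\tilde{t}_v\}$ the partial sums $f_N=\sum_{i=0}^N a_i\tilde{t}_v^i\in\CC_v[\tilde{t}_v]\subset\CC_v\{\tilde{T}_v\}[\tilde{t}_v]$ satisfy $\|f_N-f_M\|\le\max_{M<i\le N}|a_i|_v\to 0$, so they are Cauchy and converge to some $\tilde{f}\in\CC_v\{\tilde{T}_v\}[\tilde{t}_v]$.

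The assignment $f\mapsto\tilde{f}$ is a $\CC_v$-algebra homomorphism that is the identity on $\CC_v[\tilde{t}_v]$. For the required injectivity (and hence the desired inclusion), I will use the formal embedding $\CC_v\{\tilde{T}_v\}[\tilde{t}_v]\hookrightarrow\CC_v[\![\tilde{t}_v]\!]$ obtained by sending $\tilde{T}_v\mapsto\tilde{t}_v^d/u(\tilde{t}_v)$, where $u(X):=1+\tilde{\epsilon}_1 X+\cdots+\tilde{\epsilon}_{d-1}X^{d-1}$ satisfies $u(0)=1$ and is therefore invertible in $\CC_v[\![\tilde{t}_v]\!]$; one checks directly that $\tilde{g}_{\tilde{T}_v}(\tilde{t}_v)=0$ under this rule. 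The composition $\CC_v\{\tilde{t}_v\}\to\CC_v\{\tilde{T}_v\}[\tilde{t}_v]\to\CC_v[\![\tilde{t}_v]\!]$ is the identity on $\CC_v[\tilde{t}_v]$ and continuous with respect to the $\tilde{t}_v$-adic topology on the target, so it agrees with the tautological injection $\CC_v\{\tilde{t}_v\}\hookrightarrow\CC_v[\![\tilde{t}_v]\!]$. The main (and only non-routine) point is verifying this last compatibility between the Gauss-norm convergence on the left and the $\tilde{t}_v$-adic convergence on the right; everything else is a routine manipulation of non-archimedean Gauss norms.
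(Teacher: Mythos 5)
Your Banach-algebra route is genuinely different from the paper's: the paper avoids appealing to abstract completeness of $\CC_v\{\tilde{T}_v\}[\tilde{t}_v]$ and instead expands each $\tilde{t}_v^i$ over the $\CC_v\{\tilde{T}_v\}$-basis $\{1,\tilde{t}_v,\dots,\tilde{t}_v^{d-1}\}$ via polynomials $f_{i,j}\in\FF_{q^d}[\tilde{T}_v]$ of Gauss norm $\le 1$, rearranges $\sum_i a_i\tilde{t}_v^i = \sum_{j=0}^{d-1}\bigl(\sum_i a_i f_{i,j}\bigr)\tilde{t}_v^j$, and identifies $f_j:=\sum_i a_i f_{i,j}$ explicitly as an element of $\CC_v\{\tilde{T}_v\}$. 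Your construction of the residue norm, the check of submultiplicativity and completeness, and the Cauchy argument for $\sum_i a_i\tilde{t}_v^i$ are all correct, and this packaging is cleaner in spirit.

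The step you flag as the ``only non-routine point,'' however, is left unverified, and the proposed justification is wrong as stated: the composite $\CC_v\{\tilde{t}_v\}\to\CC_v\{\tilde{T}_v\}[\tilde{t}_v]\to\CC_v[\![\tilde{t}_v]\!]$ is \emph{not} continuous from the Gauss-norm topology to the $\tilde{t}_v$-adic topology. Any sequence of nonzero constants $c_n\in\CC_v$ with $|c_n|_v\to 0$ tends to $0$ in Gauss norm, yet each $c_n$ maps to itself, which has $\tilde{t}_v$-adic valuation $0$, so the images do not converge $\tilde{t}_v$-adically. The fix is to use the coarser, still Hausdorff, topology of coefficientwise convergence on $\CC_v[\![\tilde{t}_v]\!]$: since the image of $\tilde{T}_v$ is $\tilde{t}_v^d/u(\tilde{t}_v)$, which has $\tilde{t}_v$-adic valuation exactly $d$, the coefficient of $\tilde{t}_v^M$ in the image of $\sum_{r=0}^{d-1}b_r\tilde{t}_v^r$ depends only on the finitely many $\tilde{T}_v$-coefficients of the $b_r$ of index $\le M/d$, and is therefore a Gauss-continuous $\CC_v$-linear functional. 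With this topology both the composite and the tautological inclusion $\CC_v\{\tilde{t}_v\}\hookrightarrow\CC_v[\![\tilde{t}_v]\!]$ are continuous and agree on the Gauss-dense subring $\CC_v[\tilde{t}_v]$, hence coincide; equivalently, compare the coefficient of each $\tilde{t}_v^M$ directly across the partial sums and pass to the limit.
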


\begin{proof}
For each non-negative integer $i$, by \eqref{eqn: min-tv} there exist $f_{i,j} \in \FF_{q^d}[\tilde{T}_v]$, $0\leq j <d$, with $\deg_{\tilde{T}_i} f_{i,j}\leq i$ such that 
\[
\tilde{t}_v^i = f_{i,0}\tilde{t}_v^{d-1}+\cdots + f_{i,d-1}.
\]
Write $f_{i,j}(\tilde{T}_v) = \sum_{s=0}^i u_{i,j,s} \tilde{T}_v^s$ where $u_{i,j,s} \in \FF_{q^d}$.
Then for each $f = \sum_{i\geq 0} a_i \tilde{t}_v^i \in \CC_v\{\tilde{t}_v\}$, the property $\lim_{i\rightarrow \infty} |a_i|_v = 0$ implies that for $0\leq j <d$ and $s\geq 0$,
\[
b_{j,s}:= \lim_{n\rightarrow \infty} \sum_{i=s}^n a_i u_{i,j,s} \quad \text{ converges absolutely in $\CC_v$ with } \quad  \lim_{s\rightarrow \infty} b_{j,s} = 0.
\]
Put
\begin{align*}
f_j &:= \sum_{s\geq 0} b_{j,s} \tilde{T}_v^s = \sum_{s=0}^\infty \left(\sum_{i=s}^\infty a_i u_{i,j,s}\right) \tilde{T}_v^s \\
&\, = \sum_{i=0}^{\infty}a_i \left(\sum_{s=0}^i u_{i,j,s} \tilde{T}_v^s\right) = \sum_{i=0}^\infty a_i f_{i,j}(\tilde{T}_v)\quad 
\in \CC_v\{\tilde{T}_v\}.
\end{align*}
We then get 
\[
f = \sum_{i=0}^\infty a_i\tilde{t}_v^i = \sum_{j=0}^{d-1} \sum_{i=0}^\infty a_i f_{i,j} \tilde{t}_v^j = \sum_{j=0}^{d-1} f_j \cdot \tilde{t}_v^j  \quad \in \CC_v\{\tilde{T}_v\}[\tilde{t}_v]
\]
as desired.
\end{proof}

\begin{remark}
The inclusion $\bar{k}(\tilde{T}_v) \subset \bar{k}(\tilde{t}_v)$ induces a finite morphism $\phi$ from the projective $\tilde{t}_v$-line $\PP^1_{\tilde{t}_v}$ onto the projective $\tilde{T}_v$-line $\PP^1_{\tilde{T}_v}$ (over $\bar{k}$).
In particular, $\bar{k}[\tilde{t}_v]$ and $\bar{k}[\tilde{T}_v][\tilde{t}_v]$ are the affine coordinate rings of $\AA^1_{\tilde{t}_v}$ and $U:=\AA^1_{\tilde{t}_v} \setminus \phi^{-1}(\tilde{v})$, respectively.
Since $\CC_v\{\tilde{t}_v\}$ and $\CC_v\{\tilde{T}_v\}[\tilde{t}_v]$  correspond to the the affinoid domains $\AA^1_{\tilde{t}_v,\leq 1}:=\{x \in \AA^1_{\tilde{t}_v}(\CC_v)\mid |x|_v \leq 1\}$ and $U_{\leq 1}:=\{x \in U(\CC_v)\mid |x|_v \leq 1\}$, respectively, the natural inclusion $U_{\leq 1} \subset \AA^1_{\tilde{t}_v,\leq 1}$ alternatively implies $\CC_v\{\tilde{t}_v\} \subset \CC_v\{\tilde{T}_v\}[\tilde{t}_v]$.
\end{remark}

To show the uniformizability of $\Rscr_{\sigma}(\tilde{\cC}_{d\ell})$, we first note that the identification of $\Rscr_{\sigma} (\tilde{\cC}_{d\ell})$
extends to
\begin{equation}\label{E:Decom Tate-alg Rsigma}
\CC_v\{\tilde{T}_v\} \otimes_{\bar{k}[\tilde{T}_v]} \Rscr_{\sigma} (\tilde{\cC}_{d\ell})=
\CC_v\{\tilde{T}_v\}[\tilde{t}_{v,0}] \times \CC_v\{\tilde{T}_v\}[\tilde{t}_{v,1}] \times \cdots \times \CC_v\{\tilde{T}_v\}[\tilde{t}_{v,d\ell-1}].
\end{equation}
In particular, Lemma~\ref{lem: Tate-alg-inclusion} implies that
\begin{equation}\label{eqn: Tate-alg-inclusion}
\CC_v\{\tilde{t}_{v,0}\} \times \CC_v\{\tilde{t}_{v,1}\} \times \cdots \times \CC_v\{\tilde{t}_{v,d\ell-1}\} \subset \CC_v\{\tilde{T}_{v}\} \otimes_{\bar{k}[\tilde{T}_v]}\Rscr_{\sigma}(\tilde{\cC}_{d\ell}).
\end{equation}
In particular, 
for every $\big(h_0(\tilde{t}_{v,0}),...,h_{d\ell-1}(\tilde{t}_{v,d\ell-1})\big) \in \CC_v\{\tilde{t}_{v,0}\} \times \CC_v\{\tilde{t}_{v,1}\} \times \cdots \times \CC_v\{\tilde{t}_{v,d\ell-1}\}$,
one has that
\begin{align}\label{E:-sigma-action on Decomp}
&\sigma \cdot \Big(h_0(\tilde{t}_{v,0}), h_1(\tilde{t}_{v,1}),\cdots, h_{d\ell-1}(\tilde{t}_{v,d\ell-1})\Big) \notag  \\
&\hspace{1.5cm} = \Big((1-\frac{\tilde{t}_{v,0}}{\tilde{\theta}_v}) \cdot h_{d\ell-1}^{(-1)}(\tilde{t}_{v,0}), h_0^{(-1)}(\tilde{t}_{v,1}),\cdots, h_{d\ell-2}^{(-1)}(\tilde{t}_{v,d\ell-1})\Big),
\end{align}
and for $0\leq i<d\ell$,
\[
h_{i}^{(-1)}(\tilde{t}_{v,\overline{i+1}}) = \sum_{m}a_m^{q^{-1}}\tilde{t}_{v,\overline{i+1}}^m\quad \text{ when } \quad 
h_{i}(\tilde{t}_{v,i}) = \sum_{m}a_m\tilde{t}_{v,i}^m \in \CC_v\{\tilde{t}_{v,i}\}.
\]
Here $\overline{i+1} =i+1$ if $i+1<d\ell$ or $0$ if $i+1=d\ell$.
\\

Note that since $|\tilde{\theta}_{v}|_{v}>1$, 
given an integer $i$ with $0\leq i<d\ell$, we can set $\psi(\tilde{t}_v)$
to be the following infinite product 
\[
\psi(\tilde{t}_v):= \prod_{i=1}^\infty\left(1-\frac{\tilde{t}_v}{\tilde{\theta}_v^{q^{d\ell i}}}\right)^{-1} \quad \in \CC_v\{\tilde{t}_{v}\}^\times \subset \CC_v\{\tilde{T}_{v}\}[\tilde{t}_v]^\times.
\]
Take $\xi \in \FF_{q^{d\ell}}$ so that $\FF_{q^{d\ell}} = \FF_q(\xi)$.
For $0\leq i <d\ell$ and $0\leq j < d$,
Put
\begin{align*}
\psi_{i,j} &:= \bigg(\xi^i \tilde{t}_{v,0}^j \psi(\tilde{t}_{v,0}), (\xi^{q^{-1}})^i \tilde{t}_{v,1}^j \psi^{(-1)}(\tilde{t}_{v,1}), \cdots, (\xi^{q^{1-d\ell}})^i \tilde{t}_{v,d\ell-1}^j \psi^{(1-d\ell)}(\tilde{t}_{v,d\ell-1})\bigg) \\
& \hspace{0.7cm} \in \CC_v\{\tilde{t}_{v,0}\} \times \CC_v\{\tilde{t}_{v,1}\} \times \cdots \times \CC_v\{\tilde{t}_{v,d\ell-1}\}.
\end{align*}
Then 
\begin{lemma}\label{lem: HB(C)} Via the decomposition~\eqref{E:Decom Tate-alg Rsigma}, we have that
\[
H_{\text{\rm Betti}}\big(\Rscr_{\sigma} (\tilde{\cC}_{d\ell})\big) = \bigoplus_{\subfrac{0\leq i<d\ell}{0\leq j<d}} \FF_q[\tilde{T}_v] \psi_{i,j}.
\]
Consequently, $\Rscr_{\sigma} (\tilde{\cC}_{d\ell})$ is uniformizable.
\end{lemma}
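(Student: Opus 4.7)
The plan is to verify directly that each $\psi_{i,j}$ is $\sigma$-fixed, and then show that these $d^2\ell$ elements already form a $\CC_v\{\tilde{T}_v\}$-basis of the base change $\MM := \CC_v\{\tilde{T}_v\}\otimes_{\bar{k}[\tilde{T}_v]}\Rscr_{\sigma}(\tilde{\cC}_{d\ell})$; uniformizability and the stated shape of $H_{\mathrm{Betti}}$ then follow. The hard part of this plan is showing that the $\psi_{i,j}$ span $\MM$ over $\CC_v\{\tilde{T}_v\}$, which I will reduce to a block-Vandermonde determinant together with a norm computation in the ring extension $\CC_v\{\tilde T_v\}[\tilde t_{v,s}]/\CC_v\{\tilde T_v\}$.

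For the $\sigma$-invariance I apply the formula \eqref{E:-sigma-action on Decomp} componentwise. For $0\le s\le d\ell-2$, the $(s+1)$-st component of $\sigma\psi_{i,j}$ is the $(-1)$-twist of its $s$-th component, and by inspection this twist sends $(\xi^{q^{-s}})^i\tilde t_{v,s}^j\psi^{(-s)}(\tilde t_{v,s})$ to $(\xi^{q^{-s-1}})^i\tilde t_{v,s+1}^j\psi^{(-s-1)}(\tilde t_{v,s+1})$, which is precisely the $(s+1)$-st component of $\psi_{i,j}$. For the $0$-th component, I combine (a) $\xi^{q^{-d\ell}}=\xi$ (because $\xi\in \FF_{q^{d\ell}}$), (b) $\tilde t_{v,d\ell}=\tilde t_{v,0}$ (from property~(2) of the $\tilde t_{v,i}$), and (c) the functional equation $(1-\tilde t_v/\tilde\theta_v)\cdot\psi^{(-d\ell)}(\tilde t_v)=\psi(\tilde t_v)$, which is immediate from $\psi(\tilde t_v)=\prod_{n\ge 1}(1-\tilde t_v/\tilde\theta_v^{q^{d\ell n}})^{-1}$ by an index shift.

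For the basis statement, I expand each $\psi_{i,j}$ in the natural $\CC_v\{\tilde{T}_v\}$-basis $\{e_{s,k}\}_{0\le s<d\ell,\,0\le k<d}$ of $\MM$, where $e_{s,k}$ places $\tilde t_{v,s}^k$ in slot $s$ and zero elsewhere. Writing $\tilde t_{v,s}^j\psi^{(-s)}(\tilde t_{v,s})=\sum_k b_{s,j,k}\tilde t_{v,s}^k$ with $b_{s,j,k}\in\CC_v\{\tilde T_v\}$, the resulting $d^2\ell\times d^2\ell$ matrix $M$ has entries $M_{(s,k),(i,j)}=(\xi^{q^{-s}})^i\cdot b_{s,j,k}$. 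Ordering rows by $(s,k)$ and columns by $(i,j)$ with $s,i$ outer and $k,j$ inner, this factors as $M=\diag(B_0,\ldots,B_{d\ell-1})\cdot(V\otimes I_d)$, where $V_{s,i}=(\xi^{q^{-s}})^i$ is a Vandermonde matrix and $B_s=(b_{s,j,k})_{k,j}$ is the matrix of multiplication by $\psi^{(-s)}(\tilde t_{v,s})$ on the free $\CC_v\{\tilde T_v\}$-module $\CC_v\{\tilde T_v\}[\tilde t_{v,s}]$. Since $\xi$ has $d\ell$ distinct Galois conjugates $\xi^{q^{-s}}$ over $\FF_q$, $\det V$ is a nonzero element of $\FF_{q^{d\ell}}$; and since each factor $(1-\tilde t_v/\tilde\theta_v^{q^{d\ell n}})$ for $n\ge 1$ is a unit in $\CC_v\{\tilde t_v\}$ by $|\tilde\theta_v|_v>1$, the element $\psi^{(-s)}(\tilde t_{v,s})$ is a unit in $\CC_v\{\tilde t_{v,s}\}\subset\CC_v\{\tilde T_v\}[\tilde t_{v,s}]$ (via Lemma~\ref{lem: Tate-alg-inclusion}), whence $\det B_s=\mathrm{N}_{\CC_v\{\tilde T_v\}[\tilde t_{v,s}]/\CC_v\{\tilde T_v\}}\bigl(\psi^{(-s)}(\tilde t_{v,s})\bigr)\in\CC_v\{\tilde T_v\}^\times$. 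Hence $\det M\in\CC_v\{\tilde T_v\}^\times$.

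Finally, the invertibility of $M$ yields both uniformizability and the isomorphism $\CC_v\{\tilde T_v\}\otimes_{\FF_q[\tilde T_v]}\bigoplus_{i,j}\FF_q[\tilde T_v]\psi_{i,j}\cong \MM$. For the equality $H_{\mathrm{Betti}}(\Rscr_\sigma(\tilde{\cC}_{d\ell}))=\bigoplus_{i,j}\FF_q[\tilde T_v]\psi_{i,j}$, given any $m\in H_{\mathrm{Betti}}$ I use the basis property to write $m=\sum_{i,j}c_{i,j}\psi_{i,j}$ uniquely with $c_{i,j}\in\CC_v\{\tilde T_v\}$; the condition $\sigma m=m$ combined with $\sigma\psi_{i,j}=\psi_{i,j}$ forces $c_{i,j}^{(-1)}=c_{i,j}$, so each coefficient of the $\tilde T_v$-power series $c_{i,j}$ satisfies $x^q=x$ and hence lies in $\FF_q$; convergence in the Tate algebra then forces $c_{i,j}\in\FF_q[\tilde T_v]$, finishing the proof.
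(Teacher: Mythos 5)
Your proof is correct and rests on the same two ingredients as the paper's: the distinctness of the conjugates $\xi^{q^{-s}}$ of $\xi$ over $\FF_q$, and the invertibility of $\psi^{(-s)}(\tilde{t}_{v,s})$ in $\CC_v\{\tilde{T}_v\}[\tilde{t}_{v,s}]$ via Lemma~\ref{lem: Tate-alg-inclusion}. The packaging differs: the paper chooses scalars $\alpha_{i,m}\in\bar{k}$ (effectively the entries of $V^{-1}$ for your Vandermonde matrix $V$, obtained from the isomorphism $\bar{k}\otimes_{\FF_q}\FF_{q^{d\ell}}\cong\bar{k}^{d\ell}$) to form linear combinations $\psi'_{i,j}$ of the $\psi_{m,j}$ supported in a single slot, and then spans each slot using the unit $\psi^{(-i)}(\tilde{t}_{v,i})$; you instead write down the full change-of-basis matrix $M$, exhibit the factorization $M=\diag(B_0,\dots,B_{d\ell-1})\cdot(V\otimes I_d)$, and check $\det M\in\CC_v\{\tilde{T}_v\}^\times$ via the Vandermonde determinant and the norm $\det B_s=\mathrm{N}\bigl(\psi^{(-s)}(\tilde{t}_{v,s})\bigr)$. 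Both are valid; the determinant factorization makes the role of the two ingredients especially transparent, and you additionally spell out the final step — that $\sigma$-fixed coefficients in $\CC_v\{\tilde{T}_v\}$ have Frobenius-fixed power-series coefficients and hence, by convergence, lie in $\FF_q[\tilde{T}_v]$ — which the paper leaves implicit.
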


\begin{proof}
We first note that \[ \psi^{(-d\ell)}(\tilde{t}_v)=(1-\frac{\tilde{t}_{v,0}}{\tilde{\theta}_v})^{-1} \cdot \psi(\tilde{t}_v).\]
From~\eqref{E:-sigma-action on Decomp} we see  that $\psi_{i,j} \in H_{\text{\rm Betti}}\big(\Rscr_{\sigma} (\tilde{\cC}_{d\ell})\big)$ for $0\leq i<d\ell$ and $0\leq j<d$.
Since 
$\rank_{\bar{k}[\tilde{T}_v]}\Rscr_{\sigma} (\tilde{\cC}_{d\ell}) = d^2 \ell$,
it suffices to show that the set $\left\{\psi_{i,j}|\hbox{ } 0\leq i<d\ell, 0\leq j <d\right\}$ generates $\CC_v\{\tilde{T}_{v}\} \otimes_{\bar{k}[\tilde{T}_v]}\Rscr_{\sigma}(\tilde{\cC}_{d\ell})$ over $\CC_v\{\tilde{T}_{v}\}$.
First, from the isomorphism 
\[
\begin{tabular}{ccl}
$\bar{k}\otimes_{\FF_q} \FF_{q^{d\ell}}$ & $\stackrel{\sim}{\longrightarrow}$ & $\bar{k}\times \bar{k}\times  \cdots \times \bar{k}$ \\
$\alpha \otimes 1$ & $\longmapsto$ & $(\alpha\ ,\alpha\ ,\cdots ,\alpha)$ \\
$1\otimes \xi$ & $\longmapsto$ & $(\xi, \xi^{q^{-1}},\cdots,\xi^{q^{d\ell-1}})$,
\end{tabular}
\]
one has that for each integer $i$ with $0\leq i < d\ell$, there exist $\alpha_{i,0},...,\alpha_{i,d\ell-1} \in \bar{k}$ such that
\[
\sum_{m=0}^{d\ell-1}\alpha_{i,m}\xi^{mq^{-i'}} = \begin{cases}
1, & \text{ if $i'=i$,}\\
0, & \text{ otherwise.}
\end{cases}
\]
Put
\[
\psi_{i,j}' :=
\sum_{m=0}^{d\ell-1}\alpha_{i,m} \psi_{m,j}
= \Big(0,...,\tilde{t}_{v,i}^j\psi^{(-i)}(\tilde{t}_{v,i}),0,...,0\Big).
\]
We then obtain that
\begin{align*}
&\  \sum_{\subfrac{0\leq i<d\ell}{0\leq j<d}} \CC_v\{\tilde{T}_{v}\}  \cdot \psi_{i,j}
\ = \ \sum_{\subfrac{0\leq i<d\ell}{0\leq j<d}} \CC_v\{\tilde{T}_{v}\}  \cdot \psi'_{i,j} \\
=& \  \Big(\CC_v\{\tilde{T}_{v}\} [\tilde{t}_{v,0}]\cdot \psi(\tilde{t}_{v,0})\Big) \times \Big(\CC_v\{\tilde{T}_{v}\} [\tilde{t}_{v,1}]\cdot \psi^{(-1)}(\tilde{t}_{v,1}) \Big)\times \cdots \times \Big(\CC_v\{\tilde{T}_{v}\} [\tilde{t}_{v,d\ell-1}]\cdot \psi^{(1-d\ell)}(\tilde{t}_{v,d\ell-1})\Big) \\
=&\
\CC_v\{\tilde{T}_{v}\} [\tilde{t}_{v,0}] \times \CC_v\{\tilde{T}_{v}\} [\tilde{t}_{v,1}] \times \cdots \times 
\CC_v\{\tilde{T}_{v}\} [\tilde{t}_{v,d\ell-1}],
\end{align*}
where the last equality comes from the observation:
\[
\psi^{(-i)}(\tilde{t}_{v,i}) \in \CC_v\{\tilde{t}_{v,i}\} ^\times \subset \CC_v\{\tilde{T}_{v}\} [\tilde{t}_{v,i}]^\times, \quad 0\leq i < d\ell.
\]
This completes the proof.
\end{proof}

\begin{remark}\label{rem: CM}
Since $\Rscr_{\sigma} (\tilde{\cC}_{d\ell})$
has Complex Multiplication by $\FF_{q^{d\ell}}[\tilde{T}_v][\tilde{t}_v]$ (see Remark~\ref{rem: CM-0}),
we may regard
$H_{\text{Betti}}\big(\Rscr_{\sigma} (\tilde{\cC}_{d\ell})\big)$ as an $\FF_{q^{d\ell}}[\tilde{T}_v][\tilde{t}_v]$-module.
Then the proof of the above lemma indicates that
\[
H_{\text{Betti}}\big(\Rscr_{\sigma} (\tilde{\cC}_{d\ell})\big) = \FF_{q^{d\ell}}[\tilde{T}_v][\tilde{t}_v] \cdot \psi_{0,0}.
\]
\end{remark}

\subsection{Periods of \texorpdfstring{$\Rscr_{\sigma} (\tilde{\cC}_{d\ell})$}{Rsigma(Cdl)}}
\label{sec: CHJ-P}

Recall that the {\it de Rham module of $\Rscr_{\sigma} (\tilde{\cC}_{d\ell})$ over $\bar{k}$}
is 
\[
H_{\text{dR}}(\Rscr_{\sigma} (\tilde{\cC}_{d\ell}),\bar{k}):= \Hom_{\bar{k}}\left(\frac{\Rscr_{\sigma} (\tilde{\cC}_{d\ell})}{(\tilde{T}_v - \tilde{v})\cdot \Rscr_{\sigma} (\tilde{\cC}_{d\ell})},\bar{k}\right),
\]
and the de~Rham pairing is given by (see \eqref{eqn: dR-pairing})
\[
H_{\text{Betti}}\big(\Rscr_{\sigma} (\text{$\tilde{\cC}$}_{d\ell})\big) \times H_{\text{dR}}\big(\Rscr_{\sigma} (\text{$\tilde{\cC}$}_{d\ell}),\bar{k}\big) \longrightarrow \CC_v, \quad (\gamma,\omega)\mapsto \int_{\gamma} \omega := \omega(\gamma).
\]
In particular, for each integer $i$ with $0\leq i <d\ell$,
put
\begin{equation}\label{eqn: g-v-i}
\tilde{g}_{v,i}(X) 
:= \tilde{g}_{\tilde{T}_v}^{(-i)}(X)\big|_{\tilde{T}_v = \tilde{v}} = X^d-(\tilde{\epsilon}_{d-1}^{q^{(-i)}}\tilde{v})X^{d-1} - \cdots - (\tilde{\epsilon}_1^{q^{(-i)}}\tilde{v})X - \tilde{v}
\in \bar{k}[X].
\end{equation}
The identification in Proposition~\ref{prop: ID-C} leads to the following isomorphism:
\begin{equation}\label{eqn: C-decomp}
\frac{\Rscr_{\sigma} (\tilde{\cC}_{d\ell})}{(\tilde{T}_v - \tilde{v})\cdot \Rscr_{\sigma} (\tilde{\cC}_{d\ell})} 
\cong  \prod_{i=0}^{d\ell-1}\frac{\bar{k}[X]}{\tilde{g}_{v,i}(X) \cdot \bar{k}[X]}.
\end{equation}
Moreover, for each integer $s$ with $0\leq s<d\ell$, from~\eqref{eqn: min-tv} the element 
\[
\tilde{\theta}_{v,s}:=  (\theta-\varepsilon^{q^{-s}})^{-1} = \iota(\tilde{t}_{v,s}) \quad \in \bar{k}
\]
is a root of
$\tilde{g}_{v,s}(X)$, which gives a differential $\tilde{\omega}_{s} \in H_{\text{dR}}(\Rscr_{\sigma} (\tilde{\cC}_{d\ell}),\bar{k})$ induced from
\begin{equation}\label{eqn: diff}
\begin{tabular}{ccccl}
$\hspace{-0.8cm}\displaystyle\prod_{i=0}^{d\ell-1}\frac{\bar{k}[X]}{\tilde{g}_{v,i}(X) \cdot \bar{k}[X]}$& $\longrightarrow$ &$\displaystyle\frac{\bar{k}[X]}{\tilde{g}_{v,s}(X) \cdot \bar{k}[X]}$& $\longrightarrow$ &
$\bar{k}$\\ $\Big(\bar{h}_i(X) \bmod \tilde{g}_{v,i}\Big)_{0\leq i<d\ell}$ & $\longmapsto$ &
$\bar{h}_{s}(X) \bmod \tilde{g}_{v,s}$ & $\longmapsto$ &\hspace{-0.1cm}$\bar{h}_{s}(\tilde{\theta}_{v,s})$. 
\end{tabular}
\end{equation}
Therefore we obtain the following period interpretation:

\begin{proposition}\label{prop: Omega-period} Recall the notation~\eqref{E:Omega ell,v,i}.
Given an integer $s$ with $0\leq s<d\ell$, one has
\[
\int_{\psi_{0,0}}\tilde{\omega}_{s} = \frac{1}{\Omega_{d\ell,v,d\ell-s}(v)}.
\]
\end{proposition}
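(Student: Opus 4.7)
The plan is a direct computation unwinding the definitions, with the main content being a matching of indices. First, recall from Lemma~\ref{lem: HB(C)} that
\[
\psi_{0,0} = \Big(\psi(\tilde{t}_{v,0}),\, \psi^{(-1)}(\tilde{t}_{v,1}),\, \ldots,\, \psi^{(1-d\ell)}(\tilde{t}_{v,d\ell-1})\Big) \in \prod_{i=0}^{d\ell-1}\CC_v\{\tilde{t}_{v,i}\},
\]
with $\psi(\tilde{t}_v) = \prod_{i=1}^\infty (1 - \tilde{t}_v/\tilde{\theta}_v^{q^{d\ell i}})^{-1}$, and that $\tilde{\omega}_s$ is, by construction \eqref{eqn: diff}, the functional that picks out the $s$-th component and then evaluates $X \mapsto \tilde{\theta}_{v,s}$. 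Since $\psi^{(-s)}$ converges on $\CC_v\{\tilde{t}_{v,s}\}$, the de~Rham pairing extends continuously, and so
\[
\int_{\psi_{0,0}}\tilde{\omega}_s \;=\; \psi^{(-s)}(X)\big|_{X=\tilde{\theta}_{v,s}} \;=\; \prod_{i=1}^\infty \bigg(1-\frac{\tilde{\theta}_{v,s}}{\tilde{\theta}_v^{q^{d\ell i - s}}}\bigg)^{-1}.
\]

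Next, I would convert this product back in terms of $\theta$ and $\varepsilon$. Since $\tilde{\theta}_{v,s}=(\theta-\varepsilon^{q^{-s}})^{-1}$ and $\tilde{\theta}_v=(\theta-\varepsilon)^{-1}$, each factor becomes
\[
1-\frac{\tilde{\theta}_{v,s}}{\tilde{\theta}_v^{q^{d\ell i - s}}} \;=\; 1 - \frac{(\theta-\varepsilon)^{q^{d\ell i - s}}}{\theta-\varepsilon^{q^{-s}}}.
\]
The key index identification uses that $\varepsilon\in\FF_{q^d}$, hence $\varepsilon^{q^d}=\varepsilon$: for all $i\geq 1$ we have $\varepsilon^{q^{d\ell i - s}}=\varepsilon^{q^{-s}}$, so the denominator can be rewritten. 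Reindexing by $n=i-1$,
\[
\prod_{i=1}^\infty \bigg(1-\frac{(\theta-\varepsilon)^{q^{d\ell i - s}}}{\theta-\varepsilon^{q^{-s}}}\bigg)
\;=\; \prod_{n=0}^\infty \bigg(1-\frac{(\theta-\varepsilon)^{q^{d\ell n + (d\ell - s)}}}{\theta-\varepsilon^{q^{d\ell n + (d\ell - s)}}}\bigg),
\]
which is exactly $\Omega_{d\ell,v,d\ell-s}(v)$ by the definition~\eqref{E:Omega ell,v,i}. Combining the two displayed computations yields the desired formula $\int_{\psi_{0,0}}\tilde{\omega}_s = 1/\Omega_{d\ell,v,d\ell-s}(v)$.

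There is no real obstacle here; the only subtlety worth spelling out is the justification that the pairing can indeed be computed by substitution at $\tilde{\theta}_{v,s}$ against an element of $H_{\mathrm{Betti}}$. This follows because $\Rcal_\sigma(\tilde{\cC}_{d\ell})$ is uniformizable (Lemma~\ref{lem: HB(C)}), so $H_{\mathrm{Betti}}\subset \MM^\dagger$, and $\tilde{\omega}_s$ extends to a $\CC_v$-linear functional on $\MM^\dagger$ vanishing on $(\tilde{T}_v-\tilde{v})\MM^\dagger$; evaluating the $s$-th component at $\tilde{t}_{v,s}=\tilde{\theta}_{v,s}$ is precisely this extension, under the identification analogous to \eqref{eqn: H-id}. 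The only care needed is the cyclic index $\overline{s}\in\{0,\dots,d\ell-1\}$ entering $\psi^{(-s)}(\tilde{t}_{v,s})$, which matches the formula for $\sigma$-action in~\eqref{E:-sigma-action on Decomp} and hence selects the correct Frobenius twist.
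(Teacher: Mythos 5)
Your proof is correct and follows essentially the same route as the paper's: unwind the de~Rham pairing to $\psi^{(-s)}(\tilde t_{v,s})$ evaluated at $\tilde t_{v,s}=\tilde\theta_{v,s}$, rewrite in terms of $\theta$ and $\varepsilon$, and reindex using $\varepsilon^{q^d}=\varepsilon$ to identify the product with $\Omega_{d\ell,v,d\ell-s}(v)$. The only difference is that you spell out the convergence/substitution justification and the reindexing step $n=i-1$ (together with $\varepsilon^{q^{d\ell i-s}}=\varepsilon^{q^{-s}}$), which the paper leaves implicit.
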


\begin{proof}
From the definition of $\omega_{s}$ and the de Rham pairing, one gets that
\begin{align*}
\int_{\psi_{0,0}}\tilde{\omega}_{s} &=
\psi^{(-s)}(\tilde{t}_{v,s})\Big|_{\tilde{t}_{v,s}=\tilde{\theta}_{v,s}} \\
&= \prod_{i=1}^\infty \left(1-\frac{\tilde{\theta}_{v,s}}{\tilde{\theta}_v^{q^{d\ell i -s}}}\right)^{-1} = \prod_{i=1}^\infty \left(1-\frac{(\theta-\varepsilon^{q^{-s}})^{-1}}{((\theta-\varepsilon)^{q^{d\ell i - s}})^{-1}}\right)^{-1} \\
&= 
\prod_{i=1}^\infty \left(1-
\frac{(\theta-\varepsilon)^{q^{d\ell i - s}}}{\theta-\varepsilon^{q^{-s}}}
\right)^{-1} = \frac{1}{\Omega_{d\ell,v,d\ell-s}(v)}.
\end{align*}
\end{proof}

\begin{remark}\label{rem: CM-period}
As $\Rscr_{\sigma} (\tilde{\cC}_{d\ell})$ has CM by $\FF_{q^{d\ell}}[\tilde{T}_v][\tilde{t}_v]$ (see Remark~\ref{rem: CM}), we have that
\begin{align*}
&\bar{k}\bigg(\int_{\gamma}\tilde{\omega}\ \bigg|\ \gamma \in H_{\text{Betti}}\big(\Rscr_{\sigma} (\tilde{\cC}_{d\ell})\big),\ \tilde{\omega} \in H_{\text{dR}}\big(\Rscr_{\sigma} (\tilde{\cC}_{d\ell}),\bar{k}\big)\bigg)\\
&\hspace{2cm} =\  \bar{k}\bigg(\int_{\psi_{0,0}}\tilde{\omega} \ \bigg|\ \tilde{\omega} \in H_{\text{dR}}\big(\Rscr_{\sigma} (\tilde{\cC}_{d\ell}),\bar{k}\big)\bigg).
\end{align*}
The Gronthendieck period conjecture in Theorem~\ref{thm: GPC} leads to
\begin{equation}\label{eqn: trdeg-2}
\trdeg_{\bar{k}}\bar{k}\bigg(\int_{\psi_{0,0}}\omega \ \bigg|\ \omega \in H_{\text{dR}}\big(\Rscr_{\sigma} (\tilde{\cC}_{d\ell}),\bar{k}\big)\bigg) =
\dim \Gamma_{\Rscr_{\sigma} (\tilde{\cC}_{d\ell})},
\end{equation}
where $\Gamma_{\Rscr_{\sigma} (\tilde{\cC}_{d\ell})}$ is the $\tilde{T}_v$-motivic Galois group of $\Rscr_{\sigma} (\tilde{\cC}_{d\ell})$.
Next, we will determine the dimension of $\Gamma_{\Rscr_{\sigma} (\tilde{\cC}_{d\ell})}$, and complete the proof of Theorem~\ref{thm: AI-Omega}.
\end{remark}

\subsection{\texorpdfstring{$\tilde{T}_v$}{Tv}-Motivic Galois group of \texorpdfstring{$\Rscr_{\sigma} (\tilde{\cC}_{d\ell})$}{Rsigma(Cdl)} and the Hodge--Pink cocharacters}
\label{sec: CHJ-MG-HPC}

We let $\eK_{d\ell}:= \FF_{q^{d\ell}}(\tilde{t}_v)$ and $\eF:= \FF_q(\tilde{T}_v)$.
Since $\Rscr_{\sigma} (\tilde{\cC}_{d\ell})$ has complex multiplication by $\FF_{q^{d\ell}}[\tilde{T}_v][\tilde{t}_v]$ by Proposition~\ref{Prop:CM endomorphisms}
and
\[
\rank_{\FF_q[\tilde{T}_v]}\FF_{q^{d\ell}}[\tilde{T}_v][\tilde{t}_v] = [\eK_{d\ell}:\eF] = d^2 \ell = \dim_{\FF_q(\tilde{T})}H_{\Rscr_{\sigma} (\tilde{\cC}_{d\ell})}, 
\]
by \eqref{eqn: CI}
we obtain that
\[
\Gamma_{\Rscr_{\sigma} (\tilde{\cC}_{d\ell})} \hookrightarrow \Rscr_{K_{d\ell}/F} (\GG_{m/\eK_{d\ell}}) \subset \GL(H_{\Rscr_{\sigma} (\tilde{\cC}_{d\ell})}),
\]
where
$\Rscr_{K_{d\ell}/F} (\GG_{m/\eK_{d\ell}})$ is the \emph{restriction of scalars of $\GG_{m/\eK_{d\ell}}$ to $\eF$}.
Our goal is to show 
$\Gamma_{\Rscr_{\sigma} (\tilde{\cC}_{d\ell})}=\Rscr_{K_{d\ell}/F} (\GG_{m/\eK_{d\ell}})$
by analysing the images of its Hodge--Pink cocharacters.

\begin{Subsubsec}{Hodge--Pink cocharacters of $\Gamma_{\Rscr_{\sigma} (\tilde{\cC}_{d\ell})}$}\label{sec: HP-co}
For $0\leq i <d\ell$,
let $\tilde{\theta}_{v,i,0}(=\tilde{\theta}_{v,i}),..., \tilde{\theta}_{v,i,d-1}$ be the distinct roots of 
$\tilde{g}_{v,i}(X) \in \bar{k}[X]$ defined in \eqref{eqn: g-v-i}.
Comparing the minimal polynomials of $\tilde{t}_{v,i}$ in \eqref{eqn: min-tvi} and $\tilde{g}_{v,i}(X)$, we get that
\[
\tilde{T}_v- \tilde{v} \in (\tilde{t}_{v,i}-\tilde{\theta}_{v,i,j}) \bar{k}[\tilde{T}_v][\tilde{t}_{v,i}], 
\]
whence 
\begin{equation}\label{eqn: decomp-T-v}
(\tilde{T}_v-\tilde{v})\cdot \bar{k}[\tilde{T}_v][\tilde{t}_{v,i}] = \prod_{j=0}^{d-1}(\tilde{t}_{v,i}-\tilde{\theta}_{v,i,j})\cdot \bar{k}[\tilde{T}_v][\tilde{t}_{v,i}].
\end{equation}
On the other hand,
under the identification~\eqref{eqn: C-id},
we get
\begin{align}\label{eqn: decomp-sigma}
\sigma \cdot \Rscr_{\sigma} (\tilde{\cC}_{d\ell})
&=
\Big((\tilde{t}_{v,0}-\tilde{\theta}_{v,0,0}) \cdot \bar{k}[\tilde{T}_v][\tilde{t}_{v,0}] \Big)\times \bar{k}[\tilde{T}_v][\tilde{t}_{v,1}] \times \cdots \times \bar{k}[\tilde{T}_v][\tilde{t}_{v,d\ell-1}]  \\
&\supset
(\tilde{T}_v-\tilde{v})\cdot \Rscr_{\sigma} (\tilde{\cC}_{d\ell}), \notag
\end{align}
and the Hodge--Pink weight of $\Rscr_{\sigma} (\tilde{\cC}_{d\ell})$ is $(w_{i,j})_{0\leq i<d\ell,0\leq j<d}$ with
\[
w_{i,j} =
\begin{cases}
-1, & \text{ if $i=j=0$,}\\
0, & \text{ otherwise.}
\end{cases}
\]
Moreover precisely,
we may take a $\bar{k}[\tilde{T}_v]$-base $\{m_{i,j}\mid 0\leq i<d\ell,\ 0\leq j<d\}$
satisfying that
under the identification
\[
\frac{\Rscr_{\sigma} (\tilde{\cC}_{d\ell})}{(\tilde{T}_v-\tilde{v})\Rscr_{\sigma} (\tilde{\cC}_{d\ell})} \cong 
\prod_{i=0}^{d\ell-1}\frac{\bar{k}[\tilde{T}_v][\tilde{t}_{v,i}]}{(\tilde{T}_v-\tilde{v})\bar{k}[\tilde{T}_v][\tilde{t}_{v,i}]} \cong 
\prod_{\subfrac{0\leq i<d\ell}{0\leq j<d}}\frac{\bar{k}[\tilde{T}_v][\tilde{t}_{v,i}]}{(\tilde{t}_{v,i}-\tilde{\theta}_{v,i,j})\bar{k}[\tilde{T}_v][\tilde{t}_{v,i}]} \cong 
\prod_{\subfrac{0\leq i<d\ell}{0\leq j<d}} \bar{k},
\]
the image of $m_{i,j}$ is
\[
(\delta_{(i,j),(i',j')})_{\subfrac{0\leq i<d\ell}{0\leq j<d}} \in \prod_{\subfrac{0\leq i<d\ell}{0\leq j<d}} \bar{k},
\quad
\text{ where }
\quad 
\delta_{(i,j),(i',j')}=
\begin{cases}
    1, & \text{if $i=i'$ and $j=j'$,}\\
    0, & \text{ otherwise.}
\end{cases}
\]
From the isomorphism \eqref{eqn: H-id}, we may decompose
\begin{align*}
H_{\Rscr_{\sigma} (\tilde{\cC}_{d\ell}),\CC_v}  = \CC_v \underset{\tilde{v}\mapsfrom \tilde{T}_v,\eF}{\otimes} H_{\Rscr_{\sigma} (\tilde{\cC}_{d\ell})} & \stackrel{\sim}{\longrightarrow} \bigoplus_{\subfrac{0\leq i<d\ell}{0\leq j<d}}H_{i,j}. 
\end{align*}
Notice that
for each $0\leq i<d\ell$ and $0\leq j<d$,
let $\iota_{i,j}: \eK_{d\ell} = \FF_{q^{d\ell}}(\tilde{t}_v)\hookrightarrow \CC_v$ be the embedding satisfying that $\iota_{i,j}(\varepsilon) = \varepsilon^{q^{-j}}$ and $\iota_{i,j}(\tilde{t}_v) = \tilde{\theta}_{v,i,j}$.
Regarding $H_{\sigma}(\tilde{\cC}_{d\ell})$ as a one dimensional vector space over $\eK_{d\ell}$ (by Remark~\ref{rem: CM}),  we have that 
\[
\alpha \cdot z = \iota_{i,j}(\alpha) \cdot z, \quad \forall z \in H_{i,j} \quad \text{and} \quad \alpha \in \FF_{q^{d\ell}}(\tilde{t}_v).
\]
Therefore under the identification
\[
\CC_v \underset{\tilde{v} \mapsfrom \tilde{T}_v,\eF}{\times}\Rcal_{\eK_{d\ell}/\eF}(\GG_{m/\eK_{d\ell}}) \cong \prod_{\subfrac{0\leq i<d\ell}{0\leq j<d}}\CC_v \underset{\iota_{i,j}, \eK_{d\ell}}{\times} \GG_{m/\eK_{d\ell}} = \prod_{\subfrac{0\leq i<d\ell}{0\leq j<d}}\GL(H_{i,j}),
\]
the cocharacter 
\[
\chi_{\Rscr_{\sigma} (\tilde{\cC}_{d\ell})}^{\vee}: \GG_{m/\CC_v}\rightarrow \Gamma_{\Rscr_{\sigma} (\tilde{\cC}_{d\ell}),\CC_v}\subset \CC_v \underset{\tilde{v} \mapsfrom \tilde{T}_v,\eF}{\times} \Rcal_{\eK_{d\ell}/\eF}(\GG_{m/\eK_{d\ell}})
\]
is then realized as follows:
\[
\chi_{\Rscr_{\sigma} (\tilde{\cC}_{d\ell})}^{\vee}(x) = 
\begin{pmatrix}
    x^{-1} & & &\\
    & 1 & &\\
    & & \ddots & \\
    & & & 1
\end{pmatrix} 
\quad \in \prod_{\subfrac{0\leq i<d\ell}{0\leq j<d}}\CC_v \underset{\iota_{i,j}, \eK_{d\ell}}{\times} \GG_{m/\eK_{d\ell}}.
\]
Since $\Aut_{\FF_q(\tilde{v})}(\CC_v)$ acts transitively on the coordinates of $\prod_{\subfrac{0\leq i<d\ell}{0\leq j<d}}\CC_v \underset{\iota_{i,j}, \eK_{d\ell}}{\times} \GG_{m/\eK_{d\ell}}$,
the Hodge--Pink cocharacters of $ \Gamma_{\Rscr_{\sigma} (\tilde{\cC}_{d\ell})}$ generate the whole group $\CC_{v} \underset{\tilde{v} \mapsfrom \tilde{T}_v,\eF}{\times} \Rcal_{\eK_{d\ell}/\eF}(\GG_{m/\eK_{d\ell}})$.
Consequently, we finally arrive at:
\end{Subsubsec}

\begin{theorem}\label{thm: dim-G}
\[
\Gamma_{\Rscr_{\sigma} (\tilde{\cC}_{d\ell})}
= \Rcal_{\eK_{d\ell}/\eF}(\GG_{m/\eK_{d\ell}}),
\]
whence $\dim \Gamma_{\Rscr_{\sigma} (\tilde{\cC}_{d\ell})} = [\eK_{d\ell}:\eF]=d^2 \ell$.
\end{theorem}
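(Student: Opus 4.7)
The strategy is to squeeze $\Gamma_{\Rscr_{\sigma}(\tilde{\cC}_{d\ell})}$ between an upper bound coming from the CM structure and a lower bound coming from the collection of Hodge--Pink cocharacters. Since both bounds will equal $\Rcal_{\eK_{d\ell}/\eF}(\GG_{m/\eK_{d\ell}})$, the equality will follow.

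First, I would establish the upper bound. By Proposition~\ref{Prop:CM endomorphisms}, the ring $\FF_{q^{d\ell}}[\tilde{T}_v][\tilde{t}_v]$ embeds into the endomorphism ring of $\Rscr_{\sigma}(\tilde{\cC}_{d\ell})$, and these endomorphisms preserve $H_{\Rscr_{\sigma}(\tilde{\cC}_{d\ell})}$. Combined with the fact that $\rank_{\FF_q[\tilde{T}_v]}\FF_{q^{d\ell}}[\tilde{T}_v][\tilde{t}_v] = d^2\ell = \dim_{\eF} H_{\Rscr_{\sigma}(\tilde{\cC}_{d\ell})}$ (so the CM ring acts through a maximal commutative subalgebra), the containment \eqref{eqn: CI} yields
\[
\Gamma_{\Rscr_{\sigma}(\tilde{\cC}_{d\ell})} \hookrightarrow \Cent_{\GL(H_{\Rscr_{\sigma}(\tilde{\cC}_{d\ell})})}(\eK_{d\ell}) = \Rcal_{\eK_{d\ell}/\eF}(\GG_{m/\eK_{d\ell}}).
\]

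Next, I would compute the Hodge--Pink weights and construct the Hodge--Pink cocharacter $\chi^{\vee}$ explicitly. Using \eqref{eqn: decomp-T-v} and \eqref{eqn: decomp-sigma}, the factorization of $(\tilde{T}_v-\tilde{v})$ relative to $\sigma\Rscr_{\sigma}(\tilde{\cC}_{d\ell})$ gives weights $w_{i,j} = -1$ if $(i,j)=(0,0)$ and $0$ otherwise. I would then identify
\[
\CC_v \otimes_{\eF,\,\tilde{T}_v \mapsto \tilde{v}} \Rcal_{\eK_{d\ell}/\eF}(\GG_{m/\eK_{d\ell}}) \;\cong\; \prod_{\subfrac{0\leq i<d\ell}{0\leq j<d}} \CC_v \underset{\iota_{i,j},\,\eK_{d\ell}}{\times} \GG_{m/\eK_{d\ell}},
\]
indexed by the embeddings $\iota_{i,j}: \eK_{d\ell} \hookrightarrow \CC_v$ with $\iota_{i,j}(\tilde{t}_v) = \tilde{\theta}_{v,i,j}$, so that $\chi^{\vee}(x)$ becomes $x^{-1}$ in the $(0,0)$-slot and $1$ elsewhere.

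Finally, I would close the proof by invoking the action of $\Aut_{\FF_q(\tilde{v})}(\CC_v)$. The key observation is that as $(i,j)$ ranges over $\{0,\ldots,d\ell-1\}\times\{0,\ldots,d-1\}$, the embeddings $\iota_{i,j}$ form a single Galois orbit, so $\Aut_{\FF_q(\tilde{v})}(\CC_v)$ permutes the factors of the product decomposition transitively. By Definition~\ref{defn: HPC}, every conjugate of $\chi^{\vee}$ by this group is again a Hodge--Pink cocharacter of $\Gamma_{\Rscr_{\sigma}(\tilde{\cC}_{d\ell}),\CC_v}$, yielding cocharacters $x \mapsto x^{-1}$ in each single slot. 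Their images together generate $\prod_{i,j}\GG_{m} = \Rcal_{\eK_{d\ell}/\eF}(\GG_{m/\eK_{d\ell}})_{\CC_v}$, forcing the reverse inclusion. Descending from $\CC_v$ to $\eF$ then gives the desired equality.

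The main obstacle I anticipate is the careful bookkeeping of the Frobenius twists in the identification \eqref{eqn: C-id} and in correctly reading off the Hodge--Pink weights from \eqref{eqn: decomp-sigma}; in particular, verifying that the single ``$-1$'' weight appears only in the $(0,0)$-slot (and not distributed across the $d\ell$ factors) requires tracking how $\sigma$ cyclically permutes the $d\ell$ components while twisting only the $0$-th one by $(1-\tilde{t}_v/\tilde{\theta}_v)$. Once this is pinned down, the Galois-transitivity argument is essentially formal.
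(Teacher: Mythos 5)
Your proposal is correct and takes essentially the same route as the paper: the upper bound via the CM action and \eqref{eqn: CI}, the Hodge--Pink weight computation from \eqref{eqn: decomp-T-v} and \eqref{eqn: decomp-sigma} giving a single $-1$ in the $(0,0)$-slot, and the conclusion via the transitive $\Aut_{\FF_q(\tilde{v})}(\CC_v)$-action on the factors. The concern you flag about tracking the Frobenius twists is the same bookkeeping carried out in \S\ref{sec: HP-co}, and it works out as you anticipate.
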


Combining with \eqref{eqn: trdeg-2}, we finally arrive at:

\begin{corollary}\label{cor: AI-Omega}
\begin{align*}
\trdeg_{\bar{k}}\bar{k}\bigg(\int_{\psi_{0,0}} \tilde{\omega} \ \bigg|\ \tilde{\omega} \in H_{\text{\rm dR}}\big(\Rscr_{\sigma} (\tilde{\cC}_{d\ell}),\bar{k}\big)\bigg) & = d^2\ell.
\end{align*}
\end{corollary}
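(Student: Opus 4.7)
The Corollary is the synthesis of the transcendence-theoretic and motivic-Galois-theoretic results established earlier in the section. My plan is to simply assemble them.

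First, I would invoke equation~\eqref{eqn: trdeg-2} from Remark~\ref{rem: CM-period}:
\[
\trdeg_{\bar{k}}\bar{k}\bigg(\int_{\psi_{0,0}}\tilde{\omega} \ \bigg|\ \tilde{\omega} \in H_{\text{\rm dR}}\big(\Rscr_{\sigma} (\tilde{\cC}_{d\ell}),\bar{k}\big)\bigg) = \dim \Gamma_{\Rscr_{\sigma} (\tilde{\cC}_{d\ell})}.
\]
This identity rests on two inputs. The first is Papanikolas' function-field analogue of Grothendieck's period conjecture (Theorem~\ref{thm: GPC}), applicable because $\Rscr_\sigma(\tilde{\cC}_{d\ell})$ is effective (by Remark~\ref{rem: T-t-relation} and the computation following Definition~\ref{def: RC}) and uniformizable (Lemma~\ref{lem: HB(C)}). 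The second is the reduction afforded by the CM structure: Proposition~\ref{Prop:CM endomorphisms} gives an embedding of $\FF_{q^{d\ell}}[\tilde{T}_v][\tilde{t}_v]$ into the endomorphism ring, and Remark~\ref{rem: CM} then identifies $H_{\text{Betti}}(\Rscr_\sigma(\tilde{\cC}_{d\ell})) = \FF_{q^{d\ell}}[\tilde{T}_v][\tilde{t}_v]\cdot \psi_{0,0}$. Since $\FF_{q^{d\ell}}[\tilde{T}_v][\tilde{t}_v]$ acts on the de~Rham module through its evaluation at $\tilde{T}_v=\tilde{v}$ (a subring of $\bar{k}$), every period $\int_{\phi_\alpha(\psi_{0,0})}\tilde{\omega}$ equals $\int_{\psi_{0,0}}(\iota(\alpha)\cdot \tilde{\omega})$, so all periods are realized against the single Betti cycle $\psi_{0,0}$ modulo rescaling the differential.

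Second, I would plug in Theorem~\ref{thm: dim-G}:
\[
\dim\Gamma_{\Rscr_\sigma(\tilde{\cC}_{d\ell})} = [\eK_{d\ell}:\eF] = d^2\ell,
\]
which combines the CM upper bound $\Gamma_{\Rscr_\sigma(\tilde{\cC}_{d\ell})}\hookrightarrow \Rcal_{\eK_{d\ell}/\eF}(\GG_{m/\eK_{d\ell}})$ coming from \eqref{eqn: CI} with the lower bound obtained in Section~\ref{sec: HP-co} by using the $\Aut_{\FF_q(\tilde{v})}(\CC_v)$-orbit of the single Hodge--Pink cocharacter $\chi^{\vee}_{\Rscr_\sigma(\tilde{\cC}_{d\ell})}$, whose image (after conjugation by the transitive Galois action on the coordinates) spans the whole restriction-of-scalars torus.

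Combining the two displays produces the claim. The main obstacle was really packaged inside Theorem~\ref{thm: dim-G}, namely the verification that the Galois orbit of a single Hodge--Pink cocharacter fills out the full CM torus; once that dimension equality is in hand, Corollary~\ref{cor: AI-Omega} is just Papanikolas' theorem rewritten using the CM reduction to $\psi_{0,0}$. No additional calculation is required.
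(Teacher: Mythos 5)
Your proposal is correct and follows exactly the paper's own route: combine the identity $\trdeg_{\bar{k}}\bar{k}(\int_{\psi_{0,0}}\tilde{\omega}) = \dim\Gamma_{\Rscr_\sigma(\tilde{\cC}_{d\ell})}$ from Remark~\ref{rem: CM-period} (which packages Theorem~\ref{thm: GPC} together with the CM reduction to the single Betti class $\psi_{0,0}$) with $\dim\Gamma_{\Rscr_\sigma(\tilde{\cC}_{d\ell})}=d^2\ell$ from Theorem~\ref{thm: dim-G}. One minor imprecision: the CM action of $\alpha$ on $H_{\text{dR}}(\Rscr_\sigma(\tilde{\cC}_{d\ell}),\bar{k})$ is not literally multiplication by the single scalar $\iota(\alpha)$ (it acts by a different embedding of $\alpha$ on each $(i,j)$-component), but since it is a $\bar{k}$-linear endomorphism of the de~Rham space the generated field over $\bar{k}$ is unchanged, which is all that is needed.
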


\begin{Subsubsec}{Proof of Theorem~\ref{thm: AI-Omega}}\label{sec: pf-AL-Omega}
By Proposition~\ref{prop: Omega-period} we have that
\[
\Omega_{d\ell,v,i}(v) = \left(\int_{\psi_{0,0}}\tilde{\omega}_{d\ell-i} \right)^{-1}, \quad 0< i \leq d\ell.
\]
On the other hand,
for $0\leq i<d\ell$ and $0\leq j<d$,
define $\tilde{\omega}_{i,j}\in H_{dR}\big(\Rscr_{\sigma} (\tilde{\cC}_{d\ell}),\bar{k}\big)$ as follows:
\begin{equation*}
\begin{tabular}{ccccl}
$\hspace{-0.8cm}\displaystyle\prod_{i'=0}^{d\ell-1}\frac{\bar{k}[X]}{\bar{g}_{v,i'}(X) \cdot \bar{k}[X]}$& $\longrightarrow$ &$\displaystyle\frac{\bar{k}[X]}{\bar{g}_{v,i}(X) \cdot \bar{k}[X]}$& $\longrightarrow$ &
$\bar{k}$\\ $\Big(\bar{h}_{i'}(X) \bmod \bar{g}_{v,i'}\Big)_{0\leq i<d\ell}$ & $\longmapsto$ &
$\bar{h}_{i}(X) \bmod \bar{g}_{v,i}$ & $\longmapsto$ &\hspace{-0.1cm}$\bar{h}_{i}(\tilde{\theta}_{v,i,j})$,
\end{tabular}
\end{equation*}
where $\tilde{\theta}_{v,i,j}$ are given in the beginning of Section~\ref{sec: HP-co}.
Then \[
\big\{\tilde{\omega}_{i,j} \ \big|\  0\leq i<d\ell,\ 0\leq j<d \big\} \quad \text{ is a basis of $H_{dR}\big(\Rscr_{\sigma} (\tilde{\cC}_{d\ell}),\bar{k}\big)$},
\]
and
$\tilde{\omega}_{i} = \tilde{\omega}_{i,0}$ for $0\leq i<d\ell$ by comparing with
\eqref{eqn: diff}.
Since
\begin{align*}
\dim_{\bar{k}}H_{dR}\big(\Rscr_{\sigma} (\tilde{\cC}_{d\ell}),\bar{k}\big)=d^2\ell  = \trdeg_{\bar{k}}\bar{k}\bigg(\int_{\psi_{0,0}}\tilde{\omega}_{i,j} \ \bigg|\ 0\leq i<d\ell,\ 0\leq j<d \bigg),
\end{align*}
we obtain that
\[
\int_{\psi_{0,0}}\tilde{\omega}_{i,j}, \quad 0\leq i<d\ell, \quad 0\leq j<d, \quad \text{ are algebraically independent over $\bar{k}$.}
\]

Finally, as for $0< i \leq \ell$, one has that
\begin{equation}\label{eqn: omega-dltol}
\prod_{j=0}^{d-1}\Omega_{d\ell,v,j\ell +i}(v)
= \prod_{j=0}^{d-1}\prod_{n=0}^{\infty}(1-\frac{\theta_v^{q^{d\ell n +j\ell+i}}}{\theta-\varepsilon^{q^{j\ell+i}}})
= \prod_{n=0}^{\infty}(1-\frac{\theta_v^{q^{\ell n +i}}}{\theta-\varepsilon^{q^{\ell n +i}}}) =
\Omega_{\ell,v,i}(v).
\end{equation}
Therefore the result follows.

\hfill $\Box$    
\end{Subsubsec}

\bibliographystyle{alpha}

\end{document}